\let\al=\alpha
\let\e=\varepsilon
\let\r=\rho
\let\f=\frac
\let\p=\psi
\let\wt=\widetilde
\def\cE{{\mathcal E}}
\def\virgp{\raise 2pt\hbox{,}}
\def\cdotpv{\raise 2pt\hbox{;}}
\def\C{\mathop{\mathbb C\kern 0pt}\nolimits}
\def\DD{\mathop{\mathbb D\kern 0pt}\nolimits}
\def\EE{\mathop{{\mathbb E \kern 0pt}}\nolimits}
\def\K{\mathop{\mathbb K\kern 0pt}\nolimits}
\def\N{\mathop{\mathbb N\kern 0pt}\nolimits}
\def\Q{\mathop{\mathbb Q\kern 0pt}\nolimits}
\def\R{\mathop{\mathbb R\kern 0pt}\nolimits}
\def\SS{\mathop{\mathbb S\kern 0pt}\nolimits}
\def\ZZ{\mathop{\mathbb Z\kern 0pt}\nolimits}
\def\TT{\mathop{\mathbb T\kern 0pt}\nolimits}
\def\P{\mathop{\mathbb P\kern 0pt}\nolimits}
\newcommand{\vv}[1]{\boldsymbol{#1}}
\def\dive{\mathop{\rm div}\nolimits}
\def\curl{\mathop{\rm curl}\nolimits}
\def\na{\nabla}
\def\p{\partial}
\newcommand{\beq}{\begin{equation}}
\newcommand{\eeq}{\end{equation}}
\newcommand{\ben}{\begin{eqnarray}}
\newcommand{\een}{\end{eqnarray}}
\newcommand{\beno}{\begin{eqnarray*}}
\newcommand{\eeno}{\end{eqnarray*}}
\numberwithin{equation}{section}
\newtheorem{lemma}{Lemma}[section]
\newtheorem{proposition}{Proposition}[section]
\newtheorem{theorem}{Theorem}[section]
\newtheorem{remark}{Remark}[section]
\newtheorem{corollary}{Corollary}[section]
\begin{document}

\title[Large box limit]{Global existence and stability of viscous Alfv\'en waves in the large-box limit for MHD systems}

\author[Li XU]{Li Xu}
\address{School of Mathematical Sciences, Beihang University\\  100191 Beijing, China}
\email{xuliice@buaa.edu.cn}

\author[Jiahui ZHANG]{Jiahui Zhang}
\address{Department of Mathematical Sciences, Tsinghua University\\ 100084 Beijing, China}
\email{zhangjh20@mails.tsinghua.edu.cn}

\date{}

\begin{abstract}
This paper provides a rigorous analysis of  how the {\it large box limit} fundamentally alters the global existence theory and dynamical behavior of the incompressible magnetohydrodynamics (MHD) system with small viscosity/resistivity $(0<\mu\ll 1)$ on periodic domains $Q_L=[-L,L]^3$, in the presence of a strong background magnetic field. While the existence of global solutions (viscous Alfv\'en waves) on the whole space $\R^3$ was previously established in \cite{He-Xu-Yu}, such results cannot be expected for general finite periodic domains.  We demonstrate that global solutions do exist on  $Q_L=[-L,L]^3$ precisely when the domain size exceeds a critical threshold  $L_\mu\geq e^{\f1\mu}$, thereby providing the first quantitative characterization of the transition to infinite-domain-like behavior. 

Our analysis focuses on the nonlinear stability of viscous Alfv\'en waves in this large-box setting, with two principal results: 
\begin{enumerate}
\item {\bf Local existence:} For $L\geq e^{\f{1}{\mu}}$, we establish the  existence of local small solutions on $[0,\log L]\times Q_L$ with uniform energy estimates independent of $L$. This is achieved via a novel hyperbolic energy method employing carefully designed weighted norms and characteristic foliation; 
\item {\bf Global extension:}  We prove that there exists $L_\mu\geq e^{\f{1}{\mu}}$ such that for all $L\geq L_\mu$,  the local solution can be extended globally. A linear-driving decay mechanism first reduces the wave energy to $\e_\mu (\ll \mu)$ by time $t_{n_0}=\log L$. The system then enters a small-data parabolic regime that ensures global existence.
\end{enumerate}

The proof heavily relies on two key stabilizing mechanisms:  the seperation of bidirectional-propagating Alfv\'en waves within characteristic hypersurface regions entirely contained in $[0,\log L]\times Q_L$, and the long-time effect of small diffusion. The large box limit ($L\gg 1$) is also crucial, as it suppresses finite-size effects while preserving the intrinsic bidirectional propagation of nonlinear Alfv\'en waves along the magnetic field. 
\end{abstract}

\maketitle

\tableofcontents

\section{Introduction}

Magnetohydrodynamics (MHD) studies the motion of electrically conducting fluids (e.g. plasmas or liquid metals) in the presence of magnetic fields, combining principles from fluid dynamics and electromagnetism. A fundamental phenomenon in MHD is the Alfv\'en wave - a transverse wave where magnetic tension acts as the restoring force. Alfv\'en waves, first theoretically predicted by Hannes Alfv\'en in 1942(see \cite{Alfven}), are a fundamental mode of magnetohydrodynamic (MHD) waves in magnetized plasmas. These waves arise from the interplay between magnetic field line tension and plasma inertia, producing transverse oscillations that propagate bidirectionally along the magnetic field. Distinct from sound waves or electromagnetic waves, Alfv\'en waves are a feature of MHD systems and play a critical role in diverse fields, including space physics, astrophysics, geodynamics, and fusion research.

From a mathematical perspective, Alfv\'en waves appear as solutions to the incompressible ideal MHD equations under strong background magnetic fields. In the ideal case, these solutions exhibit neither dissipation nor dispersion, maintaining constant energy and waveform during propagation. However, physical plasmas necessarily incorporate finite dissipative effects ($0<\mu\ll1$), where viscosity and resistivity introduce characteristic energy dissipation and wave damping. While the three-dimensional whole-space case with small dissipation was rigorously analyzed in \cite{He-Xu-Yu}, establishing global existence, long-time dynamics, and stability results. The analogous theory for finite periodic domains $Q_L=[-L,L]^3$ remains incomplete and presents fundamentally different challenges.

Our research resolves this theoretical gap through systematic investigation of the large box limit $(L\gg 1)$, where periodic systems approximate infinite-domain behavior.  This fundamental approximation in MHD theory considers system scales that significantly exceed characteristic plasma scales, achieving two crucial objectives: 
\begin{itemize}
\item[(1)] Effective elimination of boundary artifacts while preserving intrinsic plasma dynamics;
\item[(2)] Enabling rigorous analysis of key phenomena including MHD turbulence and energy cascade mechanisms, and linear stability properties of Alfv\'en waves and tearing modes.
\end{itemize}

The large box approximation, while indispensable for fundamental MHD theory and widely employed in both astrophysical and fusion plasma modeling, introduces several nontrivial theoretical challenges that demand careful consideration. Principal among these is the reconciliation of infinite-domain predictions with physically realistic finite systems, particularly regarding: i) nonlocal magnetic interactions in bounded domains; ii) computational constraints in numerical implementations; iii) boundary effects on wave propagation characteristics.

This study establishes a rigorous mathematical framework demonstrating that periodic MHD systems with sufficiently large scales ($L\geq L_\mu$) can indeed exhibit global dynamics properties comparable to their infinite-domain counterparts. Our work provides precise criteria for when periodic boundary conditions can faithfully represent realistic MHD phenomena, with particular significance for fusion plasma confinement (e.g. tokamak optimization \cite{Hasegawa-Chen}), space plasma dynamics (e.g., solar wind turbulence modeling \cite{Verschweren}) and astrophysical systems (e.g., accretion disk stability, pulsar magnetospheres\cite{Goldreich-Sridhar}). For comprehensive treatments of the physical foundations, we refer to \cite{Biskamp,Davidson}.

\subsection{Problem formulation and literature review}

 In this paper, we study the incompressible MHD system with small viscosity and resistivity ($0<\mu\ll1$) on a 3D periodic domain  $Q_L=[-L,L]^3$ as follows:
 \begin{equation}\label{eq:1}
\left\{\begin{aligned}
&\partial_{t}\vv v+\vv v \cdot \nabla\vv v-\mu \Delta\vv v+\nabla p= (\nabla \times \vv b)\times\vv b,\quad t>0,\,x\in Q_L,\\
&\partial_{t}\vv b+\vv v \cdot \nabla \vv b-\mu \Delta \vv b=\vv b \cdot \nabla\vv v, \\
&\operatorname{div}\vv v=0,\quad \operatorname{div}\vv b=0,\\
\end{aligned}\right. 
\end{equation}
where $\vv b$ is the magnetic field, $\vv v$ and $p$ are the velocity field and scalar pressure of the fluid respectively, and the constant $\mu>0$ is the viscosity coefficient, which also serves as the magnetic diffusivity (or resistivity). The system is supplemented with periodic boundary conditions for all unknowns in \eqref{eq:1}.

The Lorentz force term $(\nabla \times\vv b) \times\vv b$ in the momentum equation can be rewritten as
\beno
 (\nabla \times\vv b) \times\vv b = -\nabla \left(\frac{1}{2} |\vv b|^2 \right) + \vv b \cdot \nabla\vv b.
\eeno
Here the first term $-\nabla \left(\frac{1}{2} |\vv b|^2 \right)$ is referred to as the magnetic pressure, as it takes the same gradient form as the fluid pressure. The second term $\vv b\cdot\nabla\vv b=\na\cdot(\vv b\otimes\vv b)$ represents the magnetic tension force, responsible for generating Alfv\'en waves. By reusing  $p$ to denote the total pressure $p + \frac{1}{2} |\vv b|^2$ (instead of the fluid pressure),  the momentum equation can be expressed in the following form:
\begin{equation*}
\partial_t\vv v +\vv v \cdot\nabla\vv v-\mu \Delta\vv v+\nabla p=\vv b\cdot\nabla\vv b.
\end{equation*}

By introducing the Els\"{a}sser variables:
\begin{equation*}
Z_+ =\vv v +\vv b, \quad Z_- =\vv v -\vv b,
\end{equation*}
the MHD system \eqref{eq:1} can be reformulated in the following form
\begin{equation}\label{eq:2}
\left\{\begin{aligned}
&\partial_t Z_+ + Z_- \cdot \nabla Z_+-\mu\Delta Z_+ = -\nabla p, \quad t>0,\,x\in Q_L,\\
&\partial_t Z_- + Z_+ \cdot \nabla Z_--\mu\Delta Z_- = -\nabla p, \\
&\operatorname{div} Z_+ = \operatorname{div} Z_- = 0.
\end{aligned}\right. 
\end{equation}

Let $B_0=(0,0,1)$ be a uniform background magnetic field and set
\beno
z_+ = Z_{+}-B_0, \quad
z_- = Z_{-}+B_0.
\eeno
The MHD system \eqref{eq:2} can then be rewritten as
\begin{equation}\label{eq:MHD}
\left\{\begin{aligned}
&\partial_t z_+ + Z_- \cdot \nabla z_+-\mu\Delta z_+ = -\nabla p, \quad t>0,\,x\in Q_L,\\
&\partial_t z_- + Z_+ \cdot \nabla z_- -\mu\Delta z_-= -\nabla p,\\
&\operatorname{div} z_+ = \operatorname{div} z_- = 0.
\end{aligned}\right. 
\end{equation}

Let $j_+=\curl z_+$ and $j_-=\curl z_-$. Taking the curl of \eqref{eq:MHD}, we obtain
\begin{equation}\label{eq:j}
\left\{\begin{aligned}
&\partial_t j_{+} + Z_- \cdot \nabla j_{+} - \mu \Delta j_{+} = -\nabla z_- \wedge \nabla z_+, \quad t>0,\,x\in Q_L,\\
&\partial_t j_{-} + Z_+ \cdot \nabla j_{-} - \mu \Delta j_{-} = -\nabla z_+ \wedge \nabla z_-.
\end{aligned}\right. 
\end{equation}
We remark that $\dive j_{+}=\dive j_{-}=0$. The explicit expressions of the nonlinear terms on the righthand side are
\beno
\nabla z_- \wedge \nabla z_+ =\varepsilon_{ijk} \partial_i z^l_- \partial_l z^j_+ \partial_k , \quad \nabla z_+ \wedge \nabla z_- =\varepsilon_{ijk} \partial_i z^l_+ \partial_l z^j_- \partial_k ,
\eeno
where $\varepsilon_{ijk}$ is the totally anti-symmetric symbol associated to the volume form of $\mathbb{R}^3$.

The linearized analysis of \eqref{eq:MHD} and \eqref{eq:j} on the whole space $\R^3$ leads to dispersion relation
\beq\label{dispersion}
w_\pm(\xi)=-i\mu|\xi|^2\pm \xi_3,\quad\forall\,\xi\in\R^3,
\eeq
where the corresponding plane wave solutions are identified as (viscous) Alfv\'en waves.
This relation reveals that these waves are $1+1$ dimension waves propagating bidirectionally along the background magnetic field with $v_A=|B_0|=1$, while damped by a weak dissipation (if $0<\mu\ll1$). For the ideal MHD case (i.e., $\mu=0$), this linearized analysis reproduces the original results obtained by H. Alfv\'en in his foundational work \cite{Alfven}.

This paper establishes a fundamental extension of the global existence theory for the MHD system \eqref{eq:MHD} from the whole space to periodic domains, with three principal contributions:
\begin{enumerate}
\item {\bf Threshold for global solutions:} We prove that global solutions exist on periodic domains $Q_L=[-L,L]^3$ precisely when $L$ exceeds a threshold $L_\mu\geq e^{1/\mu}$, marking the transition to infinite-domain-like dynamics.
\item {\bf Stabilizing mechanisms:} For $L \geq L_\mu$, viscous Alfv\'en waves stabilize due to the separation of bidirectional Alfv\'en waves and cumulative viscous dissipation over characteristic timescales.
\item 
{\bf Mathematical framework:} We develop novel energy methods and weighted norms to quantify the finite-to-infinite domain transition.
\end{enumerate}

\medskip

We conclude this subsection with a brief survey of the incompressible MHD systems involving Alfv\'en waves.
The theoretical foundation was established by H. Alfv\'en (\cite{Alfven}) in 1942,  who first predicted the existence of linear Alfv\'en waves in ideal MHD with strong background magnetic fields. For the ideal MHD system in $\R^2$ and $\R^3$, Bardos, Sulem and Sulem (\cite{Bardos}) first constructed the global solutions (ideal Alfv\'en waves) in weighted H\"{o}lder space $C^{1,\alpha}$ (not in the energy spaces), by reformulating the MHD system in terms of the Els\"{a}sser variables (see \eqref{eq:MHD} with $\mu=0$) and developing a wave-based approach inspired by Klainerman's work \cite{Klainerman 1} on nonlinear wave equations. For small diffusion case in $\R^3$, the authors in \cite{He-Xu-Yu} provided a rigorous mathematical proof for the existence, propagation and stability of (ideal/viscous) Alfv\'en wave in the nonlinear setting. The approach is inspired by the stability of Minkowski spacetime \cite{Ch-K}. The proof combined the hyperbolic energy method, which relied heavily on the weighted energy and the energy flux, with parabolic estimates, achieved through the design of the weights for the energy. An alternative demonstration of a similar global existence result was subsequently established in \cite{Wei-Zh 2} via the comparison principle and in \cite{Cai-Lei} using the so-called ghost weight technique. For a sufficiently small ratio of viscosity and resistivity, the authors in \cite{Wei-Zh 1} proved the global existence in the H\"older space $C^{1,\al}$ by unifying the method used in \cite{Bardos} and the design of the weights for the case in \eqref{eq:MHD}. For the ideal MHD on thin domains $\Omega_\delta=\R^2\times(-\delta,\delta)$,  Xu (\cite{Xu}) constructed global small solutions to the ideal incompressible MHD system in three-dimensional thin domains $\Omega_\delta = \mathbb{R}^2 \times (-\delta, \delta)$, and proved the convergence from the 3D Alfv\'{e}n waves in $\Omega_\delta$  to the plane Alfv\'{e}n waves in $\mathbb{R}^2$ as the slab thickness $\delta \to 0$.  In \cite{Li-Yu}, the authors established a rigidity theorem for the ideal Alfv\'{e}n waves at infinity, showing that if the scattering fields of Alfv\'{e}n waves vanish at infinity, then there are no Alfv\'{e}n wave at all emanating from the plasma. 

 \smallskip



Finally, we give some comments on the short review.
  All aforementioned results for MHD systems were established on infinite domains (either whole space or infinite strips), where the background magnetic field is oriented along the infinitely extended direction. This setup ensures that the two families of Alfv\'en waves (solutions of the MHD system) propagate indefinitely in opposite directions along the magnetic field. This fundamental geometric property, where the seperation of counter-propagating Alfv\'en waves serves as the stable mechanism, guarantees the global existence of solutions.

\smallskip

In contrast to previous studies, the present work differs fundamentally in the following aspects:
\begin{enumerate}
\item We consider the MHD system on finite periodic domains $Q_L=[-L,L]^3$, where due to the domain's compactness, the seperation of counter-propagating Alfv\'en wave families is disrupted near boundaries $x_3=\pm L$. This implies the breakdown of the stabilization mechanism inherent to infinite-domain systems.
\item 
Our analysis proves the existence of a scale $L_\mu (\geq e^{1/\mu})$ such that for the domain scale $L\geq L_\mu$, the MHD system admits global solutions provided the initial energy $\e_0$ is sufficiently small. We emphasize that this $\e_0$ is independent of both the domain scale $L$ and the viscosity coefficient $\mu$.
\end{enumerate}

\subsection{Difficulties, key observations and strategies}

The main difficulties in constructing global solutions to this problem arise from the boundedness of the domain $Q_L$. The specific challenges manifest in the following aspects:
\begin{enumerate}
\item[1).] The separation property of Alfv\'en waves is disrupted, leading to difficulties in nonlinear control. In previous works on infinite domains, the separation property of Alfv\'en waves serves as the intrinsic mechanism for MHD stability, enabling control of nonlinear terms with null structure (see \eqref{eq:MHD}, \eqref{eq:j}).
\item[2).] The pressure $p$ satisfies an elliptic equation 
\beq\label{eq:p}
-\Delta p=\sum_{j,k=1}^3\p_j\p_k(z_-^jz_+^k),\quad\forall\, x\in Q_L,
\eeq
and its specific expression becomes significantly more complex compared to that in $\R^3$, making the control of pressure terms considerably more difficult.
\item[3).] The treatment of viscous terms presents significant challenges, especially for the lowest-order energy estimates.  The approach in \cite{He-Xu-Yu}utilizes Hardy's inequality to reformulate these estimates in terms of the basic energy identity. However, in the finite domain $Q_L$, while Poincaré inequality can replace Hardy's inequality, its associated constant exhibits $L$-dependence - a critical limitation that introduces new complications in completing the energy estimates.
\item[4).] In \cite{He-Xu-Yu}, the global existence of solutions could be directly obtained through hyperbolic energy estimates combined with the basic energy identity. However, in our current work, the finite domain nature of $Q_L$ fundamentally alters the properties of MHD solutions (notably, the breakdown of Alfv\'en waves' separation). Consequently, the framework used for proving local existence cannot be directly extended to establish global existence.
\end{enumerate} 

\medskip

To overcome these difficulties, we will comprehensively utilize the separation property of Alfv\'en waves, the large-scale domain characteristics ($L\gg1$), and weak dissipation effects. Our analysis yields the following key observations:

{\it (1). Control within finite time interval $[0,\log L]$}:
When the initial perturbations are confined to $[-L,L]^2\times[-L/4,L/4]$, the characteristic propagation regions of the solutions (Alfv\'en waves) remains entirely within the spacetime domain $[0,\log L]\times Q_L$.  In this regime, Alfv\'en waves' propagation dynamics mirror the whole-space $\R^3$ scenario, allowing their separation property to effectively control the corresponding nonlinear terms. For initial perturbations with spatial decay, the exterior region (outside $[-L,L]^2\times[-L/4,L/4]$) contributes only $O(1/{L^\al})$ terms, ensuring that nonlinear interactions remain controllable when $L$ is sufficiently large.

{\it (2). Dissipation-dominated phase ($t\gg 1/\mu$)}:
Inspired by \cite{He-Xu-Yu}'s results for the whole space $\R^3$, we observe that beyond the hyperbolic characteristic time $1/\mu$, the system's weak dissipation mechanism becomes operative, inducing energy decay. After sufficiently long evolution,  the system energy reduces to viscosity-dominated levels, enabling its reformulation as a small-energy parabolic system. This permits application of classical theory to guarantee global solutions of MHD system with eventual dissipation of Alfv\'en waves.

\medskip

Based on these fundamental observations, we formulate our principal strategies as follows:

{\bf Step 1. Local existence on the time interval $[0,\log L]$.}  To establish local existence, we implement a hyperbolic energy method that fundamentally relies on weighted energies and energy fluxes through characteristic hypersurfaces. Our approach consists of four key components:

{\it (1). Novel foliations of  $[0,\log L]\times Q_L$.} 
The solutions $z_+$ and $z_-$ of the MHD system \eqref{eq:MHD} represent $1+1$ dimension (Alfv\'en) waves propagating bidirectionally along the background magnetic field, subject to weak diffusion.  The nonlinear terms in \eqref{eq:j}  of form $\nabla z_{+} \wedge \nabla z_{-}$ exhibit null structure similar to that in nonlinear wave equations. To control these terms, we foliate the spacetime slab $[0,\log L]\times Q_L$ by the families of characteristic hypersurfaces:
\beno
\Big(\bigcup\limits_{-\frac{L}{4}< u_{+}<\frac{L}{4}} C_{t,u_{+}}^{+}\Big) \bigcup W_{t,+}^{\leq -\frac{L}{4}} \bigcup W_{t,+}^{\geq \frac{L}{4}}\quad
\text{and}\quad
\Big(\bigcup\limits_{-\frac{L}{4}< u_{-}<\frac{L}{4}} C_{t,u_{-}}^{-}\Big) \bigcup W_{t,-}^{\leq -\frac{L}{4}} \bigcup W_{t,-}^{\geq \frac{L}{4}},
\eeno
where the characteristic regions $\bigcup\limits_{-\frac{L}{4}< u_{+}<\frac{L}{4}} C_{t,u_{+}}^{+}$ and $\bigcup\limits_{-\frac{L}{4}< u_{-}<\frac{L}{4}} C_{t,u_{-}}^{-}$ consist of the hypersurfaces  along which $z_-$ and $z_+$ propagate, respectively, and are entirely contained within $[0,\log L]\times Q_L$.  The sets $W_{t,+}^{\leq -\frac{L}{4}} \bigcup W_{t,+}^{\geq \frac{L}{4}}$ and $W_{t,-}^{\leq -\frac{L}{4}} \bigcup W_{t,-}^{\geq \frac{L}{4}}$ represent the remainder regions. 

{\it (2). Design of the weighted energies and energy fluxes.} Since the viscous terms are not compatible with the hyperbolic approach,  we shall unify the hyperbolic energy method with the parabolic estimates via designing the different weights for the lowest-order, intermediate-order and top order energies (incorporating $L$-dependence). Due to the finite size of the domain, the geometry of the Alfv\'en wave characteristics changes in regions away  from the interior of $[0,\log L] \times Q_L$, leading to a breakdown in the separation of the waves there. Consequently, we control the nonlinear terms using an energy flux defined entirely inside   $[0,\log L]\times Q_L$, along with the large-scale effect of $L$ in the remainder regions. Precisely, we define the energy flux of $z_\pm$ as follows:
\beno
F_{\pm}(t)= \sup_{|u_{\pm}|\leq \frac{L}{4}}\int_{\overline{C}_{t,u_{\pm}}^{\pm}} \left(\log \langle w_{\mp}\rangle\right)^{4}\left|z_{\pm}\right|^{2} d\sigma_{\pm}.
\eeno
The higher-order energy fluxes are defined similarly. We remark that the hypersurfaces for these fluxes are entirely inside $[0,\log L]\times Q_L$.  For more details, see Subsetion \ref{geometries subsection}.

{\it (3). Lowest-order energy estimates.} While higher-order energy estimates follow directly from the vorticity equations (cf.~\eqref{eq:j}), which avoid nonlocal pressure terms, the lowest-order estimates require careful treatment of the original MHD system \eqref{eq:MHD}. Since the pressure term $p$ is governed by an elliptic equation \eqref{eq:p} on the bounded domain $Q_L$,  the estimates for $p$ on $Q_L$ are significantly more delicate due to the $L$-dependence of Green's function. The estimate is reduced to control the viscous terms such as
\beq\label{eq:add 1}
\mu\int_{0}^{t} \int_{Q_L} \frac{\left|z_{+}\right|^{2}}{\langle w_{-}\rangle^{2}} dx d\tau.
\eeq
For the analogous problem on  $\R^3$ (see \cite{He-Xu-Yu}), Hardy's inequality bounds this term by $\mu\int_0^t\|\na z_+\|^2_{L^2(\R^3)}d\tau$, which is controlled via the basic energy identity. However, on $Q_L$, the natural substitute-Poinc\'are’s inequality-fails due to its $L$-dependence constant. Our resolution is  to extend the integral to $\R^3$ and exploit the largeness of $L$. Roughly speaking,
\beno
\mu\int_{0}^{t} \int_{Q_L} \frac{\left|z_{+}\right|^{2}}{\langle w_{-}\rangle^{2}} dx ds\lesssim\mu\int_{0}^{t} \|\na z_+\|_{L^2(Q_L)}^2ds+\f{\mu}{L^2}t\|z_+\|_{L^2(Q_L)}^2,
\eeno
both terms are bounded by $\|z_{+,0}\|_{L^2(Q_L)}^2$ under the scaling $L\gg \f{1}{\mu}$ and $t\lesssim L$.

{\it (4). Uniform energy estimates with respect to (w.r.t.) $L$ and $\mu$.} For $L\geq e^{1/\mu}$, if the initial weighted energy satisfies $\cE^w(0)\leq \e_0^2$ (with $\e_0$ independent of $L$ and $\mu$), the nonlinear terms are controlled by the total energy and flux rather than diffusion. Consequently, we obtain the uniform energy estimates with respect to $L$ and $\mu$.

\smallskip

{\bf Step 2. Global extension.} To extend the local solution globally beyond $t=\log L$, we exploit the cumulative effect of weak dissipation over long times to derive the decay of the system's energy. The proof includes two key strategies:

{\it (1). Linear-driven decay estimates.} Building upon the uniform energy estimates (independent of $L$ and $\mu$) for local solutions, we establish the decay energy estimates for the physical energy of the solutions (see \eqref{linear driven decay estimates}). In view of the decay estimates, we show that there exists a critical domain scale $L_\mu\geq e^{1/\mu}$ such that when $L\geq L_\mu$, the physical energy decays to $\e_\mu (\ll \mu)$ by time $t_{n_0}=\log L$ and the system consequently enters {\it a small-data parabolic regime}. 

{\it (2). Global existence for parabolic system.} By taking $t_{n_0}=\log L$ as the new initial time and applying classical parabolic theory to the MHD system \eqref{eq:MHD} with initial energy bounded above by $\e_\mu (\ll\mu)$, we establish both the global existence of solutions on $[\log L,+\infty)$ and their asymptotic convergence to equilibrium. Consequently, we successfully extend the local solutions on $[0,\log L]$  globally.






\subsection{The characteristic geometries}\label{geometries subsection}

Given $T^*= \log L$, we study the spacetime $[0,T^*]\times Q_L$ associated with either the MHD solution $(\vv v,\vv b)$, or its equivalent formulation $(z_+,z_-)$ from \eqref{eq:MHD}. More precisely,  assuming the existence of a smooth solution $(z_{+}, z_{-})$  on  $[0,T^*]\times Q_L$, we investigate the foliation of characteristic hypersurfaces generated by $(z_{+}, z_{-})$. The periodic nature of $Q_L$ requires careful treatment in constructing this  foliation. We first recall that $[0,T^*]\times Q_L$ admits a natural time foliation $[0,T^*]\times Q_L=\bigcup_{0\leq t\leq T^*}\Sigma_t$, where each time slice $\Sigma_t$ is  defined by
\begin{equation*}
\Sigma_{t} = \left\{(\tau,x) \in \mathbb{R} \times Q_L : \tau = t\right\}.
\end{equation*}
To construct the characteristic foliation associated with $(z_{+}, z_{-})$,  we extend $z_{\pm}$ to  $\mathbb{R}^3$ by imposing $2L$-perodicity  in all spatial directions ($x_1$, $x_2$, and $x_3$). This extension allows us to treat $z_\pm$ as $2L$-perodic vector fields on $[0,T^*]\times \mathbb{R}^3$. 

We introduce the characteristic space-time vector fields  $L_+$ and $L_-$ as follows:
\beno
 L_{+}=T+Z_{+}, \quad L_{-}=T+Z_{-},
\eeno
where the time vector field $T$ is the usual $\p_t$ defined in the Cartesian coordinates. We also use the same notations to denote the partial differential operators $L_\pm=\partial_{t}+ Z_\pm\cdot \nabla$.
Using these operators, we define two optical functions $x_{3}^{\pm} = x_{3}^{\pm}(t,x)$ on $[0,T^*]\times \mathbb{R}^3$ as the solutions to
\beq\label{def of x 3 pm}
L_{+} x_{3}^{+}=0, \quad x_{3}^{+}\big|_{t=0}=x_{3},\quad\text{and}\quad 
L_{-} x_{3}^{-}=0, \quad x_{3}^{-}\big|_{t=0}=x_{3}.
\eeq
Then we construct the functions $u_{\pm}= u_{\pm}(t,x)$ on $[0,T^*]\times Q_L$ as follows:
\begin{equation}
u_{+}(t,x)=
\begin{cases}
x_{3}^{+}(t,x), &\text{if }\,x_{3}^{+}(t,x)\in(-L,L]\\
x_{3}^{+}(t,x_1,x_2,x_3+2L), &\text{if }\,x_{3}^{+}(t,x)\in(-2L,-L],
\end{cases} \label{eq:g3}
\end{equation}
\begin{equation}
u_{-}(t,x)=
\begin{cases}
x_{3}^{-}(t,x), &\text{if }\,x_{3}^{-}(t,x)\in[-L,L),\\
x_{3}^{-}(t,x_1,x_2,x_3-2L), &\text{if }\,x_{3}^{-}(t,x)\in[L,2L).
\end{cases} \label{eq:g4}
\end{equation}
 We remark that for smooth periodic functions $Z_\pm$ on $[0,T^*]\times Q_L$, the functions $x_3^\pm$ maintain smoothness on $[0,T^*]\times Q_L$, whereas $u_\pm$ are piecewise smooth on the same domain and satisfy
\beq\label{prop for u pm}
u_\pm\in[-L,L],\quad\text{and}\quad u_\pm|_{x_3=L}=u_\pm|_{x_3=-L}
\eeq
as a consequence of \eqref{def of x 3 pm} and \eqref{eq:e9c}. One can check the details in Remark \ref{rmk for u pm}.

\smallskip 

Therefore, for any $a \in [-L,L]$, we define the characteristic hypersurfaces $C_{a}^\pm $ as the level sets:
\beno\begin{aligned}
&C_{a}^{+}=\left\{(t,x)\in [0,T^*]\times Q_L:u_{+}(t,x)=a\right\}=\bigcup_{0\leq t\leq T^*} S^+_{t,a},\\
&
C_{a}^{-}=\left\{(t,x)\in [0,T^*]\times Q_L:u_{-}(t,x)=a\right\}=\bigcup_{0\leq t\leq T^*} S^-_{t,a},
\end{aligned}\eeno
where the time slice $S^\pm_{t,a}=C_a^\pm\cap\Sigma_t$. 
Similarly, for any $a\in(-2L,2L)$, we define $\overline{C}_{a}^{\pm}$ as follows:
\beno\begin{aligned}
&
\overline{C}_{a}^{+}=\left\{(t,x)\in [0,T^*]\times Q_L:x_{3}^{+}(t,x)=a\right\}=\bigcup_{0\leq t\leq T^*}\overline{S}_{t,a}^+,\\
&
\overline{C}_{a}^{-}=\left\{(t,x)\in [0,T^*]\times Q_L:x_{3}^{-}(t,x)=a\right\}=\bigcup_{0\leq t\leq T^*}\overline{S}_{t,a}^-,
\end{aligned}\eeno
where the time slice $\overline{S}_{t,a}^{\pm}=\overline{C}_{a}^{\pm}\bigcap \Sigma_{t}$.
By construction,  $L_{+}$ is tangent to both $C_{a}^{+}$ and $\overline{C}_{a}^{+}$, while  $L_{-}$ is tangent to both $C_{a}^{-}$ and $\overline{C}_{a}^{-}$.

For any $0 \leq t_1 < t_2 \leq T^*$, we define the characteristic surface sections $C_{[t_{1},\,t_{2}],\,u_{\pm}}^{\pm}$ and $\overline{C}_{[t_{1},\,t_{2}],\,a}^{\pm}$ as follows:
\beno
C_{[t_{1},\,t_{2}],\,u_{\pm}}^{\pm}= C_{u_{\pm}}^{\pm}\bigcap \big([t_1,t_2]\times Q_L\big),\quad
\overline{C}_{[t_{1},\,t_{2}],\,a}^{\pm}= \overline{C}_{a}^{\pm}\bigcap \big([t_1,t_2]\times Q_L\big).
\eeno
For simplicity, we denote by
\beno
C_{t,u_{\pm}}^{\pm}= C_{[0,\,t],\,u_{\pm}}^{\pm}
\quad
\overline{C}_{t,a}^{\pm}= \overline{C}_{[0,\,t],\,a}^{\pm},\quad\forall\,t\in[0,T^*].
\eeno

In order to specify the region where the energy estimates are taken place, for any $0 \leq t \leq T^*$, $a \in(-2L,2L)$, $-\frac{L}{4} \leq u_{+}^{1} < u_{+}^{2} \leq \frac{L}{4}$ and $-\frac{L}{4} \leq u_{-}^{1} < u_{-}^{2} \leq \frac{L}{4}$, we define the following hypersurfaces/ regions:
\begin{itemize}
\item[(i)] Time-slice regions:
\beno\begin{aligned}
&\Sigma_{t,+}^{\left[u_{+}^{1}, u_{+}^{2}\right]}=\bigcup\limits_{u_{+}^{1}\leq u_{+}\leq u_{+}^{2}} S_{t, u_{+}}^{+}, \quad \Sigma_{t,-}^{\left[u_{-}^{1}, u_{-}^{2}\right]}=\bigcup\limits_{u_{-}^{1}\leq u_{-}\leq u_{-}^{2}} S_{t, u_{-}}^{-},\\
&\Sigma_{t,\pm}^{\leq a}=\bigcup\limits_{u_{\pm}\leq a} S_{t, u_{\pm}}^{\pm}, \quad \Sigma_{t,\pm}^{\geq a}=\bigcup\limits_{u_{\pm}\geq a} S_{t, u_{\pm}}^{\pm}, \\
&\overline{\Sigma}_{t,+}^{> -L}=\bigcup\limits_{a> -L} \overline{S}_{t,a}^{+}, \quad \overline{\Sigma}_{t,-}^{<L}=\bigcup\limits_{a< L} \overline{S}_{t,a}^{-};
\end{aligned}\eeno
\item[(ii)] Spacetime regions:
\beno
\begin{aligned}
&W_{t,+}^{\left[u_{+}^{1}, u_{+}^{2}\right]}=\bigcup\limits_{u_{+}^{1}\leq u_{+}\leq u_{+}^{2}} C_{t, u_{+}}^{+}, \quad W_{t,-}^{\left[u_{-}^{1}, u_{-}^{2}\right]}=\bigcup\limits_{u_{-}^{1}\leq u_{-}\leq u_{-}^{2}} C_{t, u_{-}}^{-},\\
&W_{t,\pm}^{\leq a}=\bigcup\limits_{u_{\pm}\leq a} C_{t, u_{\pm}}^{\pm}, \quad W_{t,\pm}^{\geq a}=\bigcup\limits_{u_{\pm}\geq a} C_{t, u_{\pm}}^{\pm}, \\
&\overline{W}_{t,+}^{>-L}=\bigcup\limits_{a> -L} \overline{C}_{t,a}^{+}, \quad \overline{W}_{t,-}^{< L}=\bigcup\limits_{a< L} \overline{C}_{t,a}^{-}.
\end{aligned} 
\eeno
\end{itemize}
Thus, in the notations established above, the spacetime domain $[0,t] \times Q_L$ admits the following two characteristic foliations:
\beno
\Big(\bigcup\limits_{-\frac{L}{4}< u_{+}<\frac{L}{4}} C_{t,u_{+}}^{+}\Big) \bigcup W_{t,+}^{\leq -\frac{L}{4}} \bigcup W_{t,+}^{\geq \frac{L}{4}}\quad
\text{and}\quad
\Big(\bigcup\limits_{-\frac{L}{4}< u_{-}<\frac{L}{4}} C_{t,u_{-}}^{-}\Big) \bigcup W_{t,-}^{\leq -\frac{L}{4}} \bigcup W_{t,-}^{\geq \frac{L}{4}}.
\eeno

With the standard Euclidean metric on $\mathbb{R}^4=\mathbb{R}_{t}\times \mathbb{R}_{x}^3$  and ignoring the $x_1$, $x_2$ directions, we schematically depict the unit outward normals to the boundaries of the spacetime regions $W_{t,+}^{\left[u_{+}^{1}, u_{+}^{2}\right]}$ and $W_{t,-}^{\left[u_{-}^{1}, u_{-}^{2}\right]}$ as follows:

  \includegraphics[width=0.7\textwidth]{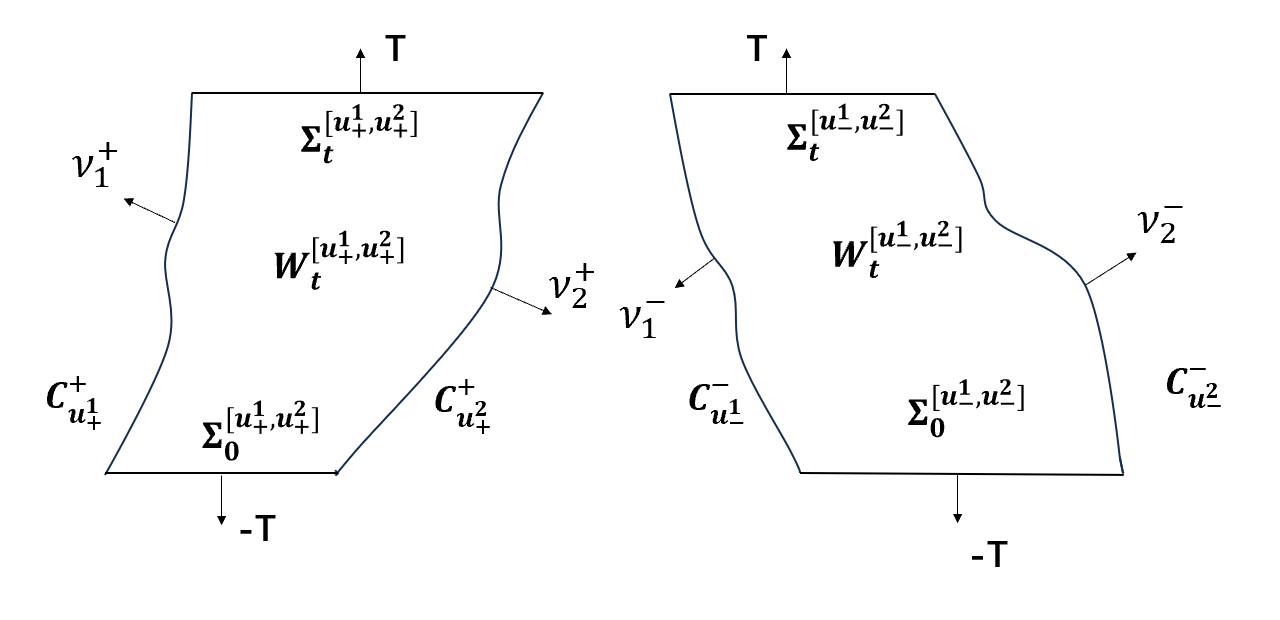}

 The unit outward normals to $\Sigma_{0}$ and $\Sigma_{t}$ are $-T$ and $T$, respectively. We denote by $\nu_{1}^{+}$, $\nu_{2}^{+}$, $\nu_{1}^{-}$, and $\nu_{2}^{-}$ the unit outward normals to $C_{t, u_{+}^{1}}^{+}$, $C_{t, u_{+}^{2}}^{+}$, $C_{t, u_{-}^{1}}^{-}$, and $C_{t, u_{-}^{2}}^{-}$, respectively. Then we have
\beno
\begin{array}{c}
\nu_{1}^{+}=-\frac{\left(\partial_{t} u^1_{+}, \nabla u^1_{+}\right)}{\sqrt{\left(\partial_{t} u^1_{+}\right)^{2}+\left|\nabla u^1_{+}\right|^{2}}}, \quad \nu_{2}^{+}=\frac{\left(\partial_{t} u^2_{+}, \nabla u^2_{+}\right)}{\sqrt{\left(\partial_{t} u^2_{+}\right)^{2}+\left|\nabla u^2_{+}\right|^{2}}},\\
\nu_{1}^{-}=-\frac{\left(\partial_{t} u^1_{-}, \nabla u^1_{-}\right)}{\sqrt{\left(\partial_{t} u^1_{-}\right)^{2}+\left|\nabla u^1_{-}\right|^{2}}}, \quad \nu_{2}^{-}=\frac{\left(\partial_{t} u^2_{-}, \nabla u^2_{-}\right)}{\sqrt{\left(\partial_{t} u^2_{-}\right)^{2}+\left|\nabla u^2_{-}\right|^{2}}}.
\end{array} 
\eeno

Intuitively, the spacetime regions $W_{t,\pm}^{[u_{\pm}^{1}, u_{\pm}^{2}]}$ are bounded by  $\Sigma_{0}$, $\Sigma_{t}$, $C_{t, u_{\pm}^{1}}^{\pm}$, and $C_{t, u_{\pm}^{2}}^{\pm}$. 

\medskip 

To conclude this subsection, we extend the characteristic coordinates  by defining the functions  $x_{i}^{\pm} = x_{i}^{\pm}(t,x)$ on $[0,T^*]\times \R^3 $  (for $i=1,2$)  through the following equations: 
\beno
L_{\pm} x_{i}^{\pm}=0, \quad x_{i}^{\pm}\big|_{t=0}=x_{i},\quad i=1,2.
\eeno
As established previously, the vector fileds $z_\pm $ admit $2L$-perodic extension on $[0,T^*]\times \mathbb{R}^3$. Consequently, the charateristic coordinates  $x_{i}^{\pm}$ (for $i=1,2,3$) preserve this periodicity in all spatial variables except their respective coordinate directions. More precisely, for all $(t,x)\in [0,T^*]\times \mathbb{R}^3$:
\begin{equation}\label{eq:g16}
\begin{aligned}
& x_{1}^{\pm}(t,x)=x_{1}^{\pm}(t,x_1,x_2+2L,x_3)=x_{1}^{\pm}(t,x_1,x_2,x_3+2L),\\
& x_{2}^{\pm}(t,x)=x_{2}^{\pm}(t,x_1+2L,x_2,x_3)=x_{2}^{\pm}(t,x_1,x_2,x_3+2L),\\
& x_{3}^{\pm}(t,x)=x_{3}^{\pm}(t,x_1+2L,x_2,x_3)=x_{3}^{\pm}(t,x_1,x_2+2L,x_3).
\end{aligned} 
\end{equation}

\subsection{Main results}

In this subsection, we will state two results of MHD system \eqref{eq:MHD}.

\subsubsection{Local well-posedness.}
Before stating the existence result, we have to define the energy functionals, diffusions and fluxes related to\eqref{eq:MHD}. 

Firstly, we introduce two weight functions $\langle w_{+}\rangle$ and $\langle w_{-}\rangle$ on $[0,T^*]\times Q_L$ as follows:
\beno
\langle w_{+}\rangle=\big(R^{2}+\left|x_{1}^{+}\right|^{2}+\left|x_{2}^{+}\right|^{2}+\left|u_{+}\right|^{2}\big)^{\frac{1}{2}}, \quad \langle w_{-}\rangle=\big(R^{2}+\left|x_{1}^{-}\right|^{2}+\left|x_{2}^{-}\right|^{2}+\left|u_{-}\right|^{2}\big)^{\frac{1}{2}},
\eeno
where $R$ is a large positive number that will be determined later on. Similarly, we also define the weights $\langle \overline{w_{+}}\rangle$ and $\langle \overline{w_{-}}\rangle$ on $[0,T^*]\times \mathbb{R}^3$ by
\beno
 \langle \overline{w_{+}}\rangle=\big(R^{2}+\left|x_{1}^{+}\right|^{2}+\left|x_{2}^{+}\right|^{2}+\left|x_{3}^{+}\right|^{2}\big)^{\frac{1}{2}}, \quad
 \langle \overline{w_{-}}\rangle=\big(R^{2}+\left|x_{1}^{-}\right|^{2}+\left|x_{2}^{-}\right|^{2}+\left|x_{3}^{-}\right|^{2}\big)^{\frac{1}{2}}.
\eeno

By virtue of \eqref{eq:g3}, \eqref{eq:g4} and \eqref{eq:g16}, we obtain
\begin{equation*}
\langle w_{+}\rangle (t,x)=
\begin{cases}
\langle \overline{w_{+}}\rangle (t,x), &\text{if }\,x_{3}^{+}(t,x)\in(-L,L],\\
\langle \overline{w_{+}}\rangle (t,x_1,x_2,x_3+2L), &\text{if }\,x_{3}^{+}(t,x)\in(-2L, -L],
\end{cases}
\end{equation*}
\begin{equation*}
\langle w_{-}\rangle (t,x)=
\begin{cases}
\langle \overline{w_{-}}\rangle (t,x), &\text{if }\,x_{3}^{-}(t,x)\in [-L,L),\\
\langle \overline{w_{-}}\rangle (t,x_1,x_2,x_3-2L), &\text{if }\,x_{3}^{-}(t,x)\in[ L,2L).
\end{cases}
\end{equation*}
Furthermore, by definitions, there hold $L_{\pm} \langle \overline{w_{\pm}}\rangle=0$ and $L_{\pm} \langle w_{\pm}\rangle=0$.

To simplify notations, for a multi-index $\alpha=\left(\alpha_{1}, \alpha_{2}, \alpha_{3}\right)$, we define $z_{\pm}^{(\alpha)}=\big(\frac{\partial}{\partial x_{1}}\big)^{\alpha_{1}}\big(\frac{\partial}{\partial x_{2}}\big)^{\alpha_{2}}\big(\frac{\partial}{\partial x_{3}}\big)^{\alpha_{3}}z_{\pm}$. For any positive integer $k$, we define  $|z_{\pm}^{(k)}|=\Big(\sum\limits_{|\alpha|=k}|z_{\pm}^{(\alpha)}|^2\Big)^{\frac{1}{2}}$. Analogously, we define $j_{\pm}^{(\alpha)}$ and $|j_{\pm}^{(k)}|$ in the same manner.

Let $N_{*} \in \mathbb{Z}_{\geq 7}$. For $t \geq 0$, we define the {\it lowest-order} and {\it higher-order} weighted energy functionals of $z_{\pm}$ as follows:
\beno
E_{\pm}(t) = \int_{\Sigma_{t}} \left(\log \langle w_{\mp}\rangle\right)^{4}\left| z_{\pm}\right|^{2} dx, 
\eeno
\beno
E_{\pm}^{(\alpha)}(t):=
\begin{cases}
\ \int_{\Sigma_{t}} \langle w_{\mp}\rangle^{2}\left(\log \langle w_{\mp}\rangle\right)^{4}|\nabla z_{\pm}^{(\alpha)}|^{2} dx, &\text{for }0\leq |\alpha| \leq N_*+1,\\
\ \int_{\Sigma_{t}} \langle w_{\mp}\rangle \left(\log \langle w_{\mp}\rangle\right)^{4}|\nabla z_{\pm}^{(\alpha)}|^{2} dx, &\text{for }|\alpha|= N_*+2,\\
\ \int_{\Sigma_{t}} \left(\log \langle w_{\mp}\rangle\right)^{4}|\nabla z_{\pm}^{(\alpha)}|^2 dx, &\text{for }|\alpha|= N_*+3,\\
\ \int_{\Sigma_{t}}|\nabla z_{\pm}^{(\alpha)}|^2 dx, &\text{for }|\alpha|= N_*+4.
\end{cases} 
\eeno
We define the {\it lowest-order, first-order,} and {\it higher-order} weighted fluxes of $z_{\pm}$ associated with the characteristic hypersurfaces as follows:
\beno
\begin{aligned}
F_\pm(t):=& \sup_{|u_{\pm}|\leq \frac{L}{4}}\int_{\overline{C}_{t,u_{\pm}}^{\pm}} \left(\log \langle w_{\mp}\rangle\right)^{4}\left|z_{\pm}\right|^{2} d\sigma_{\pm},\\
F_{\pm}^{(0)}(t):=& \sup_{|u_{\pm}|\leq \frac{L}{4}}\int_{\overline{C}_{t,u_{\pm}}^{\pm}} \langle w_{\mp}\rangle^{2}\left(\log \langle w_{\mp}\rangle\right)^{4}\left|\nabla z_{\pm}\right|^{2} d\sigma_{\pm},\\
F_{\pm}^{(\alpha)}(t):=&\sup_{|u_{\pm}|\leq \frac{L}{4}}\int_{\overline{C}_{t,u_{\pm}}^{\pm}} \langle w_{\mp}\rangle^{2}\left(\log \langle w_{\mp}\rangle\right)^{4}|j_{\pm}^{(\alpha)}|^{2} d\sigma_{\pm}, \ \text{for }1\leq |\alpha| \leq N_*,
\end{aligned} 
\eeno
where $d\sigma_\pm$ denotes the surface measure on the characteristic hypersurfaces $\overline{C}_{t,u_{\pm}}^{\pm}$. We emphasize that for all $|\alpha|\geq 1$, the flux terms are defined exclusively in terms of the vorticity components, deliberately excluding full derivatives. This technical formulation is adopted to circumvent analytical challenges associated with pressure terms in higher-order energy estimates. 
We define the {\it lowest-order} and {\it higher-order} weighted diffusions for $z_{\pm}$ as follows:
\beno
D_{\pm}(t) = \mu\int_{0}^{t} \int_{\Sigma_{\tau}} \left(\log \langle w_{\mp}\rangle\right)^{4}\left|\nabla z_{\pm}\right|^{2} dx d\tau, 
\eeno
\beno
D_{\pm}^{(\alpha)}(t)=
\begin{cases}
\ \mu\int_{0}^{t} \int_{\Sigma_{\tau}} \langle w_{\mp}\rangle^{2}\left(\log \langle w_{\mp}\rangle\right)^{4}|\nabla^2 z_{\pm}^{(\alpha)}|^{2} dx d\tau, &\text{for }0\leq |\alpha| \leq N_*+1,\\
\ \mu\int_{0}^{t} \int_{\Sigma_{\tau}} \langle w_{\mp}\rangle \left(\log \langle w_{\mp}\rangle\right)^{4}|\nabla^2 z_{\pm}^{(\alpha)}|^{2} dx d\tau, &\text{for }|\alpha|= N_*+2,\\
\ \mu\int_{0}^{t} \int_{\Sigma_{\tau}} \left(\log \langle w_{\mp}\rangle\right)^{4}|\nabla^2 z_{\pm}^{(\alpha)}|^{2} dx d\tau, &\text{for }|\alpha|= N_*+3,\\
\ \mu\int_{0}^{t} \int_{\Sigma_{\tau}}|\nabla^2 z_{\pm}^{(\alpha)}|^{2} dx d\tau, &\text{for }|\alpha|= N_*+4.
\end{cases}
\eeno

For any $k\in\N$, we define $E_\pm^{k}(t)=\sum_{|\al|=k}E_\pm^{(\al)}(t)$. The quantities $F_\pm^{k}(t)$ and $D_\pm^{k}(t)$ are defined analogously.

Given $T^* \in [0,\log L]$, we introduce the total energy norms, total flux norms, and total diffusions of each order as follows:
\beno
\begin{aligned}
& E_{\pm}= \sup\limits_{0\leq t \leq T^*}E_{\pm}(t), \quad E_{\pm}^{k}= \sup\limits_{0\leq t \leq T^*} \sum\limits_{|\alpha|=k}E_{\pm}^{(\alpha)}(t),\\
& F_{\pm}= \sup\limits_{0\leq t \leq T^*}F_{\pm}(t), \quad F_{\pm}^{k}= \sup\limits_{0\leq t \leq T^*} \sum\limits_{|\alpha|=k}F_{\pm}^{(\alpha)}(t),\\
& D_{\pm}= \sup\limits_{0\leq t \leq T^*}D_{\pm}(t), \quad D_{\pm}^{k}= \sup\limits_{0\leq t \leq T^*} \sum\limits_{|\alpha|=k}D_{\pm}^{(\alpha)}(t).
\end{aligned} 
\eeno

We also define the total energy and diffusion as follows:
\beno
\begin{aligned}
\mathcal{E}_{\pm}^{w}(t) & := E_{\pm}(t)+ \sum_{k=0}^{N_{*}} E_{\pm}^k(t)+ \mu E_{\pm}^{N_*+1}(t)+ \frac{\mu}{\log L}E_{\pm}^{N_*+2}(t) + \frac{\mu}{\left(\log L\right)^2}E_{\pm}^{N_*+3}(t)+\f{\mu}{L}  E_{\pm}^{N_{*}+4}(t),\\ 
\mathcal{D}_{\pm}^{w}(t) & := D_{\pm}(t)+ \sum_{k=0}^{N_{*}} D_{\pm}^k(t)+ \mu D_{\pm}^{N_*+1}(t)+ \frac{\mu}{\log L}D_{\pm}^{N_*+2}(t) + \frac{\mu}{\left(\log L\right)^2}D_{\pm}^{N_*+3}(t)+\f{\mu}{L}  D_{\pm}^{N_{*}+4}(t).
\end{aligned} 
\eeno

We now present our main results concerning the incompressible MHD system \eqref{eq:MHD} with initial conditions
\beq\label{initial}
z_+|_{t=0}=z_{+,0}(x),\quad z_-|_{t=0}=z_{-,0}(x),\quad\forall\, x\in Q_L
\eeq
in the presence of a strong background magnetic field. And we denote the initial total energy by
\beno 
\begin{aligned}
\mathcal{E}^{w}(0)&:= \sum_{+,-}\Bigl(\left\|\bigl(\log \langle x\rangle\bigr)^2 z_{\pm,0}\right\|_{L^2(Q_L)}^2+\sum_{k=0}^{N_{*}}\left\|\langle x\rangle\bigl(\log \langle x\rangle\bigr)^2\nabla^{k+1} z_{\pm,0}\right\|_{L^2(Q_L)}^2  \\
&\qquad+\mu\left\|\langle x\rangle\bigl(\log \langle x\rangle\bigr)^{2} \nabla^{N_*+2} z_{\pm,0}\right\|_{L^2(Q_L)}^2
+\frac{\mu}{\log L }\left\|\langle x\rangle^{\f12}\bigl(\log \langle x\rangle\bigr)^{2} \nabla^{N_*+3} z_{\pm,0}\right\|_{L^2(Q_L)}^{2}\\
&\qquad +\frac{\mu}{(\log L)^2} \left\|\bigl(\log \langle x\rangle\bigr)^{2} \nabla^{N_*+4} z_{\pm,0}\right\|_{L^2(Q_L)}^2+ \frac{\mu}{L} \left\|\nabla^{N_{*}+5} z_{\pm,0}\right\|_{L^2(Q_L)}^2 \Bigr),
\end{aligned}
\eeno
where  $\langle x\rangle:=(R^2+|x|^2)^{\f12}$.

\medskip

Our first theorem establishes the local existence of solutions to \eqref{eq:MHD} over the time interval $[0,\log L]$:

\begin{theorem}[Local existence of the solution] \label{thm:a1}
Let $B_0 = (0,0,1)$, $N^* \in \mathbb{Z}_{\geq 7}$, $0<\mu\ll1$, $R\gg 1$ and $L\geq e^{\frac{1}{\mu}}$. There exists a universal constant $\varepsilon_0\in (0,1)$, independent of both the viscosity coefficient $\mu$ and the domain scale $L$, such that if the initial divergence-free vector fields  $(z_{+,0}(x),z_{-,0}(x))$ satisfy the periodic boundary conditions and 
\beq\label{initial energy}
\mathcal{E}^{w}(0) \leq \varepsilon_{0}^{2} ,
\eeq
 then \eqref{eq:MHD}-\eqref{initial} admits a unique solution $\big(z_{+}(t,x),z_{-}(t,x)\big)$ on $[0,\log L]\times Q_L$ satisfying the periodic boundary conditions and
\begin{equation}\label{eq:thm2}
\sup_{0\leq t\leq\log L}\mathcal{E}_{\pm}^{w}(t)
+\mathcal{D}_{\pm}^{w}|_{t=\log L}+\bigl(F_{\pm}+ \sum_{k=0}^{N_{*}}F_{\pm}^{k}\bigr)\bigr|_{T^*=\log L}\leq C \mathcal{E}^w(0),
\end{equation}
where $C>0$ is a universal constant independent of  both $\mu$ and $L$. 
\end{theorem}
\begin{remark}
(1). The uniform energy estimate \eqref{eq:thm2} with respect to the domain size $L$ of $Q_L$ is essential for  passing to the limit from the large periodic box $Q_L$ to the whole space $\R^3$.

(2). As $L\rightarrow+\infty$, the box $Q_L$ exhausts $\R^3$,  and the MHD system \eqref{eq:MHD} on $Q_L$ formally converges to the MHD system on $\R^3$. According to this theorem, the existence time $\log L$ of the solution $(z_+,z_-)$ tends to the infinity as $L\rightarrow +\infty$. Moreover, the uniform energy estimates \eqref{eq:thm2} shows that  the contributions from the higher-order terms (of orders $N_*+2$, $N_*+3$ and $N_*+4$) in the total energy functional $\mathcal{E}_{\pm}^{w}(t)$ vanish in the limit. Consequently, the limiting energy estimate as $L\rightarrow+\infty$  coincides with estimate (1.13) in Theorem 1.2 of \cite{He-Xu-Yu}. This suggests that the solution of \eqref{eq:MHD} formally converges to the solution of the MHD system on $\R^3$ as $L\rightarrow+\infty$. A rigorous justification of this limit, along with a convergence rate analysis, will be addressed in future work.
\end{remark}

\subsubsection{Global well-posedness and dynamics}

Our second main theorem establishes the global existence and  dynamics behavior of solutions to \eqref{eq:MHD} with initial condition \eqref{initial}. Before stating the result, we introduce the following two concepts:

{\it (i). The classical $\mu$-small-data parabolic regime.} For a fixed $\mu>0$, the MHD system \eqref{eq:MHD} can be treated—in a manner similar to the Navier–Stokes equations—as a {\it semilinear} heat equations rather than a {\it quasilinear} system. Then the classical approach for the Navier-Stokes equations shows that, there exists a constant $\e_\mu>0$, such that if the $H^2$-norm of the initial data is bounded above by $\e_\mu$, \eqref{eq:MHD} admits a unique global solution in $H^2$ that converges to the steady state of the system. Here $\e_\mu=O(\mu)$ and satisfies $\e_\mu\ll\mu$. We say the solution enters {\it the classical $\mu$-small-data parabolic regime} when its $H^2$-norm is bounded by $\varepsilon_\mu$.

\smallskip 

{\it (ii). The unweighted energy functional.} To describle the dynamics of the solutions, we define the following energy functional:
\beq\label{total general energy}
\mathcal{E}(t)= \sum_{+,-} \bigl(\sum_{|\alpha|\leq N_*+1} \|z_{\pm}^{(\alpha)}\|_{L^2(\Sigma_t)}^2+ \mu \sum_{|\alpha|=N_*+2} \|z_{\pm}^{(\alpha)}\|_{L^2(\Sigma_t)}^2\bigr) .
\eeq

\begin{theorem}[Global existence and dynamics]\label{thm:d1}
Let $B_0 = (0,0,1)$, $N^* \in \mathbb{Z}_{\geq 7}$, $0<\mu\ll1$, $R\gg 1$, $L\geq e^{\f{1}{\mu}}$ and assume that
\beno 
\mathcal{E}^{w}(0)\leq\e_0^2,
\eeno
where $\mathcal{E}^{w}(0)$ and $\e_0$ are given by Theorem \ref{thm:a1}. Assume that $(z_+,z_-)$ is the solution of \eqref{eq:MHD}-\eqref{initial} obtained in Theorem \ref{thm:a1}. Then there exists a minimal domain scale $L_{\mu}\geq e^{\frac{1}{\mu}}$, and  for any $L\geq L_{\mu}$, there exists a constant $\e_1\in(0,\varepsilon_0]$ (independent of $\mu$ and $L$), such that if
\beq\label{initial energy 2}
 \mathcal{E}^{w}(0)\leq \e_1^2,
\eeq   
\begin{enumerate}
\item {\bf (Decay to the small-data parabolic regime)} there exist a universal constant $C_1\geq 1$ (independent of $\mu$ and $L$), and an integer $n_0$, such that for any $1\leq n\leq n_0$,
\begin{equation}\label{decay estimate}
\mathcal{E}(t_n) \leq \left(C_{1}\varepsilon_{0}\right)^{n+2},\quad\text{with }\quad t_n=n T,\quad T:=\f{\log L}{n_0}. 
\end{equation}
In particular, 
\beq\label{decay estimate 1}
\mathcal{E}(t_{n_0}) \leq \left(C_{1}\varepsilon_{0}\right)^{n_{0}+2} < \varepsilon_{\mu}^2. 
\eeq
This indicates that the solution enters the classical small-data parabolic regime at time $\log L$. 
\item {\bf (Global extension)} the local solution $(z_+,z_-)$  on $[0,\log L]$ can be extended to be a unqiue global solution of \eqref{eq:MHD}-\eqref{initial} on $[0,\infty)\times Q_L$ such that
\beq\label{eq:thm 3}
\sup_{t>\log L}\mathcal{E}(t)+\mu\int_{\log L}^\infty\bigl(\sum_{|\alpha|\leq N_*+1} \|\na z_{\pm}^{(\alpha)}\|_{L^2(\Sigma_t)}^2+ \mu \sum_{|\alpha|=N_*+2} \|\na z_{\pm}^{(\alpha)}\|_{L^2(\Sigma_t)}^2\bigr)dt\leq C\e_\mu^2.
\eeq
\end{enumerate}
\end{theorem}
\begin{remark}
(1). The global existence follows directly from the local existence theory established in Therorem \ref{thm:a1} combined with the decay estimates in \eqref{decay estimate}. 

(2). In our proof of this theorem (presented in the final section 3), we will iteratively apply the following key decay estimate:
\begin{equation}\label{linear driven decay estimates}
\begin{aligned}
\mathcal{E}(t_n)
&\leq C_0\Bigl(\frac{\log\big(\log(\mu T+e)+e\big)}{\log\big(\mu T+e\big)}+\f{2^{n}}{\log\big(\log(\mu T+e)+e\big)}\Bigr)\cdot\mathcal{E}^w(0)+C_0\bigl(\mathcal{E}^w(0)\bigr)^{\f12}\cdot \mathcal{E}(t_{n-1}),
\end{aligned} 
\end{equation}
for $1\leq n\leq n_0$.
This  decay estimate demonstrates the linear-driving decay mechanism of the solutions, which ultimately drives them  into the $\mu$-small-data parabolic regime, guaranteeing the global existence.

(3). We prove the existence of a domain-scale threshold $L_\mu$, leaving its optimality for future study.
\end{remark}

Let us complete this section with the notations we shall use in this context.

\medskip

\noindent{\bf Notations:} For $p\in[1,\infty]$, without confusion of domain, the norms $\|f \|_{L^p_x},\,$ $\|f\|_{L^p_{(x_1,x_2)}},\,$ $\|f\|_{L^p_{(x_1,x_3)}},\,$ $\|f\|_{L^p_{(x_2,x_3)}},\,$ $\|f\|_{L^p_t}$, mean $\|f(x)\|_{L^p(Q_L)},\,$ $\|f(x_1,x_2)\|_{L^p([-L,L]^2)},\,$  $\|f(x_1,x_3)\|_{L^p([-L,L]^2)},\,$ $\|f(x_2,x_3)\|_{L^p([-L,L]^2)},\,$ $\|f(\tau)\|_{L^p([0,t])}$ respectively. Similarly, we shall also use the notations $\|\cdot\|_{L^p_{(t,x)}},\, \|\cdot\|_{L^p_{(t,x_1,x_2)}},\, \|\cdot\|_{L^p_t(L^q_x)},\,\|\cdot\|_{L^p_t(L^q_{(x_1,x_2)})},\, \|\cdot\|_{L^p_t(L^q_{(x_1,x_3)})} $ etc. for any $p,q\in[1,\infty]$. The notation $f\lesssim g$ means that there exists an universal constant $C>0$ such that $f\leq Cg$. While the notation $g\gtrsim f$ means $f\lesssim g$ and  $f\sim g$ represents  both $f\lesssim g$ and $g\lesssim f$.  The constant $C>0$ denotes a universal constant which may change from line to line. 

\section{Main A Priori Estimates and Proof of Theorem \ref{thm:a1}}
\subsection{Ansatz for the method of continuity}
We prove Theorem \ref{thm:a1} by the standard method of continuity. Fix an  integer $N_*\geq 7$ and consider the initial data of $z_{\pm}$ with energy $\mathcal{E}^w(0)=\varepsilon^2$, where the small constant $\varepsilon>0$  will be determined later. Our analysis relies on three sets of assumptions concerning the underlying geometry and the energy of the waves. 

The first ansatz is on the underlying geometry:
\begin{equation}\label{eq:a1}
\Bigl|\Bigl(\frac{\partial x_{i}^{\pm}}{\partial x_{j}}\Bigr)-I\Bigr|\leq 2C_{0}\varepsilon\leq \frac{1}{100}, \quad \Bigl|\nabla \Bigl(\frac{\partial x_{i}^{\pm}}{\partial x_{j}}\Bigr)\Bigr|\leq 2C_{0}\varepsilon\leq \frac{1}{100},\quad \forall\,  (t,x)\in [0, T^{*}]\times \mathbb{R}^3 , 
\end{equation}
where $I$ is the $3\times3$ identity matrix and $C_{0}>0$ is a universal constant which will be determined at the end of the proof.

The second ansatz is about the amplitude of $z_{\pm}$:
\begin{equation}\label{eq:a2}
\|z_{\pm}\|_{L^{\infty}_x} \leq \frac{1}{2}, \quad \forall\, t\in [0, T^{*}]. 
\end{equation}

The third ansatz is concerning the total weighted energies, fluxes and diffusions:
\begin{equation}\label{eq:a3}
\sup_{0\leq t\leq T^*}\mathcal{E}_{\pm}^{w}(t)
+\mathcal{D}_{\pm}^{w}|_{t= T^*}+F_{\pm}+ \sum_{k=0}^{N_{*}}F_{\pm}^{k}\leq 2C_{1}\varepsilon^2,
\end{equation}
where $C_1\geq 1$ is a universal constant which will be determined by the energy estimates.

Since \eqref{eq:a1} and \eqref{eq:a3} hold for the initial data, they remain valid for a short time, say $[0,t_{\text{max}}]$, where $t_{\text{max}}$ is the maximal possible time so that the three sets of ansatz remain valid. Without loss of generality, we take $t_{\text{max}}=T^*$. To close the continuity argument, we shall show that there exists a constant $\varepsilon_{0}>0$, such that for all $\varepsilon\leq \varepsilon_{0}$ and $T^*=\log L$, the constant 2 in \eqref{eq:a1} and \eqref{eq:a3} can be improved to 1, i.e.,
\begin{equation*}
\Bigl|\Bigl(\frac{\partial x_{i}^{\pm}}{\partial x_{j}}\Bigr)-I\Bigr|\leq C_{0}\varepsilon, \quad \Bigl|\nabla \Bigl(\frac{\partial x_{i}^{\pm}}{\partial x_{j}}\Bigr)\Bigr|\leq C_{0}\varepsilon,\quad \forall\,  (t,x)\in [0, T^{*}]\times \mathbb{R}^3 , 
\end{equation*}
\begin{equation*}
\sup_{0\leq t\leq T^*}\mathcal{E}_{\pm}^{w}(t)
+\mathcal{D}_{\pm}^{w}|_{t=T^*}+F_{\pm}+ \sum_{k=0}^{N_{*}}F_{\pm}^{k}\leq C_{1}\varepsilon^2.
\end{equation*}
Once we complete these improvements, the continuity argument hints that the total weighted energy remains uniformly bounded for $t\leq \log L$. Consequently, the local existence result yields that the solution exists beyond $T^*=\log L$. We emphasize that the smallness of $\e_0$ is independent of $\mu$ and $L$.

\subsection{Preliminary estimates}
In this subsection, we assume that the bootstrap assumptions \eqref{eq:a1}-\eqref{eq:a3} hold and $L\geq e^{\f{1}{\mu}}\geq R\gg 1$.

\subsubsection{Analysis of the weights}

Let $\psi_{\pm}(t,y)=\big(\psi_{\pm}^{1}(t,y),\psi_{\pm}^{2}(t,y),\psi_{\pm}^{3}(t,y)\big)$ be the flow generated by $Z_{\pm}$, i.e.
\begin{equation}
\frac{d}{dt}\psi_{\pm}(t,y)= Z_{\pm}\big(t, \psi_{\pm}(t,y)\big), \quad \psi_{\pm}(0,y)=y,
\label{eq:e2star}
\end{equation}
where $y\in \mathbb{R}^3$. Since $z_{\pm}=Z_{\pm}\mp B_{0}$ ($B_0=(0,0,1)$), after integrating the time variable, we have
\begin{equation}
\psi_{\pm}(t,y)= y+ \int_{0}^{t} Z_{\pm}\big(\tau, \psi_{\pm}(\tau,y)\big) d\tau= y\pm tB_{0}+ \int_{0}^{t} z_{\pm}\big(\tau, \psi_{\pm}(\tau,y)\big) d\tau. \label{eq:e2}
\end{equation}
Let $\frac{\partial \psi_{\pm}(t,y)}{\partial y}$ be the differential of $\psi_{\pm}(t,y)$ (a $3\times 3$ Jacobian matrix) at y. By definition, we have $x^{\pm}\big(t, \psi_{\pm}(t,y)\big)=y$, which further implies that
\begin{equation*}
\begin{aligned}
\frac{\partial x^{\pm}}{\partial x}\Big|_{x=\psi_{\pm}(t,y)}&= \Bigl(\frac{\partial \psi_{\pm}(t,y)}{\partial y}\Bigr)^{-1},\\
\nabla_x \Bigl(\frac{\partial x^{\pm}}{\partial x}\Bigr)\Big|_{x=\psi_{\pm}(t,y)}&= \nabla_y \Bigl(\Bigl(\frac{\partial \psi_{\pm}(t,y)}{\partial y}\Bigr)^{-1}\Bigr) \Bigl(\frac{\partial \psi_{\pm}(t,y)}{\partial y}\Bigr)^{-1} .
\end{aligned}
\end{equation*}
Therefore, the geometric ansatz \eqref{eq:a1} can be rephrased as
\begin{equation}
\Bigl|\frac{\partial \psi_{\pm}(t,y)}{\partial y}-I\Bigr|\leq 2C_{0}^{\prime}\varepsilon\leq \frac{1}{10}, \quad \Bigl|\nabla_y \Bigl(\frac{\partial \psi_{\pm}(t,y)}{\partial y}\Bigr)\Bigr|\leq 2C_{0}^{\prime}\varepsilon\leq \frac{1}{10},\quad \forall\, (t,y)\in [0, T^{*}]\times \mathbb{R}^3 . \label{eq:e4}
\end{equation}

Under the ansatz \eqref{eq:a1}, the direct calculation gives rise to the following bounds on the weight functions:
\begin{lemma} \label{lem:e1}
 We have 
\begin{equation}
\left|\nabla^{i} \langle \overline{w_{\pm}}\rangle\right|\leq 2 \quad \text{for }i=1,2. \label{eq:e5}
\end{equation}
Moreover, for all $s_1,s_2 \in \mathbb{R}$, we have for $i=1,2$ that
\beq\label{eq:e6}
\begin{aligned}
&\left|\nabla^{i} \langle \overline{w_{\pm}}\rangle^{s_1}\right|\lesssim \langle \overline{w_{\pm}}\rangle^{s_1-1}, \quad
\left|\nabla^{i} \left(\log \langle \overline{w_{\pm}}\rangle\right)^{s_1}\right|\lesssim \frac{\left(\log \langle \overline{w_{\pm}}\rangle\right)^{s_1-1}}{\langle \overline{w_{\pm}}\rangle},\\
&\left|\nabla^{i} \big(\langle \overline{w_{+}}\rangle^{s_1} \langle \overline{w_{-}}\rangle^{s_2}\big)\right|\lesssim \frac{\langle \overline{w_{+}}\rangle^{s_1} \langle \overline{w_{-}}\rangle^{s_2}}{R},\\
&\left|\nabla^{i} \big(\langle \overline{w_{\pm}}\rangle^{s_1} \left(\log \langle \overline{w_{\pm}}\rangle\right)^{s_2}\big)\right|\lesssim \langle \overline{w_{\pm}}\rangle^{s_1-1} \left(\log \langle \overline{w_{\pm}}\rangle\right)^{s_2}.
\end{aligned} 
\eeq
\end{lemma}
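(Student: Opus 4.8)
The observation that makes this elementary is that $\langle\overline{w_{\pm}}\rangle$ is the composition of the smooth radial profile $\Phi(y)=(R^2+|y|^2)^{1/2}$ with the optical map $x\mapsto x^{\pm}(t,x)=(x_1^{\pm},x_2^{\pm},x_3^{\pm})$. Hence the whole lemma is just the chain and product rules, once two facts are recorded: first, the ansatz \eqref{eq:a1} controls the first two derivatives of the inner map uniformly on $[0,T^*]\times\R^3$ (this is legitimate since $z_{\pm}$, and hence $x^{\pm}$, has been extended $2L$-periodically), so that the Jacobian $(\partial x_i^{\pm}/\partial x_j)$ lies within $\tfrac{1}{100}$ of the identity and its gradient is $\le\tfrac{1}{100}$; second, the choice $R=100$ forces $\langle\overline{w_{\pm}}\rangle\ge R=100$ and $\log\langle\overline{w_{\pm}}\rangle\ge\log 100>1$, which is what lets one absorb the negative powers of $\langle\overline{w_{\pm}}\rangle$ and of $\log\langle\overline{w_{\pm}}\rangle$ that are produced upon differentiation.

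I would first establish \eqref{eq:e5}. Differentiating $\langle\overline{w_{+}}\rangle^2=R^2+\sum_i|x_i^{+}|^2$ gives $\partial_j\langle\overline{w_{+}}\rangle=\langle\overline{w_{+}}\rangle^{-1}\sum_i x_i^{+}\,\partial_j x_i^{+}$; since $\sum_i|x_i^{+}|^2\le\langle\overline{w_{+}}\rangle^2$ and the Jacobian is close to $I$ by \eqref{eq:a1}, Cauchy--Schwarz yields $|\nabla\langle\overline{w_{+}}\rangle|\le 2$. Differentiating once more produces three groups of terms: one in which the extra derivative hits $\langle\overline{w_{+}}\rangle^{-1}$ (size $O(1/R)$ by the first-order bound), one of the form $\langle\overline{w_{+}}\rangle^{-1}\,\partial_k x^{+}\!\cdot\partial_j x^{+}$ (again $O(1/R)$ since the inner product is $O(1)$), and one of the form $\langle\overline{w_{+}}\rangle^{-1}\,x^{+}\!\cdot\nabla^2 x^{+}$ (size $O(C_0\varepsilon)$ by \eqref{eq:a1}); summing these gives $|\nabla^2\langle\overline{w_{+}}\rangle|\le 2$, and the identical computation applies to $\langle\overline{w_{-}}\rangle$.

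The bounds in \eqref{eq:e6} then follow mechanically from the ones just proved together with elementary calculus. For $\nabla^i\langle\overline{w_{\pm}}\rangle^{s_1}$ one uses $\nabla\langle\overline{w_{\pm}}\rangle^{s_1}=s_1\langle\overline{w_{\pm}}\rangle^{s_1-1}\nabla\langle\overline{w_{\pm}}\rangle$, and for $i=2$ the extra term carrying $\langle\overline{w_{\pm}}\rangle^{s_1-2}$ is reabsorbed via $\langle\overline{w_{\pm}}\rangle^{-1}\le\tfrac{1}{100}$. For the logarithmic weight the chain rule generates the extra factor $\langle\overline{w_{\pm}}\rangle^{-1}$ from $\nabla\log\langle\overline{w_{\pm}}\rangle$, and the lower power $(\log\langle\overline{w_{\pm}}\rangle)^{s_1-2}$ is reabsorbed using $\log\langle\overline{w_{\pm}}\rangle>1$. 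For the mixed product $\langle\overline{w_{+}}\rangle^{s_1}\langle\overline{w_{-}}\rangle^{s_2}$ the gradient lands on exactly one factor while the untouched factor $\ge R$ supplies the $1/R$; the $i=2$ case only adds terms of size $O(1/R)$ or $O(1/R^2)$. The last inequality combines these two mechanisms, and in every case the constant depends only on the displayed exponents. There is no genuine obstacle here — the statement is purely computational; the only points demanding attention are keeping the estimates uniform in $(t,x)\in[0,T^*]\times\R^3$ (which is exactly the content of \eqref{eq:a1} after periodic extension) and systematically exploiting $R=100$ to swallow the negative powers of $\langle\overline{w_{\pm}}\rangle$ and $\log\langle\overline{w_{\pm}}\rangle$.
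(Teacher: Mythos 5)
Your proof is correct and is exactly the "direct calculation" the paper invokes without writing out: chain/product rule applied to $\Phi(y)=(R^2+|y|^2)^{1/2}$ composed with $x^{\pm}$, using the ansatz \eqref{eq:a1} to control the first two derivatives of the optical functions and $\langle\overline{w_{\pm}}\rangle\ge R$, $\log\langle\overline{w_{\pm}}\rangle>1$ to absorb the lower-order terms. Nothing further is needed.
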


\begin{lemma} \label{lem:e2}
For all $(t,x)\in [0,T^*]\times Q_L$, there hold
\begin{itemize}
\item[(1).] for $\f{L}{3}\leq |x_3|\leq L$,  $\frac{L}{4}< |u_{\pm}(t,x)|\leq \frac{9L}{8}$;
\item[(2).] for $|x_3|\leq \frac{5L}{6}$,  $u_{\pm}(t,x)=x_{3}^{\pm}(t,x)$ and $|u_{\pm}(t,x)|\leq \frac{7L}{8}$; 
\item[(3).] 
$\langle w_{\pm}\rangle(t,x) \sim \langle \overline{w_{\pm}}\rangle(t,x)$ and $\langle \overline{w_{\pm}}\rangle(t,x)\leq 3 L$.
\end{itemize}
\end{lemma}
\begin{proof}
It suffices to give the estimates for $u_{-}$ and $\langle w_{-}\rangle$, the estimates for $u_{+}$ and $\langle w_{+}\rangle$ can be derived in a similar way. 

{\bf Step 1. Estimate of $x_3^-(t,x)-x_3$.} Since $L_{-} x_{3}^{-}=0$ and $L_{-}=\partial_{t}+ Z_{-}\cdot \nabla$, we have
\begin{equation*}
(\partial_t-\partial_3)x_{3}^{-} + z_{-} \cdot \nabla x_{3}^{-}=0, \quad x_{3}^{-}\big|_{t=0} = x_{3}.
\end{equation*}
By taking $(t,x_1,x_2,x_3)=(t,y_1,y_2,y_3-t)$, we get
\begin{equation*}
\frac{d}{dt}\left(x_{3}^{-}(t,y_1,y_2,y_3-t)\right)+ \left(z_{-} \cdot \nabla x_{3}^{-}\right)(t,y_1,y_2,y_3-t)= 0,\quad x_{3}^{-}(0,y_1,y_2,y_3)= y_3,
\end{equation*}
which implies
\begin{equation}\label{eq:e7}
x_{3}^{-}(t,y_1,y_2,y_3-t)+ \int_{0}^{t} \left(z_{-} \cdot \nabla x_{3}^{-}\right)(\tau,y_1,y_2,y_3-\tau)\,d\tau = x_{3}^{-}(0,y_1,y_2,y_3)= y_3. 
\end{equation}
Thus, for all $(t,x)\in [0, T^*]\times\R^3$, by setting $(y_1,y_2,y_3)=(x_1,x_2,x_3+t)$, we deduce from \eqref{eq:e7} that
\beno\begin{aligned}
\left|x_{3}^{-}(t,x)-(x_{3}+t)\right| &\leq \int_{0}^{t} \left|z_{-} \cdot \nabla x_{3}^{-}(\tau,x_1,x_2,x_3+t-\tau)\right|\,d\tau\\ 
&
\overset{\eqref{eq:a1}}{\leq} \int_0^t|z_-(\tau,x_1,x_2,x_3+t-\tau)|\,d\tau\cdot(1+2C_0\e),
\end{aligned}\eeno
which along with \eqref{eq:a2} and $\e\ll1$ implies 
\beq\label{eq:e8}
\left|x_{3}^{-}(t,x)-(x_{3}+t)\right|\leq\f32\int_0^t|z_-(\tau,x_1,x_2,x_3+t-\tau)|\,d\tau \leq \f34t.
\eeq
Then for all $(t,x)\in [0, T^*]\times\R^3$,
\begin{equation}\label{eq:e9}
\left|x_{3}^{-}(t,x)-(x_{3}+t)\right| \leq \frac{3}{4}t,
\quad\text{i.e.,}\quad x_{3}+\frac{1}{4}t \leq x_{3}^{-}(t,x) \leq x_{3}+\frac{7}{4}t. 
\end{equation}

{\bf Step 2. Proof of (1).}
For any $(t,x)\in [0,T^*]\times Q_L$, using  \eqref{eq:e9} and $T^* =\log L \ll L$ , we have

\smallskip 

{\bf Case (i).} when $-L\leq x_{3}\leq -\frac{L}{3}$,  $-L \leq x_{3}^{-}(t,x) \leq -\frac{L}{4}$, 
which yields 
\beno
u_{-}(t,x)=x_{3}^{-}(t,x)\quad\text{ and}\quad |u_{-}(t,x)|> \frac{L}{4};
\eeno 

{\bf Case (ii). }when $\frac{L}{3}\leq x_{3}\leq L$, $\frac{L}{3} \leq x_{3}^{-}(t,x) \leq \frac{9L}{8}$, which hints
\beno\begin{aligned}
\text{either}\quad &u_{-}(t,x)=x_{3}^{-}(t,x)\in[\f{L}{3},L), \\ 
\text{ or }\quad &u_{-}(t,x)=x_{3}^{-}(t,x_1,x_2,x_3-2L),\quad\text{if}\quad x_{3}^{-}(t,x)\in [L,x_{3}+\frac{7}{4}t].
\end{aligned}\eeno
For the latter case where $L \leq x_{3}^{-}(t,x) \leq x_{3}+\frac{7}{4}t $, we derive
\begin{equation*}
-L-\frac{7}{4}t \leq x_{3}-2L \leq -L,
\end{equation*}
which along with \eqref{eq:e9} implies  
\begin{equation*}
-\frac{9L}{8}\leq -L-\f{3}{2}t \leq u_{-}(t,x)=x_{3}^{-}(t,x_1,x_2,x_3-2L)  \leq -L+\f{7}{4}t\leq-\frac{7L}{8}.
\end{equation*}

Based on the above analysis, we conclude that 
\beno
\frac{L}{4}< |u_{-}(t,x)|\leq \frac{9L}{8},\quad\forall\, (t,x)\in [0,T^*]\times Q_L,\, \f{L}{3}\leq |x_3|\leq L. 
\eeno 
This completes the proof of (1).

\smallskip 

{\bf Step 3. Proof of (2).} For any $(t,x)\in [0,T^*]\times Q_L$ with $|x_3|\leq \frac{5L}{6}$, using \eqref{eq:e9} and $T^*=\log L \ll L$, we get 
\begin{equation*}
-\frac{5L}{6} \leq x_{3} \leq x_{3}^{-}(t,x) \leq x_{3}+\frac{7}{4}\log L \leq \frac{7L}{8},
\end{equation*}
which yields  $u_{-}(t,x)=x_{3}^{-}(t,x)$ and $|u_{-}(t,x)|\leq \frac{7L}{8}$. This closes the proof of (2).

\smallskip 

{\bf Step 4. Proof of (3).}
For $i=1,2$, similar derivation as \eqref{eq:e8} leads to 
\beq\label{eq:e9a}
|x_i^{\pm}(t,x)-x_i|\leq\f32\int_0^t|z_\pm (\tau,x_1,x_2,x_3+t-\tau)|\,d\tau\leq \f34t\leq\f{L}{20},\quad\forall\,(t,x)\in[0,T^*]\times\R^3,
\eeq
which gives rise to
\beq\label{eq:e9b}
|x_i^{\pm}(t,x)|\leq L+\f{L}{20}=\f{21}{20}L,\quad\forall\,(t,x)\in[0,T^*]\times Q_L.
\eeq 

Combining the results in (1),(2) and \eqref{eq:e9b}, for any $(t,x)\in[0,T^*]\times Q_L$, we get either $\langle w_{\pm}\rangle= \langle \overline{w_{\pm}}\rangle$ or
\begin{equation*}
\begin{aligned}
&\f{L}{4}\leq\langle w_{\pm}\rangle(t,x)=\big(R^{2}+\left|x_{1}^{\pm}\right|^{2}+\left|x_{2}^{\pm}\right|^{2}+\left|u_{\pm}\right|^{2}\big)^{\frac{1}{2}}(t,x)\overset{R\leq L}{\leq}3L,\\
\text{and}\quad & L\leq\langle \overline{w_{\pm}}\rangle(t,x)=\big(R^{2}+\left|x^{\pm}\right|^{2}\big)^{\frac{1}{2}}(t,x)\overset{R\leq L}{\leq}3L,
\end{aligned}
\end{equation*}
which implies that $\langle w_{\pm}\rangle\sim \langle \overline{w_{\pm}}\rangle$ on $[0,T^*]\times Q_L$. 
This ends the proof of the lemma.
\end{proof}
\begin{remark}\label{rmk for u pm}
(1). As a consequence of \eqref{eq:e9} and \eqref{eq:e9a}, for any $(t,x)\in[0,T^*]\times Q_L$, there holds 
\beq\label{eq:e9d}
|x_i^\pm(t,x)-x_i|\leq\f{L}{20},\quad\text{for}\,\, i=1,2,3. 
\eeq 
(2). Thanks to the $2L$-periodicity of $Z_-$ and the definition of characteristic coordinates in \eqref{def of x 3 pm}, the solution for $x_3^-(t,x)$ satisfies
\beno
x_3^-(t,x)|_{x=\psi_-(t,y)}=y_3,\quad\forall\, (t,y)\in[0,T^*]\times\R^3,
\eeno 
where $\psi_-(t,y)$ is the flow map generated by $Z_-$ (defined in \eqref{eq:e2star}). Moreover, there holds
\beq\label{eq:e9c}\begin{aligned}
&x_3^-(t,x-2Le_3)-x_3^-(t,x)|_{x=\psi_-(t,y)}=-2L,\\
\text{i.e.,}\quad& x_3^-(t,x-2L e_3)-x_3^-(t,x)=-2L,\quad\forall\, (t,x)\in[0,T^*]\times\R^3,
\end{aligned}\eeq
where $e_3=(0,0,1)$.

For $(t,x)\in[0,T^*]\times Q_L$ with $L\gg1$, the estimate \eqref{eq:e9} yields to
\beno 
x_3^-(t,x)\in[-L,2L),\quad x_3^-(t,x)|_{x_3=-L}\in[-L,0),\quad 
x_3^-(t,x)|_{x_3=L}\in[L,2L),
\eeno
which along with \eqref{eq:e9c} and the definition of $u_-(t,x)$ in \eqref{eq:g4} implies that
\beno 
u_-(t,x)\in[-L,L)\quad\text{and}\quad u_-|_{x_3=L}=u_-|_{x_3=-L}.
\eeno
Similar argument also holds for $u_+$. Thus, \eqref{prop for u pm} holds for both $u_+$ and $u_-$.
\end{remark}

As an application of the above two lemmas, we have the following weighted Sobolev inequalities:
\begin{lemma}[weighted Sobolev inequality] \label{lem:e3}
For all integers $0\leq k\leq N_{*}-2$ and multi-indices $\alpha$ with $|\alpha|=k$, we have, for all $(t,x)\in [0,T^*] \times Q_L$, 
\begin{equation}\label{eq:e11}
\left|z_{\pm}(t,x)\right| \lesssim \frac{\left(E_{\pm}+E_{\pm}^0+ E_{\pm}^1\right)^{\frac{1}{2}}}{\left(\log \langle w_{\mp}\rangle\right)^2(t,x)} , 
\end{equation}
\begin{equation}\label{eq:e12}
\bigl|\nabla z_{\pm}^{(\alpha)}(t,x)\bigr| \lesssim \frac{\left(E_{\pm}^{k}+ E_{\pm}^{k+1}+ E_{\pm}^{k+2}\right)^{\frac{1}{2}}}{\big(\langle w_{\mp}\rangle\left(\log \langle w_{\mp}\rangle\right)^2\big)(t,x)}. 
\end{equation}
\end{lemma}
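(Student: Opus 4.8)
The plan is to reduce both pointwise bounds to the Sobolev embedding $H^2\hookrightarrow L^\infty$ on a fixed unit ball in $\R^3$, using the weighted energies $E_\pm,E_\pm^k$ to control the relevant $L^2$-norms and Lemmas~\ref{lem:e1} and \ref{lem:e2} to extract the slowly varying factors $\langle w_\mp\rangle$ and $\log\langle w_\mp\rangle$ from under the integral sign. All implied constants will be universal, since the unit-ball Sobolev constant and the weight-comparison constants depend only on $R=100$.

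Fix $(t,x)\in[0,T^*]\times Q_L$ and pass to the $2L$-periodic extension of $z_\pm$ to $\R^3$, which is smooth across the identification because $z_\pm$ solves \eqref{eq:MHD} on the torus; work on the ball $B=B(x,1)\subset\R^3$. The only genuinely nontrivial ingredient is the \emph{comparability of the weights across $B$}: namely $\langle w_\mp\rangle(t,y)\simeq\langle w_\mp\rangle(t,x)$ and $\log\langle w_\mp\rangle(t,y)\simeq\log\langle w_\mp\rangle(t,x)$ for every $y\in B$, after reducing $y$ modulo $2L\ZZ^3$ into $Q_L$. I would establish this by splitting via Lemma~\ref{lem:e2}. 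In the interior regime $|x_1|,|x_2|,|x_3|\le L/3$, the ball $B$ remains (for $L$ large) in the zone where Lemma~\ref{lem:e2} gives $u_\mp=x_3^\mp$, hence $\langle w_\mp\rangle=\langle\overline{w_\mp}\rangle$ there and no periodic wrapping occurs; then the bounds $|\nabla\langle\overline{w_\mp}\rangle|\le 2$ and $|\nabla\log\langle\overline{w_\mp}\rangle|\lesssim\langle\overline{w_\mp}\rangle^{-1}$ from Lemma~\ref{lem:e1}, together with $\langle\overline{w_\mp}\rangle\ge R=100$, force $\langle\overline{w_\mp}\rangle$ and $\log\langle\overline{w_\mp}\rangle$ to change by at most a bounded multiplicative factor over $B$. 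In the near-boundary regime, where some coordinate of $x$ has modulus $>L/3$, Lemma~\ref{lem:e2} gives $\langle w_\mp\rangle(t,x)\in(L/4,2L]$; moreover every point of $Q_L$ obtained by reducing a point of $B$ still has some coordinate of modulus $>L/4$ (for $L$ large), so $\langle w_\mp\rangle\simeq L$ holds uniformly on $B$ and comparability is automatic.

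Granting this, the unit-ball Sobolev inequality gives $|z_\pm(t,x)|^2\lesssim\sum_{|\beta|\le 2}\|\nabla^\beta z_\pm\|_{L^2(B)}^2$ and $|\nabla z_\pm^{(\alpha)}(t,x)|^2\lesssim\sum_{|\gamma|\in\{k,k+1,k+2\}}\|\nabla z_\pm^{(\gamma)}\|_{L^2(B)}^2$. Since $B$ reduces modulo $2L\ZZ^3$ to a subset of $Q_L$ of measure $O(1)$ (injectivity from the fact that $B$ has diameter $2<2L$) on which the weights are $\simeq$ their values at $x$, each $L^2(B)$-norm on the right is bounded by the matching weighted energy divided by that weight at $x$: e.g. $\|z_\pm\|_{L^2(B)}^2\lesssim(\log\langle w_\mp\rangle)^{-4}(t,x)\,E_\pm$, $\|\nabla z_\pm\|_{L^2(B)}^2\lesssim(\langle w_\mp\rangle^2(\log\langle w_\mp\rangle)^4)^{-1}(t,x)\,E_\pm^0\le(\log\langle w_\mp\rangle)^{-4}(t,x)\,E_\pm^0$ (using $\langle w_\mp\rangle\ge R\ge 1$), and $\|\nabla^2 z_\pm\|_{L^2(B)}^2\lesssim(\log\langle w_\mp\rangle)^{-4}(t,x)\,E_\pm^1$; summing and taking the square root yields \eqref{eq:e11}. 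For \eqref{eq:e12} the bookkeeping is identical once one observes that the hypothesis $k\le N_*-2$ makes $k,k+1,k+2\le N_*$, so the relevant derivative levels enter $E_\pm^k,E_\pm^{k+1},E_\pm^{k+2}$ all with the \emph{same} weight $\langle w_\mp\rangle^2(\log\langle w_\mp\rangle)^4$; dividing by this at $x$ and taking the square root leaves exactly $\langle w_\mp\rangle(\log\langle w_\mp\rangle)^2$ in the denominator.

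The main obstacle is the comparability claim of the second paragraph: $\langle w_\mp\rangle$ is anisotropic and only piecewise defined, jumping where $x_3^\mp$ crosses the periodic identification, so one must rule out its oscillating across a unit ball even when the ball straddles $\partial Q_L$. This is precisely what Lemma~\ref{lem:e2} is tailored to provide; with the interior/near-boundary dichotomy in hand, the remainder is the routine Sobolev-plus-weight-extraction computation.
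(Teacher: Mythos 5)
Your proof is correct, but it routes through a local argument where the paper uses a global one. The paper applies the Sobolev inequality on all of $Q_L$ directly to the \emph{weighted} functions $\big(\log\langle\overline{w_{\mp}}\rangle\big)^2 z_{\pm}$ and $\langle\overline{w_{\mp}}\rangle\big(\log\langle\overline{w_{\mp}}\rangle\big)^2\nabla z_{\pm}^{(\alpha)}$, built from the globally smooth barred weights; the derivatives falling on the weight are absorbed by the Leibniz rule together with Lemma~\ref{lem:e1}, and Lemma~\ref{lem:e2} is invoked once at the end to exchange $\langle\overline{w_{\mp}}\rangle$ for $\langle w_{\mp}\rangle$. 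You instead apply the unweighted Sobolev inequality on a unit ball and freeze the weight there, which requires the pointwise comparability $\langle w_{\mp}\rangle(t,y)\simeq\langle w_{\mp}\rangle(t,x)$ for $|x-y|\le 2$ — exactly the content of the paper's later Lemma~\ref{lem:p1}, and your interior/near-boundary dichotomy via Lemma~\ref{lem:e2} is the right way to prove it (the small threshold mismatch $L/3$ versus $L/3-1$ is harmless, as the proof of Lemma~\ref{lem:e2} gives the same conclusion for any threshold $\gtrsim L$). The trade-off is that the paper's global route avoids the periodic-wrapping analysis near $\partial Q_L$ entirely by never leaving the smooth barred weights until the final step, while your local route isolates the comparability statement as a reusable lemma and makes transparent why the estimate is uniform in $L$; both correctly exploit that all derivative levels $k,k+1,k+2\le N_*$ carry the same weight.
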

\begin{proof}
We only verify the inequalities for $z_{+}$, as those for $z_{-}$ can be derived in the same manner. By the standard Sobolev inequality $\|f\|_{L^\infty(Q_L)}\lesssim\|f\|_{H^2(Q_L)}$  with $L\gg 1$, for all $(t,x)\in [0,T^*] \times Q_L$, we have
\begin{equation*}
\bigl|\left(\log \langle \overline{w_{-}}\rangle\right)^2z_{+}(t,x)\bigr|^2 \lesssim \sum_{|\beta|\leq 2} \bigl\|\partial^{\beta}\big(\left(\log \langle \overline{w_{-}}\rangle\right)^2z_{+}\big)\bigr\|_{L^2(\Sigma_{t})}^2.
\end{equation*}

Using Lemma \ref{lem:e1}, we get
\begin{equation*}
\begin{aligned}
\bigl|\partial^{\beta}\big(\left(\log \langle \overline{w_{-}}\rangle\right)^2z_{+}\big)\bigr|& \lesssim \sum_{\gamma\leq \beta} \bigl|\partial^{\gamma}\big(\left(\log \langle \overline{w_{-}}\rangle\right)^2\big) z_{+}^{(\beta-\gamma)}\bigl|\lesssim \sum_{\gamma\leq \beta} \bigl|\left(\log \langle \overline{w_{-}}\rangle\right)^2 z_{+}^{(\beta-\gamma)}\bigr|.
\end{aligned}
\end{equation*}
Thanks to Lemma \ref{lem:e2}, we obtain $\langle w_{-}\rangle\sim \langle \overline{w_{-}}\rangle$ and
\begin{equation*}
\bigl|\left(\log \langle w_{-}\rangle\right)^2z_{+}(t,x)\bigr|^2 \lesssim \sum_{|\alpha|\leq 2} \bigl\|\left(\log \langle w_{-}\rangle\right)^2 z_{+}^{(\alpha)}\bigr\|_{L^{2}(\Sigma_{t})}^2 \lesssim E_{+}+E_{+}^0+ E_{+}^1 .
\end{equation*}
This verifies the bound for $z_+$ in \eqref{eq:e11}.

Similarly, for higher order derivatives with $|\alpha|=k$, we have
\beno
\bigl|\langle \overline{w_{-}}\rangle \left(\log \langle \overline{w_{-}}\rangle\right)^2\nabla z_{+}^{(\alpha)}(t,x)\bigr|^2\lesssim \sum_{|\beta|\leq 2} \bigl\|\partial^{\beta}\big(\langle \overline{w_{-}}\rangle (\log \langle \overline{w_{-}}\rangle)^2\nabla z_{+}^{(\alpha)}\big)\bigr\|_{L^{2}(\Sigma_{t})}^2,
\eeno
which along with Lemma \ref{lem:e1} and the fact $\langle w_{-}\rangle\sim \langle \overline{w_{-}}\rangle$ (stated in Lemma \ref{lem:e2}) implies
\beno\begin{aligned}
\bigl|\langle w_{-}\rangle \left(\log \langle w_{-}\rangle\right)^2\nabla z_{+}^{(\alpha)}(t,x)\bigr|^2
\lesssim \sum_{k\leq |\beta|\leq k+2} \bigl\|\langle w_{-}\rangle \left(\log \langle w_{-}\rangle\right)^2\nabla z_{+}^{(\beta)}\bigr\|_{L^{2}(\Sigma_{t})}^2\lesssim E_{+}^{k}+ E_{+}^{k+1}+ E_{+}^{k+2}. 
\end{aligned}\eeno
This proves the bound for $z_+^{(\al)}$ in \eqref{eq:e12}.
It completes the proof of the lemma.
\end{proof}
The following lemma, combined with weighted energy estimates, demonstrates the separation property of the left-traveling and right-traveling waves.
\begin{lemma} \label{lem:e4}
Assume that $R>10$ and $\|z_\pm\|_{L^\infty_x}\leq\f12$ for any $t\in[0,T^*]$ , we have
\begin{equation}
\begin{aligned}
\langle w_{+}\rangle \langle w_{-}\rangle &\geq \big(R^{2}+\left|u_{+}\right|^{2}\big)^{\frac{1}{2}} \big(R^{2}+\left|u_{-}\right|^{2}\big)^{\frac{1}{2}} \geq \frac{R}{2} \left(R^2+t^2\right)^{\frac{1}{2}},\\
\log \langle w_{+}\rangle \log \langle w_{-}\rangle &\geq \log \big(R^{2}+\left|u_{+}\right|^{2}\big)^{\frac{1}{2}} \log \big(R^{2}+\left|u_{-}\right|^{2}\big)^{\frac{1}{2}} \geq \frac{\log R}{2} \log \big(R^2+t^2\big)^{\frac{1}{2}} .
\end{aligned} \label{eq:e13}
\end{equation}
\end{lemma}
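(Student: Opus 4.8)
\textbf{Proof proposal for Lemma \ref{lem:e4}.}

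The plan is to prove the two chains of inequalities by first establishing the elementary lower bounds $\langle w_{\pm}\rangle \geq (R^2+|u_{\pm}|^2)^{1/2}$, which are immediate from the definition $\langle w_{\pm}\rangle = (R^2+|x_1^{\pm}|^2+|x_2^{\pm}|^2+|u_{\pm}|^2)^{1/2}$ since $|x_1^{\pm}|^2+|x_2^{\pm}|^2\geq 0$. This gives the first inequality in each line for free, and reduces everything to controlling the product $(R^2+|u_+|^2)^{1/2}(R^2+|u_-|^2)^{1/2}$ from below by a quantity involving $t$. The mechanism is that the two optical functions $x_3^+$ and $x_3^-$ separate at unit speed along the background field: from the transport equations $(\partial_t\pm\partial_3)x_i^{\pm}+z_{\pm}\cdot\nabla x_i^{\pm}=0$ one reads off (as was already done in the proof of Lemma \ref{lem:e2}, see \eqref{eq:e9}) that $x_3^{\pm}(t,x) = x_3 \pm t + O(t)$ with the error strictly smaller than $t$ in absolute value; hence $x_3^+ - x_3^-$ is comparable to $2t$, and at least one of $|x_3^+|,|x_3^-|$ — hence, in the regime where $u_{\pm}=x_3^{\pm}$ away from the wrap-around zone, at least one of $|u_+|,|u_-|$ — must be $\gtrsim t$.

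Concretely, first I would record the pointwise identity $\langle w_+\rangle\langle w_-\rangle \geq (R^2+|u_+|^2)^{1/2}(R^2+|u_-|^2)^{1/2}$ and then split into the two geometric cases supplied by Lemma \ref{lem:e2}: either $u_{\pm}=x_3^{\pm}$ (the interior/moderate-$x_3$ regime), or $|u_{\pm}|>\tfrac{L}{4}$. In the second case the bound is trivial since $R^2+|u_{\pm}|^2 \geq \tfrac{L^2}{16}$ while $R^2+t^2 \leq R^2+(\log L)^2 \ll L^2$, so the desired inequality holds with enormous room. In the first case, I use \eqref{eq:e9}, i.e. $x_3+\tfrac14 t \leq x_3^-(t,x) \leq x_3+\tfrac74 t$ together with its $+$-counterpart $x_3-\tfrac74 t \leq x_3^+(t,x)\leq x_3-\tfrac14 t$, to get $u_+ - u_- = x_3^+ - x_3^- \leq -\tfrac12 t$, i.e. $u_- - u_+ \geq \tfrac12 t$. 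Since $R^2+|u_+|^2 \geq R^2$ and, whenever $u_-\geq u_+ + \tfrac12 t$ with $u_+$ of either sign, one has $\max(|u_+|,|u_-|)\geq \tfrac14 t$; combining $R^2+|u_+|^2\geq R^2$ with $R^2+|u_-|^2\geq R^2 + \tfrac{t^2}{16}$ when $|u_-|$ is the larger one (and the symmetric statement otherwise), one obtains $(R^2+|u_+|^2)^{1/2}(R^2+|u_-|^2)^{1/2} \geq R\cdot(R^2+\tfrac{t^2}{16})^{1/2} \geq \tfrac{R}{2}(R^2+t^2)^{1/2}$ after absorbing the constant $\tfrac{1}{4}$ into the factor $\tfrac12$ (here $R=100$ is fixed, so the numerology is explicit). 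The logarithmic statement then follows by applying $\log$ — which is monotone — to the already-proven multiplicative bound: $\log\langle w_+\rangle \geq \log(R^2+|u_+|^2)^{1/2} \geq \log R$ and $\log\langle w_-\rangle \geq \log(R^2+|u_-|^2)^{1/2}$, and since $\log$ is increasing, $\log(R^2+|u_-|^2)^{1/2} \geq \log\big(\tfrac12(R^2+t^2)^{1/2}\big)$; a short check using $R=100$ that $\log\big(\tfrac12(R^2+t^2)^{1/2}\big) \geq \tfrac12\log(R^2+t^2)^{1/2}$ for all $t\geq 0$ (equivalently $(R^2+t^2)^{1/2}\geq 4$, which is clear) closes the second chain.

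The main obstacle — though a mild one — is bookkeeping the wrap-around: $u_{\pm}$ is defined by \eqref{eq:g3}–\eqref{eq:g4} as $x_3^{\pm}$ only on a fundamental domain and is shifted by $\mp 2L$ otherwise, so the clean separation estimate $u_+ - u_- \leq -\tfrac12 t$ that one gets from $x_3^+ - x_3^-$ need not persist after the shift. This is precisely why one wants the case division from Lemma \ref{lem:e2}: the shift only occurs when $|x_3|\gtrsim L$, which forces $|u_{\pm}|>\tfrac{L}{4}$, and in that regime the inequality is won by the sheer size of $L$ relative to $t\leq\log L$ rather than by the propagation estimate. So the proof is genuinely two-regime, and the only care needed is to confirm that the two regimes from Lemma \ref{lem:e2} jointly exhaust all $(t,x)\in[0,T^*]\times Q_L$ and that in the overlap either argument applies — both of which are immediate from the statement of that lemma.
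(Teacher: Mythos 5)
Your overall strategy is exactly the paper's: drop the $|x_1^{\pm}|^2+|x_2^{\pm}|^2$ terms to reduce to the $u_{\pm}$ factors, split into the wrap-around regime $|u_{\pm}|>L/4$ (where the bound is trivial because $t\leq\log L\ll L$) and the interior regime $u_{\pm}=x_3^{\pm}$, and in the latter use the counter-propagation of the optical functions to force $\max(|u_+|,|u_-|)\gtrsim t$. The case exhaustion via Lemma \ref{lem:e2} is handled correctly.

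There is, however, one step that fails as written. From \eqref{eq:e9} and its $+$-counterpart you only get $u_--u_+\geq \tfrac12 t$, hence $\max(|u_+|,|u_-|)\geq \tfrac14 t$, and your claim that $\bigl(R^2+\tfrac{t^2}{16}\bigr)^{1/2}\geq \tfrac12\bigl(R^2+t^2\bigr)^{1/2}$ ("absorbing the constant $\tfrac14$ into the factor $\tfrac12$") is false once $t>2R$: squaring, it requires $\tfrac{3R^2}{4}\geq\tfrac{3t^2}{16}$, i.e.\ $t\leq 2R=200$, whereas $t$ ranges up to $\log L\geq 1/\mu$. With your separation you can only conclude the constant $\tfrac{R}{4}$ (since $R^2+\tfrac{t^2}{16}\geq\tfrac{1}{16}(R^2+t^2)$ always holds). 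The paper avoids this by not routing through \eqref{eq:e9}: it uses the exact Lagrangian identity $x_3^{\pm}(t,x)=x_3\mp t-\int_0^t z_{\pm}^3\,d\tau$, in which only $z_{\pm}^3$ (bounded by $\tfrac12$ via \eqref{eq:a2}) appears — no $\nabla x_3^{\pm}$ factor — giving $|(u_--u_+)-2t|\leq t$, hence $|u_--u_+|\geq t$ and $\max(|u_+|,|u_-|)\geq\tfrac t2$, which yields $\tfrac R2$ cleanly. Either adopt that sharper separation or settle for $\tfrac R4$ (harmless for the applications, but not the stated constant). A second, smaller point: the logarithmic line does not follow "by applying $\log$ to the multiplicative bound" — $\log$ of a product is a sum, not a product, of logs, and the product bound does not give you a lower bound on the individual factor $(R^2+|u_-|^2)^{1/2}$. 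You must argue factor-by-factor from $|u_+|\geq R^{0}$ (trivially $\log\langle w_+\rangle\geq\log R$) and from the pointwise lower bound $|u_-|\gtrsim t$ on whichever factor is large — which you essentially do, but it again inherits the $t/4$ versus $t/2$ issue.
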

\begin{proof}
By virtue of $\psi_\pm$ (defined in \eqref{eq:e2}), we solve $x_{3}^{\pm}$ from $L_\pm x_{3}^{\pm}=0$ as follows
\begin{equation*}
x_{3}^{\pm}\big(t, \psi_{\pm}(t,y)\big)=y_{3}= \psi_{\pm}^{3}(t,y)\mp t- \int_{0}^{t} z_{\pm}^{3}\big(\tau, \psi_{\pm}(\tau,y)\big) d\tau.
\end{equation*}
For any $(t,x)\in [0,T^*] \times Q_L$,  using $x=\psi_{\pm}(t,y)$ with $y\in \mathbb{R}^3$,  we get
\begin{equation*}
x_{3}^{\pm}(t,x)=x_{3}\mp t- \int_{0}^{t} z_{\pm}^{3}\big(\tau, \psi_{\pm}\big(\tau, \psi_{\pm}^{-1}(t,x)\big)\big) d\tau.
\end{equation*}

According to Lemma \ref{lem:e2}, if $\frac{L}{3}\leq |x_3|\leq L$, then $|u_{\pm}(t,x)|>\frac{L}{4}$ and \eqref{eq:e13} is obviously valid; if $|x_3|\leq \frac{L}{3}$, then $u_{\pm}(t,x)=x_{3}^{\pm}(t,x)$, thereby 
\begin{equation*}
\left|(u_{-}-u_{+})-2t\right|\leq \int_{0}^{t} \big(\left\|z_{+}^{3}\right\|_{L^{\infty}_x}+ \left\|z_{-}^{3}\right\|_{L^{\infty}_x}\big)\, d\tau\stackrel{\|z_\pm\|_{L^\infty_x}\leq\f12}{\leq} t,
\end{equation*}
which gives rise to
\begin{equation*}
t\leq \left|u_{-}-u_{+}\right|\leq 3t.
\end{equation*}
This implies that either $|u_{+}|\geq \frac{t}{2}$ or $|u_{-}|\geq \frac{t}{2}$, then \eqref{eq:e13} also holds for $|x_3|\leq \frac{L}{3}$. The lemma is proved.
\end{proof}

\subsubsection{Technical lemmas involving characteristic hypersurfaces.}
We now present a lemma to control the normal derivatives of the characteristic hypersurfaces $\overline{C}_{a}^\pm$ and a trace theorem for function restrictions to these hypersurfaces.
\begin{lemma} \label{lem:e5}
Under assumptions \eqref{eq:a1} and \eqref{eq:a2}, for all $a\in \mathbb{R}$, we have
\begin{equation}
\frac{7}{16} \leq \langle L_{-} , \nu_{+}\rangle\big{|}_{\overline{C}_{a}^{+}} \leq 4 , \quad \frac{7}{16} \leq \langle L_{+} , \nu_{-}\rangle\big{|}_{\overline{C}_{a}^{-}} \leq 4, \label{eq:e15}
\end{equation}
where $\nu_{\pm}$ is the unit normal vector field of $\overline{C}_{a}^{\pm}$.
\end{lemma}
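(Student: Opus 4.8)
The plan is to compute $\langle L_-,\nu_+\rangle$ explicitly from the eikonal equation $L_+x_3^+=0$ and then read off the two-sided bound from the geometric ansatz \eqref{eq:a1} and the amplitude bound \eqref{eq:a2}; the estimate for $\langle L_+,\nu_-\rangle$ will follow by interchanging $+$ and $-$. Working in $\mathbb{R}^4=\mathbb{R}_t\times\mathbb{R}^3_x$ with its Euclidean structure, $\overline{C}_a^+$ is the level set $\{x_3^+=a\}$, so up to sign its unit normal is
\[
\nu_+=-\frac{(\partial_t x_3^+,\nabla x_3^+)}{\sqrt{(\partial_t x_3^+)^2+|\nabla x_3^+|^2}} ,
\]
the sign being the one consistent with the orientation used in the divergence/energy identities so that the quantity in \eqref{eq:e15} is positive, while $L_-=\partial_t+Z_-\cdot\nabla$ is the spacetime vector $(1,Z_-)$. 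Therefore
\[
\langle L_-,\nu_+\rangle\big|_{\overline{C}_a^+}=-\frac{\partial_t x_3^++Z_-\cdot\nabla x_3^+}{\sqrt{(\partial_t x_3^+)^2+|\nabla x_3^+|^2}} .
\]
Since $\partial_t x_3^++Z_+\cdot\nabla x_3^+=L_+x_3^+=0$ and $Z_--Z_+=z_--z_+-2B_0$ with $B_0=(0,0,1)$, the numerator equals $-(Z_--Z_+)\cdot\nabla x_3^+=2\,\partial_3 x_3^+-(z_--z_+)\cdot\nabla x_3^+$, i.e. a leading term $2\,\partial_3 x_3^+\approx 2$ coming purely from the background field plus a lower-order correction.

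Next I would insert the pointwise bounds. The ansatz \eqref{eq:a1} gives $|\partial_j x_3^+-\delta_{j3}|\le\frac1{100}$ for $j=1,2,3$, hence $|\nabla x_3^+|\in[\frac{99}{100},\frac{102}{100}]$ and $2\,\partial_3 x_3^+\in[\frac{99}{50},\frac{101}{50}]$, while \eqref{eq:a2} yields $|(z_--z_+)\cdot\nabla x_3^+|\le(\|z_+\|_{L^\infty}+\|z_-\|_{L^\infty})\,|\nabla x_3^+|\le\frac{102}{100}$; altogether the numerator lies in $[\frac{24}{25},\frac{76}{25}]$. For the denominator I would use the eikonal equation once more, written as $\partial_t x_3^+=-\partial_3 x_3^+-z_+\cdot\nabla x_3^+$, so that $|\partial_t x_3^++\partial_3 x_3^+|\le\tfrac12|\nabla x_3^+|\le\frac{51}{100}$, hence $|\partial_t x_3^+|\in[\frac{48}{100},\frac{152}{100}]$ and $\sqrt{(\partial_t x_3^+)^2+|\nabla x_3^+|^2}\in[\frac{11}{10},\frac{37}{20}]$. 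Dividing, $\langle L_-,\nu_+\rangle\big|_{\overline{C}_a^+}$ lies in an interval (numerically roughly $[0.52,\,2.77]$) strictly inside $[\frac7{16},4]$, which is the first inequality.

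The bound for $\langle L_+,\nu_-\rangle$ on $\overline{C}_a^-$ is obtained by the same computation with $+$ and $-$ exchanged: one uses $L_-x_3^-=0$ to write the numerator $\partial_t x_3^-+Z_+\cdot\nabla x_3^-=(Z_+-Z_-)\cdot\nabla x_3^-=2\,\partial_3 x_3^-+(z_+-z_-)\cdot\nabla x_3^-$, and the eikonal equation now reads $\partial_t x_3^-=\partial_3 x_3^--z_-\cdot\nabla x_3^-$; the numerical intervals are identical. The one point that requires care — and the reason the argument closes — is that \eqref{eq:a1} controls only the \emph{spatial} Jacobian $\partial x_i^\pm/\partial x_j$, so $\partial_t x_3^\pm$ is not bounded by hypothesis and must be recovered from the transport equation $L_\pm x_3^\pm=0$; this is exactly what forces $\partial_t x_3^\pm\approx\mp1$, i.e. the uniform transversality of $L_\mp$ to $\overline{C}_a^\pm$. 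Everything else is bookkeeping of the explicit constants, and the margins above show that no sharper form of \eqref{eq:a1}–\eqref{eq:a2} is needed.
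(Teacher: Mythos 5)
Your computation is correct, and it is exactly the standard direct argument that the paper itself omits by citing Lemma 2.5 of \cite{He-Xu-Yu}: express $\nu_{+}$ via the spacetime gradient of $x_{3}^{+}$, use $L_{+}x_{3}^{+}=0$ to reduce the numerator to $(Z_{+}-Z_{-})\cdot\nabla x_{3}^{+}=2\partial_{3}x_{3}^{+}-(z_{-}-z_{+})\cdot\nabla x_{3}^{+}$ and to recover $\partial_{t}x_{3}^{+}$ (which, as you rightly stress, is not controlled by \eqref{eq:a1} directly), and then insert \eqref{eq:a1}--\eqref{eq:a2}. Your numerical intervals are accurate and land strictly inside $[\tfrac{7}{16},4]$, so the proof closes with room to spare.
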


\begin{lemma}[Trace] \label{lem:e6}
Under assumptions \eqref{eq:a1} and \eqref{eq:a2}, for all $f(t, x)\in L^{2}\big([0, T^{*}]; H^{1}(Q_{L})\big)$, the restriction of $f$ to $\overline{C}_{a}^{\pm}$ belongs to $L^{2}\big(\overline{C}_{a}^{\pm}\big)$. In fact, we have
\begin{equation}
\|f\|_{L^{2}(\overline{C}_{a}^{\pm})} \lesssim \|f\|_{L^{2}\big([0, T^{*}]; H^{1}(Q_{L})\big)} , \label{eq:e16}
\end{equation}
and
\begin{equation}
d\sigma_{\pm}=\big(\sqrt{2}+O(\varepsilon)\big) dx_{1} dx_{2} dt, \label{eq:e17}
\end{equation}
where $d\sigma_\pm$ is the surface measure on $\overline{C}_{a}^{\pm}$.
\end{lemma}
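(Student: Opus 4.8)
The plan is to realize each characteristic hypersurface $\overline{C}_a^{\pm}$ as a graph over a base region in the $(t,x_1,x_2)$-variables and then reduce everything to a one-dimensional trace inequality in the $x_3$-direction. First I would observe that, by the geometric ansatz \eqref{eq:a1}, the map $x_3\mapsto x_{3}^{\pm}(t,x_1,x_2,x_3)$ is strictly increasing with $\partial_{x_3}x_{3}^{\pm}\geq 1-2C_{0}\varepsilon\geq \tfrac{99}{100}$; hence for each fixed $(t,x_1,x_2)$ the equation $x_{3}^{\pm}(t,x_1,x_2,x_3)=a$ has at most one root $x_3=g_a^{\pm}(t,x_1,x_2)\in[-L,L]$, and the implicit function theorem exhibits $g_a^{\pm}$ as a $C^1$ function on the (relatively open) base set
\[
\Omega_a^{\pm}=\big\{(t,x_1,x_2)\in[0,T^*]\times[-L,L]^2:\ x_{3}^{\pm}(t,x_1,x_2,-L)\leq a\leq x_{3}^{\pm}(t,x_1,x_2,L)\big\}.
\]
Thus $\overline{C}_a^{\pm}$ is exactly the graph $\{(t,x_1,x_2,g_a^{\pm}(t,x_1,x_2)):(t,x_1,x_2)\in\Omega_a^{\pm}\}$, a description compatible with the periodic identifications of $Q_L$ since $x_{3}^{\pm}$ is $2L$-periodic in $x_1,x_2$ by \eqref{eq:g16}.

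Next I would compute the induced surface measure. Differentiating $x_{3}^{\pm}(t,x_1,x_2,g_a^{\pm})=a$ gives $\partial_{t}g_a^{\pm}=-\partial_{t}x_{3}^{\pm}/\partial_{x_3}x_{3}^{\pm}$ and $\partial_{x_i}g_a^{\pm}=-\partial_{x_i}x_{3}^{\pm}/\partial_{x_3}x_{3}^{\pm}$ for $i=1,2$; by \eqref{eq:a1} the latter two are $O(\varepsilon)$ and $\partial_{x_3}x_{3}^{\pm}=1+O(\varepsilon)$. For the time derivative I would use $L_{\pm}x_{3}^{\pm}=0$, i.e. $\partial_{t}x_{3}^{\pm}=-Z_{\pm}\cdot\nabla x_{3}^{\pm}=\mp\,\partial_{x_3}x_{3}^{\pm}-z_{\pm}\cdot\nabla x_{3}^{\pm}$, and then invoke the pointwise smallness $\|z_{\pm}\|_{L^\infty}\lesssim\varepsilon$ (which follows from \eqref{eq:a2} together with the weighted Sobolev inequality of Lemma \ref{lem:e3} and the energy ansatz \eqref{eq:a3}) to conclude $\partial_{t}x_{3}^{\pm}=\mp 1+O(\varepsilon)$, hence $\partial_{t}g_a^{\pm}=\pm 1+O(\varepsilon)$. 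Since the Riemannian surface element of a graph $x_3=g_a^{\pm}(t,x_1,x_2)$ in $\mathbb{R}^4_{t,x}$ is $d\sigma_{\pm}=\sqrt{1+(\partial_{t}g_a^{\pm})^{2}+(\partial_{x_1}g_a^{\pm})^{2}+(\partial_{x_2}g_a^{\pm})^{2}}\,dt\,dx_1\,dx_2$, this yields $d\sigma_{\pm}=(\sqrt{2}+O(\varepsilon))\,dx_1\,dx_2\,dt$, which is \eqref{eq:e17}; in any case this Jacobian weight is bounded above and below by universal constants already under \eqref{eq:a1}--\eqref{eq:a2}.

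Finally I would establish \eqref{eq:e16} by a fibered one-dimensional estimate. For $f\in L^2([0,T^*];H^1(Q_L))$ and a.e. fixed $(t,x_1,x_2)$, the slice $f(t,x_1,x_2,\cdot)$ lies in $H^1([-L,L])$, so for any $s_0\in[-L,L]$ one has $|f(t,x_1,x_2,g_a^{\pm})|^2=|f(t,x_1,x_2,s_0)|^2+\int_{s_0}^{g_a^{\pm}}\partial_{x_3}|f|^2\,ds$; averaging in $s_0$ over $[-L,L]$ yields
\[
|f(t,x_1,x_2,g_a^{\pm})|^2\ \leq\ \frac{1}{2L}\int_{-L}^{L}|f|^2\,ds\ +\ 2\int_{-L}^{L}|f|\,|\partial_{x_3}f|\,ds .
\]
Multiplying by the bounded Jacobian weight and integrating over $\Omega_a^{\pm}\subset[0,T^*]\times[-L,L]^2$, then using Cauchy--Schwarz on the second term and $L\geq e^{1/\mu}>1$ on the first, I obtain
\[
\|f\|_{L^2(\overline{C}_a^{\pm})}^{2}\ \lesssim\ \int_{0}^{T^*}\!\!\int_{Q_L}\Big(\tfrac1{2L}|f|^2+2|f|\,|\nabla f|\Big)\,dx\,dt\ \lesssim\ \|f\|_{L^2([0,T^*];L^2(Q_L))}^{2}+\|f\|_{L^2([0,T^*];L^2(Q_L))}\|\nabla f\|_{L^2([0,T^*];L^2(Q_L))},
\]
which is controlled by $\|f\|_{L^2([0,T^*];H^1(Q_L))}^{2}$. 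No deep difficulty arises here; the points that require care are the single-valuedness of the graph $\overline{C}_a^{\pm}$ over a well-defined base domain in the presence of the $2L$-periodic extension and of the time-cap $[0,T^*]$, and the observation that the sharp $O(\varepsilon)$ remainder in \eqref{eq:e17} genuinely needs the refined bound $\|z_{\pm}\|_{L^\infty}\lesssim\varepsilon$ (the ansatz \eqref{eq:a1}--\eqref{eq:a2} alone yields only two-sided bounds for $d\sigma_{\pm}$, which is already enough for \eqref{eq:e16}).
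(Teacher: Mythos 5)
Your proof is correct and follows essentially the same route as the argument the paper delegates to Lemmas 2.5 and 2.9 of \cite{He-Xu-Yu}: writing $\overline{C}_{a}^{\pm}$ as a graph $x_3=g_a^{\pm}(t,x_1,x_2)$ via the monotonicity $\partial_{x_3}x_3^{\pm}\geq 1-2C_0\varepsilon$, computing the Jacobian from $L_{\pm}x_3^{\pm}=0$, and reducing \eqref{eq:e16} to a fibered one-dimensional trace inequality in $x_3$. Your remark that the sharp $\sqrt{2}+O(\varepsilon)$ constant in \eqref{eq:e17} uses the refined bound $\|z_{\pm}\|_{L^{\infty}}\lesssim\varepsilon$ from \eqref{eq:a3} (not merely \eqref{eq:a2}), while two-sided bounds suffice for \eqref{eq:e16}, is accurate and consistent with how the lemma is used under the full bootstrap ansatz.
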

The detailed proofs of Lemmas \ref{lem:e5} and \ref{lem:e6} are similar to those of Lemmas 2.5 and 2.9 in \cite{He-Xu-Yu}. 
\begin{remark}
For all $(t,x)\in [0,T^*]\times Q_L$ satisfying $-\frac{L}{4}\leq u_{\pm}(t,x)\leq \frac{L}{4}$, Lemma \ref{lem:e2}(1) yields  $|x_3|< \frac{L}{3}$ and consequently $u_{\pm}(t,x)=x_{3}^{\pm}(t,x)$. This implies that
\beno
C_{u_\pm}^{\pm}=\overline{C}_{u_\pm}^{\pm}, \quad\text{for}\,\, -\frac{L}{4}\leq u_{\pm}\leq \frac{L}{4}.
\eeno
As a result, the energy fluxes of $z_{\pm}$ can be equivalently defined as 
\begin{equation}
\begin{aligned}
F_{\pm}(t) & = \sup_{|u_{\pm}|\leq \frac{L}{4}}\int_{C_{t,u_{\pm}}^{\pm}} \left(\log \langle w_{\mp}\rangle\right)^{4}\left|z_{\pm}\right|^{2} d\sigma_{\pm},\\
F_{\pm}^{(0)}(t) & = \sup_{|u_{\pm}|\leq \frac{L}{4}}\int_{C_{t,u_{\pm}}^{\pm}} \langle w_{\mp}\rangle^{2}\left(\log \langle w_{\mp}\rangle\right)^{4}\left|\nabla z_{\pm}\right|^{2} d\sigma_{\pm},\\
F_{\pm}^{(\alpha)}(t) & = \sup_{|u_{\pm}|\leq \frac{L}{4}}\int_{C_{t,u_{\pm}}^{\pm}} \langle w_{\mp}\rangle^{2}\left(\log \langle w_{\mp}\rangle\right)^{4}\bigl|j_{\pm}^{(\alpha)}\bigr|^{2} d\sigma_{\pm}, \ \text{for }1\leq |\alpha| \leq N_*.
\end{aligned} \label{eq:e18}
\end{equation}
Moreover, Lemma \ref{lem:e6} shows that for $-\frac{L}{4}\leq u_{\pm}\leq \frac{L}{4}$, there holds
\begin{equation}\label{eq:e19}
\|f\|_{L^{2}(C_{u_\pm}^{\pm})} \lesssim \|f\|_{L^2([0, T^*]; H^{1}(Q_{L}))} . 
\end{equation}
\end{remark}

\subsubsection{Estimates of the weights involving boundaries.} Since the weight functions are piecewise smooth and non-periodic in the $(x_1,x_2)$-directions, integration by parts in the energy estimates yields boundary terms. To control these boundary contributions, we establish estimates for certain difference terms of the weight functions at the boundaries. The weight functions for $z_\pm$ on $[0,T^*]\times Q_L$ are constructed as follows:
\beq\label{weight:w1}
\lambda_\pm(t,x):=\Lambda(\langle w_{\mp}\rangle),\quad\text{with}\quad \Lambda(r)=(\log r)^2,\,\,(\log r)^4,\,\,  r^2(\log r)^4\quad  \text{or}\quad r(\log r)^4,\quad\forall\,r>0.
\eeq
The property \eqref{prop for u pm} of $u_\pm$ together with the periodicity property \eqref{eq:g16} shows that
\begin{equation}\label{eq:l5}
\lambda_{+}(t, x_{1}, x_{2}, -L)=\lambda_{+}(t, x_{1}, x_{2}, L), \quad \lambda_{-}(t, x_{1}, x_{2}, -L)=\lambda_{-}(t, x_{1}, x_{2}, L). 
\end{equation}

We introduce the smooth positive functions on $\left[0, T^{*}\right] \times \mathbb{R}^{3}$ as follows:
\beno
\lambda_{\pm}^{1}(t,x)=\Lambda(\langle\overline{w_\mp}\rangle)(t,x),
\quad \lambda_{\pm}^{2}(t, x)=\Lambda(\langle\overline{w_\mp}\rangle)(t,x\mp 2Le_3).
\eeno 
The definitions of $u_\pm$ yield that 
\begin{equation}\label{eq:l2}
\lambda_{+}(t,x)=
\begin{cases}
 \Lambda(\langle\overline{w_-}\rangle)(t,x)=\lambda_{+}^{1}(t,x), \quad \text{if }\,x_{3}^{-}(t,x)< L,\\
 \Lambda(\langle\overline{w_-}\rangle)(t,x-2Le_3)=\lambda_{+}^{2}(t,x), \quad \text{if }\,x_{3}^{-}(t,x)\geq L,
\end{cases} 
\end{equation}
\begin{equation}\label{eq:l3}
\lambda_{-}(t,x)=
\begin{cases}
 \Lambda(\langle\overline{w_+}\rangle)(t,x)=\lambda_{-}^{1}(t,x), \quad \text{if }\,x_{3}^{+}(t,x)> -L,\\
 \Lambda(\langle\overline{w_+}\rangle)(t,x+2Le_3)=\lambda_{-}^{2}(t,x), \quad \text{if }\,x_{3}^{+}(t,x)\leq -L.
\end{cases} 
\end{equation}
The definitions of $x^\pm(t,x)$ and Lemma \ref{lem:e1} yields that for $i=1,2$,
\begin{equation}\label{eq:l4}
L_{-} \lambda_{+}^{i}(t,x)=0, \quad L_{+} \lambda_{-}^{i}(t,x)=0,\quad\forall(t,x)\in[0,T^*]\times\R^3,  
\end{equation}
\begin{equation}\label{eq:l6}
 \left|\nabla \lambda_\pm^{i}(t, x)\right|\lesssim \left|\lambda_\pm^{i}(t, x)\right| \quad\text{and}\quad   \lambda_\pm(t, x)\sim \lambda_\pm^{1}(t, x),\quad\forall (t, x)\in \left[0, T^{*}\right] \times Q_{L}.
\end{equation}

With above notations and properties, we obtain the following estimates:

\begin{lemma}\label{prop:b1}
Under the assumptions \eqref{eq:a1}--\eqref{eq:a3}, we have for all $t\in[0,T^*]$,
\begin{equation}\label{eq:b1}
\begin{aligned}
&\Bigl\|\frac{\lambda_{\pm}|_{x_1=L}-\lambda_{\pm}|_{x_1=-L}}{\lambda_{\pm}|_{x_1=L}}\Bigr\|_{L^\infty_{(t, x_{2}, x_{3})}}
+\Bigl\|\frac{\lambda_{\pm}|_{x_2=L}-\lambda_{\pm}|_{x_2=-L}}{\lambda_{\pm}|_{x_2=L}}\Bigr\|_{L^\infty_{(t, x_{1}, x_{3})}}+ \delta_{\pm}(\lambda_{\pm}^{1}-\lambda_{\pm}^{2})\lesssim F(L)\cdot\e,\\
&\qquad\text{with}\quad F(L):=\left\{\begin{aligned}
&\frac{1}{L\left(\log L\right)^2},\quad\text{if}\quad\lambda_\pm(t,x)= \left(\log \langle w_\mp\rangle\right)^2\,\,\text{or}\,\,(\log \langle w_{\mp}\rangle)^4,\\ 
& \frac{1}{L\log L},\quad\text{if}\quad\lambda_\pm(t,x)=\langle w_{\mp}\rangle^2(\log \langle w_{\mp}\rangle)^4\,\,\text{or}\,\, \langle w_{\mp}\rangle(\log \langle w_{\mp}\rangle)^4,
\end{aligned}\right.
\end{aligned} 
\end{equation}
where $\delta_\pm(\lambda_\pm^{1}-\lambda_\pm^{2})$ are defined by
\begin{equation}\label{eq:l8}
\delta_{+}(\lambda_{+}^{1}-\lambda_{+}^{2})=\Bigl\|\frac{\lambda_{+}^{1}-\lambda_{+}^{2}}{\lambda_{+}^{2}}\Bigr\|_{L^{\infty}(\overline{C}_{L}^{-})}, \quad \delta_{-}(\lambda_{-}^{1}-\lambda_{-}^{2})=\Bigl\|\frac{\lambda_{-}^{1}-\lambda_{-}^{2}}{\lambda_{-}^{2}}\Bigr\|_{L^{\infty}(\overline{C}_{-L}^{+})}.
\end{equation} 
\end{lemma}
The proof of Lemma \ref{prop:b1} relies on Lemma \ref{lem:e1}, Lemma \ref{lem:e2}, and the properties of $\lambda$ stated above. We postpone the proof to the Appendix.

\smallskip

We also need to handle the boundary terms arising from the weight differences in the derivation of the weighted div-curl lemma. The corresponding weights on $[0,T^*]\times Q_L$ are defined by
\beq\label{weight:w2}
\lambda(t,x):=\Phi(\langle w_+\rangle,\langle w_-\rangle)\quad
\text{with}\quad \Phi(r_+,r_-)=\f{r_-^2(\log r_-)^4}{r_+(\log r_+)^2},\,\,  \f{r_+^2(\log r_+)^4}{r_-(\log r_-)^2},\quad\forall\,r_\pm>0.
\eeq
The property \eqref{prop for u pm} of $u_\pm$ combined with the periodicity property \eqref{eq:g16} implies that
\beq\label{eq:dc 1}
\lambda(t,x_1,x_2,L)=\lambda(t,x_1,x_2,-L).
\eeq

We also introduce the smooth positive functions on $\left[0, T^{*}\right] \times \mathbb{R}^{3}$ as follows:
\beq\label{eq:dc 2}
\begin{aligned}
&\lambda_{1}(t,x):=\Phi(\langle\overline{w_+}\rangle(t,x),\langle\overline{w_-}\rangle(t,x)),\\
&\lambda_{2}(t,x):=\Phi(\langle\overline{w_+}\rangle(t,x),\langle\overline{w_-}\rangle(t,x-2Le_3)),\\
&\lambda_{3}(t,x):=\Phi(\langle\overline{w_+}\rangle(t,x+2Le_3),\langle\overline{w_-}\rangle(t,x)).
\end{aligned}
\eeq 
The defintions of $u_\pm$ imply that for all $(t,x)\in[0,T^*]\times Q_L$,
\beq\label{eq:dc 2a} 
\lambda(t,x)=
\left\{\begin{aligned}
& \lambda_{1}(t,x), \quad \text{if }x_{3}^{+}(t,x)>-L \text{ and } x_{3}^{-}(t,x)< L,\\
& \lambda_{2}(t,x), \quad \text{if }\,x_{3}^{-}(t,x)\geq L,\\
& \lambda_{3}(t,x), \quad \text{if }\,x_{3}^{+}(t,x)\leq -L,
\end{aligned}\right.
\eeq
which together with Lemmas \ref{lem:e1} and \ref{lem:e2} shows that for all $(t,x)\in[0,T^*]\times Q_L$,
\beq\label{eq:dc 3}
\left|\nabla \lambda_{i}(t, x)\right|\lesssim\left|\lambda_{i}(t, x)\right|\,\, \text{for}\,\,  i=1,2,3,\quad\text{and}\quad  \lambda(t, x)\sim\lambda_{1}(t, x).
\eeq 

\begin{lemma}\label{prop:div-curl-apply}
Let $\lambda(t,x)=\f{\langle w_-\rangle^2(\log \langle w_-\rangle)^4}{\langle w_+\rangle(\log \langle w_+\rangle)^2}$ or $\f{\langle w_+\rangle^2(\log \langle w_+\rangle)^4}{\langle w_-\rangle(\log \langle w_-\rangle)^2}$. Then under the assumptions \eqref{eq:a1}--\eqref{eq:a3},  we have for any $t\in[0,T^*]$,
\begin{equation}\label{eq:div-curl *}
\begin{aligned}
& \Bigl\|\frac{\lambda|_{x_1=L}-\lambda|_{x_1=-L}}{\lambda|_{x_1=L}}\Bigr\|_{L^\infty_{(t, x_{2}, x_{3})}} + \Bigl\|\frac{\lambda|_{x_2=L}-\lambda|_{x_2=-L}}{\lambda|_{x_2=L}}\Bigr\|_{L^\infty_{(t, x_{1}, x_{3})}} \\
& \quad\ +\Bigl\|\frac{\lambda_{1}-\lambda_{2}}{\lambda_{2}}\Bigr\|_{L^{\infty}(\overline{C}_{L}^{-})}+ \Bigl\|\frac{\lambda_{1}-\lambda_{3}}{\lambda_{3}}\Bigr\|_{L^{\infty}(\overline{C}_{-L}^{+})}\lesssim\frac{\varepsilon}{L\log L} ,
\end{aligned} 
\end{equation}
where $\lambda_1(t,x),\,\lambda_2(t,x),\,\lambda_3(t,x)$ are defined in \eqref{eq:dc 2}.
\end{lemma}

Lemma \ref{prop:div-curl-apply} is proved in a similar way to Lemma \ref{prop:b1}. We also postpone the proof to the Appendix.

\subsubsection{Div-curl Lemma} To derive the higher-order energy estimates, we need a weighted div-curl lemma.

\begin{lemma}[div-curl lemma]\label{lem:div-cur}
 Let $\lambda(t,x)$ be a weight function defined in either \eqref{weight:w1} or \eqref{weight:w2}. Then under the assumptions \eqref{eq:a1}--\eqref{eq:a3}, for all vector fields $\vv v(t, x)\in L^{\infty}\big([0, T^{*}]; H^{1}(Q_{L})\big)$ that are $2L$-periodic in each spatial direction and satisfy
\begin{equation*}
\f{|\na\lambda|}{\sqrt{\lambda}}\vv v\in L^{2}\left(\Sigma_{t}\right) ,\quad\sqrt{\lambda} \nabla \vv v\in L^{2}\left(\Sigma_{t}\right), 
\end{equation*}
we have for all $t \in [0,T^*]$,
\begin{equation}\label{eq:div-curl lemma}
\begin{aligned}
\|\sqrt{\lambda} \nabla\vv v\|_{L^{2}(\Sigma_{t})}^{2} &\lesssim \|\sqrt{\lambda} \operatorname{div}\vv v\|_{L^{2}(\Sigma_{t})}^{2}+ \|\sqrt{\lambda} \operatorname{curl}\vv v\|_{L^{2}(\Sigma_{t})}^{2}
+ \Bigl\|\f{|\na\lambda|}{\sqrt{\lambda}} \vv v\Bigr\|_{L^{2}(\Sigma_{t})}^{2}\\ 
&\qquad
+F(L)\e \bigl(\|\sqrt{\lambda} \vv v\|_{L^{2}(\Sigma_{t})}^{2}+\|\sqrt{\lambda} \nabla^2 \vv v\|_{L^2(\Sigma_t)}^{2}\bigr),
\end{aligned} 
\end{equation}
where $F(L)$ is given by
\beq\label{eq:def of F}
F(L):=\left\{\begin{aligned}
&\frac{1}{L\left(\log L\right)^2},\quad\text{for}\quad\lambda_\pm= \left(\log \langle w_\mp\rangle\right)^2\,\,\text{or}\,\,(\log \langle w_{\mp}\rangle)^4,\\ 
& \frac{1}{L\log L},\quad\text{for the other cases}.
\end{aligned}\right.
\eeq
Here we remark that both $\lambda$ and  $\na\lambda$ are piecewise smooth on $[0,T^*]\times Q_L$. 
\end{lemma}
\begin{proof}
We derive \eqref{eq:div-curl lemma} only for the case where $\lambda(t,x)=\f{\langle w_-\rangle^2(\log \langle w_-\rangle)^4}{\langle w_+\rangle(\log \langle w_+\rangle)^2}$. For $\lambda(t,x)$ defined by either \eqref{weight:w1} or \eqref{weight:w2}, the proof of \eqref{eq:div-curl lemma} follows analogously.
For $\lambda(t,x)=\f{\langle w_-\rangle^2(\log \langle w_-\rangle)^4}{\langle w_+\rangle(\log \langle w_+\rangle)^2}$, it satisfies the properties \eqref{eq:dc 1}, \eqref{eq:dc 2}, \eqref{eq:dc 2a} and \eqref{eq:dc 3}. We divide the proof into several steps.

{\bf Step 1. Estimate of $\sqrt\lambda\na\vv v$.} For any $2L$-periodic vector field $\vv v\in L^{\infty}\big([0, T^{*}]; H^{1}(Q_{L})\big)$, there holds
\begin{equation}\label{eq:div-curl 2}
-\Delta\vv v =\operatorname{curl} \operatorname{curl}\vv v-\nabla \operatorname{div} \vv v. 
\end{equation}
Direct calculations shows that
\beno
\begin{aligned}
-\Delta\vv v\cdot\lambda\vv v&=-\sum_{i=1}^3\p_i(\p_i\vv v\cdot\lambda\vv v)
+\lambda|\na\vv v|^2+(\na\lambda\cdot\na)\vv v\cdot\vv v,\\ 
\operatorname{curl} \operatorname{curl}\vv v \cdot \lambda\vv v 
& =\operatorname{div}\big(\operatorname{curl} \vv v \times (\lambda \vv v)\big)+ \lambda|\operatorname{curl} \vv v|^{2}+ \operatorname{curl}\vv v\cdot(\nabla \lambda \times \vv v),\\
\nabla \operatorname{div}\vv v\cdot \lambda\vv v
& =\operatorname{div}(\operatorname{div}\vv v \,\lambda \vv v)-\lambda|\operatorname{div}\vv v|^{2}-\nabla \lambda\cdot\vv v\operatorname{div}\vv v.
\end{aligned}
\eeno
from which and \eqref{eq:div-curl 2}, we deduce that 
\beq\label{eq:dc 4}
\int_{Q_L}\lambda|\na\vv v|^2dx=\int_{Q_L}\lambda|\dive\vv v|^2dx
+\int_{Q_L}\lambda|\curl\vv v|^2dx+\sum_{k=1}^4R_k,
\eeq
where
\beno\begin{aligned}
& R_1:=\int_{Q_L}\sum_{i=1}^3\p_i(\p_i\vv v\cdot\lambda\vv v)dx=
\f12 \int_{Q_L}\dive[\lambda\na(|\vv v|^2)]dx,
\\ 
&R_2:=\int_{Q_L}\operatorname{div}\big(\operatorname{curl} \vv v \times (\lambda \vv v)\big)dx,\quad  R_3:=-\int_{Q_L}\operatorname{div}(\operatorname{div}\vv v \,\lambda \vv v)dx,\\ 
& R_4:=\int_{Q_L}\Bigl(-(\na\lambda\cdot\na)\vv v\cdot\vv v+\operatorname{curl}\vv v\cdot(\nabla \lambda \times \vv v)+\nabla \lambda\cdot\vv v\operatorname{div}\vv v\Bigr)dx.
\end{aligned}\eeno

\smallskip

{\bf Step 2. Estimates of $\{R_k\}_{k=1}^4$.} We proceed to estimate terms $\{R_k\}_{k=1}^4$ individually. 

{\it Step 2.1. Estimate of $R_4$.} It is easy to get 
\beq\label{eq:dc 5}
|R_4|\leq C\|\sqrt\lambda\na\vv v\|_{L^2(\Sigma_t)}\cdot\Bigl\|\f{|\na\lambda|}{\sqrt\lambda}\vv v\Bigr\|_{L^2(\Sigma_t)}\leq\f18\|\sqrt\lambda\na\vv v\|_{L^2(\Sigma_t)}^2+C\Bigl\|\f{|\na\lambda|}{\sqrt\lambda}\vv v\Bigr\|_{L^2(\Sigma_t)}^2.
\eeq

{\it Step 2.2. Estimate of $R_1$.} By \eqref{eq:dc 2} and \eqref{eq:dc 2a},  $\lambda(t,x)$ is piecewise smooth on $Q_L$ (or $\Sigma_t$), taking the form of smooth functions $\lambda_{1}$, $\lambda_{2}$, and $\lambda_{3}$ in the respective regions:
\beno 
\overline{\Sigma}_{t,+}^{>-L} \bigcap \overline{\Sigma}_{t,-}^{< L},\quad  \overline{\Sigma}_{t,-}^{\geq L} \quad\text{and}\quad \overline{\Sigma}_{t,+}^{\leq -L}.
\eeno 
Note that these three regions form  a complete partition of $Q_L$ with two interior interfaces:
\beno
\overline{S}_{t,L}^-=\p\bigl(\overline{\Sigma}_{t,+}^{>-L} \bigcap \overline{\Sigma}_{t,-}^{< L}\bigr)\cap\p\bigl(\overline{\Sigma}_{t,-}^{\geq L}\bigr),\quad 
\overline{S}_{t,-L}^+=\p\bigl(\overline{\Sigma}_{t,+}^{>-L} \bigcap \overline{\Sigma}_{t,-}^{< L}\bigr)\cap\p\bigl(\overline{\Sigma}_{t,+}^{\leq -L}\bigr).
\eeno 
 By applying the divergence theorem for piecewise smooth vector fields (Lemma \ref{lem:divergence}), we obtain
\beq\label{eq:dc 6}\begin{aligned}
R_1=&\f12 \int_{\p Q_L}\lambda\na(|\vv v|^2)\cdot\vv ndS
+\f12 \int_{\overline{S}_{t,L}^-}(\lambda_1-\lambda_2)\na(|\vv v|^2)\cdot\vv n_-dS_-\\ 
&\quad 
+\f12 \int_{\overline{S}_{t,-L}^+}(\lambda_1-\lambda_3)\na(|\vv v|^2)\cdot\vv n_+dS_+,
\end{aligned}\eeq
where $\vv n$ is the unit outward normal vector on $\p Q_L$, 
 $\vv n_-$ is the unit normal vector on $\overline{S}_{t,L}^-$ pointing from $\overline{\Sigma}_{t,+}^{>-L} \bigcap \overline{\Sigma}_{t,-}^{< L}$ to $\overline{\Sigma}_{t,-}^{\geq L}$, and $\vv n_+$ is the unit normal vector on $\overline{S}_{t,-L}^+$ pointing from $\overline{\Sigma}_{t,+}^{>-L} \bigcap \overline{\Sigma}_{t,-}^{< L}$ to $\overline{\Sigma}_{t,+}^{\leq -L}$.

\smallskip 

{\it (i). For the first term} on the right hand side(RHS) of \eqref{eq:dc 6}, using  $\lambda(t,x_1,x_2,L)=\lambda(t,x_1,x_2,-L)$ (see \eqref{eq:dc 1}), the periodic properties of $x^\pm_i$ in \eqref{eq:g16} and the periodic propeties of $\vv v$, we get
\beq\label{eq:dc 7}\begin{aligned}
\f12\Bigl|\int_{\p Q_L}\lambda\na(|\vv v|^2)\cdot\vv ndS\Bigr|&
=\Bigl|\sum_{i=1}^2\int_{\{x_i=L\}}\bigl(\lambda|_{x_i=L}-\lambda|_{x_i=-L}\bigr)\p_i\vv v\cdot\vv vd{\widehat{x_i}}\Bigr|\\
&\lesssim\sum_{i=1}^2\Bigl\|\f{\lambda|_{x_i=L}-\lambda|_{x_i=-L}}{\lambda|_{x_i=L}}\Bigr\|_{L^\infty_{(t,\widehat{x}_i)}}\cdot\int_{\{x_i=L\}} \bigl(\lambda|\na\vv v|^2+\lambda|\vv v|^2\bigr)d{\widehat{x_i}},
\end{aligned}\eeq
where  $\{x_i=L\}:=\{x\in Q_L\,|\, x_i=L\}$, 
$\widehat{x_1}:=(x_2,x_3)$ and $\widehat{x_2}:=(x_1,x_3)$. 

By virtue of \eqref{eq:dc 3}) and the trace theorem, we obtain
\beno\begin{aligned}
\int_{\{x_i=L\}} &\Bigl(\lambda|\na\vv v|^2+\lambda|\vv v|^2\Bigr)d{\widehat{x_i}}\lesssim\int_{\{x_i=L\}} \Bigl(\lambda_1|\na\vv v|^2+\lambda_1|\vv v|^2\Bigr)d{\widehat{x_i}}\\ 
&\lesssim\|\sqrt\lambda_1\na\vv v\|_{H^1(Q_L)}^2+\|\sqrt\lambda_1\vv v\|_{H^1(Q_L)}^2\\ 
&\lesssim\sum_{k=0}^2\|\sqrt\lambda_1\na^k\vv v\|_{L^2(\Sigma_t)}^2\lesssim\sum_{k=0}^2\|\sqrt\lambda\na^k\vv v\|_{L^2(\Sigma_t)}^2,
\end{aligned}\eeno
from which, \eqref{eq:div-curl *} (Lemma \ref{prop:div-curl-apply}) and \eqref{eq:dc 7}, we deduce that
\beq\label{eq:dc 8}
\f12\Bigl|\int_{\p Q_L}\lambda\na(|\vv v|^2)\cdot\vv ndS\Bigr|
\lesssim\f{\e}{L\log L}\cdot\sum_{k=0}^2\|\sqrt\lambda\na^k\vv v\|_{L^2(\Sigma_t)}^2.
\eeq

\smallskip

{\it (ii). For the second term} on the RHS  of \eqref{eq:dc 6}, using the fact that $\overline{S}_{t,L}^-=\overline{C}_L^-\cap\Sigma_t$, we have
\beq\label{eq:dc 9}
\f12\Bigl|\int_{\overline{S}_{t,L}^-}(\lambda_1-\lambda_2)\na(|\vv v|^2)\cdot\vv n_-dS_-\Bigr|\lesssim\bigl\|\f{\lambda_1-\lambda_2}{\lambda_2}\bigr\|_{L^\infty(\overline{C}_L^-)}\cdot\int_{\overline{S}_{t,L}^{-}} \left(\lambda_2 |\na\vv v|^2 + \lambda_2 |\vv v|^2 \right) dS_{-}.
\eeq
By virtue of the trace theorem and \eqref{eq:dc 3}), we get
\beno\begin{aligned}
\int_{\overline{S}_{t,L}^{-}} \left(\lambda_2 |\na\vv v|^2 + \lambda_2 |\vv v|^2 \right) dS_{-}
&\lesssim\|\sqrt\lambda_2\na\vv v\|_{H^1(\Sigma_t)}^2+\|\sqrt\lambda_2\vv v\|_{H^1(\Sigma_t)}^2\\ 
&\lesssim\sum_{k=0}^2\|\sqrt\lambda_2\na^k\vv v\|_{L^2(\Sigma_t)}^2
\lesssim\sum_{k=0}^2\|\sqrt\lambda\na^k\vv v\|_{L^2(\Sigma_t)}^2,
\end{aligned}\eeno 
from which, \eqref{eq:div-curl *} (Lemma \ref{prop:div-curl-apply}) and \eqref{eq:dc 9}, we derive that
\beq\label{eq:dc 10}
\f12\Bigl|\int_{\overline{S}_{t,L}^-}(\lambda_1-\lambda_2)\na(|\vv v|^2)\cdot\vv n_-dS_-\Bigr|\lesssim\f{\e}{L\log L}\cdot\sum_{k=0}^2\|\sqrt\lambda\na^k\vv v\|_{L^2(\Sigma_t)}^2.
\eeq 
The same estimate also holds for the last term on the RHS of \eqref{eq:dc 6}.

\smallskip 

Thanks to  \eqref{eq:dc 8} and \eqref{eq:dc 10}, we deduce from \eqref{eq:dc 6} that
\beq\label{eq:dc 11}
|R_1|\lesssim\f{\e}{L\log L}\cdot\sum_{k=0}^2\|\sqrt\lambda\na^k\vv v\|_{L^2(\Sigma_t)}^2.
\eeq 
The same estimates also hold for $R_2$ and $R_3$.

\smallskip 

{\it Step 2.3. Estimates of $\{R_k\}_{k=1}^4$.} Due to the results in Step 2.1 and Step 2.2, we get
\beq\label{eq:dc 12}
\sum_{k=1}^4|R_k|\leq\f18\|\sqrt\lambda\na\vv v\|_{L^2(\Sigma_t)}^2+C\Bigl\|\f{|\na\lambda|}{\sqrt\lambda}\vv v\Bigr\|_{L^2(\Sigma_t)}^2+\f{C\e}{L\log L}\cdot\sum_{k=0}^2\|\sqrt\lambda\na^k\vv v\|_{L^2(\Sigma_t)}^2.
\eeq

\smallskip 

{\bf Step 3. Modified div-curl lemma.} Using \eqref{eq:dc 12} and $L\gg 1$, we derive \eqref{eq:div-curl lemma} from \eqref{eq:dc 4} for $\lambda(t,x)=\f{\langle w_-\rangle^2(\log \langle w_-\rangle)^4}{\langle w_+\rangle(\log \langle w_+\rangle)^2}$. The same conclusion \eqref{eq:div-curl lemma} holds for the symmetric case $\lambda(t,x)=\f{\langle w_+\rangle^2(\log \langle w_+\rangle)^4}{\langle w_-\rangle(\log \langle w_-\rangle)^2}$.

\smallskip 

For $\lambda=\lambda_+=\Lambda(\langle w_-\rangle)$ defined in \eqref{weight:w1}, note that $\lambda$ is piecewise smooth on $\Sigma_t$, taking the form of smooth functions $\lambda_+^1$ and  $\lambda_+^2$ in the regions $\overline{\Sigma}_{t,-}^{<L}$ and $\overline{\Sigma}_{t,-}^{\geq L}$, respectively. By an analogous derivation to the case above, \eqref{eq:div-curl lemma} remains valid. Here, we replace the use of \eqref{eq:div-curl *} (Lemma \ref{prop:div-curl-apply}) with \eqref{eq:b1} (Lemma \ref{prop:b1}). The case $\lambda=\lambda_-=\Lambda(\langle w_+\rangle)$ defined in \eqref{weight:w1} follows similarly.

This completes the proof of the lemma.
\end{proof}

\begin{remark}
Since $\operatorname{div} z_{\pm}=0$, the div-curl lemma allows us to obtain energy estimates for $\nabla z_{\pm}^{(\gamma)}$ by investigating the energy estimates in terms of $j_{\pm}^{(\gamma)}$, and thus use the vorticity formulation \eqref{eq:j} of the MHD system. 
\end{remark}
\begin{remark}
For $\lambda$ defined in \eqref{weight:w1} and \eqref{weight:w2}, we derive from  Lemma \ref{lem:e1} that 
\beno 
|\na\lambda|\lesssim\f{\lambda}{R}. 
\eeno 
Now for any multi-index $\gamma$ with $1\leq |\gamma|\leq N_{*}$, we
take 

i). $\vv v=z_{+}^{(\gamma)}:=\p^\gamma z_+$ with either
$ 
\lambda= \frac{\langle w_{-}\rangle^2\left(\log \langle w_{-}\rangle\right)^4}{\langle w_{+}\rangle \left(\log \langle w_{+}\rangle\right)^2}$ or $\lambda=\lambda_+=\Lambda(\langle w_{-}\rangle),$

ii). or $\vv v=z_{-}^{(\gamma)}$ with either $
 \lambda= \frac{\langle w_{+}\rangle^2\left(\log \langle w_{+}\rangle\right)^4}{\langle w_{-}\rangle \left(\log \langle w_{-}\rangle\right)^2}$ or $\lambda=\lambda_-=\Lambda(\langle w_{+}\rangle)$.

Applying these choices in Lemma \ref{lem:div-cur}, thanks to \eqref{eq:div-curl lemma} and the obtained fact that $|\na\lambda|\lesssim\f{\lambda}{R}$, we get
\begin{equation}
\bigl\|\sqrt{\lambda} \nabla z_{\pm}^{(\gamma)}\bigr\|_{L^{2}(\Sigma_{t})}^{2} \lesssim \bigl\|\sqrt{\lambda} j_{\pm}^{(|\gamma|)}\bigr\|_{L^{2}(\Sigma_{t})}^{2}+ \bigl\|\sqrt{\lambda} \nabla z_{\pm}^{(|\gamma|-1)}\bigr\|_{L^{2}(\Sigma_{t})}^{2}+ \frac{\varepsilon}{L\log L} \bigl\|\sqrt{\lambda} \nabla z_{\pm}^{(|\gamma|+1)}\bigr\|_{L^{2}(\Sigma_{t})}^{2}.
\end{equation}
For $1\leq |\gamma|\leq N_{*}$, we can iterate this estimate to get
\begin{equation}
\bigl\|\sqrt{\lambda} \nabla z_{\pm}^{(\gamma)}\bigr\|_{L^{2}(\Sigma_{t})}^{2} \lesssim \sum_{k=1}^{|\gamma|} \bigl\|\sqrt{\lambda} j_{\pm}^{(k)}\bigr\|_{L^{2}(\Sigma_{t})}^{2}+ \bigl\|\sqrt{\lambda} \nabla z_{\pm}\bigr\|_{L^{2}(\Sigma_{t})}^{2}+ \underbrace{\frac{\varepsilon}{L\log L} \bigl\|\sqrt{\lambda} \nabla^2 z_{\pm}^{(|\gamma|)}\bigr\|_{L^{2}(\Sigma_{t})}^{2}}_{\lesssim \frac{\mu\varepsilon}{L}\bigl\|\sqrt{\lambda} \nabla^2 z_{\pm}^{(|\gamma|)}\bigr\|_{L^{2}(\Sigma_{t})}^{2},\text{ since }\log L \geq \frac{1}{\mu}} . \label{eq:div-curl iterate}
\end{equation}
\end{remark}

\subsection{Energy estimates for viscous linear equations}
Under the assumptions \eqref{eq:a1}--\eqref{eq:a3}, we derive the energy estimates for the following viscous linear system on $[0,T^*]\times Q_L$:
\begin{equation}
\left\{\begin{array}{l}
\partial_{t} f_{+}+ Z_{-}\cdot \nabla f_{+}- \mu\Delta f_{+}= \rho_{+},
\\
\partial_{t} f_{-}+ Z_{+}\cdot \nabla f_{-}- \mu\Delta f_{-}= \rho_{-},
\end{array}\right. \label{eq:l1}
\end{equation}
where $Z_{+}$ and $Z_{-}$ are divergence-free vector fields with 2L-periodicity in all spatial directions.


\begin{proposition} \label{prop:l1}
Let $T^*=\log L$, $\mu\in(0,1)$ and $L\geq e^{\f{1}{\mu}}$. Assume that the weights $\lambda_{\pm}$ are defined as one of the following forms from \eqref{weight:w1}: 
\beno
(\log \langle w_{\mp}\rangle)^2,\quad(\log \langle w_{\mp}\rangle)^4,\quad \langle w_{\mp}\rangle^2(\log \langle w_{\mp}\rangle)^4,\quad \langle w_{\mp}\rangle(\log \langle w_{\mp}\rangle)^4. 
\eeno 
Then for any sufficiently smooth $2L$-periodic vector fields $f_\pm(t, x)$ satisfying \eqref{eq:l1}, we have
\begin{equation}\label{eq:l7}
\begin{aligned}
& \quad\sup_{0\leq\tau\leq t}\|\sqrt{\lambda_\pm}f_\pm\|_{L^2(\Sigma_\tau)}^2+\mu\|\sqrt{\lambda_\pm}\na f_\pm\|_{L^2_t(L^2_x)}^2+ \sup\limits_{|u_{\pm}|\leq \frac{L}{4}} \int_{C^{\pm}_{t,u_{\pm}}} \lambda_{\pm}\left|f_{\pm}\right|^2 d\sigma_{\pm}\\
& \leq 10\|\sqrt{\lambda_\pm}f_\pm\|_{L^2(\Sigma_0)}^2+14\mu \Bigl\|\f{|\na\lambda_\pm|}{\sqrt{\lambda_\pm}} f_\pm\Bigr\|_{L^2_t(L^2_x)}^2+C\mu^2\|\sqrt{\lambda_\pm}\na^2 f_\pm\|_{L_t^2(L^2_x)}^2\\ 
&\qquad+ C\int_{0}^{t} \int_{\Sigma_{\tau}} \lambda_\pm \left|f_\pm\right|\cdot |\rho_\pm|\, dxd\tau.
\end{aligned} 
\end{equation}
\end{proposition}
\begin{proof}
We only give the estimates for $f_{+}$ which corresponds to the left-traveling Alfv\'en wave $z_{+}$ and the estimates for $f_{-}$ follow analogously.

{\bf Step 1. Energy estimate on $W_t:=[0,t]\times Q_L$.}
By the definition of $\lambda_\pm$, there hold \eqref{eq:l5}, \eqref{eq:l4} and \eqref{eq:l6}.  Observing that $L_{-}=T+Z_{-}$ and $L_-\lambda_+=0$ (since $L_{-} \lambda_{+}^{i}=0$ in \eqref{eq:l4}), we take the inner product of the first equation in \eqref{eq:l1} with  $\lambda_{+} f_{+}$ to obtain
\begin{equation}\label{eq:l9}
\frac{1}{2} L_{-}\big(\lambda_{+} \left|f_{+}\right|^2\big)- \mu\Delta f_{+}\cdot \lambda_{+} f_{+}= \lambda_{+} f_{+}\cdot \rho_{+}, \quad\forall\,(t,x)\in[0,T^*]\times Q_L.
\end{equation}

We use $\widetilde{\operatorname{div}}$ to denote the divergence of $\mathbb{R}^4=\mathbb{R}_{t} \times \mathbb{R}_{x}^3$ with respect to the standard Euclidean metric. Since $\operatorname{div} Z_{\pm}=0$, we have $\widetilde{\operatorname{div}}\, L_{\pm}=0$. We integrate \eqref{eq:l9} on $W_t$
 to get
\begin{equation}\label{eq:l10}
\frac{1}{2}\iint_{W_t}\wt{\dive}\big(\lambda_{+} \left|f_{+}\right|^2 L_-\big) dx d\tau-\mu\iint_{W_t} \Delta f_{+}\cdot \lambda_{+} f_{+}\, dx d\tau= \iint_{W_t}\lambda_{+} f_{+}\cdot \rho_{+}\, dx d\tau .
\end{equation}

We now analyze and estimate each term in \eqref{eq:l10} sequentially.

{\it Step 1.1. Estimate of $\frac{1}{2}\iint_{W_t}\wt{\dive}\big(\lambda_{+} \left|f_{+}\right|^2 L_-\big) dx d\tau.$} First, we recall that $\lambda_+$ is piecewise smooth on $W_t$, taking the form of smooth functions $\lambda_+^1$ and $\lambda_+^2$ on the respective regions:
\beno 
\overline{W}_{t,-}^{<L}:= \bigcup\limits_{a<L} \overline{C}_{t,a}^{-},\quad 
\overline{W}_{t,-}^{\geq L}:= \bigcup\limits_{a\geq L} \overline{C}_{t,a}^{-}.
\eeno
Note that $W_t=\overline{W}_{t,-}^{<L}\bigcup \overline{W}_{t,-}^{\geq L}$ and $\p\bigl(\overline{W}_{t,-}^{<L}\bigr)\bigcap\p\bigl(\overline{W}_{t,-}^{\geq L}\bigr)=\overline{C}_{t,L}^{-}$ is the interior interface. 

Using divergence theorem for piecewise smooth vector fields (Lemma \ref{lem:divergence}), we obtain 
\beq\label{eq:l11}
\iint_{W_t}\wt{\dive}\big(\lambda_{+} \left|f_{+}\right|^2 L_-\big) dx d\tau=\int_{\p W_t}\lambda_{+} \left|f_{+}\right|^2\langle L_-,\vv\nu\rangle d\sigma+\underbrace{\int_{\overline{C}^-_{t,L}}(\lambda_+^1-\lambda_+^2)|f_+|^2\langle L_-,\vv\nu_-\rangle d\sigma_-}_{=0,\,\text{since}\, L_-\,\text{is tangential to }\, \overline{C}^-_{t,L}},
\eeq
where $\vv\nu$ is the unit outward normal vector on $\p W_t$ and $\vv\nu_-$ is the unit normal vector on $\overline{C}^-_{t,L}$ pointing from $\overline{W}_{t,-}^{<L}$ to $\overline{W}_{t,-}^{\geq L}$.

For the first term on the RHS of \eqref{eq:l11}, noticing that $L_-=(1,Z_-)$, we have
\beq\label{eq:l12}
\int_{\p W_t}\lambda_{+} \left|f_{+}\right|^2\langle L_-,\vv\nu\rangle d\sigma
=\int_{\Sigma_t}\lambda_{+} \left|f_{+}\right|^2dx-\int_{\Sigma_0}\lambda_{+} \left|f_{+}\right|^2dx+\int_0^t\int_{\p Q_L}\lambda_{+} \left|f_{+}\right|^2(Z_-\cdot\vv n)dSd\tau,
\eeq
where $\vv n$ is the unit outward normal vector on $\p Q_L$. By virtue of $\lambda_+|_{x_3=L}=\lambda_+|_{x_3=-L}$ (see \eqref{eq:l5}) and the periodic properties of $f_+$,  we obtain
\beno\begin{aligned}
\Bigl|\int_0^t\int_{\p Q_L}&\lambda_{+} \left|f_{+}\right|^2(Z_-\cdot\vv n)dSd\tau\Bigr|
=\Bigl|\sum_{i=1}^2\int_0^t\int_{\{x_i=L\}}(\lambda_{+}|_{x_i=L}-\lambda_{+}|_{x_i=-L})\left|f_{+}\right|^2\cdot z_-^id\widehat{x}_id\tau\Bigr|\\ 
&\lesssim\sum_{i=1}^2\Bigl\|\f{\lambda_{+}|_{x_i=L}-\lambda_{+}|_{x_i=-L}}{\lambda_{+}|_{x_i=L}}\Bigr\|_{L^\infty_{(t,\widehat{x}_i)}}\cdot\|z_-^i\|_{L^\infty_{(t,x)}}\cdot\int_0^t\int_{\{x_i=L\}}\lambda_{+}|f_+|^2\,d\widehat{x}_id\tau\\ 
&\lesssim F(L)\e\cdot\sum_{i=1}^2\int_0^t\int_{\{x_i=L\}}\lambda_{+}^1|f_+|^2\,d\widehat{x}_id\tau,
\end{aligned}\eeno
where we used \eqref{eq:b1} (Lemma \ref{prop:b1}), the assumption that $\|z_-^i\|_{L^\infty_{(t,x)}}\leq\f12$ and $\lambda_+(t,x)\sim\lambda_+^1(t,x)$ for all $(t,x)\in[0,T^*]\times Q_L$ (see \eqref{eq:l6}) in the last inequality. Here $F(L)$ is defined in \eqref{eq:b1}. 
Using the trace theorem, and \eqref{eq:l6} again, we get
\beno\begin{aligned}
\int_{\{x_i=L\}}\lambda_{+}^1|f_+|^2\,d\widehat{x}_i
&\lesssim\|\sqrt{\lambda_{+}^1}f_+\|_{H^1(Q_L)}^2
\lesssim\|\sqrt{\lambda_{+}^1}f_+\|_{L^2_x}^2+\|\sqrt{\lambda_{+}^1}\na f_+\|_{L^2_x}^2\\ 
&
\lesssim\|\sqrt{\lambda_{+}}f_+\|_{L^2_x}^2+\|\sqrt{\lambda_{+}}\na f_+\|_{L^2_x}^2,
\end{aligned}\eeno 
which implies 
\beq\label{eq:l13}
\Bigl|\int_0^t\int_{\p Q_L}\lambda_{+} \left|f_{+}\right|^2(Z_-\cdot\vv n)dSd\tau\Bigr|\lesssim F(L)\e\cdot\bigl(T^*\cdot\sup_{0\leq\tau\leq t}\|\sqrt{\lambda_{+}}f_+\|_{L^2_x}^2+\|\sqrt{\lambda_{+}}\na f_+\|_{L^2_t(L^2_x)}^2\bigr).
\eeq 

Thanks to \eqref{eq:l12} and \eqref{eq:l13}, we deduce from \eqref{eq:l11} that
\beq\label{eq:l15}\begin{aligned}
\f12 \iint_{W_t}\wt{\dive}\big(\lambda_{+} \left|f_{+}\right|^2 L_-\big) dx d\tau\geq&\f12 \|\sqrt{\lambda_+}f_+\|_{L^2(\Sigma_t)}^2-\f12 \|\sqrt{\lambda_+}f_+\|_{L^2(\Sigma_0)}^2\\
&- CF(L)\e\cdot\bigl(T^*\cdot\sup_{0\leq\tau\leq t}\|\sqrt{\lambda_{+}}f_+\|_{L^2_x}^2+\|\sqrt{\lambda_+}\na f_+\|_{L^2_t(L^2_x)}^2\bigr).
\end{aligned}\eeq

{\it Step 1.2. Estimate of the diffusion term $-\mu\iint_{W_t} \Delta f_{+}\cdot \lambda_{+} f_{+}\, dx d\tau$.} Since 
\beq\label{eq:l15a}
-\Delta f_{+}\cdot \lambda_{+} f_{+}=-\f12\na\cdot\bigl[\lambda_+\na (|f_+|^2)\bigr]+(\na\lambda_+\cdot\na)f_+\cdot f_++\lambda_+|\na f_+|^2,
\eeq
we have
\beq\label{eq:l16}
-\mu\iint_{W_t} \Delta f_{+}\cdot \lambda_{+} f_{+}\, dx d\tau
=\mu\int_0^t\|\sqrt{\lambda_+}\na  f_+\|_{L^2(\Sigma_\tau)}^2d\tau+I_1+I_2,
\eeq 
where
\beno
I_1=-\f{\mu}{2}\int_0^t\int_{Q_L}\na\cdot\bigl[\lambda_+\na (|f_+|^2)\bigr]dxd\tau,\quad 
I_2=\mu\iint_{W_t}(\na\lambda_+\cdot\na)f_+\cdot f_+ dxd\tau,
\eeno

For $I_1$, we use the divergence theorem for piecewise smooth vector fields (Lemma \ref{lem:divergence}) to get
\beno 
I_1=-\f{\mu}{2}\int_0^t\int_{\p Q_L}\lambda_+\na (|f_+|^2)\cdot \vv n dSd\tau-\f{\mu}{2}\int_0^t\int_{\overline{S}^-_{\tau,L}}(\lambda_+^1-\lambda_+^2)\na (|f_+|^2)\cdot \vv n_- dS_-d\tau,
\eeno 
where $\vv n$ is the unit outward normal vector on $\p Q_L$ and $\vv n_-$ is the unit normal vector on $\overline{S}^-_{\tau,L}=\p(\overline{\Sigma}_{\tau,-}^{<L})\bigcap\p(\overline{\Sigma}_{\tau,-}^{\geq L}) $ pointing from $\overline{\Sigma}_{\tau,-}^{<L}$ to $\overline{\Sigma}_{\tau,-}^{\geq L}$. Similar derivation as that of \eqref{eq:dc 11} yields to
\beq\label{eq:l17}
|I_1|\lesssim F(L)\e\cdot\mu\sum_{k=0}^2\int_0^t\|\sqrt{\lambda_+}\na^kf_+\|_{L^2(\Sigma_\tau)}^2d\tau.
\eeq

For $I_2$, it is easy to get
\beq\label{eq:l18}
|I_2|\leq\f{\mu}{4}\|\sqrt{\lambda_+}\na f_+\|_{L^2_t(L^2_x)}^2+\mu \Bigl\|\f{|\na\lambda_+|}{\sqrt{\lambda_+}} f_+\Bigr\|_{L^2_t(L^2_x)}^2.
\eeq

Thanks to \eqref{eq:l17}, \eqref{eq:l18}, $F(L)\leq\f{1}{L\log L}$ (see \eqref{eq:b1}) and $L\gg 1$, we deduce from \eqref{eq:l16} that
\beq\label{eq:l19}\begin{aligned}
-\mu\iint_{W_t} \Delta f_{+}\cdot \lambda_{+} f_{+}\, dx d\tau
&\geq\f\mu2\|\sqrt{\lambda_+}\na f_+\|_{L^2_t(L^2_x)}^2
-\mu \Bigl\|\f{|\na\lambda_+|}{\sqrt{\lambda_+}} f_+\Bigr\|_{L^2_t(L^2_x)}^2\\ 
&\qquad- CF(L)\e\cdot\mu\bigl(T^*\cdot\sup_{0\leq\tau\leq t}\|\sqrt{\lambda_+}f_+\|_{L^2_x}^2+\|\sqrt{\lambda_+}\na^2 f_+\|_{L_t^2(L^2_x)}^2\bigr).
\end{aligned}\eeq

\smallskip 

{\it Step 1.3. Energy estimate on $W_t$. } By virtue of \eqref{eq:l15} and \eqref{eq:l19}, we derive from \eqref{eq:l10} that
\beno\begin{aligned}
\|\sqrt{\lambda_+}f_+&\|_{L^2(\Sigma_t)}^2+\mu\|\sqrt{\lambda_+}\na f_+\|_{L^2_t(L^2_x)}^2
\leq\|\sqrt{\lambda_+}f_+\|_{L^2(\Sigma_0)}^2+2\mu \Bigl\|\f{|\na\lambda_+|}{\sqrt{\lambda_+}} f_+\Bigr\|_{L^2_t(L^2_x)}^2\\ 
&+CF(L)\e\cdot T^*\cdot\sup_{0\leq\tau\leq t}\|\sqrt{\lambda_+}f_+\|_{L^2_x}^2+CF(L)\e\cdot \|\sqrt{\lambda_+}\na f_+\|_{L_t^2(L^2_x)}^2\\
&+CF(L)\e\cdot\mu \|\sqrt{\lambda_+}\na^2 f_+\|_{L_t^2(L^2_x)}^2+2\int_{0}^{t} \int_{\Sigma_{\tau}} \lambda_+ \left|f_+\right| |\rho_+|\, dx d\tau.
\end{aligned}\eeno 
Since $T^*=\log L$, $L\geq e^{\f{1}{\mu}}\gg1$ and $F(L)\leq\f{1}{L\log L}$, we have
\beq\label{eq:l20a}
F(L)\cdot T^*\leq\f{1}{L}\ll 1,\quad F(L)\leq\f{1}{L}\cdot\mu\ll\mu .
\eeq
Consequently, we obtain
\beq\label{eq:l20}\begin{aligned}
\sup_{0\leq\tau\leq t}\|\sqrt{\lambda_+}f_+&\|_{L^2(\Sigma_\tau)}^2+\mu\|\sqrt{\lambda_+}\na f_+\|_{L^2_t(L^2_x)}^2
\leq2\|\sqrt{\lambda_+}f_+\|_{L^2(\Sigma_0)}^2+4\mu \Bigl\|\f{|\na\lambda_+|}{\sqrt{\lambda_+}} f_+\Bigr\|_{L^2_t(L^2_x)}^2\\ 
&+C\underbrace{F(L)\e\cdot\mu}_{\lesssim{\mu^2\e}/L}\|\sqrt{\lambda_+}\na^2 f_+\|_{L_t^2(L^2_x)}^2+4\int_{0}^{t} \int_{\Sigma_{\tau}} \lambda_+ \left|f_+\right|\cdot |\rho_+|\, dx d\tau.
\end{aligned}\eeq 

\medskip 

{\bf Step 2. Local energy estimates on $W_{t,+}^{\geq u_+}\cap\overline{W}_{t,-}^{< L}$.} This step aims to bound the flux term $\sup\limits_{|u_+|\leq \frac{L}{4}} \int_{C^{+}_{t,u_+}} \lambda_{+}\left|f_+\right|^2 d\sigma_+$ appearing on the LHS of \eqref{eq:l7}. 

{\it Step 2.1. Cut-off equation.}  First, we take a smooth cut-off function $\varphi(r)\in C_{c}^{\infty}(\mathbb{R})$ such that 
\begin{equation*}
\varphi(r)=
\begin{cases}
& 1, \quad \text{for }|r|\leq \frac{1}{3},\\
& 0, \quad \text{for }|r|\geq\f12,
\end{cases}
\quad\text{with}\quad 0\leq\varphi(r)\leq 1,
\end{equation*}
and we define
\begin{equation*}
\varphi_{L}(r)=\varphi\Big(\frac{r}{L}\Big)=
\begin{cases}
& 1, \quad \text{for }|r|\leq \frac{L}{3},\\
& 0, \quad \text{for }|r|\geq \frac{L}{2}.
\end{cases}
\end{equation*}

Setting $\widetilde{f}_{+}=\varphi_{L}(x_{3})f_{+}$, due to $\partial_{t} f_{+}+ Z_{-}\cdot \nabla f_{+}- \mu\Delta f_{+}= \rho_{+}$, we observe that $\widetilde{f}_{+}$ satisfies
\beq\label{eq:l21}
\quad\ \partial_{t} \widetilde{f}_{+}+ Z_{-}\cdot \nabla \widetilde{f}_{+}- \mu\Delta \widetilde{f}_{+}=\widetilde{\rho}_{+},
\eeq
where 
\beno 
\widetilde{\rho}_{+}=\varphi_{L}(x_{3})\rho_{+} + \bigl(Z_{-}^{3}\varphi_{L}^{\prime}(x_{3})-\mu\varphi_{L}^{\prime\prime}(x_{3})\bigr)f_{+} - 2\mu\varphi_{L}^{\prime}(x_{3})\partial_{3}f_{+}.
\eeno 
By the definition of $\varphi_L$ and the ansatz $\|z_-\|_{L^\infty_{t,x}}\leq\f12$, we obtain 
\beq\label{eq:l22}
|\widetilde\rho_+|\lesssim|\rho_+|+\f{1}{L}|f_+|+\f{\mu}{L}|\p_3f_+|.
\eeq

By Lemma \ref{lem:e2}, for all $(t,x)\in [0,T^*]\times Q_L$, the following holds:
\begin{itemize}
\item[(i).] if $|u_{+}(t,x)|\leq \frac{L}{4}$, then
\beq\label{eq:l23}
|x_3|< \frac{L}{3}\Rightarrow \varphi_{L}(x_{3})=1\Rightarrow\widetilde{f}_{+}(t,x)=f_{+}(t,x)\quad\text{and}\quad C_{u_+}^{+}=\overline{C}_{u_+}^{+}.
\eeq 
\item[(ii).] if $x_{3}^{-}(t,x)=L$, then
\beq\label{eq:l24}
|x_3|> \frac{L}{2}\Rightarrow\varphi_{L}(x_{3})=0\Rightarrow\widetilde{f}_{+}(t,x)=0,\quad\forall\, (t,x)\in\overline{C}^-_L.
\eeq 
\end{itemize}

\smallskip

{\it Step 2.2. Local energy estimate.}
Since $L_{-}=T+Z_{-}$ and $L_{-} \lambda_{+}=0$, we take inner product of \eqref{eq:l21} with $\lambda_{+} \widetilde{f}_{+}$ to obtain
\beno
\frac{1}{2} L_{-}\big(\lambda_{+} \big|\widetilde{f}_{+}\big|^2\big)- \mu\Delta \widetilde{f}_{+}\cdot \lambda_{+} \widetilde{f}_{+}= \lambda_{+} \widetilde{f}_{+}\cdot \widetilde{\rho}_{+}, \quad\forall\,(t,x)\in[0,T^*]\times Q_L.
\eeno

Fix an arbitrary $u_{+}\in [-\frac{L}{4},\frac{L}{4}]$, we integrate the above equality on $W_{t,+}^{\geq u_{+}}\bigcap \overline{W}_{t,-}^{<L}$ to get
\begin{equation}\label{eq:l25}\begin{aligned}
&\frac{1}{2}\iint_{W_{t,+}^{\geq u_{+}}\bigcap \overline{W}_{t,-}^{< L}}\wt{\dive}\big(\lambda_{+} |\wt f_{+}|^2 L_-\big) dx d\tau-\mu\iint_{W_{t,+}^{\geq u_{+}}\bigcap \overline{W}_{t,-}^{<L}} \Delta\wt f_{+}\cdot \lambda_{+}\wt f_{+}\, dx d\tau\\ 
&\qquad=\iint_{W_{t,+}^{\geq u_{+}}\bigcap \overline{W}_{t,-}^{< L}}\lambda_{+}\wt f_{+}\cdot\wt \rho_{+}\, dx d\tau,
\end{aligned}\end{equation}
where we used the notation $L_-=(1,Z_-)$ for a vector field and the property $\wt\dive L_-=0$. Here we remark that $\lambda_+=\lambda_+^1$ is smooth on $W_{t,+}^{\geq u_{+}}\bigcap \overline{W}_{t,-}^{<L}$.

 {\bf (a). Estimate of $\frac{1}{2}\iint_{W_{t,+}^{\geq u_{+}}\bigcap \overline{W}_{t,-}^{< L}}\wt{\dive}\big(\lambda_{+} |\wt f_{+}|^2 L_-\big) dx d\tau$.}  By virtue of divergence theorem, we have
\beno
\iint_{W_{t,+}^{\geq u_{+}}\bigcap \overline{W}_{t,-}^{< L}}\wt{\dive}\big(\lambda_{+} |\wt f_{+}|^2 L_-\big) dx d\tau
=\int_{\p(W_{t,+}^{\geq u_{+}}\bigcap \overline{W}_{t,-}^{< L})}\lambda_{+} |\wt f_{+}|^2 \langle L_-,\vv\nu\rangle d\sigma,
\eeno 
where $\vv\nu$ is the unit outward normal vector on $W_{t,+}^{\geq u_{+}}\bigcap \overline{W}_{t,-}^{< L}$.
Note that 
\beno 
\p(W_{t,+}^{\geq u_{+}}\bigcap \overline{W}_{t,-}^{< L})
=\Bigl(\p(W_{t,+}^{\geq u_{+}}\bigcap \overline{W}_{t,-}^{< L})\bigcap\p([0,t]\times Q_L)\Bigr)\bigcup C_{t,u_+}^{+}\bigcup\overline{C}_{t,L}^{-}.
\eeno 

Following similar derivation as in Step 1.1., we get
\begin{equation}\label{eq:l26}
\begin{aligned}
\iint_{W_{t,+}^{\geq u_{+}}\bigcap \overline{W}_{t,-}^{< L}}&\wt{\dive}\big(\lambda_{+} |\wt f_{+}|^2 L_-\big) dx d\tau
 = \int_{\Sigma_{t,+}^{\geq u_{+}}\bigcap\overline{\Sigma}_{t,-}^{< L}} \lambda_{+} \big|\widetilde{f}_{+}\big|^2 dx- \int_{\Sigma_{0,+}^{\geq u_{+}}\bigcap\overline{\Sigma}_{0,-}^{< L}} \lambda_{+} \big|\widetilde{f}_{+}\big|^2 dx\\
& \quad\ +\sum_{i=1}^2\underbrace{\int_0^t\int_{\Sigma_{\tau,+}^{\geq u_{+}}\bigcap \overline{\Sigma}_{\tau,-}^{< L}\bigcap \{x_i=L\}}(\lambda_{+}|_{x_i=L}-\lambda_{+}|_{x_i=-L}) \big|\widetilde{f}_{+}\big|^2 z_{-}^{i}\, d\widehat{x}_i d\tau}_{J_i}\\ 
& \quad\ +\underbrace{\int_{\overline{C}_{t,L}^{-}} \lambda_{+} \big|\widetilde{f}_{+}\big|^2\left\langle L_{-}, \nu_{-}\right\rangle\, d\sigma_{-}}_{=0,\ \text{ since }L_{-}\text{ is tangential to }\overline{C}_{t,L}^{-}}+ \underbrace{\int_{C_{t,u_{+}}^{+}} \lambda_{+} \big|\widetilde{f}_{+}\big|^2\left\langle L_{-}, \nu_{+}\right\rangle\, d\sigma_{+}}_{J_{3}},
\end{aligned} 
\end{equation}
and
\beq\label{eq:l27} \begin{aligned}
|J_i|&\overset{|\wt f_+|\leq|f_+|}{\leq}\int_0^t\int_{\{x_i=L\}}\bigl|\lambda_{+}|_{x_i=L}-\lambda_{+}|_{x_i=-L}\bigr|\cdot \big|f_{+}\big|^2\cdot |z_{-}^{i}|\, d\widehat{x}_i d\tau\\
&\lesssim F(L)\e\cdot\bigl(T^*\cdot\sup_{0\leq\tau\leq t}\|\sqrt{\lambda_{+}} f_+\|_{L^2_x}^2+\|\sqrt{\lambda_{+}}\na f_+\|_{L^2_t(L^2_x)}^2\bigr),\quad i=1,2.
\end{aligned}\eeq

For $J_3$, by the facts that $\langle L_{-}, \nu_{+}\rangle\in[\f{7}{16},\, 4]$  (Lemma \ref{lem:e5}) and $\wt f_+|_{C_{u_{+}}^{+}}=f_+|_{C_{u_{+}}^{+}}$ (see \eqref{eq:l23}), we have
\beno
J_{3}= \int_{C_{t,u_{+}}^{+}} \lambda_{+} \left|f_{+}\right|^2\left\langle L_{-}, \nu_{+}\right\rangle\, d\sigma_{+} \geq \frac{7}{16}\int_{C_{t,u_{+}}^{+}} \lambda_{+} \left|f_{+}\right|^2 d\sigma_{+},
\eeno
from which, \eqref{eq:l27} and \eqref{eq:l26}, we deduce that
\beno
\begin{aligned}
\frac{1}{2}&\iint_{W_{t,+}^{\geq u_{+}}\bigcap \overline{W}_{t,-}^{< L}}\wt{\dive}\big(\lambda_{+} |\wt f_{+}|^2 L_-\big) dx d\tau
\geq \frac{1}{2}\int_{\Sigma_{t,+}^{\geq u_{+}}\bigcap\overline{\Sigma}_{t,-}^{< L}} \lambda_{+} \big|\widetilde{f}_{+}\big|^2 dx- \frac{1}{2}\int_{\Sigma_{0,+}^{\geq u_{+}}\bigcap\overline{\Sigma}_{0,-}^{< L}} \lambda_{+} \big|\widetilde{f}_{+}\big|^2 dx\\ 
&\quad + \frac{7}{32}\int_{C_{t,u_{+}}^{+}} \lambda_{+} \left|f_{+}\right|^2 d\sigma_{+}- CF(L)\e\cdot\bigl(T^*\cdot\sup_{0\leq\tau\leq t}\|\sqrt{\lambda_{+}} f_+\|_{L^2_x}^2+\|\sqrt{\lambda_{+}}\na f_+\|_{L^2_t(L^2_x)}^2\bigr).
\end{aligned} 
\eeno
Thus, using the facts that $F(L)T^*\leq\f{1}{L}$ and $F(L)\leq\f{\mu}{L}$ (see \eqref{eq:l20a}), we obtain
\begin{equation}\label{eq:l28}
\begin{aligned}
\frac{1}{2}\iint_{W_{t,+}^{\geq u_{+}}\bigcap \overline{W}_{t,-}^{< L}}&\wt{\dive}\big(\lambda_{+} |\wt f_{+}|^2 L_-\big) dx d\tau
\geq \frac{7}{32}\int_{C_{t,u_{+}}^{+}} \lambda_{+} \left|f_{+}\right|^2 d\sigma_{+}- \frac{1}{2}\int_{\Sigma_{0}} \lambda_{+} \big|f_{+}\big|^2 dx\\ 
&\quad - \f{C\e}{L}\cdot\bigl(\sup_{0\leq\tau\leq t}\|\sqrt{\lambda_{+}}f_+\|_{L^2_x}^2+\mu\|\sqrt{\lambda_{+}}\na f_+\|_{L^2_t(L^2_x)}^2\bigr).
\end{aligned} 
\end{equation}

{\bf (b). Estimate of $-\mu\iint_{W_{t,+}^{\geq u_{+}}\bigcap \overline{W}_{t,-}^{<L}} \Delta\wt f_{+}\cdot \lambda_{+}\wt f_{+}\, dx d\tau$. }
Using \eqref{eq:l15a}, we have
\beq\label{eq:l29}
-\mu\iint_{W_{t,+}^{\geq u_{+}}\bigcap \overline{W}_{t,-}^{<L}} \Delta\wt f_{+}\cdot \lambda_{+}\wt f_{+}\, dx d\tau
=\mu\iint_{W_{t,+}^{\geq u_{+}}\bigcap \overline{W}_{t,-}^{<L}}\lambda_+|\na\wt{f}_+|^2dxd\tau+K_1+K_2,
\eeq 
where
\beno\begin{aligned}
&K_1=-\f{\mu}{2}\iint_{W_{t,+}^{\geq u_{+}}\bigcap \overline{W}_{t,-}^{<L}}\na\cdot\bigl[\lambda_+\na (|\wt f_+|^2)\bigr]dxd\tau,\\
&K_2=\mu\iint_{W_{t,+}^{\geq u_{+}}\bigcap \overline{W}_{t,-}^{<L}}(\na\lambda_+\cdot\na)\wt f_+\cdot\wt f_+ dxd\tau.
\end{aligned}\eeno 

For $K_2$,  we have
\beq\label{eq:l30}\begin{aligned}
|K_2|&\leq \frac{\mu}{2}\iint_{W_{t,+}^{\geq u_{+}}\bigcap \overline{W}_{t,-}^{< L}} \lambda_{+} \big|\nabla \widetilde{f}_{+}\big|^2 dx d\tau
+\frac{\mu}{2}\iint_{W_{t,+}^{\geq u_{+}}\bigcap \overline{W}_{t,-}^{< L}}\f{|\na\lambda_{+}|^2}{\lambda_{+}} \big|\widetilde{f}_{+}\big|^2 dx d\tau\\ 
&\leq \frac{\mu}{2}\iint_{W_{t,+}^{\geq u_{+}}\bigcap \overline{W}_{t,-}^{< L}} \lambda_{+} \big|\nabla \widetilde{f}_{+}\big|^2 dx d\tau
+\frac{\mu}{2}\Bigl\|\f{|\na\lambda_{+}|}{\sqrt{\lambda_{+}}} f_{+}\Bigr\|_{L^2_t(L^2_x)}^2, 
\end{aligned}\eeq
where we used $|\wt f_+|\leq|f_+|$ in the last inequality.

For $K_1$, applying the divergence theorem, similar to \eqref{eq:l26},  we obtain
\beno\begin{aligned}
K_1&=-\underbrace{\f{\mu}{2}\sum_{i=1}^2\int_0^t\int_{\Sigma_{\tau,+}^{\geq u_{+}}\bigcap \overline{\Sigma}_{\tau,-}^{< L}\bigcap \{x_i=L\}}(\lambda_{+}|_{x_i=L}-\lambda_{+}|_{x_i=-L})\p_i(|\wt f_+|^2)\, d\widehat{x}_i d\tau}_{K_{11}}\\ 
& \quad-\underbrace{\f{\mu}{2}\int_{\overline{C}_{t,L}^{-}} \lambda_+\na (|\wt f_+|^2)\cdot\vv n_{-}\, d\sigma_{-}}_{=0,\ \text{ since }\wt{f}_+|_{\overline{C}_{t,L}^-}=0}-\underbrace{\f{\mu}{2}\int_{C_{t,u_{+}}^{+}} \lambda_+\na (|\wt f_+|^2)\cdot\vv n_{+}\, d\sigma_{+}}_{K_{12}},
\end{aligned}\eeno
where $\overline{C}_{t,L}^-=\bigcup_{0\leq\tau\leq t}\overline{S}^-_{\tau,L}$ with the normal vector on $\overline{S}^-_{\tau,L}$ given by $\vv n_-=\f{\na_xx_3^-(\tau,x)}{|\na_{\tau,x}x_3^-(\tau,x)|}$, and $\overline{C}_{t,u_+}^+=\bigcup_{0\leq\tau\leq t}\overline{S}^+_{\tau,u_+}$ with the  normal vector on $\overline{S}^+_{\tau,u_+}$ given by $\vv n_+=-\f{\na_xu_+(\tau,x)}{|\na_{\tau,x}u_+(\tau,x)|}$.

Since 
\beno 
|\p_i(|\wt f_+|^2)|=2|\varphi_L(x_3)|^2\cdot |f_+\cdot\p_i f_+|\leq 2|f_+|\cdot|\p_i f_+|,\quad\text{for}\,\, i=1,2,
\eeno
similarly as $J_i$, we get by using the facts $F(L)T^*\leq\f{1}{L}$, $F(L)\leq\f{\mu}{L}$ (see \eqref{eq:l20a}) that
\beno\begin{aligned} 
|K_{11}|&\lesssim\mu\cdot F(L)\e\cdot\bigl(T^*\cdot\sup_{0\leq\tau\leq t}\|\sqrt{\lambda_{+}} f_+\|_{L^2_x}^2+\|\sqrt{\lambda_{+}}\na f_+\|_{L^2_t(L^2_x)}^2+\|\sqrt{\lambda_{+}}\na^2 f_+\|_{L^2_t(L^2_x)}^2\bigr)\\ 
&\lesssim\f{\mu\e}{L}\cdot\sup_{0\leq\tau\leq t}\|\sqrt{\lambda_{+}} f_+\|_{L^2_x}^2+\f{\mu\e}{L}\cdot\mu\|\sqrt{\lambda_{+}}\na f_+\|_{L^2_t(L^2_x)}^2+\f{\e}{L}\cdot\mu^2\|\sqrt{\lambda_{+}}\na^2 f_+\|_{L^2_t(L^2_x)}^2.
\end{aligned}\eeno 

For $K_{12}$, noting that $\wt f_+|_{C_{u_+}^{+}}=f_+|_{C_{u_+}^{+}}$ (see \eqref{eq:l23}), we derive
\beno\begin{aligned}
|K_{12}| &=\mu\bigl|\int_{C_{t,u_{+}}^{+}} \lambda_{+}f_{+}\cdot(\vv n_+\cdot\na)f_+\,d\sigma_{+}\bigr|\leq \frac{1}{8}\int_{C_{t,u_{+}}^{+}} \lambda_{+} \left|f_{+}\right|^2 d\sigma_{+}+ 2\mu^2\int_{C_{t,u_{+}}^{+}} \lambda_{+} \left|\nabla f_{+}\right|^2 d\sigma_{+} .
\end{aligned}\eeno
By Lemma \ref{lem:e6} (trace lemma) and property \eqref{eq:l6} of $\lambda_{+}$, we have
\beno
\int_{C_{t,u_{+}}^{+}} \lambda_{+} \left|\nabla f_{+}\right|^2 d\sigma_{+}\lesssim\int_0^t\|\sqrt{\lambda_{+}}\nabla f_{+}\|_{H^1(\Sigma_\tau)}^2d\tau\lesssim\int_{0}^{t} \int_{\Sigma_{\tau}} \lambda_{+}\big(\left|\nabla f_{+}\right|^2+\left|\nabla^2 f_{+}\right|^2\big)\, dx d\tau. 
\eeno

Combining these estimates, we conclude that
\beq\label{eq:l31}\begin{aligned}
|K_1|&\leq \frac{1}{8}\int_{C_{t,u_{+}}^{+}} \lambda_{+} \left|f_{+}\right|^2 d\sigma_{+}+C\mu\cdot\bigl(\sup_{0\leq\tau\leq t}\|\sqrt{\lambda_{+}} f_+\|_{L^2_x}^2+\mu\|\sqrt{\lambda_{+}}\na f_+\|_{L^2_t(L^2_x)}^2\bigr)\\ 
&\qquad\quad+C\mu^2\|\sqrt{\lambda_{+}}\na^2 f_+\|_{L^2_t(L^2_x)}^2.
\end{aligned}\eeq

Therefore, thanks to \eqref{eq:l30} and \eqref{eq:l31}, we deduce from \eqref{eq:l29} that 
\beq\label{eq:l32}
\begin{aligned}
& -\mu\iint_{W_{t,+}^{\geq u_{+}}\bigcap \overline{W}_{t,-}^{< L}} \Delta \widetilde{f}_{+}\cdot \lambda_{+} \widetilde{f}_{+}\, dx d\tau\\ 
&\quad\geq \frac{\mu}{2}\iint_{W_{t,+}^{\geq u_{+}}\bigcap \overline{W}_{t,-}^{<L}} \lambda_{+} \big|\nabla \widetilde{f}_{+}\big|^2 dx d\tau-\frac{\mu}{2}\Bigl\|\f{|\na\lambda_{+}|}{\sqrt{\lambda_{+}}} f_{+}\Bigr\|_{L^2_t(L^2_x)}^2-C\mu^2\|\sqrt{\lambda_{+}}\na^2 f_+\|_{L^2_t(L^2_x)}^2\\ 
&\qquad -C\mu\cdot\bigl(\sup_{0\leq\tau\leq t}\|\sqrt{\lambda_{+}} f_+\|_{L^2_x}^2+\mu\|\sqrt{\lambda_{+}}\na f_+\|_{L^2_t(L^2_x)}^2\bigr)
 - \frac{1}{8}\int_{C_{t,u_{+}}^{+}} \lambda_{+} \left|f_{+}\right|^2 d\sigma_{+}.
\end{aligned}
\eeq
We remark that the last term in \eqref{eq:l32} can be obsorbed by the first term $\frac{7}{32}\int_{C_{t,u_{+}}^{+}} \lambda_{+} \left|f_{+}\right|^2 d\sigma_{+}$ on the RHS of \eqref{eq:l28}, since $\f18<\frac{7}{32}$. Furthermore, we may discard the non-negative term $\frac{\mu}{2}\iint_{W_{t,+}^{\geq u_{+}}\bigcap \overline{W}_{t,-}^{<L}} \lambda_{+} \big|\nabla \widetilde{f}_{+}\big|^2 dx d\tau$  appearing on the RHS of \eqref{eq:l32}.

{\bf (c). Estimate of $\iint_{W_{t,+}^{\geq u_{+}}\bigcap \overline{W}_{t,-}^{< L}}\lambda_{+}\wt f_{+}\cdot\wt \rho_{+}\, dx d\tau$.}
By virtue of \eqref{eq:l22} and $|\wt f_+|\leq |f_+|$, we have
\begin{equation}\label{eq:l33}
\begin{aligned}
&\iint_{W_{t,+}^{\geq u_{+}}\bigcap \overline{W}_{t,-}^{<L}}\lambda_{+} \widetilde{f}_{+}\cdot \widetilde{\rho}_{+}\, dx d\tau
\lesssim \int_{0}^{t} \int_{\Sigma_{\tau}} \lambda_{+} \left|f_{+}\right|\cdot |\rho_{+}|\, dx d\tau+\frac{1}{L}\int_{0}^{t} \int_{\Sigma_{\tau}} \lambda_{+} \left|f_{+}\right|^2 dx d\tau\\ 
&\qquad\qquad\qquad\qquad\qquad+ \frac{\mu}{L}\int_{0}^{t} \int_{\Sigma_{\tau}} \lambda_{+}\left|f_{+}\right| \cdot\left|\nabla f_{+}\right|\, dx d\tau\\
&\quad \lesssim \int_{0}^{t} \int_{\Sigma_{\tau}} \lambda_{+} \left|f_{+}\right|\cdot |\rho_{+}|\, dx d\tau+ \frac{T^*}{L}\sup_{0\leq\tau\leq t}\|\sqrt{\lambda_{+}}f_{+}\|_{L^2_x}^2+ \frac{\mu}{L}\|\sqrt{\lambda_{+}} \nabla f_{+}\|_{L^2_t(L^2_x)}^2\\
& \overset{T^*=\log L}{\lesssim} \int_{0}^{t} \int_{\Sigma_{\tau}} \lambda_{+} \left|f_{+}\right| |\rho_{+}|\, dx d\tau+ \frac{\log L}{L}\sup\limits_{0\leq \tau \leq t} \|\sqrt{\lambda_{+}} f_{+}\|^2_{L^2(\Sigma_\tau)}+ \frac{\mu}{L}\|\sqrt{\lambda_{+}} \nabla f_{+}\|_{L^2_t(L^2_x)}^2.
\end{aligned} 
\end{equation}

\smallskip 

Combining \eqref{eq:l28}, \eqref{eq:l32} and \eqref{eq:l33}, we deduce from \eqref{eq:l25} that
\beq\label{eq:l34}\begin{aligned}
&\frac{3}{32}\int_{C_{t,u_{+}}^{+}} \lambda_{+} \left|f_{+}\right|^2 d\sigma_{+}\leq\frac{1}{2}\int_{\Sigma_{0}} \lambda_{+} \big|f_{+}\big|^2 dx+\frac{\mu}{2}\Bigl\|\f{|\na\lambda_{+}|}{\sqrt{\lambda_{+}}} f_{+}\Bigr\|_{L^2_t(L^2_x)}^2+C\mu^2\|\sqrt{\lambda_{+}}\na^2 f_+\|_{L^2_t(L^2_x)}^2\\ 
&\quad+C\int_{0}^{t} \int_{\Sigma_{\tau}} \lambda_{+} \left|f_{+}\right|\cdot |\rho_{+}|\, dx d\tau+\underbrace{C\bigl(\f{\log L}{L}+\mu\bigr)\cdot\bigl(\sup_{0\leq\tau\leq t}\|\sqrt{\lambda_{+}}f_+\|_{L^2_x}^2+\mu\|\sqrt{\lambda_{+}}\na f_+\|_{L^2_t(L^2_x)}^2\bigr)}_{\text{can be obsorbed into the LHS terms in \eqref{eq:l20} due to $L\gg 1$ and $\mu\ll1$} }.
\end{aligned}\eeq

\medskip 

{\bf Step 3. Final energy estimates.}
By $L\gg 1$ and $\mu\ll1$, combined with \eqref{eq:l20} and \eqref{eq:l34}, we finally get 
\begin{equation} \label{eq:l35}
\begin{aligned}
& \quad\sup_{0\leq\tau\leq t}\|\sqrt{\lambda_+}f_+\|_{L^2(\Sigma_\tau)}^2+\mu\|\sqrt{\lambda_+}\na f_+\|_{L^2_t(L^2_x)}^2+ \sup\limits_{|u_{+}|\leq \frac{L}{4}} \int_{C^{+}_{t,u_{+}}} \lambda_{+}\left|f_{+}\right|^2 d\sigma_{+}\\
& \leq 10\|\sqrt{\lambda_+}f_+\|_{L^2(\Sigma_0)}^2+14\mu \Bigl\|\f{|\na\lambda_+|}{\sqrt{\lambda_+}} f_+\Bigr\|_{L^2_t(L^2_x)}^2+C\mu^2\|\sqrt{\lambda_+}\na^2 f_+\|_{L_t^2(L^2_x)}^2\\ 
&\qquad+ C\int_{0}^{t} \int_{\Sigma_{\tau}} \lambda_+ \left|f_+\right|\cdot |\rho_+|\, dx d\tau.
\end{aligned}
\end{equation}
This is the desired estimate \eqref{eq:l7} for $f_+$. This completes the proof of the proposition.
\end{proof}
A byproduct of the proof is the energy inequality \eqref{eq:l20}. Since it will play a crucial role in controlling the viscosity terms later on, we restate it as the following proposition.

\begin{proposition}\label{prop:l2}
Under the assumptions of Proposition \ref{prop:l1}, for any sufficiently smooth $2L$-periodic vector fields $f_\pm(t,x)$ satisfying \eqref{eq:l1}, there holds
\beq\label{eq:l36}\begin{aligned}
\sup_{0\leq\tau\leq t}\|\sqrt{\lambda_\pm}f_\pm&\|_{L^2(\Sigma_\tau)}^2+\mu\|\sqrt{\lambda_\pm}\na f_\pm \|_{L^2_t(L^2_x)}^2
\leq2\|\sqrt{\lambda_\pm }f_\pm \|_{L^2(\Sigma_0)}^2+4\mu \Bigl\|\f{|\na\lambda_\pm |}{\sqrt{\lambda_\pm }} f_\pm \Bigr\|_{L^2_t(L^2_x)}^2\\ 
&+C\e\mu F(L)\cdot\|\sqrt{\lambda_\pm }\na^2 f_\pm \|_{L_t^2(L^2_x)}^2+ 4\int_{0}^{t} \int_{\Sigma_{\tau}} \lambda_\pm  \left|f_\pm \right|\cdot |\rho_\pm |\, dx d\tau,
\end{aligned}\eeq 
where $F(L)$ is defined in \eqref{eq:def of F}.
\end{proposition}

\subsection{Energy estimates for the lowest order terms}
In this section,  we will apply  Proposition \ref{prop:l1} to the following MHD system (\eqref{eq:MHD}):
\beq\label{MHD}
\left\{\begin{array}{l}
\partial_{t} z_{+}+ Z_{-}\cdot \nabla z_{+}- \mu\Delta z_{+}= -\nabla p, \\
\partial_{t} z_{-}+ Z_{+}\cdot \nabla z_{-}- \mu\Delta z_{-}= -\nabla p, \\
\operatorname{div} z_+ = \operatorname{div} z_- = 0,
\end{array}\right. 
\eeq
with the weight functions $\lambda_{\pm}=\left(\log \langle w_{\mp}\rangle\right)^4$. For the case $\lambda_{\pm}=1$,  the divergence-free condition $\operatorname{div} Z_{\pm}=0$ yields the basic energy identity:
\begin{equation} \label{eq:basic}
\int_{\Sigma_{t}} \left|z_{\pm}\right|^2 dx+ 2\mu\int_{0}^{t} \int_{\Sigma_{\tau}} \left|\nabla z_{\pm}\right|^2 dx d\tau = \int_{\Sigma_{0}} \left|z_{\pm}\right|^2 dx .
\end{equation}
In particular, this implies
\begin{equation}
\mu\int_{0}^{t} \int_{\Sigma_{\tau}} \left|\nabla z_{\pm}\right|^2 dx d\tau \leq \frac{1}{2}\int_{\Sigma_{0}} \left|z_{\pm}\right|^2 dx . 
\end{equation}

The main result of this section is the following proposition concerning the lowest order energy estimate.
\begin{proposition}\label{prop:estimates on lowest order terms}
Assume that R is very large, under the assumptions \eqref{eq:a1}--\eqref{eq:a3}, there holds
\begin{equation}
\begin{aligned}
&\ E_{\pm}+ F_{\pm}+ D_{\pm}\lesssim E_{\pm}(0)+ \Big(E_{\mp}+ \sum_{k=0}^2 E_{\mp}^k \Big)^{\frac{1}{2}} \left(E_{\pm}^{0}+ F_{\pm}^{0}\right)+ \mu D_{\pm}^{0} .
\end{aligned} \label{eq:estimates on lowest order terms}
\end{equation}
\end{proposition}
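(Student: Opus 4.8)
The plan is to apply the linear energy estimate \eqref{eq:l7} of Proposition \ref{prop:l1} to the two equations of \eqref{eq:MHD}, with $f_\pm=z_\pm$, $\rho_\pm=-\nabla p$ and the weights $\lambda_\pm=(\log\langle w_\mp\rangle)^4$, and then to dispose of each of the five terms produced on its right-hand side. For this choice one takes $\lambda_+^1=(\log\langle\overline{w_-}\rangle)^4$, $\lambda_+^2(t,x)=(\log\langle\overline{w_-}\rangle)^4(t,x_1,x_2,x_3-2L)$ and symmetrically for $\lambda_-$; the relations between $\langle w_\mp\rangle$ and $\langle\overline{w_\mp}\rangle$ give $\lambda_\pm=(\log\langle w_\mp\rangle)^4$ on $[0,T^*]\times Q_L$, and properties \eqref{eq:l4}--\eqref{eq:l6} follow from $L_\mp\langle\overline{w_\pm}\rangle=0$, the $2L$-periodicity of $Z_\mp$, Lemma \ref{lem:e1} (using $\langle\overline{w_\mp}\rangle\ge R=100$, so that $\langle\overline{w_\mp}\rangle^{-1}(\log\langle\overline{w_\mp}\rangle)^3\le(\log\langle\overline{w_\mp}\rangle)^4$) and the comparability $\langle w_\pm\rangle\simeq\langle\overline{w_\pm}\rangle$ of Lemma \ref{lem:e2}; moreover on $\overline C_L^-$ one has $x_3^-=L$, hence $\langle\overline{w_-}\rangle(t,x_1,x_2,x_3-2L)=\langle\overline{w_-}\rangle(t,x)$, so $\lambda_+^1=\lambda_+^2$ there and $\delta_+(\lambda_+^1-\lambda_+^2)=0$; likewise $\delta_-(\lambda_-^1-\lambda_-^2)=0$. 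Taking $t=T^*$ and using $C_{u_\pm}^\pm=\overline C_{u_\pm}^\pm$ for $|u_\pm|\le L/4$ (the remark after Lemma \ref{lem:e6}), the left-hand side of \eqref{eq:l7} is exactly $E_\pm+F_\pm+D_\pm$. On the right: $\int_{\Sigma_0}\lambda_\pm|z_\pm|^2=E_\pm(0)$ since $\langle w_\mp\rangle|_{t=0}=\langle x\rangle$; the term $\mu^2\!\int\!\!\int\lambda_\pm|\nabla^2z_\pm|^2\le\mu D_\pm^0$ because $\lambda_\pm\le\langle w_\mp\rangle^2(\log\langle w_\mp\rangle)^4$; and in the boundary-difference block the $\delta_\pm$-part vanishes, while the two lateral quotients are $O(\varepsilon/L)$ (by the near-identity bound \eqref{eq:a1} and $\langle w_\mp\rangle\simeq L$ on the faces $x_1,x_2=\pm L$, via Lemma \ref{lem:e2}), so that block is $\lesssim\frac{\varepsilon}{L}(\log L\,E_\pm+D_\pm)+\frac{\varepsilon}{L}D_\pm^0$, which — using $\mu\log L\ll L$ and $\varepsilon\le\mu L$ (a consequence of $L\ge e^{1/\mu}$) — is absorbed by the left-hand side plus $\mu D_\pm^0$. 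It therefore remains to treat the source term $\int_0^t\!\!\int_{\Sigma_\tau}\lambda_\pm|z_\pm||\nabla p|$ and the viscous correction $\mu\int_0^t\!\!\int_{\Sigma_\tau}\frac{|\nabla\lambda_\pm|^2}{\lambda_\pm}|z_\pm|^2$.

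For the viscous correction, Lemma \ref{lem:e1} gives $\frac{|\nabla\lambda_\pm|^2}{\lambda_\pm}\lesssim\frac{(\log\langle w_\mp\rangle)^2}{\langle w_\mp\rangle^2}$. Where $|x_3|\ge L/2$ one has $\langle w_\mp\rangle\gtrsim L$ by Lemma \ref{lem:e2}, so this density is $\lesssim(\log L)^2/L^2$ and its contribution is a negligible multiple of $E_\pm$. On the core region $\langle w_\mp\rangle=\langle\overline{w_\mp}\rangle$; inserting a cutoff $\chi$ equal to $1$ on $[-L/3,L/3]^3$ and supported in $[-L/2,L/2]^3$, extending $\chi z_\pm$ by zero to $\R^3$, changing variables along the near-identity diffeomorphism $x\mapsto x^\mp(t,\cdot)$ (so that $\langle\overline{w_\mp}\rangle$ becomes $\langle y\rangle=(R^2+|y|^2)^{1/2}$), and using the logarithmically weighted Hardy inequality $\int_{\R^3}\langle y\rangle^{-2}(\log\langle y\rangle)^2|h|^2\,dy\lesssim\int_{\R^3}(\log\langle y\rangle)^2|\nabla h|^2\,dy$, one obtains, up to a negligible cutoff error, the bound $\mu\int_0^t\!\!\int_{\Sigma_\tau}(\log\langle w_\mp\rangle)^2|\nabla z_\pm|^2$. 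By Cauchy--Schwarz this is $\le D_\pm^{1/2}\big(\mu\int_0^t\!\!\int|\nabla z_\pm|^2\big)^{1/2}$, and the basic energy identity \eqref{eq:basic} bounds the last factor by $C E_\pm(0)^{1/2}$; hence by Young's inequality the viscous correction is $\le\eta D_\pm+C_\eta E_\pm(0)$ with $\eta$ at our disposal. This is precisely where the scaling $L\gg\mu^{-1}$ and the basic energy identity substitute for the Hardy argument that was immediate on $\R^3$ in \cite{He-Xu-Yu}.

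The source term $\int_0^t\!\!\int_{\Sigma_\tau}\lambda_\pm|z_\pm||\nabla p|$ is the main obstacle. From \eqref{eq:p} and $\operatorname{div}z_\pm=0$, $\nabla p$ can be written as a zeroth-order operator applied to $z_\mp\cdot\nabla z_\pm$, namely $\partial_i p=-R_iR_k\big((z_\mp\cdot\nabla z_\pm)^k\big)$ with $R_i$ the Riesz transforms on $Q_L$; the torus Green's function differs from the $\R^3$ one by a convergent family of image terms whose contribution to any weighted estimate is $O(L^{-\alpha})$ and is killed by $L\gg1$. Combining a weighted $L^2$ bound for $\nabla p$ on $Q_L$ with (i) the weighted Sobolev inequality of Lemma \ref{lem:e3}, which furnishes $\|z_\mp\|_{L^\infty}+\|\langle w_\pm\rangle(\log\langle w_\pm\rangle)^2\nabla z_\mp\|_{L^\infty}\lesssim\big(E_\mp+\sum_{k=0}^2E_\mp^k\big)^{1/2}$; (ii) the separation estimate of Lemma \ref{lem:e4}, $\langle w_+\rangle\langle w_-\rangle\gtrsim R(R^2+\tau^2)^{1/2}$, which supplies $\tau$-integrable decay; and (iii) the coarea formula along the characteristic foliation $\{C^\pm_{u_\pm}\}_{|u_\pm|\le L/4}$ together with the $O(L^{-\alpha})$ contribution of the remainder regions $W^{\le-L/4}_\pm\cup W^{\ge L/4}_\pm$, under which the traces of $z_\pm$ and $\nabla z_\pm$ to $C^\pm_{u_\pm}$ are controlled by $F_\pm$ and $F_\pm^0$ respectively, one arrives — after absorbing into the left-hand side the lower-order contributions, which carry a factor $\varepsilon$ by \eqref{eq:a3} — at $\int_0^t\!\!\int_{\Sigma_\tau}\lambda_\pm|z_\pm||\nabla p|\lesssim\big(E_\mp+\sum_{k=0}^2E_\mp^k\big)^{1/2}\big(E_\pm^0+F_\pm^0\big)$.

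Collecting the previous estimates, choosing $\eta$ small, and absorbing into the left-hand side every term carrying a coefficient $\lesssim\varepsilon$, $\mu\log L/L$, $\varepsilon/L$, $\eta$, or a quantity that tends to $0$ as $L\to\infty$ — legitimate by the bootstrap assumption \eqref{eq:a3} and the scaling $L\ge e^{1/\mu}$ — yields \eqref{eq:estimates on lowest order terms}; interchanging the roles of $+$ and $-$ gives the bound for the other sign. The decisive difficulty is the pressure estimate: unlike on $\R^3$, no ready-made bound is available, and one must build a weighted elliptic estimate adapted to the torus, isolate and absorb its $L$-dependent image corrections, and reorganize the resulting space-time integral along the two characteristic foliations so that the $z_\pm$- and $\nabla z_\pm$-factors become the fluxes $F_\pm$, $F_\pm^0$ through hypersurfaces lying entirely inside $[0,\log L]\times Q_L$, where the separation of the two Alfvén families of Lemma \ref{lem:e4} is available. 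A secondary point is the viscous correction, where the Hardy inequality (valid only after passing to $\R^3$) combined with the basic energy identity and the scaling $L\gg\mu^{-1}$ replaces the transparent $\R^3$ argument.
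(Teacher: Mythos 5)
Your proposal follows essentially the same route as the paper's own proof: the linear estimate \eqref{eq:l7} with $\lambda_\pm=(\log\langle w_\mp\rangle)^4$, smallness of the boundary quotients (the paper's Proposition \ref{prop:b1}), a cutoff--Lagrangian--Hardy argument closed by the basic energy identity \eqref{eq:basic} for the viscous correction (Proposition \ref{prop:v1}), and the method-of-images decomposition of $\nabla p$ combined with the weighted Sobolev inequality, the separation estimate of Lemma \ref{lem:e4}, and the coarea formula along the characteristic foliation (Proposition \ref{prop:p1}). Two remarks on where you deviate. First, your observation that $\delta_\pm(\lambda_\pm^1-\lambda_\pm^2)=0$ is a genuine micro-simplification: the exact $2L$-periodicity of $Z_-$ gives $x_3^-(t,x_1,x_2,x_3-2L)=x_3^-(t,x)-2L$ exactly, so on $\overline{C}_{L}^{-}$ the two branches of the weight coincide; the paper only proves the weaker bound $\lesssim\varepsilon/(L(\log L)^2)$ via the mean value theorem, which is all it needs. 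Likewise your single-pass log-weighted Hardy plus Cauchy--Schwarz against $D_\pm$ is a mild variant of the paper's two-step induction on the weight power, and both close. Second --- the only substantive reservation --- the pressure estimate is where essentially all of the work lies, and your sketch asserts its conclusion after naming the ingredients: the claim that the image corrections contribute ``$O(L^{-\alpha})$ to any weighted estimate'' conceals the boundary terms generated by integrating by parts in the image sum (the paper's $A_4$, $A_5$, handled by Sobolev traces on the faces $y_i=\pm L$), and the bound for the local piece is not a weighted $L^2$ bound for $\nabla p$ followed by Cauchy--Schwarz but a bilinear estimate in which the weight is split asymmetrically between $z_\pm$ and the kernel via Lemma \ref{lem:e4}, precisely so that the $z_\pm$-factor lands in the flux space controlled by $F_\pm$, $F_\pm^0$ through the coarea formula. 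Your outline identifies all of these mechanisms and the target bound matches Proposition \ref{prop:p1} after absorbing the $\varepsilon(E_\pm+F_\pm)$ contributions, so the plan is sound, but that proposition is the part that actually has to be proved in detail.
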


\subsubsection{Estimates on the pressure}
In the current subsection, we aim to derive the estimates concerning the pressure term $\nabla p$. We first derive the expression of $\nabla p$ on $Q_L$ as follows: 

\begin{lemma}[Expression of the pressure]\label{lem:pressure}
Assume that $(z_+,z_-)$ are sufficiently smooth $2L$-periodic solutions to \eqref{MHD}. Denoting by  $\vv k=(k_1,k_2,k_3)\in\mathbb{Z}^3$, and
\beno 
G_{\vv k}(x,y):=\f{1}{4\pi}\cdot\f{1}{|x-y_{\vv k}|},\quad 
y_{\vv k}:=y+2\vv k L,\quad\forall\, x,y\in Q_L,
\eeno 
we have for any $(t,x)\in[0,T^*]\times Q_L$,
\beq\label{eq:pressure}
\begin{aligned}
\nabla p(t,x) = &\sum_{k_1,k_2,k_3=-1}^1\sum_{i,j=1}^3 \int_{Q_L} \nabla_x G_{\vv k}(x,y) \bigl(\partial_{i} z_{+}^{j} \partial_{j} z_{-}^{i}\bigr)(t, y) dy \\ 
&\quad+\na p_{\geq 2}(t,x)+\vv B_1(t,x)+\vv B_2(t,x).
\end{aligned}
\eeq
Here $\na p_{\geq 2}(t,x)$ is the integral term with the kernel far  from zero and satisfying
\beq\label{eq:est for p geq 2}
|\na p_{\geq 2}(t,x)|\lesssim\f{1}{L^4}\int_{Q_L}|z_-(t,y)|\cdot|z_+(t,y)|dy,
\eeq
while 
 $\vv B_1(t,x)$ and $\vv B_2(t,x)$ are boundary integral terms satisfying
\beq\label{eq:est for B 1 2}\begin{aligned}
&\bigl|\vv B_1(t,x)\bigr|\lesssim \f{1}{L^2}\cdot\sum_{i=1}^3\int_{\{y_i=L\}}|z_\mp(t,y)|\cdot|\na z_\pm(t,y)|d\widehat{y_i},\\
&\bigl|\vv B_2(t,x)\bigr|
\lesssim\f{1}{L^3}\cdot\sum_{j=1}^3\int_{\{y_j=L\}}|z_-(t,y)|\cdot| z_+(t,y)|d\widehat{y_j}. 
\end{aligned}\eeq
\end{lemma}
\begin{proof} We divide the proof into several steps.

{\bf Step 1. Formal derivation of the pressure.} Since $\dive\,z_\pm=0$, taking the divergence  of the first equation in \eqref{MHD} yields
\beq\label{eq:P 1}
-\Delta p=\dive(z_+\cdot\na z_-)=:f,\quad\forall\,(t,x)\in(0,T^*]\times Q_L,
\eeq
where 
\beno 
f=\sum_{i,j=1}^3\p_j(z_+^i\p_iz_-^j)=\sum_{i,j=1}^3\p_j\p_i(z_+^iz_-^j)=\sum_{i,j=1}^3\p_jz_+^i\p_iz_-^j.
\eeno

We extend $f(t,\cdot)$ from $Q_L$ to $\R^3$ periodically by defining
\beno
\wt f(t,x)=f(t,x-2\vv k L),\quad\text{for}\quad x_i\in\bigl[(2k_i-1)L,\,(2k_i+1)L\bigr),\,\, i=1,2,3,
\eeno
where $\vv k=(k_1,k_2,k_3)\in\mathbb{Z}^3$. For $f\in C([0,T^*]\times Q_L)\cap L^\infty\bigl([0,T^*]\times Q_L\bigr)$, it follows that $\wt f\in C([0,T^*]\times\R^3)\cap L^\infty\bigl([0,T^*]\times\R^3\bigr)$.

We now consider the solution $\wt{p}(t,x)$ to the equation
\beno 
-\Delta\wt p=\wt f,\quad x\in\R^3.
\eeno
Then the restriction $p=\wt p|_{Q_L}$ solves \eqref{eq:P 1} on $Q_L$. By the theory of the Laplace equation, there exists a unique bounded solution $\wt p$ (up to a constant) such that
\beq\label{eq:P 2}
\na p(t,x)=\na \wt p(t,x)=\f{1}{4\pi}\int_{\R^3}\na\bigl(\f{1}{|x-y|}\bigr)\wt f(t,y)dy,\quad\forall\, x\in Q_L.
\eeq
This is the formal expression for $\na p$. The finiteness of the integral on the RHS of \eqref{eq:P 2} must be verified.

\medskip

{\bf Step 2. The explicit expression of the pressure.} In this step, we will derive a rigorous explicit expression for the pressure.

{\it Step 2.1. Formal expression in terms of $f$.}  By the definition of $\wt f$, we have
\beq\label{eq:P 3}\begin{aligned}
\na p(t,x)&=\f{1}{4\pi}\sum_{\vv k\in\mathbb{Z}^3}\int_{(2k_3-1)L}^{(2k_3+1)L}\int_{(2k_2-1)L}^{(2k_2+1)L}\int_{(2k_1-1)L}^{(2k_1+1)L}
\na\bigl(\f{1}{|x-y|}\bigr)f(t,y-2\vv k L)dy_1dy_2dy_3\\ 
&=\f{1}{4\pi}\sum_{\vv k\in\mathbb{Z}^3}\int_{Q_L}\na\bigl(\f{1}{|x-y_{\vv k}|}\bigr)f(t,y)dy,\quad\forall\, x\in Q_L,
\end{aligned}\eeq
where $y_{\vv k}=y+2\vv k L$. Note that on $Q_L$,
\beno 
-\Delta\Bigl( \f{1}{4\pi}\int_{Q_L}\f{1}{|x-y_{\vv k}|}f(t,y)dy\Bigr)=\left\{\begin{aligned}
&f(t,x),\quad\text{if }\,\, \vv k=\vv 0,\\ 
& 0,\quad\text{if }\,\, \vv k\neq\vv 0.
\end{aligned}\right.
\eeno
This shows that to verify that $\na p$  defined in \eqref{eq:P 2} solves \eqref{eq:P 1}, we only need to check that the series on the RHS of  \eqref{eq:P 3} is convergent.

In view of the sigularity of $\f{1}{|x-y_{\vv k}|}$, we specify $\mathbb{Z}^3$ into three regions:
\beno
\mathbb{Z}^3=\bigl\{\vv k=\vv 0\bigr\}\bigcup\bigl\{\vv k\in\mathbb{Z}^3\,|\, \max\{|k_1|,\,|k_2|,\,|k_3|\}=1\bigr\}\bigcup\underbrace{\bigl\{\vv k\in\mathbb{Z}^3\,|\, \max\{|k_1|,\,|k_2|,\,|k_3|\}\geq 2\bigr\}}_{K_{\geq 2}}.
\eeno
For simplicity, we use the notation $G_{\vv k}(x,y)$ and denote by
\beno
p_{\vv k}(t,x):=\int_{Q_L}G_{\vv k}(x,y)f(t,y)dy,\quad\forall\, x\in Q_L,
\eeno
then
\beq\label{eq:P 3a} 
\na p=\sum_{k_1,k_2,k_3=-1}^1\na p_{\vv k}+\sum_{\vv k\in K_{\geq 2}}\na p_{\vv k}.
\eeq

{\it Step 2.2. An equivalent expression of  $\na p_{\vv k}$  for  $\vv k\in K_{\geq 2}$.} In this case, for any $x,y\in Q_L$, there holds
\beno
|x-y_{\vv k}|=|x-y-2\vv kL|\geq 2L,
\eeno
which gives rise to
\beq\label{eq:P 4}
\bigl|\na_x^m G_{\vv k}(x,y)\bigr|\lesssim\bigl|x-y_{\vv k}\bigr|^{-m-1}\lesssim \f{1}{L^{m+1}},\quad\forall\, m\in\mathbb{Z}_{\geq 0}.
\eeq

Since  $f=\sum_{i,j=1}^3\p_i\p_j(z_+^iz_-^j)$ is $2L$-periodic, integration by parts yields 
\beno\begin{aligned}
\na p_{\vv k}(t,x)&=-\sum_{i,j=1}^3\int_{Q_L}\p_{y_i}\na_xG_{\vv k}(x,y)\p_j(z_+^iz_-^j)(t,y)dy+\vv B_{1,\vv k}(t,x)\\ 
&=\sum_{i,j=1}^3\int_{Q_L}\p_{x_i}\na_xG_{\vv k}(x,y)\p_j(z_+^iz_-^j)(t,y)dy+\vv B_{1,\vv k}(t,x),
\end{aligned}\eeno
where the boundary term  $\vv B_{1,\vv k}(t,x)$ is given by
\beno 
\vv B_{1,\vv k}(t,x):=\sum_{i=1}^3\int_{\{y_i=L\}}\Bigl(\underbrace{\na_xG_{\vv k}(x,y)\big|_{y_i=L}-\na_xG_{\vv k}(x,y)\big|_{y_i=-L}}_{q_{1,\vv k}(x,\widehat{y_i})}\Bigr)(z_-\cdot\na z_+^i)(t,y)d\widehat{y_i}.
\eeno
We remark here, there also holds
\beno
\na p_{\vv k}(t,x)=\sum_{i,j=1}^3\int_{Q_L}\p_{x_j}\na_xG_{\vv k}(x,y)\p_i(z_+^iz_-^j)(t,y)dy+\vv B_{1,\vv k}(t,x),
\eeno
with
\beno 
\vv B_{1,\vv k}(t,x)=\sum_{j=1}^3\int_{\{y_j=L\}}q_{1,\vv k}(x,\widehat{y_j})(z_+\cdot\na z_-^j)(t,y)d\widehat{y_j}.
\eeno 
Using integration by parts again, we have
\beq\label{eq:P 5}
\na p_{\vv k}(t,x)=\sum_{i,j=1}^3\int_{Q_L}\p_{x_j}\p_{x_i}\na_xG_{\vv k}(x,y)(z_+^iz_-^j)(t,y)dy+\vv B_{1,\vv k}(t,x)+\vv B_{2,\vv k}(t,x),
\eeq
where the boundary term  $\vv B_{2,\vv k}(t,x)$ is given by
\beno 
\vv B_{2,\vv k}(t,x):=\sum_{i,j=1}^3\int_{\{y_j=L\}}\Bigl(\underbrace{\p_{x_i}\na_xG_{\vv k}(x,y)\big|_{y_j=L}-\p_{x_i}\na_xG_{\vv k}(x,y)\big|_{y_j=-L}}_{q^i_{2,\vv k}(x,\widehat{y_j})}\Bigr)(z_+^iz_-^j)(t,y)d\widehat{y_j}.
\eeno

{\it Step 2.3. Estimates of $\sum_{\vv k\in K_{\geq 2}}\vv B_{1,\vv k}$ and $\sum_{\vv k\in K_{\geq 2}}\vv B_{2,\vv k}$.} To do this, we only need to estimate the summations of the kernels as follows:
\beno 
\sum_{\vv k\in K_{\geq 2}}q_{1,\vv k}(x,\widehat{y_i}),
\quad \sum_{\vv k\in K_{\geq 2}}q^i_{2,\vv k}(x,\widehat{y_j}).
\eeno

For $i=1$, since $y_1+2k_1L|_{y_1=-L}=y_1+2(k_1-1)L|_{y_1=L}$, there holds
\beno 
G_{(k_1,k_2,k_3)}(x,y)\big|_{y_1=-L}=G_{(k_1-1,k_2,k_3)}(x,y)\big|_{y_1=L},
\eeno
which gives rise to
\beno 
q_{1,\vv k}(x,\widehat{y_1})=\na_xG_{(k_1,k_2,k_3)}(x,y)\big|_{y_1=L}-\na_xG_{(k_1-1,k_2,k_3)}(x,y)\big|_{y_1=L}.
\eeno 
Since 
\beno 
K_{\geq 2}=\bigl\{\vv k\in\mathbb{Z}^3\,|\,|k_1|\geq 2,\, \max\{|k_2|,\,|k_3|\}\leq 1 \bigr\}\bigcup\bigl\{\vv k\in\mathbb{Z}^3\,|\,k_1\in\mathbb{Z},\,\max\{|k_2|,\,|k_3|\}\geq 2\bigr\},
\eeno
we have
\beq\label{eq:P 6}
\sum_{\vv k\in K_{\geq 2}}q_{1,\vv k}(x,\widehat{y_1})
=\sum_{k_2,k_3=-1}^1\Bigl(\na_x G_{(-2,k_2,k_3)}(x,y)\big|_{y_1=L}-\na_x G_{(1,k_2,k_3)}(x,y)\big|_{y_1=L}\Bigr).
\eeq
Similar expressions hold for $\sum_{\vv k\in K_{\geq 2}}q_{1,\vv k}(x,\widehat{y_i})$ with $i=2,3$. Using \eqref{eq:P 4}, we have
\beno 
\bigl|\sum_{\vv k\in K_{\geq 2}}q_{1,\vv k}(x,\widehat{y_1})\bigr|\lesssim\f{1}{L^2}.
\eeno 
The same estimates hold for $\sum_{\vv k\in K_{\geq 2}}q_{1,\vv k}(x,\widehat{y_i})$ with $i=2,3$. Consequently, denoting by 
$
\vv B_1(t,x):=\sum_{\vv k\in K_{\geq 2}}\vv B_{1,\vv k}(t,x),
$
we obtain
\beq\label{eq:P 7}
\bigl|\vv B_1(t,x)\bigr|
\lesssim\f{1}{L^2}\cdot\sum_{i=1}^3\int_{\{y_i=L\}}|z_\mp(t,y)|\cdot|\na z_\pm(t,y)|d\widehat{y_i}.
\eeq 

Similarly, denoting by 
$
\vv B_2(t,x):=\sum_{\vv k\in K_{\geq 2}}\vv B_{2,\vv k}(t,x),
$
we get
\beq\label{eq:P 8}
\bigl|\vv B_2(t,x)\bigr|
\lesssim\f{1}{L^3}\cdot\sum_{j=1}^3\int_{\{y_j=L\}}|z_-(t,y)|\cdot| z_+(t,y)|d\widehat{y_j}.
\eeq 

{\it Step 2.4. Estimate of $\sum_{\vv k\in K_{\geq 2}}\na p_{\vv k}$.} Due to \eqref{eq:P 7}, \eqref{eq:P 8} and \eqref{eq:P 5}, it remains to estimate the following term:
\beno 
\na p_{\geq 2}(t,x):=\sum_{\vv k\in K_{\geq 2}}\sum_{i,j=1}^3\int_{Q_L}\p_{x_j}\p_{x_i}\na_xG_{\vv k}(x,y)(z_+^iz_-^j)(t,y)dy.
\eeno 

Using \eqref{eq:P 4}, we have
\beno 
\sum_{\vv k\in K_{\geq 2}}\bigl|\p_{x_j}\p_{x_i}\na_xG_{\vv k}(x,y)\bigr|\lesssim\sum_{\vv k\in K_{\geq 2}}\f{1}{|x-y-2\vv kL|^4}
\lesssim\f{1}{L^4}\sum_{\vv k\in K_{\geq 2}}\f{1}{|\f{x-y}{L}-2\vv k|^4},
\eeno
which along with the fact that $|x-y-2\vv kL|\geq 2L$ yields 
\beno
\sum_{\vv k\in K_{\geq 2}}\bigl|\p_{x_j}\p_{x_i}\na_xG_{\vv k}(x,y)\bigr|\lesssim\f{1}{L^4},\quad\forall\, x,y\in Q_L.
\eeno
Thus, we obtain
\beq\label{eq:P 9}
\bigl|\na p_{\geq 2}(t,x)\bigr|\lesssim\f{1}{L^4}\int_{Q_L}|z_+(t,y)|\cdot|z_-(t,y)|dy,\quad\forall\, x\in Q_L.
\eeq 

{\it Step 2.5. The explicit expression of the pressure.} Thanks to Steps 2.1-2.4, we obtain the explicit expression for $\na p$ given in \eqref{eq:pressure}. Moreover,  the estimates  \eqref{eq:est for p geq 2} and \eqref{eq:est for B 1 2} hold.

On the other side,  if $\na p(t,x)$ is given by \eqref{eq:pressure}, one colud also verify that equation \eqref{eq:P 1} for $p$ is satisfied.
The lemma is proved.
\end{proof}

\begin{remark}
In \eqref{eq:pressure}, $\na p_{\geq 2}$  is merely a notation and does not represent the actual gradient of a function $p_{\geq 2}$, even though $p_{\geq 2}$ is formally given by
\beno 
\sum_{\vv k\in K_{\geq 2}}\sum_{i,j=1}^3\int_{Q_L}\p_{x_j}\p_{x_i}G_{\vv k}(x,y)(z_+^iz_-^j)(t,y)dy,
\eeno
which may not be a convergent series.
\end{remark}

As a consequence of the proof, we are able to derive both an explicit expression and the corresponding estimates for $\p\na p$.

\begin{corollary}\label{cor:pressure}
Under the assumptions of Lemma \ref{lem:pressure}, we have
\beq\label{eq:pressure 2}\begin{aligned}
\p\na p(t,x)=&\sum_{k_1,k_2,k_3=-1}^1\sum_{i,j=1}^3 \int_{Q_L}\nabla_x G_{\vv k}(x,y) \p\bigl(\partial_{i} z_{+}^{j} \partial_{j} z_{-}^{i}\bigr)(t, y) dy \\ 
&\quad+\p\na\wt{p}_{\geq 2}(t,x)+\wt{\vv B_1}(t,x)+\wt{\vv B_2}(t,x),
\end{aligned}\eeq
where $\p\na\wt{p}_{\geq 2}$ is the integral term with the kernel far  from zero and satisfying  
\beq\label{eq:est for wt p geq 2}
|\p\na\wt{p}_{\geq 2}(t,x)|\lesssim\f{1}{L^4}\int_{Q_L}|z_\mp(t,y)|\cdot|\na z_\pm(t,y)|dy,
\eeq
$\wt{\vv B_1}(t,x)$ and $\wt{\vv B_2}(t,x)$ are boundary integral terms such that
\beq\label{eq:est for wt B 1 2}\begin{aligned}
&\bigl|\wt{\vv B_1}(t,x)\bigr|\lesssim \f{1}{L^3}\cdot\sum_{i=1}^3\int_{\{y_i=L\}}|z_\mp(t,y)|\cdot|\na z_\pm(t,y)|d\widehat{y_i},\\ 
&\bigl|\wt{\vv B_2}(t,x)\bigr|\lesssim \f{1}{L^2}\cdot\sum_{i=1}^3\int_{\{y_i=L\}}|\na z_-(t,y)|\cdot|\na z_+(t,y)|d\widehat{y_i}.
\end{aligned}\eeq
\end{corollary}
\begin{proof} We adopt the notations from the proof of Lemma \ref{lem:pressure} and provide only a sketch of the proof for this corollary.

Due to \eqref{eq:P 3a}, we first have
\beno 
\p\na p(t,x)=\sum_{k_1,k_2,k_3=-1}^1\p\na p_{\vv k}+\sum_{\vv k\in K_{\geq 2}}\p\na p_{\vv k}.
\eeno 

 Following a similar argument to Steps 2.2-2.4 in the proof of Lemma \ref{lem:pressure}, for the term $\sum_{\vv k\in K_{\geq 2}}\p\na p_{\vv k}$, we have
\beno
\sum_{\vv k\in K_{\geq 2}}\p\na p_{\vv k}(t,x)=\p\na\wt{p}_{\geq 2}(t,x)
+\wt{\vv B_1}(t,x),
\eeno 
where $\na\wt{p}_{\geq 2}$ is a notational convention, analogous to $\na p_{\geq 2}$, and satisfies
\beno
\p\na\wt{p}_{\geq 2}(t,x)=\sum_{\vv k\in K_{\geq 2}}\sum_{i=1}^3\int_{Q_L}\p_x\p_{x_i}\na_x G_{\vv k}(x,y)\bigl(z_\mp\cdot\na z_\pm^{i}\bigr)(t,y)dy.
\eeno
Meanwhile, $\wt{\vv B_1}(t,x)$ is a boundary integral term with an expression similar to $\vv B_1(t,x)$ and satisfies
\beno
\bigl|\wt{\vv B_1}(t,x)\bigr|\lesssim\f{1}{L^3}\cdot\sum_{i=1}^3\int_{\{y_i=L\}}|z_\mp(t,y)|\cdot|\na z_\pm(t,y)|d\widehat{y_i}.
\eeno
This establishes the first inequality in \eqref{eq:est for wt B 1 2}.

When $k_1,\, k_2,\,k_3\in\{-1,0,1\}$, integration by parts yields 
\beno\begin{aligned}
\p_1\na p_{\vv k}(t,x)&=-\sum_{i,j=1}^3\int_{Q_L}\p_{y_1}\na_xG_{\vv k}(x,y)(\p_jz_+^i\p_iz_-^j)(t,y)dy\\ 
&=\sum_{i,j=1}^3\int_{Q_L}\na_xG_{\vv k}(x,y)\p_{y_1}(\p_jz_+^i\p_iz_-^j)(t,y)dy+\wt{\vv B}^1_{2,\vv k}(t,x),
\end{aligned}\eeno
where $\wt{\vv B}^1_{2,\vv k}(t,x)$ is a boundary integral term given by
\beno\begin{aligned}
\wt{\vv B}^1_{2,\vv k}(t,x)&=-\sum_{i,j=1}^3\int_{\{y_1=L\}}\bigl(\underbrace{\na_xG_{\vv k}(x,y)|_{y_1=L}-\na_xG_{\vv k}(x,y)|_{y_1=-L}}_{q_{1,\vv k}(x,\widehat{y_1})}\bigr)\cdot(\p_jz_+^i\p_iz_-^j)(t,y)dy_2dy_3.
\end{aligned}\eeno 
A similar argument to the one used in Step 2.3 of the proof of Lemma \ref{lem:pressure} yields
\beno 
\sum_{k_1,k_2,k_3=-1}^1q_{1,\vv k}(x,\widehat{y_1})=-\sum_{k_2,k_3=-1}^1
\bigl(\na_x G_{(-2,k_2,k_3)}(x,y)-\na_x G_{(1,k_2,k_3)}(x,y\bigr)\big|_{y_1=L}.
\eeno

For $k_1=-2$ and $1$, we have for any $x_1\in[-L,L]$ and $y_1=L$,
\beno 
|x_1-y_1-2k_1L|=|x_1-(2k_1+1)L|\geq 2L,
\eeno
which shows that
\beno 
\bigl|\sum_{k_1,k_2,k_3=-1}^1q_{1,\vv k}(x,\widehat{y_1})\bigr|\lesssim\f{1}{L^2}. 
\eeno 

Thus, for $\p=\p_1$, denoting $\wt{\vv B_2}=\sum_{k_1,k_2,k_3=-1}^1\wt{\vv B}^1_{2,\vv k}$, we obtain the second inequality in \eqref{eq:est for wt B 1 2}. The same estimates hold for $\p=\p_2$ and $ \p_3$. This completes the proof of the corollary.
\end{proof}

\medskip

We are now in a position to present the estimates for the pressure term that arise in the lowest-order energy estimates.

\begin{proposition}\label{prop:p1}
Let $R\geq 100$. Then under the assumptions \eqref{eq:a1}--\eqref{eq:a3},   for all $t\in [0, T^*]$, we have
\begin{equation}\label{eq:p0}
\begin{aligned}
&\int_{0}^{t} \int_{\Sigma_{\tau}}\left(\log \langle w_{\mp}\rangle\right)^4\left|z_{\pm}\right|\cdot \left|\nabla p\right| dx d\tau 
 \lesssim \Big(E_{\mp}+ \sum_{l=0}^2 E_{\mp}^l \Big)^{\frac{1}{2}} \left(E_{\pm}+ E_{\pm}^{0}+ F_{\pm}(t)+ F_{\pm}^{0}(t) \right).
\end{aligned} 
\end{equation}
\end{proposition}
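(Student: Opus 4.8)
\textbf{Proof proposal for Proposition \ref{prop:p1}.}

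The plan is to first strip the derivative off the pressure and onto the weight, then represent $p$ by a convolution that is only \emph{mildly} singular, and finally reduce the nonlocal quadratic expression to the energy and flux of the single wave $z_{\pm}$ by pulling out the opposite wave in $L^\infty$. I treat the $+$ case; the $-$ case is symmetric. Since $\operatorname{div} z_{+}=0$ and $z_{+}$, $p$ are $2L$-periodic, integration by parts gives
\begin{equation*}
\int_{\Sigma_{\tau}}\left(\log\langle w_{-}\rangle\right)^{4}z_{+}\cdot\nabla p\,dx=-\int_{\Sigma_{\tau}}p\,z_{+}\cdot\nabla\!\left(\left(\log\langle w_{-}\rangle\right)^{4}\right)dx,
\end{equation*}
and by Lemmas \ref{lem:e1}--\ref{lem:e2} one has $\bigl|\nabla(\log\langle w_{-}\rangle)^{4}\bigr|\lesssim\langle w_{-}\rangle^{-1}(\log\langle w_{-}\rangle)^{3}$, so it suffices to bound $\int_{0}^{t}\!\int_{\Sigma_{\tau}}\langle w_{-}\rangle^{-1}(\log\langle w_{-}\rangle)^{3}|z_{+}|\,|p|\,dx\,d\tau$. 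For $p$ I would use $\operatorname{div} z_{-}=0$ to rewrite \eqref{eq:p} as $-\Delta p=\partial_{k}\!\left(z_{-}^{j}\partial_{j}z_{+}^{k}\right)$, so that on $Q_{L}$, $p=\partial_{k}G_{L}*\!\left(z_{-}\cdot\nabla z_{+}^{k}\right)$ with $G_{L}$ the periodic Green's function of $-\Delta$; the gain is that the kernel $\partial_{k}G_{L}$ behaves like $|x-y|^{-2}$ near the diagonal, which is locally integrable in $\R^{3}$, so no Calder\'on--Zygmund singular integral appears. I would split $G_{L}$ into the whole-space part $\Gamma$ plus a periodic correction; using Lemma \ref{lem:e2} (the effective support and decay of $z_{\pm}$ stay confined to $[-L,L]^{2}\times[-\tfrac{L}{4},\tfrac{L}{4}]$ up to fast tails) together with $t\le T^{*}\le\log L\ll L$, the correction contributes only $O(L^{-\alpha})$ times the right-hand side of \eqref{eq:p0} and is absorbed.

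For the main ($\Gamma$) part I would pull $z_{-}$ out of the convolution pointwise by the weighted Sobolev inequality \eqref{eq:e11} of Lemma \ref{lem:e3}, which produces the factor $\bigl(E_{-}+E_{-}^{0}+E_{-}^{1}\bigr)^{1/2}\le\bigl(E_{-}+\sum_{k\le2}E_{-}^{k}\bigr)^{1/2}$ and a decay $(\log\langle w_{+}\rangle(y))^{-2}$ at the convolution variable. What remains is the weighted bilinear expression in $z_{+}$ alone,
\begin{equation*}
\bigl(E_{-}+\textstyle\sum_{k\le2}E_{-}^{k}\bigr)^{1/2}\int_{0}^{t}\!\int_{\Sigma_{\tau}}\frac{(\log\langle w_{-}\rangle(x))^{3}}{\langle w_{-}\rangle(x)}\,|z_{+}(x)|\left(\int_{\Sigma_{\tau}}\frac{|\nabla z_{+}(y)|}{|x-y|^{2}\,(\log\langle w_{+}\rangle(y))^{2}}\,dy\right)dx\,d\tau,
\end{equation*}
which I would estimate by a Cauchy--Schwarz/Schur-testing argument after a dyadic near/far splitting of the $y$-integral and a decomposition of the $x$- and $y$-regions along the characteristic foliations $\{C^{\pm}_{u_{\pm}}\}$ of \S\ref{geometries subsection}. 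On the near-diagonal piece the kernel is integrable and the weights at $x$ and $y$ are comparable (weights are $\gtrsim R=100$ and vary slowly, Lemma \ref{lem:e1}), so one lands on $E_{+}^{1/2}(E_{+}^{0})^{1/2}$-type quantities after Cauchy--Schwarz.

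The delicate point in closing the $\tau$-integral uniformly in $L$ is to distribute weights correctly. I would spend the abundant $\langle w_{-}\rangle$-weight hidden in $E_{+}^{0}=\sup_{\tau}\int\langle w_{-}\rangle^{2}(\log\langle w_{-}\rangle)^{4}|\nabla z_{+}|^{2}$, together with the decays $\langle w_{-}\rangle^{-1}$, $(\log\langle w_{+}\rangle)^{-2}$ and $|x-y|^{-2}$, across the two $z_{+}$-factors, and then invoke the separation estimate \eqref{eq:e13} of Lemma \ref{lem:e4}, $\log\langle w_{+}\rangle\log\langle w_{-}\rangle\gtrsim\log R\cdot\log\langle\tau\rangle$, to convert the surplus weight into decay in $\langle\tau\rangle$, so that in the ``well-separated'' regions the contribution is $\lesssim E_{+}+E_{+}^{0}$ with an $L$-independent constant. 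In the remaining region, where $x$ and $y$ both sit near a single characteristic hypersurface $C^{+}_{u_{+}}$ (of $z_{-}$) and the separation yields no gain, I would replace the $\Sigma_{\tau}$-integration by integration over that hypersurface using the coarea formula and the trace Lemma \ref{lem:e6}, together with $C^{+}_{u_{+}}=\overline{C}^{+}_{u_{+}}$ for $|u_{+}|\le\tfrac{L}{4}$ (Lemma \ref{lem:e2} and \eqref{eq:e18}); this produces precisely $F_{+}(t)$ and $F_{+}^{0}(t)$, and the maneuver is legitimate because for $|u_{+}|\le\tfrac{L}{4}$ the surface $C^{+}_{u_{+}}$ lies entirely inside $[0,T^{*}]\times Q_{L}$. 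Collecting the pieces gives \eqref{eq:p0}.

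\textbf{Main obstacle.} The hard part is the non-locality of the pressure on a bounded domain without incurring $L$-dependent constants: the periodic Green's function $G_{L}$ differs from $\Gamma$ by a correction whose naive bounds grow with $L$ (reflecting the only conditionally convergent image sum for second derivatives of $\Gamma$), so one cannot simply import the $\R^{3}$ pressure estimates of \cite{He-Xu-Yu}. Making rigorous that, in the regime $T^{*}\le\log L\ll L$ with the Alfv\'en waves effectively localized in $[-L,L]^{2}\times[-\tfrac{L}{4},\tfrac{L}{4}]$ (Lemma \ref{lem:e2}), this correction contributes only $O(L^{-\alpha})$ times the right-hand side of \eqref{eq:p0} is the crux. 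A secondary difficulty is the weight bookkeeping needed so that the $\tau$-integral over $[0,T^{*}]$ and the tail of the $y$-convolution both close with a constant independent of $L$; this is exactly where the separation estimate Lemma \ref{lem:e4} and the passage to flux integrals through the interior hypersurfaces $\overline{C}^{\pm}_{t,u_{\pm}}$ ($|u_{\pm}|\le\tfrac{L}{4}$) are indispensable.
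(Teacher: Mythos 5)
Your overall architecture matches the paper's: a lattice/image decomposition of the Green's function with the far translates contributing $O(L^{-\alpha})$ after integrating extra derivatives onto the kernel, an $L^{\infty}$ extraction of the opposite wave via the weighted Sobolev inequality, and a coarea/flux conversion over the characteristic foliation to close the $\tau$-integral. The gap lies in your opening move and the weight bookkeeping it forces. By integrating by parts onto the weight you must estimate $p$ rather than $\nabla p$, so only one derivative is distributed over the bilinear form and one of the two fields stays undifferentiated. With your choice $p=\partial_{k}G_{L}*(z_{-}^{j}\partial_{j}z_{+}^{k})$, pulling out the undifferentiated $z_{-}$ by \eqref{eq:e11} yields only $(\log\langle w_{+}\rangle(y))^{-2}$, with \emph{no} power of $\langle w_{+}\rangle^{-1}$; and the weight you created at $x$, $\langle w_{-}\rangle^{-1}(\log\langle w_{-}\rangle)^{3}$, decays in $u_{-}$, which is the wrong foliation for $z_{+}$. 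The flux $F_{+}$, $F_{+}^{0}$ is produced by a coarea integral in $u_{+}$, which requires an integrable density in $u_{+}$ — concretely $\langle w_{+}\rangle^{-1}(\log\langle w_{+}\rangle)^{-2}$, as in the paper's $J_{1},J_{2}$ (see \eqref{eq:p39}); $(\log\langle w_{+}\rangle)^{-2}$ alone is not integrable in $u_{+}$, and the separation estimate \eqref{eq:e13} applied to your surplus $\langle w_{-}\rangle^{-1}$ gives only $\log\langle\tau\rangle^{-1}$ or trades it for \emph{growth} in $\langle w_{+}\rangle$, neither of which closes $\int_{0}^{\log L}d\tau$ uniformly in $L$. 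So the displayed intermediate expression does not "produce precisely $F_{+}(t)$ and $F_{+}^{0}(t)$" as claimed. The paper avoids this by keeping $|z_{+}|\,|\nabla p|$ with $-\Delta p=\partial_{i}z_{+}^{j}\partial_{j}z_{-}^{i}$ (one derivative on each factor), so the $L^{\infty}$ bound \eqref{eq:e12} on $\nabla z_{-}$ delivers the full $\langle w_{+}\rangle^{-1}(\log\langle w_{+}\rangle)^{-2}$, and by splitting the weight $(\log\langle w_{-}\rangle)^{4}$ at $x$ into two copies of $\frac{(\log\langle w_{-}\rangle)^{2}}{\langle w_{+}\rangle^{1/2}\log\langle w_{+}\rangle}$ — one for $z_{+}(x)$, one transferred through the kernel to $\nabla z_{+}(y)$ — each of which is exactly flux-controlled.

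Two further points. First, your integration by parts ignores that $(\log\langle w_{-}\rangle)^{4}$ is only piecewise smooth: it jumps across $\overline{C}_{L}^{-}$ (this is why the paper's Propositions \ref{prop:l1} and \ref{prop:b1} carry the $\delta_{\pm}(\lambda_{\pm}^{1}-\lambda_{\pm}^{2})$ terms), so a surface term on $\overline{C}_{t,L}^{-}$ appears that you would have to estimate. Second, the correction $G_{L}-\Gamma$ for \emph{first} derivatives of the periodic Green's function is not merely "mildly" behaved: $\sum_{\vec k}|x-y-2L\vec k|^{-2}$ diverges over $\mathbb{Z}^{3}$, so the renormalization via two integrations by parts on the far translates (producing the $|x-y|^{-4}$ kernels in $A_{1}$ and the boundary terms $A_{4},A_{5}$) is not optional; the paper's limiting argument with the cutoffs $\chi_{n}$ is doing exactly this, and your plan would need the same construction for $p$ itself.
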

\begin{proof} We prove \eqref{eq:p0} only for the term $\int_{0}^{t} \int_{\Sigma_{\tau}}\left(\log \langle w_-\rangle\right)^4\left|z_+\right|\cdot \left|\nabla p\right| dx d\tau $, and the remaining part of \eqref{eq:p0} follows similarly.   We first use \eqref{eq:pressure} to decompose $\nabla p$ as
\beq\label{eq:p2}
\na p=\na p_0+\na p_1+\na p_{\geq 2}+\vv B_1+\vv B_2,
\eeq
where $\na p_{\geq 2}$, $\vv B_1$, and $\vv B_2$ are as defined in Lemma \ref{lem:pressure}, and 
\beno\begin{aligned} 
&\na p_0(t,x):=\frac{1}{4\pi}\sum_{i,j=1}^3 \int_{Q_L} \nabla_x \Big(\frac{1}{|x-y|}\Big)\left(\partial_{i} z_{+}^{j} \partial_{j} z_{-}^{i}\right)(\tau, y) dy,\\ 
&\na p_1(t,x):=\sum_{\substack{k_1,k_2,k_3 =-1 \\ \max\{ |k_1|,|k_2|,|k_3|\}=1}}^1\sum_{i,j=1}^3 \int_{Q_L} \nabla_x G_{\vv k}(x,y) \left(\partial_{i} z_{+}^{j} \partial_{j} z_{-}^{i}\right)(\tau, y) dy.
\end{aligned}\eeno

We shall divide the estimates for $\int_{0}^{t} \int_{\Sigma_{\tau}}\left(\log \langle w_-\rangle\right)^4\left|z_+\right|\cdot \left|\nabla p\right| dx d\tau $ into five parts corresponding to the decomposition of $\na p$.

\medskip 

{\bf Step 1. Estimate for  $\int_{0}^{t} \int_{\Sigma_{\tau}}\left(\log \langle w_-\rangle\right)^4\left|z_+\right|\cdot \left|\nabla p_0\right| dx d\tau $.} By virtue of H\"older's inequality, we have
\beq\label{eq:p1}\begin{aligned}
&\int_{0}^{t} \int_{\Sigma_{\tau}}\left(\log \langle w_-\rangle\right)^4\left|z_+\right|\cdot \left|\nabla p_0\right| dx d\tau\\ 
\leq
&\Bigl\|\f{\left(\log \langle w_-\rangle\right)^2}{\langle w_+\rangle^{\f12}\log \langle w_+\rangle}z_+\Bigr\|_{L^2_t(L^2_x)}\cdot\|\langle w_+\rangle^{\f12}\log \langle w_+\rangle\cdot \left(\log \langle w_-\rangle\right)^2\na p_0\|_{L^2_t(L^2_x)}.
\end{aligned}\eeq
We now estimate the terms on the RHS of \eqref{eq:p1}.

\smallskip 

{\it Step 1.1. Decomposition of $\na p_0$.} Introducing the cut-off function $\theta(r)\in C_c^\infty(\R)$ such that 
\begin{equation*}
0\leq \theta(r)\leq 1\quad\text{and}\quad  \theta(r)=
\begin{cases}
& 1, \quad \text{for }|r|\leq 1,\\
& 0, \quad \text{for }|r|\geq 2,
\end{cases}
\end{equation*}
we decompose $\na p_0$ into two parts as follows:
\beq\label{eq:p3}\begin{aligned}
\na p_0(t,x)&=\frac{1}{4\pi}\sum_{i,j=1}^3\int_{Q_L} \nabla_x\Big(\frac{1}{|x-y|}\Big)\cdot \theta(|x-y|)\cdot \left(\partial_{i} z_{+}^{j} \partial_{j} z_{-}^{i}\right)(t, y) dy\\ 
&\qquad+\frac{1}{4\pi}\sum_{i,j=1}^3\int_{Q_L} \nabla_x\Big(\frac{1}{|x-y|}\Big)\cdot\bigl(1- \theta(|x-y|)\bigr)\cdot \partial_{i}\left( z_{+}^{j} \partial_{j} z_{-}^{i}\right)(t, y) dy\\ 
&=:\frac{1}{4\pi}\sum_{i,j=1}^3A_{1}^{ij}(t,x)+\frac{1}{4\pi}\sum_{i,j=1}^3A_{2}^{ij}(t,x).
\end{aligned}\eeq

{\it Step 1.2. The bound of $\|\langle w_+\rangle^{\f12}\log \langle w_+\rangle\cdot \left(\log \langle w_-\rangle\right)^2A_{1}^{ij}\|_{L^2_t(L^2_x)}$.} Notice that the integral region for $A_{1}^{ij}(\tau,x)$ is $\{y\in Q_L\,|\,|x-y|\leq 2\}$.  By virtue of Lemma \ref{lem:p1}, for any $x,y\in Q_L,\, |x-y|\leq 2$, there hold
\beno
\langle w_{\pm}\rangle(\tau, x)\leq 12\langle w_{\pm}\rangle(\tau, y),\quad \log \langle w_{\pm}\rangle(\tau, x)\leq 2\log \langle w_{\pm}\rangle(\tau, y).
\eeno 
Consequently, we get
\beno\begin{aligned}
&\langle w_+\rangle^{\f12}\log \langle w_+\rangle\cdot \left(\log \langle w_-\rangle\right)^2|A_{1}^{ij}(\tau,x)|\\ 
\lesssim
&\int_{|x-y|\leq 2}\f{1}{|x-y|^2}\cdot\Bigl(\langle w_+\rangle^{\f12}\log \langle w_+\rangle\cdot \left(\log \langle w_-\rangle\right)^2\cdot|\na z_-|\cdot|\na z_+|\Bigr)(\tau,y)\cdot 1_{Q_L}(y)dy,
\end{aligned}\eeno
which along with Young's inequality implies 
\beno\begin{aligned}
&\|\langle w_+\rangle^{\f12}\log \langle w_+\rangle\cdot \left(\log \langle w_-\rangle\right)^2A_{1}^{ij}\|_{L^2_t(L^2_x)}\\ 
\lesssim&\||x|^{-2}\|_{L^1(|x|\leq 2)} \cdot\|\langle w_+\rangle^{\f12}\log \langle w_+\rangle\cdot \left(\log \langle w_-\rangle\right)^2\cdot|\na z_-|\cdot|\na z_+|\|_{L^2_t(L^2(Q_L))}\\ 
\lesssim&\|\langle w_+\rangle\bigl(\log \langle w_+\rangle\bigr)^2\na z_-\|_{L^\infty_{(t,x)}}
\cdot\Bigl\|\f{\left(\log \langle w_-\rangle\right)^2}{\langle w_+\rangle^{\f12}\log \langle w_+\rangle}\cdot\na z_+\Bigr\|_{L^2_t(L^2_x)}.
\end{aligned}\eeno

Thanks to the weighted Sobolev inequality \eqref{eq:e12}, we have
\beq\label{eq:p6a}
\|\langle w_+\rangle\bigl(\log \langle w_+\rangle\bigr)^2\na z_-\|_{L^\infty_{(t,x)}}\lesssim\Bigl(\sum_{l=0}^2 E^l_-\Bigr)^{\f12}.
\eeq
Then we obtain
\beq\label{eq:p6}
\sum_{i,j=1}^3\|\langle w_+\rangle^{\f12}\log \langle w_+\rangle\cdot \left(\log \langle w_-\rangle\right)^2A_{1}^{ij}\|_{L^2_t(L^2_x)}
\lesssim\Bigl(\sum_{l=0}^2 E^l_-\Bigr)^{\f12}
\cdot\Bigl\|\f{\left(\log \langle w_-\rangle\right)^2}{\langle w_+\rangle^{\f12}\log \langle w_+\rangle}\cdot\na z_+\Bigr\|_{L^2_t(L^2_x)}. 
\eeq

{\it Step 1.3. The bound of $\|\langle w_+\rangle^{\f12}\log \langle w_+\rangle\cdot \left(\log \langle w_-\rangle\right)^2A_{2}^{ij}\|_{L^2_t(L^2_x)}$.} We first use integration by parts to get
\beno\begin{aligned}
A_{2}^{ij}(t,x)&=\int_{Q_L}\p_{x_i}\Bigl(\nabla_x\Big(\frac{1}{|x-y|}\Big)\cdot\bigl(1- \theta(|x-y|)\bigr)\Bigr)\cdot\bigl( z_{+}^{j} \partial_{j} z_{-}^{i}\bigr)(t, y) dy\\ 
&\qquad
+\int_{\{y_i=L\}}\nabla_x\Big(\frac{1}{|x-y|}\Big)\cdot\bigl(1- \theta(|x-y|)\bigr)\cdot\bigl( z_{+}^{j} \partial_{j} z_{-}^{i}\bigr)(t, y)d\widehat{y_i}\\ 
&\qquad-\int_{\{y_i=-L\}}\nabla_x\Big(\frac{1}{|x-y|}\Big)\cdot\bigl(1- \theta(|x-y|)\bigr)\cdot\bigl( z_{+}^{j} \partial_{j} z_{-}^{i}\bigr)(t, y)d\widehat{y_i}\\
&=:A_{21}^{ij}(t,x)+A_{22}^{ij}(t,x)+A_{23}^{ij}(t,x),
\end{aligned}\eeno
where $\{y_i=\pm L\}:=\{y\in Q_L\,|\, y_i=\pm L\}$, $\widehat{y_1}=(y_2,y_3)$, $\widehat{y_2}=(y_1,y_3)$ and $\widehat{y_3}=(y_1,y_2)$.

{\underline{(a). Estimate for $A_{21}^{ij}$.}} For $A_{21}^{ij}$, we have
\beno\begin{aligned}
|A_{21}^{ij}(t,x)|&\lesssim\int_{Q_L}\f{1- \theta(|x-y|)}{|x-y|^3}\cdot\bigl(|z_+|\cdot|\na z_-|\bigr)(t,y)dy+\int_{Q_L}\f{|\theta'(|x-y|)|}{|x-y|^2}\cdot\bigl(|z_+|\cdot|\na z_-|\bigr)(t,y)dy\\ 
&=:A_{211}(t,x)+A_{212}(t,x).
\end{aligned}\eeno

Since 
\beno 
A_{212}(t,x)\lesssim\int_{1\leq |x-y|\leq 2}\f{1}{|x-y|^2}\cdot\bigl(|z_+|\cdot|\na z_-|\bigr)(t,y)\cdot 1_{Q_L}(y)dy,
\eeno
similar derivation as \eqref{eq:p6} gives rise to
\beq\label{eq:p7}
\|\langle w_+\rangle^{\f12}\log \langle w_+\rangle\cdot \left(\log \langle w_-\rangle\right)^2A_{212}\|_{L^2_t(L^2_x)}
\lesssim\Bigl(\sum_{l=0}^2 E^l_-\Bigr)^{\f12}
\cdot\Bigl\|\f{\left(\log \langle w_-\rangle\right)^2}{\langle w_+\rangle^{\f12}\log \langle w_+\rangle}\cdot z_+\Bigr\|_{L^2_t(L^2_x)}. 
\eeq

While for $A_{211}$, there holds
\beno 
A_{211}(t,x)\leq \int_{|x-y|\geq 1}\f{1}{|x-y|^3}\cdot\bigl(|z_+|\cdot|\na z_-|\bigr)(t,y)\cdot 1_{Q_L}(y)dy.
\eeno 
Due to Lemma \ref{lem:p1}, for any $x,y\in Q_L,\, |x-y|\geq 1$, there hold
\beno
\langle w_{\pm}\rangle(\tau, x)\leq 13|x-y|\cdot\langle w_{\pm}\rangle(\tau, y),\quad
\log \langle w_{\pm}\rangle(\tau, x) \leq 2\log (13|x-y|)\cdot \log \langle w_{\pm}\rangle(\tau, y),
\eeno
which shows that
\beno\begin{aligned}
&\langle w_+\rangle^{\f12}\log \langle w_+\rangle\cdot \left(\log \langle w_-\rangle\right)^2|A_{211}(\tau ,x)|\\ 
\lesssim
&\int_{|x-y|\geq 1}\f{\bigl(\log (13|x-y|)\bigr)^3}{|x-y|^{\f52}}\cdot\Bigl(\langle w_+\rangle^{\f12}\log \langle w_+\rangle\cdot \left(\log \langle w_-\rangle\right)^2\cdot|z_+|\cdot|\na z_-|\Bigr)(\tau,y)\cdot 1_{Q_L}(y)dy.
\end{aligned}\eeno

By Young's inequality, we deduce that
\beno\begin{aligned}
&\|\langle w_+\rangle^{\f12}\log \langle w_+\rangle\cdot \left(\log \langle w_-\rangle\right)^2A_{211}\|_{L^2_t(L^2_x)}\\ 
\lesssim&\Bigl\|\f{\bigl(\log (13|x|)\bigr)^3}{|x|^{\f52}}\Bigr\|_{L^2(|x|\geq 1)} \cdot\|\langle w_+\rangle^{\f12}\log \langle w_+\rangle\cdot \left(\log \langle w_-\rangle\right)^2\cdot|z_+|\cdot|\na z_-|\|_{L^2_t(L^1(Q_L))}\\ 
\lesssim&\|\langle w_+\rangle\bigl(\log \langle w_+\rangle\bigr)^2\na z_-\|_{L^\infty_t(L^2_x)}
\cdot\Bigl\|\f{\left(\log \langle w_-\rangle\right)^2}{\langle w_+\rangle^{\f12}\log \langle w_+\rangle} z_+\Bigr\|_{L^2_t(L^2_x)},
\end{aligned}\eeno
which  yields
\beq\label{eq:p8}
\|\langle w_+\rangle^{\f12}\log \langle w_+\rangle\cdot \left(\log \langle w_-\rangle\right)^2A_{211}\|_{L^2_t(L^2_x)}
\lesssim\Bigl(E^0_-\Bigr)^{\f12}
\cdot\Bigl\|\f{\left(\log \langle w_-\rangle\right)^2}{\langle w_+\rangle^{\f12}\log \langle w_+\rangle}\cdot z_+\Bigr\|_{L^2_t(L^2_x)}. 
\eeq

Thanks to \eqref{eq:p7} and \eqref{eq:p8}, we obtain
\beq\label{eq:p9}
\|\langle w_+\rangle^{\f12}\log \langle w_+\rangle\cdot \left(\log \langle w_-\rangle\right)^2A_{21}^{ij}\|_{L^2_t(L^2_x)}
\lesssim\Bigl(\sum_{l=0}^2 E^l_-\Bigr)^{\f12}
\cdot\Bigl\|\f{\left(\log \langle w_-\rangle\right)^2}{\langle w_+\rangle^{\f12}\log \langle w_+\rangle}\cdot z_+\Bigr\|_{L^2_t(L^2_x)}. 
\eeq

 {\underline{(b). Estimates for $A_{22}^{ij}$ and $A_{23}^{ij}$.}} For $A_{22}^{ij}$, we have
\beno
|A_{22}^{ij}(t,x)|\lesssim\int_{\{y_i=L\}\cap\{|x-y|\geq 1\}}\frac{1}{|x-y|^2}\cdot\bigl( |z_{+}|\cdot|\na z_{-}|\bigr)(t, y)d\widehat{y_i}.
\eeno
Since $|x^\pm_i(t,y)-y_i|\leq\f{L}{20}$ (see \eqref{eq:e9d}), we have
$
x^\pm_i(t,y)\big|_{y_i=L}\sim L,
$ 
which along  with Lemma \ref{lem:e2} shows that
\beno 
\langle w_\pm\rangle(t,y)|_{y_i=L}\gtrsim L\gtrsim \langle w_\pm\rangle(t,x).
\eeno

Thus,  denoting by 
\beno
f(t,x):=\Bigl(\frac{(\log \langle w_{-}\rangle)^2}{\langle w_{+}\rangle^{\frac{1}{2}} \log \langle w_{+}\rangle}\cdot |z_{+}|\Bigr)(t,x),
\quad g(t,x):=\bigl(\langle w_{+}\rangle (\log \langle w_{+}\rangle)^2\cdot|\nabla z_{-}|\bigr)(t,x),
\eeno 
 for $i=1$,  we obtain
\beq\label{eq:p10}\begin{aligned}
&\langle w_+\rangle^{\f12}\log \langle w_+\rangle\cdot \left(\log \langle w_-\rangle\right)^2|A_{22}^{1j}(t,x)|\\ 
\lesssim&\int_{\{y_1=L\}\cap\{|x-y|\geq 1\}}\frac{1}{|x-y|^2}\cdot
f(t,L,\widehat{y_1})\cdot g(t,L,\widehat{y_1})d\widehat{y_1}=: H(t,x).
\end{aligned}\eeq

For any $x,y\in Q_L$ such that $|x-y|\geq 1$ and $y_1=L$, we have 
\beno\begin{aligned}
\text{either}\quad &x_1\in[-L,L-\f12]\,\Rightarrow |x_1-L|\geq\f12,\quad \widehat{x_1},\,\widehat{y_1}\in[-L,L]^2,\\ 
\text{or}\quad &x_1\in(L-\f12,L]\,\Rightarrow |x_1-L|\leq\f12,\quad |\widehat{x_1}-\widehat{y_1}|\geq\f12.
\end{aligned}\eeno
Then by Young's inequality in $\R^2$ and Sobolev embedding inequality,  for $x_1\in[-L,L-\f12]$, we have
\beno
\begin{aligned}
\|H(t, x)\|_{L_{(x_2, x_3)}^{2}}&\lesssim \Bigl\|\frac{1}{|x_1-L|^2+|x_2|^2+|x_3|^2}\Bigr\|_{L_{(x_2, x_3)}^{2}(\mathbb{R}^{2})}\cdot\|f|_{y_1=L}\|_{L_{(y_2, y_3)}^{2}}\cdot \|g|_{y_1=L}\|_{L_{(y_2, y_3)}^{2}} \\
&\lesssim 
\frac{1}{|x_1-L|}\cdot\|f(t,\cdot)\|_{H^1(Q_L)}\cdot\|g(t,\cdot)\|_{H^1(Q_L)},
\end{aligned}
\eeno
while for all $x_1\in(L-\f12,L]$, there holds
\beno
|H(t,x)|\leq\int_{\{y_1=L\}}\frac{1_{|(x_2-y_2,x_3-y_3)|\geq\f12}}{|x_2-y_2|^2+|x_3-y_3|^2}\cdot f(t,L,y_2,y_3)\cdot g(t,L,y_2,y_3)dy_2dy_3,
\eeno
it follows that
\beno
\begin{aligned}
\|H(t, x)\|_{L_{(x_2, x_3)}^{2}}
&\lesssim \Bigl\|\frac{1_{|(x_2,x_3)|\geq\f12}}{|x_2|^2+|x_3|^2}\Bigr\|_{L^{2}(\mathbb{R}^{2})}\cdot\|f|_{y_1=L}\|_{L_{(y_2, y_3)}^{2}}\cdot \|g|_{y_1=L}\|_{L_{(y_2, y_3)}^{2}} \\
&\lesssim\|f(t,\cdot)\|_{H^1(Q_L)}\cdot\|g(t,\cdot)\|_{H^1(Q_L)}.
\end{aligned}
\eeno

Consequently, we have
\beno\begin{aligned}
\|H(t,\cdot)\|_{L^2_x}^2&\lesssim\bigl(\int_{-L}^{L-\f12}\f{1}{|x_1-L|^2}dx_1+\int_{L-\f12}^L 1\, dx_1\bigr)\cdot\|f(t,\cdot)\|_{H^1(Q_L)}^2\cdot\|g(t,\cdot)\|_{H^1(Q_L)}^2\\ 
&\lesssim \|f(t,\cdot)\|_{H^1(Q_L)}^2\cdot\|g(t,\cdot)\|_{H^1(Q_L)}^2,
\end{aligned}\eeno
which implies that
\beq\label{eq:p11}
\|H\|_{L^2_t(L^2_x)}\lesssim\|f\|_{L^2_t(H^1(Q_L))}\cdot\|g\|_{L^\infty_t(H^1(Q_L))}.
\eeq

Using the expressions for $f$ and $g$, along with Lemmas \ref{lem:e1}-\ref{lem:e2} yields
\beno\begin{aligned}
&\|g\|_{L^\infty_t(H^1(Q_L))}\lesssim
\|\langle w_{+}\rangle (\log \langle w_{+}\rangle)^2\nabla z_{-}\|_{L^\infty_t(L^2_x)}+\|\langle w_{+}\rangle (\log \langle w_{+}\rangle)^2\nabla^2 z_{-}\|_{L^\infty_t(L^2_x)},\\
&
\|f\|_{L^2_t(H^1(Q_L))}\lesssim \Bigl\|\frac{(\log \langle w_{-}\rangle)^2}{\langle w_{+}\rangle^{\frac{1}{2}} \log \langle w_{+}\rangle}\cdot z_{+}\Bigr\|_{L^2_t(L^2_x)}+\Bigl\|\frac{(\log \langle w_{-}\rangle)^2}{\langle w_{+}\rangle^{\frac{1}{2}} \log \langle w_{+}\rangle}\cdot\na z_{+}\Bigr\|_{L^2_t(L^2_x)},
\end{aligned}\eeno 
from which, together with\eqref{eq:p11} and \eqref{eq:p10}, we deduce that
\beq\label{eq:p12}\begin{aligned}
&\|\langle w_+\rangle^{\f12}\log \langle w_+\rangle\cdot \left(\log \langle w_-\rangle\right)^2A_{22}^{1j}|\|_{L^2_t(L^2_x)}\\ 
\lesssim 
&\bigl(E_-^0+E_-^1\bigr)^{\f12}\cdot\Bigl(\Bigl\|\frac{(\log \langle w_{-}\rangle)^2}{\langle w_{+}\rangle^{\frac{1}{2}} \log \langle w_{+}\rangle}\cdot z_{+}\Bigr\|_{L^2_t(L^2_x)}+\Bigl\|\frac{(\log \langle w_{-}\rangle)^2}{\langle w_{+}\rangle^{\frac{1}{2}} \log \langle w_{+}\rangle}\cdot\na z_{+}\Bigr\|_{L^2_t(L^2_x)}\Bigr).
\end{aligned}\eeq
The same estimate holds for $A_{22}^{ij}$ and $A_{23}^{ij}$ with $i,j=1,2,3$.

{\underline{(c). The bound for $A_{2}^{ij}$.}} Combining \eqref{eq:p9} and \eqref{eq:p12}, we derive that
\beq\label{eq:p13}\begin{aligned}
&\|\langle w_+\rangle^{\f12}\log \langle w_+\rangle\cdot \left(\log \langle w_-\rangle\right)^2A_{2}^{ij}\|_{L^2_t(L^2_x)}\\ 
\lesssim 
&\bigl(\sum_{l=0}^2E_-^l\bigr)^{\f12}\cdot\Bigl(\Bigl\|\frac{(\log \langle w_{-}\rangle)^2}{\langle w_{+}\rangle^{\frac{1}{2}} \log \langle w_{+}\rangle}\cdot z_{+}\Bigr\|_{L^2_t(L^2_x)}+\Bigl\|\frac{(\log \langle w_{-}\rangle)^2}{\langle w_{+}\rangle^{\frac{1}{2}} \log \langle w_{+}\rangle}\cdot\na z_{+}\Bigr\|_{L^2_t(L^2_x)}\Bigr).
\end{aligned}\eeq

{\it Step 1.4. Estimates for $\Bigl\|\frac{\left(\log \langle w_{-}\rangle\right)^2}{\langle w_{+}\rangle^{\frac{1}{2}} \log \langle w_{+}\rangle} z_{+}\Bigr\|_{L_t^{2}(L_{x}^{2})}$ and $\Bigl\|\frac{\left(\log \langle w_{-}\rangle\right)^2}{\langle w_{+}\rangle^{\frac{1}{2}} \log \langle w_{+}\rangle} \nabla z_{+}\Bigr\|_{L_t^{2}(L_{x}^{2})}$.}
To bound these terms, we recall two characteristic foliations for the spacetime region $[0,t] \times Q_L$ as follows:
\begin{equation*}
\Big(\bigcup\limits_{-\frac{L}{4}< u_{+}<\frac{L}{4}} C_{t,u_{+}}^{+}\Big) \bigcup W_{t,+}^{\leq -\frac{L}{4}} \bigcup W_{t,+}^{\geq \frac{L}{4}} \,\text{ and }\, \Big(\bigcup\limits_{-\frac{L}{4}< u_{-}<\frac{L}{4}} C_{t,u_{-}}^{-}\Big) \bigcup W_{t,-}^{\leq -\frac{L}{4}} \bigcup W_{t,-}^{\geq \frac{L}{4}}.
\end{equation*}
Then denoting by 
\beno
J_1:=\Bigl\|\frac{\left(\log \langle w_{-}\rangle\right)^2}{\langle w_{+}\rangle^{\frac{1}{2}} \log \langle w_{+}\rangle} z_{+}\Bigr\|_{L_t^{2}(L_{x}^{2})}^2=\iint_{[0,t]\times Q_L}\frac{\left(\log \langle w_{-}\rangle\right)^4}{\langle w_{+}\rangle\bigl(\log \langle w_{+}\rangle\bigr)^2}|z_{+}|^2dxdt,
\eeno
we have
\beq\label{eq:p37}
\begin{aligned}
&J_1=\underbrace{\int_{W_{t,+}^{\leq -\frac{L}{4}} \bigcup W_{t,+}^{\geq \frac{L}{4}}} \frac{\left(\log \langle w_{-}\rangle\right)^4\left|z_{+}(\tau, x)\right|^2}{\langle w_{+}\rangle \left(\log \langle w_{+}\rangle\right)^2} dx d\tau}_{J_{11}} + \underbrace{\int_{W_{t,+}^{[-\frac{L}{4}, \frac{L}{4}]}} \frac{\left(\log \langle w_{-}\rangle\right)^4\left|z_{+}(\tau, x)\right|^2}{\langle w_{+}\rangle \left(\log \langle w_{+}\rangle\right)^2} dx d\tau}_{J_{12}} . 
\end{aligned}\end{equation}

{\underline{(a). Estimate for $J_{11}$.}} For any $(\tau,x)\in W_{t,+}^{\leq -\frac{L}{4}} \bigcup W_{t,+}^{\geq \frac{L}{4}}$, there holds  
\beno
\left|u_{+}(\tau,x)\right|\geq \frac{L}{4}\Rightarrow\langle w_{+}\rangle(\tau,x)\geq \big(R^{2}+\left|u_{+}\right|^{2}\big)^{\frac{1}{2}}(\tau,x)\gtrsim L, 
\eeno 
which implies
\beno
\begin{aligned}
J_{11} & \lesssim \frac{1}{L \left(\log L\right)^2}\int_{W_{t,+}^{\leq -\frac{L}{4}} \bigcup W_{t,+}^{\geq \frac{L}{4}}} \left(\log \langle w_{-}\rangle\right)^4\left|z_{+}(\tau, x)\right|^2 dx d\tau\\
& \lesssim \frac{1}{L \left(\log L\right)^2}\int_{0}^{t} \int_{\Sigma_{\tau}} \left(\log \langle w_{-}\rangle\right)^4\left|z_{+}(\tau, x)\right|^2 dx d\tau.
\end{aligned} 
\eeno
Thus,  using $T^*=\log L$, we get 
\beq\label{eq:p38}
J_{11}\lesssim  \frac{T^*}{L \left(\log L\right)^2}\cdot E_{+} \lesssim \frac{E_{+}}{L\log L}.
\eeq

{\underline{(b). Estimate for $J_{12}$.}}
By Lemma\ref{lem:e2}, for all $(\tau, x)\in W_{t,+}^{[-\frac{L}{4}, \frac{L}{4}]}$, we have $|x_3|\leq \frac{L}{3}$ and thus $u_{+}(\tau,x)= x_{3}^{+}(\tau,x)$. Moreover, for $-\frac{L}{4}\leq a_{+} \leq\frac{L}{4}$, we have $C_{t,a_{+}}^{+}= \overline{C}_{t,a_{+}}^{+}$ and 
\begin{equation*}
W_{t,+}^{[-\frac{L}{4}, \frac{L}{4}]}= \bigcup\limits_{-\frac{L}{4}\leq u_{+}\leq \frac{L}{4}} C_{t,u_{+}}^{+}= \bigcup\limits_{-\frac{L}{4}\leq a_{+}\leq \frac{L}{4}} \overline{C}_{t,a_{+}}^{+}= \overline{W}_{t,+}^{[-\frac{L}{4}, \frac{L}{4}]}.
\end{equation*}

We consider the following change of coordinates:
\begin{equation*}
\begin{aligned}
\Phi:\ W_{t,+}^{[-\frac{L}{4}, \frac{L}{4}]}= \bigcup\limits_{-\frac{L}{4}\leq u_{+}\leq \frac{L}{4}} C_{t,u_{+}}^{+} & \longrightarrow  [-L, L]^{2}\times \big[-\frac{L}{4}, \frac{L}{4}\big] \times \left[0, t\right], \\
\left(x_{1}, x_{2}, x_{3}, \tau\right) & \mapsto \left(x_{1}, x_{2}, u_+, \tau\right)=\left(x_{1}, x_{2}, x_{3}^{+}(\tau, x), \tau\right) .
\end{aligned}
\end{equation*}
By virtue of the ansatz \eqref{eq:a1}, the Jacobian matrix $d\Phi$ of $\Phi$ satisfies
\begin{equation*}
\operatorname{det}\left(d\Phi\right)=\partial_{3}x_{3}^{+}=1+O(\varepsilon) .
\end{equation*}
Therefore, to compute the integral $J_{12}$, by using $\left(x_{1}, x_{2}, x_{3}^{+}, \tau\right)$ as reference coordinates and the fact that $d\sigma_{+}=\big(\sqrt{2}+O(\varepsilon)\big) dx_{1} dx_{2} dt$ (see \eqref{eq:e17}), we have
\beno\begin{aligned}
\ J_{12} 
&\lesssim\int_{-\frac{L}{4}}^{\frac{L}{4}}\left(\int_{C_{t,u_{+}}^{+}} \frac{\left(\log \langle w_{-}\rangle\right)^{4}\left|z_{+}\right|^{2}}{\langle w_{+}\rangle\left(\log \langle w_{+}\rangle\right)^{2}} d\sigma_{+}\right) d u_{+}\\ 
&\lesssim \int_{-\frac{L}{4}}^{\frac{L}{4}}\int_{C_{t,u_{+}}^{+}} \frac{\left(\log \langle w_{-}\rangle\right)^{4}\left|z_{+}\right|^{2}}{\big(R^{2}+\left|u_{+}\right|^{2}\big)^{\frac{1}{2}}\Big(\log \big(R^{2}+\left|u_{+}\right|^{2}\big)^{\frac{1}{2}}\Big)^{2}}  d\sigma_{+}d u_{+}.
\end{aligned}
\eeno
Since $u_+$ is a constant along $C_{t,u_{+}}^{+}$, we get
\beq\label{eq:p39}
\ J_{12}\lesssim\sup_{|u_{+}|\leq \frac{L}{4}} \int_{C_{t,u_{+}}^{+}} \left(\log \langle w_{-}\rangle\right)^{4}\left|z_{+}\right|^{2} d\sigma_{+}\cdot \int_{\mathbb{R}} \frac{1}{\big(R^{2}+\left|u_{+}\right|^{2}\big)^{\frac{1}{2}}\Big(\log \big(R^{2}+\left|u_{+}\right|^{2}\big)^{\frac{1}{2}}\Big)^{2}} d u_{+}\lesssim F_{+}(t) .
\eeq

Combining \eqref{eq:p38} and \eqref{eq:p39}, we obtain
\begin{equation}\label{eq:p40}
\Bigl\|\frac{\left(\log \langle w_{-}\rangle\right)^2}{\langle w_{+}\rangle^{\frac{1}{2}} \log \langle w_{+}\rangle} z_{+}\Bigr\|_{L_t^{2}(L_{x}^{2})}^2=J_1\lesssim \frac{E_{+}}{L\log L}+ F_{+}(t) . 
\end{equation}
Similarly, we derive
\beq\label{eq:p41}
\Bigl\|\frac{\left(\log \langle w_{-}\rangle\right)^2}{\langle w_{+}\rangle^{\frac{1}{2}} \log \langle w_{+}\rangle}\na z_{+}\Bigr\|_{L_t^{2}(L_{x}^{2})}^2\leq\Bigl\|\frac{\langle w_{-}\rangle\left(\log \langle w_{-}\rangle\right)^2}{\langle w_{+}\rangle^{\frac{1}{2}} \log \langle w_{+}\rangle}\na z_{+}\Bigr\|_{L_t^{2}(L_{x}^{2})}^2\lesssim \frac{E_{+}^0}{L\log L}+ F_{+}^{(0)}(t) . 
\eeq

{\it Step 1.5. Estimate for  $\na p_0$.} Thanks to \eqref{eq:p6} and \eqref{eq:p13}, we derive from \eqref{eq:p3} that 
\beno\begin{aligned}
&\|\langle w_+\rangle^{\f12}\log \langle w_+\rangle\cdot \left(\log \langle w_-\rangle\right)^2\na p_0\|_{L^2_t(L^2_x)}\\ 
\lesssim 
&\bigl(\sum_{l=0}^2E_-^l\bigr)^{\f12}\cdot\Bigl(\Bigl\|\frac{(\log \langle w_{-}\rangle)^2}{\langle w_{+}\rangle^{\frac{1}{2}} \log \langle w_{+}\rangle}\cdot z_{+}\Bigr\|_{L^2_t(L^2_x)}+\Bigl\|\frac{(\log \langle w_{-}\rangle)^2}{\langle w_{+}\rangle^{\frac{1}{2}} \log \langle w_{+}\rangle}\cdot\na z_{+}\Bigr\|_{L^2_t(L^2_x)}\Bigr),
\end{aligned}\eeno
which along with  \eqref{eq:p40} and \eqref{eq:p41} implies
\beno 
\|\langle w_+\rangle^{\f12}\log \langle w_+\rangle\cdot \left(\log \langle w_-\rangle\right)^2\na p_0\|_{L^2_t(L^2_x)}
\lesssim \bigl(\sum_{l=0}^2E_-^l\bigr)^{\f12}\cdot\Bigl(\frac{E_{+}+E_{+}^0}{L\log L}+ F_{+}(t)+ F_{+}^0(t)\Bigr)^{\f12}.
\eeno
Using  \eqref{eq:p40} again, we deduce from \eqref{eq:p1} that 
\beq\label{eq:p1a}
\int_{0}^{t} \int_{\Sigma_{\tau}}\left(\log \langle w_-\rangle\right)^4\left|z_+\right|\cdot \left|\nabla p_0\right| dx d\tau
\lesssim
 \bigl(\sum_{l=0}^2E_-^l\bigr)^{\f12}\cdot\Bigl(\frac{E_{+}+E_{+}^0}{L\log L}+F_{+}(t)+ F_{+}^0(t)\Bigr).
\eeq

{\bf Step 2. Estimate for  $\int_{0}^{t} \int_{\Sigma_{\tau}}\left(\log \langle w_-\rangle\right)^4\left|z_+\right|\cdot \left|\nabla p_1\right| dx d\tau $.} It is straightforward to obtain
\beq\label{eq:p15a}\begin{aligned}
\int_{0}^{t} \int_{\Sigma_{\tau}}\left(\log \langle w_-\rangle\right)^4\left|z_+\right|\cdot \left|\nabla p_1\right| dx d\tau 
&\leq T^*
\bigl\|\left(\log \langle w_-\rangle\right)^2z_+\bigr\|_{L^\infty_t(L^2_x)}\cdot\|\left(\log \langle w_-\rangle\right)^2\na p_1\|_{L^\infty_t(L^2_x)}\\ 
&\leq\log L\cdot \bigl(E_+\bigr)^{\f12}\cdot\|\left(\log \langle w_-\rangle\right)^2\na p_1\|_{L^\infty_t(L^2_x)}.
\end{aligned}\eeq
It remains to estimate $\|\left(\log \langle w_-\rangle\right)^2\na p_1\|_{L^\infty_t(L^2_x)}$. We will focus on the following term in the expression for $\na p_1$:
\beno 
\na p_{11}(t,x):=\int_{Q_L} \nabla_x G_{(0,0,1)}(x,y)\left(\partial_{i} z_{+}^{j} \partial_{j} z_{-}^{i}\right)(\tau, y) dy;
\eeno 
the estimates for the remaining terms in the expression of $\na p_1$ follow analogously. 

Since $G_{\vv k}(x,y)=\f{1}{4\pi}\cdot\f{1}{|x-y-2\vv kL|}$, we have
\beno 
|\nabla_x G_{(0,0,1)}(x,y)|\lesssim\f{1}{|x-y_{(0,0,1)}|^2}=\f{1}{|x_1-y_1|^2+|x_2-y_2|^2+|x_3-y_3-2L|^2}.
\eeno
Then we get
\beq\label{eq:p15}
|\na p_{11}(t,x)|\lesssim\int_{Q_L}\f{(|\na z_+|\cdot|\na z_-|)(t,y)}{|x-y_{(0,0,1)}|^2}\cdot\bigl( 1_{\geq\f{L}{3}}(|y_3|)+1_{<\f{L}{3}}(|y_3|)\bigr)\, dy=:I_1(t,x)+I_2(t,x).
\eeq 

\smallskip

{\it Step 2.1. Estimate for $I_1(t,x)=\int_{Q_L}\f{(|\na z_+|\cdot|\na z_-|)(t,y)}{|x-y_{(0,0,1)}|^2}\cdot1_{\geq\f{L}{3}}(|y_3|)\, dy$.} When $|y_3|\geq\f{L}{3}$, by Lemma \ref{lem:e2}, there holds $\f{L}{4}<|u_\pm(t,y)|\leq\f{9L}{8}$, which implies
\beno 
\langle w_{\pm}\rangle(t,y)\gtrsim L\gtrsim \langle w_{\pm}\rangle(t,x)\quad\text{and}\quad\log\langle w_{\pm}\rangle(t,y)\gtrsim\log L\gtrsim\log \langle w_{\pm}\rangle(t,x).
\eeno
Thereby we get
\beno\begin{aligned}
\left(\log \langle w_-\rangle\right)^2|I_1(t,x)|
&\lesssim \int_{Q_L}\f{(\left(\log \langle w_-\rangle\right)^2|\na z_+|\cdot|\na z_-|)(t,y)}{|x-y_{(0,0,1)}|^2}\cdot1_{\geq\f{L}{3}}(|y_3|)\, dy\\ 
&
\lesssim\f{1}{L^2}\int_{Q_L}\f{\bigl(f_1\cdot g_1\bigr)(t,y)}{|x-y_{(0,0,1)}|^2}\cdot1_{\geq\f{L}{3}}(|y_3|)\, dy,
\end{aligned}\eeno 
where $f_1,\,g_1$ are given by
\beno
f_1(t,x):=\bigl(\langle w_{-}\rangle(\log \langle w_{-}\rangle)^2\cdot |\na z_{+}|\bigr)(t,x),
\quad g_1(t,x):=\bigl(\langle w_{+}\rangle\cdot|\nabla z_{-}|\bigr)(t,x).
\eeno 

By virtue of Young's and H\"older's inequalities, we get
\beno\begin{aligned}
&\quad \bigl\|\int_{|x-y_{(0,0,1)}|\leq1}\f{\bigl(f_1\cdot g_1\bigr)(t,y)}{|x-y_{(0,0,1)}|^2}\cdot1_{\geq\f{L}{3}}(|y_3|)\cdot 1_{Q_L}(y)\,dy\bigr\|_{L^2_x}\\ 
&\lesssim
\bigl\|\f{1}{|x|^2}\bigr\|_{L^1(|x|\leq 1)}
\cdot\|f_1\|_{L^2_x}\cdot\|g_1\|_{L^\infty_x}
\lesssim \|f_1\|_{L^2_x}\cdot\|g_1\|_{L^\infty_x},
\end{aligned}\eeno
and 
\beno\begin{aligned}
&\quad \bigl\|\int_{|x-y_{(0,0,1)}|\geq 1}\f{\bigl(f_1\cdot g_1\bigr)(t,y)}{|x-y_{(0,0,1)}|^2}\cdot1_{\geq\f{L}{3}}(|y_3|)\cdot 1_{Q_L}(y)\,dy\bigr\|_{L^2_x}\\ 
&\lesssim
\|\f{1}{|x|^2}\|_{L^2(|x|\geq 1)}
\cdot\|f_1\|_{L^2_x}\cdot\|g_1\|_{L^2_x}
\lesssim \|f_1\|_{L^2_x}\cdot\|g_1\|_{L^2_x},
\end{aligned}\eeno
which give rise to
\beq\label{eq:p16}\begin{aligned}
\|\left(\log \langle w_-\rangle\right)^2I_1\|_{L^\infty_t(L^2_x)}&\lesssim\f{1}{L^2}\|f_1\|_{L^\infty_t(L^2_x)}\cdot\bigl(\|g_1\|_{L^\infty_{(t,x)}}+\|g_1\|_{L^\infty_t(L^2_x)}\bigr)\\ 
&\lesssim\f{1}{L^2}\bigl(\sum_{l=0}^2E_-^l\bigr)^{\f12}\cdot\bigl(E_+^0\bigr)^{\f12},
\end{aligned}\eeq
where we used the expressions of $f_1$ and $g_1$, and the weighted Sobolev inequality (Lemma \ref{lem:e3}) in the last inequality.

\smallskip

{\it Step 2.2. Estimate for $I_2(t,x)=\int_{Q_L}\f{(|\na z_+|\cdot|\na z_-|)(t,y)}{|x-y_{(0,0,1)}|^2}\cdot1_{<\f{L}{3}}(|y_3|)\, dy$.} When $|y_3|<\f{L}{3}$ and $x_3\in[-L,L]$, there holds $|x_3-y_3|\leq\f{4L}{3}$, which hints that $|x_3-y_3-2L|\geq\f{2L}{3}$. It follows that
\beno 
|x-y_{(0,0,1)}|^2\geq|x_3-y_3-2L|^2\geq\f{4}{9}L^2,
\eeno
which along with $\langle w_-\rangle\lesssim L$ gives rise to
\beno
\left(\log \langle w_-\rangle\right)^2|I_2(t,x)|
\lesssim\f{(\log L)^2}{L^2}
\int_{Q_L}(|\na z_+|\cdot|\na z_-|)(t,y)\,dy\lesssim \f{(\log L)^2}{L^2}\cdot \bigl(E_-^0\bigr)^{\f12}\cdot \bigl(E_+^0\bigr)^{\f12}.
\eeno 
Then there holds
\beq\label{eq:p16a}
\|\left(\log \langle w_-\rangle\right)^2I_2\|_{L^\infty_t(L^2_x)}
\lesssim|Q_L|^{\f12}\cdot \f{(\log L)^2}{L^2}\cdot \bigl(E_-^0\bigr)^{\f12}\cdot \bigl(E_+^0\bigr)^{\f12}
\lesssim\f{(\log L)^2}{L^{\f12}}\cdot \bigl(E_-^0\bigr)^{\f12}\cdot \bigl(E_+^0\bigr)^{\f12}.
\eeq

{\it Step 2.3. Estimate for $\int_{0}^{t} \int_{\Sigma_{\tau}}\left(\log \langle w_-\rangle\right)^4\left|z_+\right|\cdot \left|\nabla p_1\right| dx d\tau $.} Thanks to \eqref{eq:p15}, \eqref{eq:p16} and \eqref{eq:p16a}, we obtain the estimate for $\na p_{11}$, and similar bounds apply to the remaining terms in $\na p_1$. Hence, 
\beq\label{eq:p16b}
\|\left(\log \langle w_-\rangle\right)^2\na p_1\|_{L^\infty_t(L^2_x)}
\lesssim \f{(\log L)^2}{L^{\f12}}\cdot \bigl(\sum_{l=0}^2E_-^l\bigr)^{\f12}\cdot \bigl(E_+^0\bigr)^{\f12}.
\eeq
Then we deduce from \eqref{eq:p15a} that
\beq\label{eq:p17}
\int_{0}^{t} \int_{\Sigma_{\tau}}\left(\log \langle w_-\rangle\right)^4\left|z_+\right|\cdot \left|\nabla p_1\right| dx d\tau 
\lesssim\f{(\log L)^3}{L^{\f12}}\cdot \bigl(\sum_{l=0}^2E_-^l\bigr)^{\f12}\cdot \bigl(E_++E_+^0\bigr).
\eeq

{\bf Step 3. Estimate for $\int_{0}^{t} \int_{\Sigma_{\tau}}\left(\log \langle w_-\rangle\right)^4\left|z_+\right|\cdot \left|\nabla p_{\geq 2}\right| dx d\tau $.} Similarly as $\na p_1$, we first have
\beno
\int_{0}^{t} \int_{\Sigma_{\tau}}\left(\log \langle w_-\rangle\right)^4\left|z_+\right|\cdot \left|\nabla p_{\geq2}\right| dx d\tau 
\leq\log L\cdot E_+^{\f12}\cdot\|\left(\log \langle w_-\rangle\right)^2\na p_{\geq 2}\|_{L^\infty_t(L^2_x)}.
\eeno

By virtue of \eqref{eq:est for p geq 2} and $\log \langle w_-\rangle\lesssim\log L$, we get 
\beno
\left(\log \langle w_-\rangle\right)^2|\na p_{\geq 2}(t,x)|\lesssim\f{(\log L)^2}{L^4}\int_{Q_L}|z_-(t,y)|\cdot|z_+(t,y)|dy
\lesssim \frac{\left(\log L\right)^2}{L^4} E_{+}^{\frac{1}{2}} E_{-}^{\frac{1}{2}},
\eeno
which implies
\beq\label{eq:p18}
\|\left(\log \langle w_-\rangle\right)^2\na p_{\geq 2}\|_{L^\infty_t(L^2_x)}\lesssim L^{\f32}\cdot\frac{\left(\log L\right)^2}{L^4} E_{+}^{\frac{1}{2}} E_{-}^{\frac{1}{2}}=\frac{\left(\log L\right)^2}{L^{\f52}} E_{+}^{\frac{1}{2}} E_{-}^{\frac{1}{2}}.
\eeq
Then we derive that
\beq\label{eq:p19}
\int_{0}^{t} \int_{\Sigma_{\tau}}\left(\log \langle w_-\rangle\right)^4\left|z_+\right|\cdot \left|\nabla p_{\geq2}\right| dx d\tau 
\lesssim\frac{\left(\log L\right)^3}{L^{\f52}}\cdot E_{-}^{\frac{1}{2}}\cdot  E_{+}.
\eeq

{\bf Step 4. Estimate for $\int_{0}^{t} \int_{\Sigma_{\tau}}\left(\log \langle w_-\rangle\right)^4\left|z_+\right|\cdot\bigl(|\vv B_1|+|\vv B_2|\bigr) dx d\tau $.} Similarly as $\na p_1$, we first have
\beq\label{eq:p20}\begin{aligned}
&\quad \int_{0}^{t} \int_{\Sigma_{\tau}}\left(\log \langle w_-\rangle\right)^4\left|z_+\right|\cdot\bigl(|\vv B_1|+|\vv B_2|\bigr) dx d\tau\\ 
& 
\leq\log L\cdot E_+^{\f12}\cdot\bigl(\|\left(\log \langle w_-\rangle\right)^2\vv B_1\|_{L^\infty_t(L^2_x)}+\|\left(\log \langle w_-\rangle\right)^2\vv B_2\|_{L^\infty_t(L^2_x)}\bigr).
\end{aligned}\eeq

For $\|\left(\log \langle w_-\rangle\right)^2\vv B_1\|_{L^\infty_t(L^2_x)}$, the first inequality in \eqref{eq:est for B 1 2} yields
\beno 
|\vv B_1(t,x)|\lesssim\f{1}{L^2}\sum_{i=1}^3\int_{\{y_i=L\}}|z_+(t,y)|\cdot|\na z_-(t,y)|d\widehat{y_i}.
\eeno 
When $y_i=L$, Lemma \ref{lem:e2} shows that $\langle w_{\pm}\rangle(t, y)|_{y_i=L}\gtrsim L$, which gives rise to 
\beno 
\langle w_{\pm}\rangle(t, x)\lesssim L\lesssim \langle w_{\pm}\rangle(t, y)|_{y_i=L},\quad\log \langle w_{\pm}\rangle(t, x)\lesssim\log  L\lesssim\log \langle w_{\pm}\rangle(t, y)|_{y_i=L}.
\eeno 
 Hence, we obtain
\beno
\begin{aligned}
\left(\log \langle w_{-}\rangle\right)^2\left|\vv B_1(t, x)\right|
&\lesssim \frac{1}{L^3} \sum_{i=1}^3\int_{\{y_i=L\}}\left(\left(\log \langle w_{-}\rangle\right)^2 \left|z_{+}\right| \cdot\langle w_{+}\rangle\left|\nabla z_{-}\right|\right)(t, y)d\widehat{y_i}\\
& \lesssim \frac{1}{L^3}\sum_{i=1}^3 \|\big(\left(\log \langle w_{-}\rangle\right)^2 z_{+}\bigr)\big|_{y_i=L}\|_{L_{\widehat{y_i}}^{2}}\cdot\|\big(\langle w_{+}\rangle\nabla z_{-}\big)\big|_{y_i=L}\|_{L_{\widehat{y_i}}^{2}}\\
& \lesssim \frac{1}{L^3} \left(E_{+}+ E_{+}^0\right)^{\frac{1}{2}}\cdot \left(E_{-}^0+ E_{-}^1\right)^{\frac{1}{2}} ,
\end{aligned} 
\eeno
where we used Sobolev embedding theorem and Lemma \ref{lem:e1} in the last inequality. Consequently, we derive
\beq\label{eq:p21}
\|\left(\log \langle w_-\rangle\right)^2\vv B_1\|_{L^\infty_t(L^2_x)}
\lesssim\f{1}{L^{\f32}}\cdot \left(E_{-}^0+ E_{-}^1\right)^{\frac{1}{2}}\cdot \left(E_{+}+ E_{+}^0\right)^{\frac{1}{2}}.
\eeq
Similarly, using the second inequality of \eqref{eq:est for B 1 2}, we deduce that
\beq\label{eq:p22}
\|\left(\log \langle w_-\rangle\right)^2\vv B_2\|_{L^\infty_t(L^2_x)}
\lesssim\f{1}{L^{\f32}}\cdot \left(E_{-}+ E_{-}^0\right)^{\frac{1}{2}}\cdot \left(E_{+}+ E_{+}^0\right)^{\frac{1}{2}}.
\eeq

Thanks to \eqref{eq:p20}, \eqref{eq:p21} and \eqref{eq:p22}, we obtain
\beq\label{eq:p23}
\int_{0}^{t} \int_{\Sigma_{\tau}}\left(\log \langle w_-\rangle\right)^4\left|z_+\right|\cdot\bigl(|\vv B_1|+|\vv B_2|\bigr) dx d\tau
\lesssim\frac{\log L}{L^{\frac{3}{2}}}\cdot\left(E_{-}+ E_{-}^0+ E_{-}^1\right)^{\frac{1}{2}}\cdot\left(E_{+}+ E_{+}^0\right).
\eeq

{\bf Step 5. Estimate involving $\na p$.} Combining \eqref{eq:p1a}, \eqref{eq:p17}, \eqref{eq:p19}, and \eqref{eq:p23}, we conclude that
\beno
\int_{0}^{t} \int_{\Sigma_{\tau}}\left(\log \langle w_{-}\rangle\right)^4\left|z_{+}\right| \left|\nabla p\right| dx d\tau \lesssim \Big(E_{-}+ \sum_{l=0}^2 E_{-}^l \Big)^{\frac{1}{2}} \left(E_{+}+ E_{+}^{0}+ F_{+}(t)+ F_{+}^{0}(t) \right).
\eeno
This proves \eqref{eq:p0}. The proof of this proposition is completed.
\end{proof}

\subsubsection{Completion of the lowest order energy estimate}
In this subsection, we will complete the proof of Proposition \ref{prop:estimates on lowest order terms}.
\begin{proof}[\proofname\ of Proposition \ref{prop:estimates on lowest order terms}]
We only derive the lowest order energy estimate \eqref{eq:estimates on lowest order terms} for $z_+$, and the estimate for $z_-$ follows by a symmetric argument.

Thanks to Proposition \ref{prop:l1}, using \eqref{eq:l7} with $t=T^*$, $f_+=z_+$, $\rho_+=-\nabla p$ and $\lambda_+=\left(\log \langle w_-\rangle\right)^4$, we have
\begin{equation}\label{eq:c1}
\begin{aligned}
& \quad\sup_{0\leq t\leq T^*}\|\left(\log \langle w_-\rangle\right)^2z_+\|_{L^2(\Sigma_t)}^2+\mu\|\left(\log \langle w_-\rangle\right)^2\na z_+\|_{L^2_{T^*}(L^2_x)}^2\\
&\qquad + \sup\limits_{|u_+|\leq \frac{L}{4}} \int_{C^+_{u_+}} \left(\log \langle w_-\rangle\right)^4\left|z_+\right|^2 d\sigma_+\\ 
& \lesssim\|\left(\log \langle w_-\rangle\right)^2z_+\|_{L^2(\Sigma_0)}^2+ \mu\int_{0}^{T^*} \int_{\Sigma_t} \frac{\left(\log \langle w_-\rangle\right)^{2}}{\langle w_-\rangle^{2}}\left|z_+\right|^{2} dx dt \\ 
&\qquad+\mu^2\|\left(\log \langle w_-\rangle\right)^2\na^2 z_+\|_{L_{T^*}^2(L^2_x)}^2+ \int_{0}^{T^*} \int_{\Sigma_t} \left(\log \langle w_-\rangle\right)^4 \left|z_+\right|\cdot |\na p|\, dxdt,
\end{aligned} 
\end{equation}
where we have used the fact that $\frac{\left|\nabla \lambda_+\right|^2}{\lambda_+} \lesssim \frac{\left(\log \langle w_-\rangle\right)^{2}}{\langle w_-\rangle^{2}}$ for $\lambda_+=\left(\log \langle w_-\rangle\right)^4$. 

Using Proposition \ref{prop:p1} to control the last term in the RHS of \eqref{eq:c1}, then \eqref{eq:c1} is reduced to
\beq\label{eq:c2}\begin{aligned}
E_++ D_++ F_+
&\lesssim E_+(0)+ \Big(E_-+ \sum_{l=0}^2 E_-^l \Big)^{\frac{1}{2}} \left(E_++ E_+^{0}+ F_++ F_+^{0}\right)\\ 
&\qquad+ \mu D_+^{0}+\mu\int_{0}^{T^*} \int_{\Sigma_t} \frac{\left(\log \langle w_-\rangle\right)^{2}}{\langle w_-\rangle^{2}}\left|z_+\right|^{2} dx dt.
\end{aligned}\eeq

 We now estimate the term $\mu\int_{0}^{T^*} \int_{\Sigma_t} \frac{\left(\log \langle w_-\rangle\right)^{2}}{\langle w_-\rangle^{2}}\left|z_+\right|^{2} dxdt$. Let $\theta\in C_c^\infty(\R)$ be a cut-off function such that 
\beno 
0\leq\theta(r)\leq 1,\quad \theta(r)=\left\{\begin{aligned}
&1,\quad\text{if}\quad |r|\leq\f18,\\ 
&0,\quad\text{if}\quad |r|\geq\f14.
\end{aligned}\right.
\eeno
Defining $\theta_L(r):=\theta(\f{r}{L})$, we have
\beno
\theta_L(r)=1,\quad\text{when }\, |r|\leq\f{L}{8},\quad
\text{and}\quad \theta_L(r)=0,\quad\text{when }\, |r|\geq\f{L}{4}.
\eeno
Then we obtain
\beq\label{eq:c3}\begin{aligned}
\mu\int_{0}^{T^*} \int_{\Sigma_t} \frac{\left(\log \langle w_-\rangle\right)^{2}}{\langle w_-\rangle^{2}}&\left|z_+\right|^{2} dx dt
\leq\underbrace{2\mu\int_{0}^{T^*} \int_{\Sigma_t} \frac{\left(\log \langle w_-\rangle\right)^{2}}{\langle w_-\rangle^{2}}\left|\theta_L(|x^-|)z_+\right|^{2} dx dt}_{I_1}\\ 
&+\underbrace{2\mu\int_{0}^{T^*} \int_{\Sigma_t} \frac{\left(\log \langle w_-\rangle\right)^{2}}{\langle w_-\rangle^{2}}\left|\bigl(1-\theta_L(|x^-|)\bigr)z_+\right|^{2} dx dt}_{I_2}.
\end{aligned}\eeq

{\underline{\bf Estimate for $I_1$}.}
The integration  in $I_1$ is restricted to the region where $(t,x)\in[0,T^*]\times Q_L$ satisfies
\beno 
|x^-(t,x)|\leq\f{L}{4}.
\eeno
By Lemma \ref{lem:e2}, we have $u_-(t,x)=x_3^-(t,x)$ and
\beno 
\langle w_-\rangle(t,x)=\langle\overline{w_-}\rangle(t,x)
=(R^2+|x^-(t,x)|^2)^{\f12}.
\eeno
On the other hand, \eqref{eq:e9d} shows that if $|x^-(t,x)|\leq\f{L}{4}$, it follows that $x\in Q_L$.

Using the flow map $\psi_-(t,y)$ associated with $z_-$, we find that 
\beno 
x^-\bigl(t,\psi^-(t,y)\bigr)=y,\quad\forall y\in\R^3,
\eeno 
which together with $\det\,\Bigl(\f{\p\psi_-(t,y)}{\p y}\Bigr)=1$ implies
\beno 
|I_1|\lesssim\mu\int_0^{T^*}\int_{\R^3}\f{1}{|y|^2}
\bigl|\log\langle y\rangle \theta_L(|y|)z_+\bigl(t,\psi_-(t,y)\bigr) \bigr|^2dydt.
\eeno 
Here $z_+(t,\cdot)$ is treated as a $2L$-periodic vector field on $\R^3$.
By virtue of the  Hardy's inequality \footnote{On $\R^3$, the Hardy's inequality is given by 
\beno
\int_{\R^3}\f{|f(x)|^2}{|x|^2}dx\leq 4\int_{\R^3}|\na f(x)|^2dx.
\eeno
},
we obtain 
\beq\label{eq:c4}\begin{aligned}
|I_1|&\lesssim\mu\int_0^{T^*}\int_{\R^3}\Bigl|\na_y\Bigl(\log\langle y\rangle \theta_L(|y|)z_+\bigl(t,\psi_-(t,y)\bigr)\Bigr)\Bigr|^2dydt\\  
&\lesssim\mu\int_0^{T^*}\int_{\R^3}\f{1}{\langle y\rangle^2}\cdot\bigl|\theta_L(|y|)z_+\bigl(t,\psi_-(t,y)\bigr)\bigr|^2dydt\\ 
&\quad +\mu\int_0^{T^*}\int_{\R^3}\bigl(\log\langle y\rangle\bigr)^2\bigl|\theta_L'(|y|)\bigr|^2\cdot\bigl|z_+\bigl(t,\psi_-(t,y)\bigr)\bigr|^2dydt\\ 
&\quad +\mu\int_0^{T^*}\int_{\R^3}\bigl(\log\langle y\rangle\bigr)^2 \bigl|\theta_L(|y|)\bigr|^2\cdot\bigl|\na_y\bigl(z_+\bigl(t,\psi_-(t,y)\bigr)\bigr)\bigr|^2dydt\\ 
&=:I_{11}+I_{12}+I_{13}.
\end{aligned}\eeq

For $I_{11}$, using the Hardy's inequality again, we have
\beno\begin{aligned}
I_{11}&\lesssim\mu\int_0^{T^*}\int_{\R^3}\Bigl|\na_y\Bigl(\theta_L(|y|)z_+\bigl(t,\psi_-(t,y)\bigr)\Bigr)\Bigr|^2dydt\\ 
&\lesssim\mu\int_0^{T^*}\int_{\R^3}\bigl|\theta_L'(|y|)\bigr|^2\cdot\bigl|z_+\bigl(t,\psi_-(t,y)\bigr)\bigr)\Bigr|^2dydt\\
&\qquad
+\mu\int_0^{T^*}\int_{\R^3}\bigl|\theta_L(|y|)\bigr|^2\cdot\Bigl|\na_y\Bigl(z_+\bigl(t,\psi_-(t,y)\bigr)\Bigr)\Bigr|^2dydt.
\end{aligned}\eeno
Since 
\beno 
\na_y\Bigl(z_+\bigl(t,\psi_-(t,y)\bigr)\Bigr)=\Bigl(\f{\p\psi_-(t,y)}{\p y}\Bigr)^T\na_xz_+(t,x)\big|_{x=\psi_-(t,y)},
\eeno 
using the fact that $\Bigl|\f{\p\psi_-(t,y)}{\p y}-I\Bigr|\leq\f{1}{10}$ (see \eqref{eq:e4}), there holds
\beq\label{eq:c5}
\Bigl|\na_y\Bigl(z_+\bigl(t,\psi_-(t,y)\bigr)\Bigr)\Bigr|
\sim \bigl|\na z_+(t,x)|_{x=\psi_-(t,y)}\bigr|.
\eeq
Then switching back to the coordinates $x$ and noticing that $\bigl|\theta_L'(|y|)\bigr|\lesssim\f{1}{L}\cdot 1_{\f{L}{8}\leq|y|\leq\f{L}{4}}$, we use \eqref{eq:c5} to obtain 
\beno\begin{aligned}
I_{11}&\lesssim\f{\mu}{L^2}\int_0^{T^*}\int_{Q_L}|z_+(t,x)|^2dxdt
+\mu\int_0^{T^*}\int_{Q_L}\bigl|\na z_+(t,x)\bigr|^2dxdt\\ 
&\overset{T^*=\log L}{\lesssim}\f{\mu\log L}{L^2}\cdot\sup_{0\leq t\leq T^*} \|z_+\|_{L^2(\Sigma_t)}^2+\mu\int_0^{T^*}\|\na z_+\|_{L^2(\Sigma_t)}^2dt.
\end{aligned}\eeno
Thus, by the basic energy identity \eqref{eq:basic} and $L\gg1$, we obtain
\beq\label{eq:c6}
I_{11}\lesssim\int_{\Sigma_0}|z_+|^2dx\lesssim\f{1}{(\log R)^4}\cdot E_+(0).
\eeq

Similarly, we have
\beno\begin{aligned}
I_{12}&\lesssim\f{\mu}{L^2}\int_0^{T^*}\int_{\Sigma_t}(\log \langle w_-\rangle)^2|z_+(t,x)|^2dxdt\lesssim\f{\mu\log L}{L^2} E_+,\\ 
I_{13}&\lesssim\mu\int_0^{T^*}\int_{\Sigma_t}(\log\langle w_-\rangle)^2|\na z_+(t,x)|^2dxdt=\mu\|\log \langle w_-\rangle\na z_+\|_{L^2_{T^*}(L^2_x)}^2.
\end{aligned}\eeno
Therefore, we get
\beq\label{eq:c7}
|I_1|\lesssim  E_+(0)+\f{\mu\log L}{L^2}\cdot E_++\mu\|\log\langle w_-\rangle\na z_+\|_{L^2_{T^*}(L^2_x)}^2.
\eeq

We shall estimate the term $\mu\|\log\langle w_-\rangle\na z_+\|_{L^2_{T^*}(L^2_x)}^2$ later on.

{\underline{\bf Estimate for $I_2$}.}
The integration in $I_2$ is restricted to the region where $(t,x)\in[0,T^*]\times Q_L$ satisfies $|x^-(t,x)|\geq\f{L}{8}$. Since $\langle w_-\rangle\sim\langle\overline{w_-}\rangle$ (Lemma \ref{lem:e2}), we have 
\beno
\langle w_-\rangle(t,x)\gtrsim\langle\overline{w_-}\rangle(t,x)=(R^2+|x^-(t,x)|^2)^{\f12}\gtrsim L.
\eeno
Hence, using $T^*=\log L$, we get
\beq\label{eq:c8}
|I_2|\lesssim\f{\mu}{L^2(\log L)^2}\int_{0}^{T^*} \int_{\Sigma_t} \left(\log \langle w_-\rangle\right)^{4}\left|z_+\right|^{2}dx dt
\lesssim\f{\mu}{L^2\log L} E_+.
\eeq 

{\underline{\bf Estimate for $\mu\|\log\langle w_-\rangle\na z_+\|_{L^2_{T^*}(L^2_x)}^2$.}} Thanks to Proposition \ref{prop:l2}, using \eqref{eq:l36} with $f_+=z_+$, $\rho_+=-\nabla p$ and $\lambda_+=\left(\log \langle w_-\rangle\right)^2$, we have
\beno
\begin{aligned}
& \quad\sup_{0\leq t\leq T^*}\|\log \langle w_-\rangle z_+\|_{L^2(\Sigma_t)}^2+\mu\|\log \langle w_-\rangle\na z_+\|_{L^2_{T^*}(L^2_x)}^2\\
& \lesssim\|\log \langle w_-\rangle z_+\|_{L^2(\Sigma_0)}^2+ \mu\int_{0}^{T^*} \int_{\Sigma_t} \frac{\left|z_+\right|^{2}}{\langle w_-\rangle^{2}} dx dt \\ 
&\qquad+ \f{\mu\e}{L(\log L)^2}\|\log \langle w_-\rangle\na^2 z_+\|_{L_{T^*}^2(L^2_x)}^2+ \int_{0}^{T^*} \int_{\Sigma_{\tau}} \left(\log \langle w_-\rangle\right)^2 \left|z_+\right|\cdot |\na p|\,dxdt,
\end{aligned} 
\eeno
where we have used the fact that $\frac{\left|\nabla \lambda_+\right|^2}{\lambda_+} \lesssim \frac{1}{\langle w_-\rangle^{2}}$ for $\lambda_+=\left(\log \langle w_-\rangle\right)^2$.  Using Proposition \ref{prop:p1} to control the last term in the RHS of the above inequality,  we get
\beq\label{eq:c9}\begin{aligned}
\mu\|\log \langle w_-\rangle\na z_+\|_{L^2_{T^*}(L^2_x)}^2 
&\lesssim E_+(0)+ \Big(E_-+ \sum_{l=0}^2 E_-^l \Big)^{\frac{1}{2}} \left(E_++ E_+^{0}+ F_++ F_+^{0}\right)\\ 
&\qquad+\f{\e}{L(\log L)^2} D_+^{0}+\mu\int_{0}^{T^*} \int_{\Sigma_t} \frac{\left|z_+\right|^{2}}{\langle w_-\rangle^{2}} dx dt.
\end{aligned}\eeq

For the last term in \eqref{eq:c9},  we first have
\beq\label{eq:c10}\begin{aligned}
\mu\int_{0}^{T^*} \int_{\Sigma_t} \frac{\left|z_+\right|^{2}}{\langle w_-\rangle^{2}} dx dt
&\leq2\mu\int_{0}^{T^*} \int_{\Sigma_t} \frac{1}{\langle w_-\rangle^{2}}\left|\theta_L(|x^-|)z_+\right|^{2} dx dt\\ 
&+2\mu\int_{0}^{T^*} \int_{\Sigma_t} \frac{1}{\langle w_-\rangle^{2}}\left|\bigl(1-\theta_L(|x^-|)\bigr)z_+\right|^{2} dx dt.
\end{aligned}\eeq
The first term in the RHS of \eqref{eq:c10} has the same estimate as $I_{11}$. The second term is handled similarly to $I_2$, which satisfies
\beno 
2\mu\int_{0}^{T^*} \int_{\Sigma_t} \frac{1}{\langle w_-\rangle^{2}}\left|\bigl(1-\theta_L(|x^-|)\bigr)z_+\right|^{2} dx dt
\lesssim\f{\mu}{L^2(\log L)^3} E_+.
\eeno 
Hence, we obtain
\beno
\mu\int_{0}^{T^*} \int_{\Sigma_t} \frac{\left|z_+\right|^{2}}{\langle w_-\rangle^{2}} dx dt
\lesssim E_+(0)+\f{\mu\log L}{L^2}\cdot E_+,
\eeno
which along with  \eqref{eq:c9} shows that
\beq\label{eq:c11}\begin{aligned}
\mu\|\log \langle w_-\rangle\na z_+\|_{L^2_{T^*}(L^2_x)}^2 &\lesssim E_+(0)+ \Big(E_-+ \sum_{l=0}^2 E_-^l \Big)^{\frac{1}{2}} \left(E_++ E_+^{0}+ F_++ F_+^{0}\right)\\ 
&\qquad+ \mu D_+^{0}
+\f{\mu\log L}{L^2}\cdot E_+,
\end{aligned}\eeq
where we used that fact that $L\geq e^{\f{1}{\mu}}$ (or $\f{1}{\log L}\leq\mu$).

\smallskip

Combining \eqref{eq:c7}, \eqref{eq:c8} and \eqref{eq:c11}, we derive from \eqref{eq:c3} that
\beq\label{eq:c11a}
\mu\int_{0}^{T^*} \int_{\Sigma_t} \frac{\bigl(\log\langle w_-\rangle\bigr)^{2}}{\langle w_-\rangle^{2}}\left|z_+\right|^{2} dxdt
\lesssim\text{the RHS terms in \eqref{eq:c11}},
\eeq
which along with \eqref{eq:c2} gives rise to 
\beq\label{eq:c12}
E_++ D_++ F_+
\lesssim E_{\pm}(0)+ \Big(E_-+ \sum_{l=0}^2 E_-^l \Big)^{\frac{1}{2}} \left(E_++ E_+^{0}+ F_++ F_+^{0}\right)
+ \mu D_+^{0}+\f{\mu\log L}{L^2}\cdot E_+.
\eeq

Since $\Big(E_{\mp}+ \sum_{l=0}^2 E_{\mp}^l \Big)^{\frac{1}{2}}\leq 2C_1\e$ by the ansatz \eqref{eq:a3}, it follows that for sufficiently small $\e\ll1$ and sufficiently large $L\gg1$, inequality \eqref{eq:c12} implies
\beno
E_++ D_++ F_+
\lesssim E_{\pm}(0)+ \Big(E_-+ \sum_{l=0}^2 E_-^l \Big)^{\frac{1}{2}} \left(E_+^{0}+ F_+^{0}\right)
+ \mu D_+^{0}.
\eeno
This establishes \eqref{eq:estimates on lowest order terms} for $z_+$. The estimate for $z_-$ follows by a symmetric argument.
Thus, it completes the proof of the proposition.
\end{proof}

\subsection{Energy estimates for the first order terms}
In this section, our goal is to derive energy estimates for $\nabla z_{\pm}$. For this purpose, we first commute one derivative with \eqref{eq:MHD} and we obtain
\begin{equation}\label{eq:first order eqn}
\left\{\begin{array}{l}
\partial_{t} \partial z_{+}+ Z_{-}\cdot \nabla \partial z_{+}- \mu\Delta \partial z_{+}= -\partial \nabla p- \partial z_{-} \cdot \nabla z_{+}, \\
\partial_{t} \partial z_{-}+ Z_{+}\cdot \nabla \partial z_{-}- \mu\Delta \partial z_{-}= -\partial \nabla p- \partial z_{+} \cdot \nabla z_{-},
\end{array}\right. 
\end{equation}
where $\partial z_{\pm}$ denotes for some $\partial_i z_{\pm}$ with $i=1,2,3$. We will apply Proposition \ref{prop:l1} to system \eqref{eq:first order eqn} with the weight functions $\lambda_{\pm}=\langle w_{\mp}\rangle^2\left(\log \langle w_{\mp}\rangle\right)^4$.
The main result of this section is given by the following proposition.
\begin{proposition}\label{prop:estimates on first order terms}
Assume that R is sufficiently large (independent of $\e$, $\mu$ and $L$), and the assumptions \eqref{eq:a1}--\eqref{eq:a3} hold, we have
\begin{equation}\label{eq:estimates on first order terms}
E_{\pm}^{0}+ F_{\pm}^{0}+ D_{\pm}^{0}
\lesssim E_{\pm}(0)+ E_{\pm}^{0}(0)+ \Big(E_{\mp}+ \sum_{k=0}^3 E_{\mp}^k \Big)^{\frac{1}{2}} \left(E_{\pm}+ E_{\pm}^{1}+ F_{\pm}+ F_{\pm}^{1}\right)+ \mu D_{\pm}^{1}.
\end{equation}
\end{proposition}
Before going further, we first recall \eqref{eq:p41} as follows: 
\beq\label{eq:p1-3}
\Bigl\|\frac{\langle w_{\mp}\rangle\left(\log \langle w_{\mp}\rangle\right)^2}{\langle w_{\pm}\rangle^{\frac{1}{2}} \log \langle w_{\pm}\rangle}\na z_{\pm}\Bigr\|_{L_t^{2}(L_{x}^{2})}^2\lesssim \frac{E_{\pm}^0}{L\log L}+ F_{\pm}^0(t) . 
\eeq
Thanks to the div-curl iteration formula \eqref{eq:div-curl iterate}, we have for any $1\leq m\leq N_*$, 
\beno
\bigl\|\sqrt\lambda\na^m\na z_\pm\bigr\|_{L^2_x}^2\lesssim \sum_{k=1}^{m} \bigl\|\sqrt{\lambda} j_{\pm}^{(k)}\bigr\|_{L^{2}_x}^{2}+ \bigl\|\sqrt{\lambda} \nabla z_{\pm}\bigr\|_{L^{2}_x}^{2}
+\frac{\varepsilon}{L\log L}\bigl\|\sqrt{\lambda} \nabla^2 z_{\pm}^{(m)}\bigr\|_{L^{2}_x}^{2},
\eeno
with $\lambda=\f{\langle w_\mp\rangle^2\left(\log \langle w_\mp\rangle\right)^4}{\langle w_\pm\rangle\bigl(\log \langle w_\pm\rangle\bigr)^2}$.
Then a similar derivation as \eqref{eq:p40} and \eqref{eq:p41} yields
\beq\label{eq:p1-3aa}
\Bigl\|\f{\langle w_\mp\rangle\left(\log \langle w_\mp\rangle\right)^2}{\langle w_\pm\rangle^{\f12}\log \langle w_\pm\rangle}\na^m\na z_\pm\Bigr\|_{L^2_t(L^2_x)}^2\lesssim\sum_{k=0}^m\bigl(\f{ E_\pm^k}{L\log L}+ F_\pm^k(t)\bigr)+\frac{\varepsilon}{L}E_\pm^{m+1},
\eeq
\beq\label{eq:p1-3a}
\Bigl\|\f{\langle w_\mp\rangle\left(\log \langle w_\mp\rangle\right)^2}{\langle w_\pm\rangle^{\f12}\log \langle w_\pm\rangle}\na^m\na z_\pm\Bigr\|_{L^2_t(L^2_x)}^2\overset{L\geq e^{\f{1}{\mu}}}{\lesssim}\sum_{k=0}^m\bigl(\f{ E_\pm^k}{L\log L}+ F_\pm^k(t)\bigr)+\frac{\varepsilon}{L}D_\pm^m(t).
\eeq

\subsubsection{Estimates on the pressure}
In this subsection, we derive estimates for the pressure term $\p\na p$, which arises in the first-order energy estimate for the MHD system.
\begin{proposition}\label{prop:p1-1}
Let $R\geq 100$ and $L\geq e^{\f{1}{\mu}}$. Then under the assumptions \eqref{eq:a1}--\eqref{eq:a3}, for all $t\in [0, T^*]$, we have
\begin{equation}\label{eq:p1-0}\begin{aligned}
&\qquad\int_{0}^{t} \int_{\Sigma_{\tau}} \langle w_{\mp}\rangle^2\left(\log \langle w_{\mp}\rangle\right)^4\left|\nabla z_{\pm}\right| \left|\nabla^2 p\right| dx d\tau\\ 
& \lesssim  \Big(E_{\mp}+ \sum_{k=0}^3 E_{\mp}^k \Big)^{\frac{1}{2}} \left(E_{\pm}^{0}+ E_{\pm}^{1}+ F_{\pm}^{0}(t)+ F_{\pm}^{1}(t)+ \frac{\varepsilon}{L}D_{\pm}^{1}(t) \right).
\end{aligned}
\end{equation}
\end{proposition}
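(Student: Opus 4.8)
## Proof Proposal for Proposition \ref{prop:p1-1}

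The plan is to adapt the pressure estimate of Proposition \ref{prop:p1} to the once-differentiated setting, exploiting the same Green's-function representation on the periodic box but now for $\partial\nabla p$ instead of $\nabla p$. First I would differentiate the elliptic equation $-\Delta p = \sum_{i,j}\partial_i z_+^j\,\partial_j z_-^i$ once in $x$, obtaining $-\Delta(\partial p) = \partial\bigl(\sum_{i,j}\partial_i z_+^j\partial_j z_-^i\bigr)$, and then reproduce the cut-off/convolution argument from the proof of Proposition \ref{prop:p1} verbatim (the cut-offs $\chi_n$, the sets $\mathcal A_n,\mathcal B_n$, the periodized Green kernels $G_{\vec k}$, and the vanishing of the remainder terms as $n\to\infty$). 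This yields a decomposition $\partial\nabla p = \sum_{m=1}^{5} B_m(\tau,x)$ entirely analogous to \eqref{eq:p10}: a far-field sum $B_1$ over $\max|k_i|\ge 2$ where two integrations by parts move both $\partial_i$ and $\partial_j$ off $z_\pm^j z_-^i$ onto the smooth kernel; a near-but-shifted sum $B_2$; a principal local piece $B_3$ involving $\nabla_x(1/|x-y|)$ applied to the quadratic source $\partial_i(\partial z_+^j)\partial_j z_-^i + \partial_i z_+^j\partial_j(\partial z_-^i)$; and the boundary contributions $B_4,B_5$. In the present case the source of $\partial\nabla p$ carries one extra derivative, so each place where $\nabla z_\pm$ appeared in the proof of Proposition \ref{prop:p1} is replaced by $\nabla^2 z_\pm$ or $\nabla z_\pm$, and the target weighted norm on the test factor is $\langle w_\mp\rangle\left(\log\langle w_\mp\rangle\right)^2\nabla z_\pm$ rather than $\left(\log\langle w_\mp\rangle\right)^2 z_\pm$.

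Next I would run the five estimates. For $B_1$ (and the far pieces of $B_2$, $B_4$, $B_5$) the extra decay from $\max|k_i|\ge2$ gives powers of $1/L$, exactly as in \eqref{eq:p14}, \eqref{eq:p28}, \eqref{eq:p63}, so these are harmless after multiplying by $T^*\le\log L$. The genuinely new work is the local term $B_3$, which I would split by the cut-off $\theta(|x-y|)$ into a singular part $\int_{|x-y|\le2}\tfrac{1}{|x-y|^2}(\cdots)\,dy$ and a smoother part handled by one more integration by parts (moving a derivative onto $\nabla_x(1/|x-y|)$). For the singular part I would use Lemma \ref{lem:p1} and Lemma \ref{lem:p2} to transfer the weights from $x$ to $y$, then apply Young's inequality in the form $\|\tfrac{1}{|x|^2}\mathbbm 1_{|x|\le2}\|_{L^1}$, landing on $\|\langle w_\mp\rangle(\log\langle w_\mp\rangle)^2\nabla^2 z_\pm\|_{L^2}\cdot\|\langle w_\pm\rangle(\log\langle w_\pm\rangle)^2\nabla z_\mp\|_{L^\infty}$, where the $L^\infty$ factor is controlled by $\bigl(\sum_{k=0}^3 E_\mp^k\bigr)^{1/2}$ via the weighted Sobolev inequality Lemma \ref{lem:e3}. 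Pairing with the test factor $\langle w_\pm\rangle^{-1/2}(\log\langle w_\pm\rangle)^{-1}\cdot\langle w_\mp\rangle(\log\langle w_\mp\rangle)^2\nabla z_\pm$ and using Hölder reduces everything to two flux-type integrals of the shape $\int_0^t\!\!\int_{\Sigma_\tau}\tfrac{\langle w_\mp\rangle^2(\log\langle w_\mp\rangle)^4}{\langle w_\pm\rangle(\log\langle w_\pm\rangle)^2}\bigl(|\nabla z_\pm|^2+|\nabla^2 z_\pm|^2\bigr)dx\,d\tau$; the first is bounded by $\tfrac{E_\pm^0}{L\log L}+F_\pm^0(t)$ and the second, after invoking the div-curl iterate \eqref{eq:div-curl iterate} together with the flux decomposition along $C_{t,u_\pm}^\pm$ exactly as in \eqref{eq:p37}--\eqref{eq:p41}, by $\tfrac{E_\pm^1}{L\log L}+F_\pm^1(t)+\tfrac{\varepsilon}{L}D_\pm^1(t)$. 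The boundary terms $B_4,B_5$ and the $\nabla^2 z_\pm$ appearing there are treated by the Sobolev-trace argument of \eqref{eq:p53*}--\eqref{eq:p56}, contributing only $1/L$ powers.

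I expect the main obstacle to be bookkeeping the derivative count in the local term $B_3$: unlike the lowest-order case, the source of $\partial\nabla p$ is genuinely quadratic in $\nabla z_\pm,\nabla^2 z_\pm$, so one must decide carefully which of the two factors absorbs the singular kernel (hence sits in $L^2$ with a $\langle w\rangle$-weight) and which goes into $L^\infty$ via Lemma \ref{lem:e3} — and the $L^\infty$ estimate costs up to $E_\mp^3$, which is exactly why the statement carries $\sum_{k=0}^3 E_\mp^k$. A secondary subtlety is that controlling the second flux integral forces the appearance of $\nabla^2 z_\pm$ on a characteristic hypersurface; this cannot be closed by $F_\pm^1$ alone (which is defined via $j_\pm^{(\alpha)}$, not full second derivatives), so one must first pass through the div-curl lemma to convert $\nabla^2 z_\pm^{(\gamma)}$ into $j_\pm^{(|\gamma|+1)}$ plus a small $\tfrac{\varepsilon}{L}D_\pm^1(t)$ remainder, using $\log L\ge\tfrac1\mu$ to absorb the extra factor. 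Everything else is a routine repetition of the scheme already established for Proposition \ref{prop:p1}, so the proof will be presented by indicating the decomposition $\partial\nabla p=\sum_m B_m$, then treating $B_3$ in full detail and citing the earlier estimates for the remaining pieces.
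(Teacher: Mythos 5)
Your proposal follows essentially the same route as the paper's proof: the same five-piece Green's-function decomposition of $\partial\nabla p$, the far-field and boundary pieces yielding $1/L$ powers, the singular local piece handled by transferring weights via Lemmas \ref{lem:p1}--\ref{lem:p2}, Young's inequality, the weighted Sobolev bound costing $\sum_{k=0}^3 E_\mp^k$, and the reduction to flux integrals closed through the div-curl iterate \eqref{eq:div-curl iterate}. You also correctly identify the two genuine subtleties (why $E_\mp^3$ appears and why $\frac{\varepsilon}{L}D_\pm^1$ is needed), so no gaps to report.
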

\begin{proof}
The proof follows a similar strategy to Proposition \ref{prop:p1}. We sketch the estimate for the term $\int_{0}^{t} \int_{\Sigma_{\tau}}\langle w_-\rangle^2\left(\log \langle w_-\rangle\right)^4\left|\na z_+\right|\cdot \left|\nabla^2 p\right| dx d\tau $, and the remaining part of \eqref{eq:p1-0} follows similarly.  

 We first use \eqref{eq:pressure 2} to decompose $\p\nabla p$ as
\beq\label{eq:p1-1}
\p\na p=\p\na\wt p_0+\p\na\wt p_1+\p\na\wt p_{\geq 2}+\wt{\vv B_1}+\wt{\vv B_2},
\eeq
where $\p\na\wt p_{\geq 2}$, $\wt{\vv B_1}$ and $\wt{\vv B_2}$ are as defined in Corollary \ref{cor:pressure}, and 
\beno\begin{aligned} 
&\p\na\wt p_0(t,x):=\frac{1}{4\pi}\sum_{i,j=1}^3 \int_{Q_L} \nabla_x \Big(\frac{1}{|x-y|}\Big)\p\left(\partial_{i} z_{+}^{j} \partial_{j} z_{-}^{i}\right)(t, y) dy,\\ 
&\p\na\wt p_1(t,x):=\sum_{\substack{k_1,k_2,k_3 =-1 \\ \max\{ |k_1|,|k_2|,|k_3|\}=1}}^1\sum_{i,j=1}^3 \int_{Q_L} \nabla_x G_{\vv k}(x,y)\p\left(\partial_{i} z_{+}^{j} \partial_{j} z_{-}^{i}\right)(t, y) dy.
\end{aligned}\eeno

We shall divide the estimates for $\int_{0}^{t} \int_{\Sigma_{\tau}}\langle w_-\rangle^2\left(\log \langle w_-\rangle\right)^4\left|\na z_+\right|\cdot \left|\nabla^2 p\right| dx d\tau $ into five parts corresponding to the decomposition of $\p\na p$. 

{\bf Step 1. Estimate for  $\int_{0}^{t} \int_{\Sigma_{\tau}}\langle w_-\rangle^2\left(\log \langle w_-\rangle\right)^4\left|\na z_+\right|\cdot \left|\p\nabla\wt p_0\right| dx d\tau $.} We shall split the integral into several terms according to the properties of $\p\na\wt p_0$. We take the term $\p_1\na\wt p_0$ for an example.

{\it Step 1.1. Decomposition of $\p_1\na\wt p_0$.} Introducing the cut-off function $\theta(r)\in C_c^\infty(\R)$ such that $0\leq \theta(r)\leq 1$ and 
\beno
\theta(r)=1\quad\text{when } |r|\leq 1\quad\text{and }\quad \theta(r)=0\quad\text{when } |r|\geq 2,
\eeno
we decompose $\p_1\na\wt p_0$ into two parts as follows:
\beq\label{eq:p1-4}\begin{aligned}
\p_1\na\wt p_0(t,x)&=\frac{1}{4\pi}\sum_{i,j=1}^3\int_{Q_L} \nabla_x\Big(\frac{1}{|x-y|}\Big)\cdot \theta(|x-y|)\cdot\p_1\left(\partial_{i} z_{+}^{j} \partial_{j} z_{-}^{i}\right)(t, y) dy\\ 
&\qquad+\frac{1}{4\pi}\sum_{i,j=1}^3\int_{Q_L} \nabla_x\Big(\frac{1}{|x-y|}\Big)\cdot\bigl(1- \theta(|x-y|)\bigr)\cdot \partial_1\left(\p_i z_{+}^{j} \partial_{j} z_{-}^{i}\right)(t, y) dy\\ 
&=:\frac{1}{4\pi}\sum_{i,j=1}^3A_{1}^{ij}(t,x)+\frac{1}{4\pi}\sum_{i,j=1}^3A_{2}^{ij}(t,x).
\end{aligned}\eeq

{\it Step 1.2. Estimate for  $\int_{0}^{t} \int_{\Sigma_{\tau}}\langle w_-\rangle^2\left(\log \langle w_-\rangle\right)^4\left|\na z_+\right|\cdot|A_{1}^{ij}|\, dx d\tau $.} 
By H\"older's inequality, we have
\beq\label{eq:p1-2}\begin{aligned}
&\qquad\int_{0}^{t} \int_{\Sigma_{\tau}}\langle w_-\rangle^2\left(\log \langle w_-\rangle\right)^4|\na z_+|\cdot |A_{1}^{ij}| dx d\tau\\ 
&\leq
\Bigl\|\f{\langle w_-\rangle\left(\log \langle w_-\rangle\right)^2}{\langle w_+\rangle^{\f12}\log \langle w_+\rangle}\na z_+\Bigr\|_{L^2_t(L^2_x)}\cdot\|\langle w_+\rangle^{\f12}\log \langle w_+\rangle\cdot \langle w_-\rangle\left(\log \langle w_-\rangle\right)^2A_{1}^{ij}\|_{L^2_t(L^2_x)}.
\end{aligned}\eeq
Similar derivation as \eqref{eq:p6} shows that
\beno\begin{aligned}
&\qquad\|\langle w_+\rangle^{\f12}\log \langle w_+\rangle\cdot \langle w_-\rangle\left(\log \langle w_-\rangle\right)^2A_{1}^{ij}\|_{L^2_t(L^2_x)}\\ 
&\lesssim\sum_{l=0}^1\|\langle w_+\rangle(\log \langle w_+\rangle)^2\na^l\na z_-\|_{L^\infty_{(t,x)}}\cdot\sum_{m=0}^1\Bigl\|\f{\langle w_-\rangle\left(\log \langle w_-\rangle\right)^2}{\langle w_+\rangle^{\f12}\log \langle w_+\rangle}\na^m\na z_+\Bigr\|_{L^2_t(L^2_x)},
\end{aligned}\eeno
which along with the weighted Sobolev inequality \eqref{eq:e12}, \eqref{eq:p1-3} and \eqref{eq:p1-3a} implies
\beno
\|\langle w_+\rangle^{\f12}\log \langle w_+\rangle\cdot \langle w_-\rangle\left(\log \langle w_-\rangle\right)^2A_{1}^{ij}\|_{L^2_t(L^2_x)}
\lesssim\Bigl(\sum_{l=0}^3E_-^l\Bigr)^{\f12}\cdot\Bigl(\frac{E_+^0+E_+^1}{L\log L}+ F_+^0+ F_+^1+\f{\e}{L}D_+^1\Bigr)^{\f12}.
\eeno
Using \eqref{eq:p1-3} again, we deduce from \eqref{eq:p1-2} that 
\beq\label{eq:p1-5}
\int_{0}^{t} \int_{\Sigma_{\tau}}\langle w_-\rangle^2\left(\log \langle w_-\rangle\right)^4|\na z_+|\cdot |A_{1}^{ij}| dx d\tau
\lesssim\Bigl(\sum_{l=0}^3E_-^l\Bigr)^{\f12}\cdot\Bigl(\frac{E_+^0+E_+^1}{L\log L}+ F_+^0+ F_+^1+\f{\e}{L}D_+^1\Bigr).
\eeq

{\it Step 1.3. Estimate for $\int_{0}^{t} \int_{\Sigma_{\tau}}\langle w_-\rangle^2\left(\log \langle w_-\rangle\right)^4\left|\na z_+\right|\cdot|A_{2}^{ij}|\, dx d\tau $.} For $A_{2}^{ij}$, integration by parts yields
\beq\label{eq:p1-5star}\begin{aligned}
A_{2}^{ij}(t,x)&=\int_{Q_L} \partial_{x_1}\Bigl\{\nabla_x\Big(\frac{1}{|x-y|}\Big)\cdot\bigl(1- \theta(|x-y|)\bigr)\Bigr\}\cdot \left(\p_i z_{+}^{j} \partial_{j} z_{-}^{i}\right)(t, y)dy\\ 
&\quad+\int_{\{y_1=L\}}\nabla_x\Big(\frac{1}{|x-y|}\Big)\cdot\bigl(1- \theta(|x-y|)\bigr)\cdot \left(\p_i z_{+}^{j} \partial_{j} z_{-}^{i}\right)(t, y)dy_2dy_3\\ 
&\quad-\int_{\{y_1=-L\}}\nabla_x\Big(\frac{1}{|x-y|}\Big)\cdot\bigl(1- \theta(|x-y|)\bigr)\cdot \left(\p_i z_{+}^{j} \partial_{j} z_{-}^{i}\right)(t, y)dy_2dy_3\\ 
&:=A_{21}^{ij}(t,x)+A_{22}^{ij}(t,x)+A_{23}^{ij}(t,x).
\end{aligned}\eeq

{\underline{(a). Estimate for $A_{21}^{ij}$.}} For $A_{21}^{ij}$, we have
\beno\begin{aligned}
|A_{21}^{ij}(t,x)|&\lesssim\int_{Q_L}\f{1- \theta(|x-y|)}{|x-y|^3}\cdot\bigl(|\na z_+|\cdot|\na z_-|\bigr)(t,y)dy+\int_{Q_L}\f{|\theta'(|x-y|)|}{|x-y|^2}\cdot\bigl(|\na z_+|\cdot|\na z_-|\bigr)(t,y)dy\\ 
&=:A_{211}(t,x)+A_{212}(t,x).
\end{aligned}\eeno

{\it i). Estimate for $A_{212}$.} Following similar derivation as \eqref{eq:p6}, we get
\beq\label{eq:p1-5a}
\|\langle w_+\rangle^{\f12}\log \langle w_+\rangle\cdot\langle w_-\rangle \left(\log \langle w_-\rangle\right)^2A_{212}\|_{L^2_t(L^2_x)}
\lesssim\Bigl(\sum_{l=0}^2 E^l_-\Bigr)^{\f12}
\cdot\Bigl\|\f{\langle w_-\rangle\left(\log \langle w_-\rangle\right)^2}{\langle w_+\rangle^{\f12}\log \langle w_+\rangle}\cdot\na z_+\Bigr\|_{L^2_t(L^2_x)}.
\eeq
Since
\beno\begin{aligned}
&\qquad\int_{0}^{t} \int_{\Sigma_{\tau}}\langle w_-\rangle^2\left(\log \langle w_-\rangle\right)^4\left|\na z_+\right|\cdot \left|A_{212}\right| dx d\tau\\ 
&\leq
\Bigl\|\f{\langle w_-\rangle\left(\log \langle w_-\rangle\right)^2}{\langle w_+\rangle^{\f12}\log \langle w_+\rangle}\na z_+\Bigr\|_{L^2_t(L^2_x)}\cdot\|\langle w_+\rangle^{\f12}\log \langle w_+\rangle\cdot \langle w_-\rangle\left(\log \langle w_-\rangle\right)^2A_{212}\|_{L^2_t(L^2_x)},
\end{aligned}\eeno
using \eqref{eq:p1-5a} and \eqref{eq:p1-3}, we obtain 
\beq\label{eq:p1-5b}
\int_{0}^{t} \int_{\Sigma_{\tau}}\langle w_-\rangle^2\left(\log \langle w_-\rangle\right)^4\left|\na z_+\right|\cdot |A_{212}| dx d\tau\lesssim\Bigl(\sum_{l=0}^2E_-^l\Bigr)^{\f12}\cdot\Bigl(\frac{E_+^0}{L\log L}+ F_+^0\Bigr).
\eeq

{\it ii). Estimate for $A_{211}$.} 
By the expression of $A_{211}$, we first have
\beno 
|A_{211}(t,x)|\leq \int_{|x-y|\geq 1}\f{1}{|x-y|^3}\cdot\bigl(|\na z_+|\cdot|\na z_-|\bigr)(t,y)\cdot 1_{Q_L}(y)dy.
\eeno 
Due to \eqref{eq:p5-1} in Lemma \ref{lem:p1}, for any $x,y\in Q_L,\, |x-y|\geq 1$, there holds
\beno
\langle w_{\pm}\rangle(\log \langle w_{\pm}\rangle)^2(\tau, x)\lesssim\langle w_{\pm}\rangle\bigl(\log\langle w_{\pm}\rangle\bigr)^2(\tau,y)+|x-y|\bigl(\log (12|x-y|)\bigr)^2.
\eeno
Then we get 
\beno\begin{aligned}
&\qquad\langle w_-\rangle\left(\log \langle w_-\rangle\right)^2|A_{211}(\tau ,x)|\\ 
&
\lesssim\int_{|x-y|\geq 1}\f{1}{|x-y|^3}\cdot\Bigl(\langle w_-\rangle\left(\log \langle w_-\rangle\right)^2\cdot|\na z_+|\cdot|\na z_-|\Bigr)(\tau,y)\cdot 1_{Q_L}(y)dy\\ 
&\qquad+\int_{|x-y|\geq 1}\f{\bigl(\log (12|x-y|)\bigr)^2}{|x-y|^2}\cdot\Bigl(|\na z_+|\cdot|\na z_-|\Bigr)(\tau,y)\cdot 1_{Q_L}(y)dy\\ 
&=:K_1(\tau,x)+K_2(\tau,x).
\end{aligned}\eeno

For $K_1(\tau,x)$, by \eqref{eq:p5} in Lemma \ref{lem:p1}, we get
\beno\begin{aligned}
\langle w_+\rangle^{\f12}\log \langle w_+\rangle|K_1(\tau ,x)|
&\lesssim\int_{|x-y|\geq 1}\f{\log(13|x-y|)}{|x-y|^{\f52}}\cdot\Bigl(\f{\langle w_-\rangle\left(\log \langle w_-\rangle\right)^2}{\langle w_+\rangle^{\f12}\log \langle w_+\rangle}|\na z_+|\Bigr)(\tau,y)\cdot\\ 
&\qquad\cdot\Bigl(\langle w_+\rangle\bigl(\log \langle w_+\rangle\bigr)^2|\na z_-|\Bigr)(\tau,y)\cdot 1_{Q_L}(y)dy.
\end{aligned}\eeno
Using Young's inequality yields 
\beno\begin{aligned}
\|\langle w_+\rangle^{\f12}\log \langle w_+\rangle K_1\|_{L^2_t(L^2_x)}
&\lesssim\underbrace{\Bigl\|\f{\log(13|x|)}{|x|^{\f52}}\Bigr\|_{L^2(|x|\geq 1)}}_{\lesssim1}
\cdot\Bigl\|\f{\langle w_-\rangle\left(\log \langle w_-\rangle\right)^2}{\langle w_+\rangle^{\f12}\log \langle w_+\rangle}\na z_+\Bigr\|_{L^2_t(L^2_x)}\cdot\\ 
&\qquad\cdot\|\langle w_+\rangle\bigl(\log \langle w_+\rangle\bigr)^2\na z_-\|_{L^\infty_t(L^2_x)},
\end{aligned}\eeno
which along with \eqref{eq:p1-3} implies
\beq\label{eq:p1-5c}
\|\langle w_+\rangle^{\f12}\log \langle w_+\rangle K_1\|_{L^2_t(L^2_x)}
\lesssim \bigl(E^0_-\bigr)^{\f12}
\cdot\Bigl(\frac{E_+^0}{L\log L}+ F_+^0\Bigr)^{\f12}. 
\eeq

For $K_2(\tau,x)$, the separation property \eqref{eq:e13} of $z_+$ and $z_-$ gives rise to
\beno\begin{aligned}
&\qquad|K_2(\tau,x)|\\ 
&=\int_{|x-y|\geq 1}\f{\bigl(\log (12|x-y|)\bigr)^2}{|x-y|^2}\cdot\Bigl(\f{\langle w_-\rangle\bigl(\log \langle w_-\rangle\bigr)^2|\na z_+|\cdot\langle w_+\rangle\bigl(\log \langle w_+\rangle\bigr)^2|\na z_-|}{\langle w_-\rangle\bigl(\log \langle w_-\rangle\bigr)^2\cdot\langle w_+\rangle\bigl(\log \langle w_+\rangle\bigr)^2}\Bigr)(\tau,y)\cdot 1_{Q_L}(y)dy\\ 
&\lesssim\f{1}{\langle\tau\rangle\bigl(\log\langle\tau\rangle\bigr)^2}
\cdot\int_{|x-y|\geq 1}\f{\bigl(\log (12|x-y|)\bigr)^2}{|x-y|^2}
\cdot\langle w_-\rangle\bigl(\log \langle w_-\rangle\bigr)^2|\na z_+(\tau,y)|\cdot\\ 
&\qquad\qquad\qquad\cdot \langle w_+\rangle\bigl(\log \langle w_+\rangle\bigr)^2|\na z_-(\tau,y)|\cdot 1_{Q_L}(y)dy.
\end{aligned}\eeno
By Young's inequality, we have
\beno\begin{aligned}
\|K_2\|_{L^2(\Sigma_\tau)}
&\lesssim\f{1}{\langle\tau\rangle\bigl(\log\langle\tau\rangle\bigr)^2}
\cdot\underbrace{\Bigl\|\f{\bigl(\log (12|x|)\bigr)^2}{|x|^2}\Bigr\|_{L^2(|x|\geq1)}}_{\lesssim1}\\ 
&\qquad
\cdot\|\langle w_-\rangle\bigl(\log \langle w_-\rangle\bigr)^2\na z_+\|_{L^2(\Sigma_\tau)}\cdot \|\langle w_+\rangle\bigl(\log \langle w_+\rangle\bigr)^2\na z_-\|_{L^2(\Sigma_\tau)},
\end{aligned}\eeno
which gives rise to
\beq\label{eq:p1-5d}
\|K_2\|_{L^2(\Sigma_\tau)}\lesssim\f{\bigl(E_-^0\bigr)^{\f12}\bigl(E_+^0\bigr)^{\f12}}{\langle\tau\rangle\bigl(\log\langle\tau\rangle\bigr)^2}.
\eeq

By virtue of \eqref{eq:p1-5c}, \eqref{eq:p1-5d} and \eqref{eq:p1-3}, we get
\beno\begin{aligned}
&\qquad\int_{0}^{t} \int_{\Sigma_{\tau}}\langle w_-\rangle^2\left(\log \langle w_-\rangle\right)^4\left|\na z_+\right|\cdot \left|A_{211}\right| dx d\tau\\ 
&\lesssim
\Bigl\|\f{\langle w_-\rangle\left(\log \langle w_-\rangle\right)^2}{\langle w_+\rangle^{\f12}\log \langle w_+\rangle}\na z_+\Bigr\|_{L^2_t(L^2_x)}\cdot\|\langle w_+\rangle^{\f12}\log \langle w_+\rangle K_1\|_{L^2_t(L^2_x)}\\ 
&\qquad+\bigl\|\langle w_-\rangle\left(\log \langle w_-\rangle\right)^2\na z_+\bigr\|_{L^\infty_t(L^2_x)}\cdot\int_0^t\|K_2\|_{L^2_x}d\tau\\ 
&\lesssim  \bigl(E^0_-\bigr)^{\f12}
\cdot\Bigl(\frac{E_+^0}{L\log L}+ F_+^0\Bigr)
+\bigl(E_-^0\bigr)^{\f12} E_+^0\cdot\underbrace{\int_0^t\f{1}{\langle\tau\rangle\bigl(\log\langle\tau\rangle\bigr)^2}d\tau}_{\lesssim 1}.
\end{aligned}\eeno
Hence, we obtain
\beq\label{eq:p1-5e}
\int_{0}^{t} \int_{\Sigma_{\tau}}\langle w_-\rangle^2\left(\log \langle w_-\rangle\right)^4\left|\na z_+\right|\cdot \left|A_{211}\right| dx d\tau\lesssim \bigl(E^0_-\bigr)^{\f12}
\cdot\Bigl(E_+^0+ F_+^0\Bigr).
\eeq

Thanks to \eqref{eq:p1-5b} and \eqref{eq:p1-5e}, we derive
\beq\label{eq:p1-5f}
\int_{0}^{t} \int_{\Sigma_{\tau}}\langle w_-\rangle^2\left(\log \langle w_-\rangle\right)^4|\na z_+|\cdot |A_{21}^{ij}| dx d\tau\lesssim \Bigl(\sum_{l=0}^2E_-^l\Bigr)^{\f12}
\cdot\Bigl(E_+^0+ F_+^0\Bigr). 
\eeq

 {\underline{(b). Estimates for $A_{22}^{ij}$ and $A_{23}^{ij}$.}} A derivation similar to that of \eqref{eq:p12} for $A_{22}^{1j}$ in the proof of Proposition \ref{prop:p1} shows that
\beno\begin{aligned}
&\|\langle w_+\rangle^{\f12}\log \langle w_+\rangle\cdot \langle w_-\rangle\left(\log \langle w_-\rangle\right)^2A_{22}^{ij}|\|_{L^2_t(L^2_x)}\\ 
\lesssim 
&\bigl(E_-^0+E_-^1\bigr)^{\f12}\cdot\Bigl(\Bigl\|\frac{\langle w_-\rangle(\log \langle w_{-}\rangle)^2}{\langle w_{+}\rangle^{\frac{1}{2}} \log \langle w_{+}\rangle}\cdot\na z_{+}\Bigr\|_{L^2_t(L^2_x)}+\Bigl\|\frac{\langle w_-\rangle(\log \langle w_{-}\rangle)^2}{\langle w_{+}\rangle^{\frac{1}{2}} \log \langle w_{+}\rangle}\cdot\na^2 z_{+}\Bigr\|_{L^2_t(L^2_x)}\Bigr).
\end{aligned}\eeno
Here, we simply replace $z_+$ and its weight $\left(\log \langle w_-\rangle\right)^2$ in \eqref{eq:p12} with $\na z_+$ and  $\langle w_-\rangle\left(\log \langle w_-\rangle\right)^2$ respectively. Using \eqref{eq:p1-3} and \eqref{eq:p1-3a}, we obtain 
\beq\label{eq:p1-6}
\|\langle w_+\rangle^{\f12}\log \langle w_+\rangle\cdot \langle w_-\rangle\left(\log \langle w_-\rangle\right)^2A_{22}^{ij}|\|_{L^2_t(L^2_x)}\lesssim 
\bigl(E_-^0+E_-^1\bigr)^{\f12}\cdot\Bigl(\frac{E_+^0+E_+^1}{L\log L}+ F_+^0+ F_+^1+\f{\e}{L}D_+^1\Bigr)^{\f12}.
\eeq

Using H\"older's inequality, \eqref{eq:p1-3} and \eqref{eq:p1-6}, we deduce that
\beq\label{eq:p1-7}
\int_{0}^{t} \int_{\Sigma_{\tau}}\langle w_-\rangle^2\left(\log \langle w_-\rangle\right)^4\left|\na z_+\right|\cdot|A_{22}^{ij}|\, dx d\tau\lesssim\bigl(E_-^0+E_-^1\bigr)^{\f12}\cdot\Bigl(\frac{E_+^0+E_+^1}{L\log L}+ F_+^0+ F_+^1+\f{\e}{L}D_+^1\Bigr).
\eeq
The same estimate holds for $A_{23}^{ij}$.

{\underline{(c). Estimate for $A_{2}^{ij}$.}}
Thanks to \eqref{eq:p1-5f} and \eqref{eq:p1-7}, we deduce from \eqref{eq:p1-5star} that
\beq\label{eq:p1-8}\begin{aligned}
&\int_{0}^{t} \int_{\Sigma_{\tau}}\langle w_-\rangle^2\left(\log \langle w_-\rangle\right)^4\left|\na z_+\right|\cdot \left|A_2^{ij}\right| dx d\tau\lesssim
\Bigl(\sum_{l=0}^2E_-^l\Bigr)^{\f12}\cdot\bigl(E_+^0+E_+^1+ F_+^0+ F_+^1+\f{\e}{L}D_+^1\bigr).
\end{aligned}\eeq

{\it Step 1.4. Estimate for  $\int_{0}^{t} \int_{\Sigma_{\tau}}\langle w_-\rangle^2\left(\log \langle w_-\rangle\right)^4\left|\na z_+\right|\cdot \left|\p\nabla\wt p_0\right| dx d\tau $.} Combining \eqref{eq:p1-5} and \eqref{eq:p1-8}, we derive from \eqref{eq:p1-4} that
\beq\label{eq:p1-8a}\begin{aligned}
&\int_{0}^{t} \int_{\Sigma_{\tau}}\langle w_-\rangle^2\left(\log \langle w_-\rangle\right)^4\left|\na z_+\right|\cdot \left|\p\nabla\wt p_0\right| dx d\tau\lesssim
\Bigl(\sum_{l=0}^3E_-^l\Bigr)^{\f12}\cdot\bigl(E_+^0+E_+^1+ F_+^0+ F_+^1+\f{\e}{L}D_+^1\bigr).
\end{aligned}\eeq

{\bf Step 2. Estimate for $\int_{0}^{t} \int_{\Sigma_{\tau}}\langle w_-\rangle^2\left(\log \langle w_-\rangle\right)^4\left|\na z_+\right|\cdot \left|\p\nabla\wt p_1\right| dx d\tau $.}
The estimate is similar to the term involving $\na p_1$ in Step 2 in the proof of Proposition \ref{prop:p1}. We will focus on the following term in the expression for $\p_k\na\wt p_1$:
\beno 
\p_k\na\wt p_{11}(\tau,x):=\int_{Q_L} \nabla_x G_{(0,0,1)}(x,y)\p_k\bigl(\partial_{i} z_{+}^{j} \partial_{j} z_{-}^{i}\bigr)(\tau, y) dy,\quad k=1,2.3;
\eeno 
the estimates for the remaining terms in the expression of $\p\na\wt p_1$ follow analogously. 

\smallskip

{\it Step 2.1. Decomposition of $\p_k\na\wt p_{11}$.}
We introduce a cut-off function $\phi(r)\in C_c^\infty(\R)$ such that $0\leq\phi(r)\leq 1$ and
\beno 
\phi(r)=1\quad\text{when}\quad |r|\leq\f13,\quad\text{and}\quad
\phi(r)=0\quad\text{when}\quad |r|\geq\f34.
\eeno
Taking $\phi_L(r)=\phi\bigl(\f{r}{L}\bigr)$, we get
\beq\label{eq:p1-9a}\begin{aligned}
\p_k\na\wt p_{11}(\tau,x)&=\int_{Q_L} \nabla_x G_{(0,0,1)}(x,y)\phi_L(y_3)\cdot\p_k\bigl(\partial_{i} z_{+}^{j} \partial_{j} z_{-}^{i}\bigr)(\tau, y)\, dy\\ 
&\quad+
\int_{Q_L} \nabla_x G_{(0,0,1)}(x,y)\bigl(1-\phi_L(y_3)\bigr)\cdot\p_k\bigl(\partial_{i} z_{+}^{j} \partial_{j} z_{-}^{i}\bigr)(\tau, y)\,dy\\ 
& =:I_{1k}(\tau,x)+I_{2k}(\tau,x).
\end{aligned}\eeq
We remark that the integrations for $I_{1k}$ and $I_{2k}$ are carried out over the regions where $y \in Q_L$ satisfies $|y_3| \leq \frac{3L}{4}$ and $|y_3| \geq \frac{L}{3}$, respectively.

\smallskip

{\it Step 2.2. Estimate for $I_{1k}$.} Integration by parts yields 
\beq\label{eq:p1-9b}
I_{1k}(\tau,x)=\underbrace{\int_{Q_L}\p_{x_k}\nabla_x G_{(0,0,1)}(x,y)\phi_L(y_3)\cdot\bigl(\partial_{i} z_{+}^{j} \partial_{j} z_{-}^{i}\bigr)(\tau, y)\, dy}_{\bar{I}_{1k}(\tau,x)}+\wt{I}_{1k}(\tau,x),
\eeq
where
\beno\begin{aligned}
\wt{I}_{1k}(\tau,x)&=\int_{\{y_k=L\}}\nabla_x G_{(0,0,1)}(x,y)\phi_L(y_3)\cdot\bigl(\partial_{i} z_{+}^{j} \partial_{j} z_{-}^{i}\bigr)(\tau, y)\, d\widehat{y_k}\\ 
&\qquad-\int_{\{y_k=-L\}}\nabla_x G_{(0,0,1)}(x,y)\phi_L(y_3)\cdot\bigl(\partial_{i} z_{+}^{j} \partial_{j} z_{-}^{i}\bigr)(\tau, y)\, d\widehat{y_k},\quad\text{for }\, k=1,2,\\ 
\text{and}\quad
\wt{I}_{13}(\tau,x)&=-\int_{Q_L}\nabla_x G_{(0,0,1)}(x,y)\phi'_L(y_3)\cdot\bigl(\partial_{i} z_{+}^{j} \partial_{j} z_{-}^{i}\bigr)(\tau, y)\, dy.
\end{aligned}\eeno

Notice that for $x_3 \in [-L, L]$ and $|y_3| \leq \frac{3L}{4}$, we have $|x_3 - y_3 - 2L| \geq \frac{L}{4}$, and thus $|x-y_{(0,0,1)}|\geq \frac{L}{4}$. Then
by the properties of $G_{\vv k}(x,y)$, we have
\beno 
|\na_x G_{(0,0,1)}(x,y)|\lesssim\f{1}{|x-y_{(0,0,1)}|^2},\quad
|\na_x^2 G_{(0,0,1)}(x,y)|\lesssim\f{1}{|x-y_{(0,0,1)}|^3},
\eeno
which implies
\beno\begin{aligned} 
\sum_{k=1}^3|\bar{I}_{1k}(\tau,x)|+|\wt{I}_{13}(\tau,x)|&\lesssim\f{1}{L^3}\int_{Q_L}\bigl(|\na z_{+}|\cdot|\na z_{-}|\bigr)(\tau, y)\,dy.
\end{aligned}
\eeno
Thus, similar derivation as \eqref{eq:p16a} yields that 
\beq\label{eq:p1-9c}
\sum_{k=1}^3\|\langle w_-\rangle\left(\log \langle w_-\rangle\right)^2\bar{I}_{1k}\|_{L^\infty_t(L^2_x)}+\|\langle w_-\rangle\left(\log \langle w_-\rangle\right)^2\wt{I}_{13}\|_{L^\infty_t(L^2_x)}
\lesssim\f{(\log L)^2}{L^{\f12}}\cdot \bigl(E_-^0\bigr)^{\f12}\cdot \bigl(E_+^0\bigr)^{\f12}.
\eeq

For $\wt{I}_{11}$ and $\wt{I}_{12}$, we only need to deal with the following term:
\beno 
\wt{I}_{11,L}(\tau,x):=\int_{\{y_1=L\}}\nabla_x G_{(0,0,1)}(x,y)\phi_L(y_3)\cdot\bigl(\partial_{i} z_{+}^{j} \partial_{j} z_{-}^{i}\bigr)(\tau, y)\, dy_2dy_3.
\eeno 
Similarly as $\bar{I}_{1k}$, we first have
\beno 
|\wt{I}_{11,L}(\tau,x)|\lesssim\f{1}{L^2}\int_{\{y_1=L\}}\bigl(|\na z_+|\cdot|\na z_-|\bigr)(\tau, y)\, dy_2dy_3,
\eeno 
which along with the fact that $\langle w_-\rangle(\tau,x)\overset{\text{Lemma \ref{lem:e2}}}{\lesssim} L\lesssim\langle w_-\rangle(\tau,L,y_2,y_3)$ implies
\beno\begin{aligned}
&\langle w_-\rangle\left(\log \langle w_-\rangle\right)^2|\wt{I}_{11,L}|(\tau,x)\lesssim 
\f{(\log L)^2}{L^2}\int_{\{y_1=L\}}\bigl(\langle w_-\rangle|\na z_+|\cdot|\na z_-|\bigr)(\tau, y)\, dy_2dy_3\\ 
&\qquad\lesssim \f{(\log L)^2}{L^2}\cdot\|\langle w_-\rangle\na z_+|_{y_1=L}\|_{L^2_{(y_2,y_3)}}\cdot\|\na z_-|_{y_1=L}\|_{L^2_{(y_2,y_3)}}\\ 
&\qquad\lesssim\f{(\log L)^2}{L^2}\cdot\bigl(\|\langle w_-\rangle\na z_+\|_{L^2_y}+\|\langle w_-\rangle\na^2 z_+\|_{L^2_y}\bigr)\cdot\bigl(\|\na z_-\|_{L^2_y}+\|\na^2 z_-\|_{L^2_y}\bigr),
\end{aligned}\eeno
where we used Sobolev embedding theorem and Lemma \ref{lem:e1} in the last inequality. Therefore, we obtain
\beq\label{eq:p1-9d}
\|\langle w_-\rangle\left(\log \langle w_-\rangle\right)^2\wt{I}_{11,L}\|_{L^\infty_t(L^2_x)}
\lesssim\f{(\log L)^2}{L^{\f12}}\bigl(E_-^0+E_-^1\bigr)^{\f12}\cdot\bigl(E_+^0+E_+^1\bigr)^{\f12}.
\eeq
The same estimates hold for the remaining terms in $\wt{I}_{11}$ and $\wt{I}_{12}$, and consequently for $\wt{I}_{11}$ and $\wt{I}_{12}$ themselves.

\smallskip 

Thanks to \eqref{eq:p1-9b}, \eqref{eq:p1-9c} and \eqref{eq:p1-9d}, we obtain
\beq\label{eq:p1-9e}
\sum_{k=1}^3\|\langle w_-\rangle\left(\log \langle w_-\rangle\right)^2 I_{1k}\|_{L^\infty_t(L^2_x)}
\lesssim\f{(\log L)^2}{L^{\f12}}\bigl(E_-^0+E_-^1\bigr)^{\f12}\cdot\bigl(E_+^0+E_+^1\bigr)^{\f12}.
\eeq

{\it Step 2.3. Estimate for $I_{2k}$.} By definition, we first get
\beno
|I_{2k}(t,x)|\lesssim \int_{|y_3|\geq\f{L}{3}}\f{(|\p_k\na z_+|\cdot|\na z_-|+|\na z_+|\cdot|\p_k\na z_-|)(t,y)}{|x-y_{(0,0,1)}|^2}\cdot 1_{Q_L}(y)\, dy.
\eeno
Similar derivation as \eqref{eq:p16}(for $I_1$ in the proof of Proposition \ref{prop:p1}) yields 
\beq\label{eq:p1-9f}
\sum_{k=1}^2\|\langle w_-\rangle\left(\log \langle w_-\rangle\right)^2I_{2k}\|_{L^\infty_t(L^2_x)}\lesssim\f{1}{L}\Bigl(\sum_{l=0}^3E_-^l\Bigr)^{\f12}\cdot\bigl(E_+^0+E_+^1\bigr)^{\f12}.
\eeq

{\it Step 2.4. Estimate for $\int_{0}^{t} \int_{\Sigma_{\tau}}\langle w_-\rangle^2\left(\log \langle w_-\rangle\right)^4\left|\na z_+\right|\cdot \left|\p\nabla\wt p_1\right| dxd\tau $.} Combining \eqref{eq:p1-9a}, \eqref{eq:p1-9e} and \eqref{eq:p1-9f}, we deduce that
\beno 
\|\langle w_-\rangle\left(\log \langle w_-\rangle\right)^2\p\na\wt{p}_{11}\|_{L^\infty_t(L^2_x)}\lesssim\f{(\log L)^2}{L^{\f12}}\cdot\Bigl(\sum_{l=0}^3E_-^l\Bigr)^{\f12}\cdot\bigl(E_+^0+E_+^1\bigr)^{\f12}.
\eeno
The same estimates hold for the remaining terms in $\p\na\wt{p}_1$. Therefore, we obtain
\beq\label{eq:p1-9}\begin{aligned}
&\qquad\int_{0}^{t} \int_{\Sigma_{\tau}}\langle w_-\rangle^2\left(\log \langle w_-\rangle\right)^4\left|\na z_+\right|\cdot \left|\p\nabla\wt p_1\right| dxd\tau\\ 
&\leq\|\langle w_-\rangle\left(\log \langle w_-\rangle\right)^2\na z_+\|_{L^\infty_t(L^2)}\cdot\|\langle w_-\rangle\left(\log \langle w_-\rangle\right)^2\p\na\wt{p}_1\|_{L^\infty_t(L^2_x)}\cdot T^*\\ 
&\lesssim\f{(\log L)^3}{L^{\f12}}\cdot \Bigl(\sum_{l=0}^3E_-^l\Bigr)^{\f12}\cdot\bigl(E_+^0+E_+^1\bigr).
\end{aligned}\eeq

{\bf Step 3. Estimate for $\int_{0}^{t} \int_{\Sigma_{\tau}}\langle w_-\rangle^2\left(\log \langle w_-\rangle\right)^4\left|\na z_+\right|\cdot \left|\p\nabla\wt p_{\geq 2}\right| dx d\tau $.} By virtue of \eqref{eq:est for wt p geq 2} and $\langle w_-\rangle\lesssim L$, we first get
\beno
\langle w_-\rangle\left(\log \langle w_-\rangle\right)^2\left|\p\nabla\wt p_{\geq 2}(t,x)\right|
\lesssim\f{(\log L)^2}{L^3}\int_{Q_L}|z_-|\cdot|\na z_+|dx
\lesssim\f{(\log L)^2}{L^3}\cdot(E_-)^{\f12}\cdot\bigl(E_+^0\bigr)^{\f12}.
\eeno
Then similar derivation as \eqref{eq:p19} for $\na p_{\geq 2}$ implies
\beq\label{eq:p1-10}
\int_{0}^{t} \int_{\Sigma_{\tau}}\langle w_-\rangle^2\left(\log \langle w_-\rangle\right)^4\left|\na z_+\right|\cdot \left|\p\nabla\wt p_{\geq 2}\right| dx d\tau\lesssim\f{(\log L)^3}{L^{\f32}}\cdot (E_-)^{\f12}\cdot E_+^0.
\eeq

{\bf Step 4. Estimate for $\int_{0}^{t} \int_{\Sigma_{\tau}}\langle w_-\rangle^2\left(\log \langle w_-\rangle\right)^4\left|\na z_+\right|\cdot \bigl(|\wt{\vv B_1}|+|\wt{\vv B_2}|\bigr)dx d\tau $.} By \eqref{eq:est for wt B 1 2}, we have
\beno\begin{aligned}
&|\wt{\vv B_1}(t,x)|\lesssim\f{1}{L^3}\sum_{i=1}^3\int_{\{y_i=L\}}|\na z_+(t,y)|\cdot|z_-(t,y)|d\widehat{y_i} \\ 
&|\wt{\vv B_2}(t,x)|\lesssim\f{1}{L^2}\sum_{i=1}^3\int_{\{y_i=L\}}|\na z_+(t,y)|\cdot|\na z_-(t,y)|d\widehat{y_i}.
\end{aligned}\eeno
Similar argument as \eqref{eq:p23} for $|\vv B_1|+|\vv B_2|$ yields 
\beq\label{eq:p1-11}
\int_{0}^{t} \int_{\Sigma_{\tau}}\langle w_-\rangle^2\left(\log \langle w_-\rangle\right)^4\left|\na z_+\right|\cdot \bigl(|\wt{\vv B_1}|+|\wt{\vv B_2}|\bigr)dx d\tau
\lesssim\frac{\log L}{L^{\frac{3}{2}}}\cdot\left(E_{-}+ E_{-}^0+ E_{-}^1\right)^{\frac{1}{2}}\cdot\left(E_{+}^0+ E_{+}^1\right).
\eeq

{\bf Step 5. Estimate for $\int_{0}^{t} \int_{\Sigma_{\tau}}\langle w_-\rangle^2\left(\log \langle w_-\rangle\right)^4\left|\na z_+\right|\cdot \left|\nabla^2 p\right| dx d\tau $.} Going back to \eqref{eq:p1-1}, we combine the estimates obtained in Steps 1-4 to conclude that
\beno\begin{aligned}
&\quad\int_{0}^{t} \int_{\Sigma_{\tau}}\langle w_-\rangle^2\left(\log \langle w_-\rangle\right)^4\left|\na z_+\right|\cdot \left|\nabla^2 p\right| dx d\tau\\ 
&\lesssim\Bigl(E_-+\sum_{l=0}^3E_-^l\Bigr)^{\f12}\cdot\bigl(E_+^0+E_+^1+ F_+^0+ F_+^1+\f{\e}{L}D_+^1\bigr).
\end{aligned}\eeno 
This establishes \eqref{eq:p1-0} for $\nabla z_+$; the result for $\nabla z_-$ follows from a symmetric argument.
It completes the proof of the proposition.
\end{proof}

\subsubsection{Completion of the first order energy estimate}
In this subsection, we will complete the proof of Proposition \ref{prop:estimates on first order terms}.
\begin{proof}[\proofname\ of Proposition \ref{prop:estimates on first order terms}]
We only derive the first order energy estimate \eqref{eq:estimates on first order terms} for $z_+$; the estimate for $ z_-$ follows by a symmetric argument.

Invoking Proposition \ref{prop:l1}, we apply \eqref{eq:l7} at $t=T^*$ with  $f_+=\p z_+$, $\rho_+=-\p\nabla p-\p z_-\cdot\na z_+$ and $\lambda_+=\langle w_-\rangle^2\left(\log \langle w_-\rangle\right)^4$ to obtain
\begin{equation}\label{eq:c1-1}
\begin{aligned}
& \quad\sup_{0\leq t\leq T^*}\|\langle w_-\rangle\left(\log \langle w_-\rangle\right)^2\na z_+\|_{L^2(\Sigma_t)}^2+\mu\|\langle w_-\rangle\left(\log \langle w_-\rangle\right)^2\na^2 z_+\|_{L^2_{T^*}(L^2_x)}^2\\
&\qquad + \sup\limits_{|u_+|\leq \frac{L}{4}} \int_{C^+_{u_+}}\langle w_-\rangle^2 \left(\log \langle w_-\rangle\right)^4\left|\na z_+\right|^2 d\sigma_+\\ 
& \lesssim\|\langle w_-\rangle\left(\log \langle w_-\rangle\right)^2\na z_+\|_{L^2(\Sigma_0)}^2+ \mu\int_{0}^{T^*} \int_{\Sigma_t} \left(\log \langle w_-\rangle\right)^4\left|\na z_+\right|^{2} dx dt \\ 
&\qquad+\mu^2\|\langle w_-\rangle\left(\log \langle w_-\rangle\right)^2\na^3 z_+\|_{L_{T^*}^2(L^2_x)}^2\\ 
&\qquad+ \int_{0}^{T^*} \int_{\Sigma_t} \langle w_-\rangle^2\left(\log \langle w_-\rangle\right)^4 \left|\na z_+\right|\cdot\bigl(|\na^2 p|+|\na z_-|\cdot|\na z_+|\bigr)\, dxdt,
\end{aligned} 
\end{equation}
where we have used the fact that  $\frac{\left|\nabla \lambda_+\right|^2}{\lambda_+} \lesssim\left(\log \langle w_-\rangle\right)^4$ for $\lambda_+=\langle w_-\rangle^2\left(\log \langle w_-\rangle\right)^4$.

For the second term on the RHS of \eqref{eq:c1-1}, we apply the lowest order energy estimate \eqref{eq:estimates on lowest order terms} from Proposition \ref{prop:estimates on lowest order terms}. The first part of the last term, invovling $\na^2p$, is bounded by \eqref{eq:p1-0} in Proposition \ref{prop:p1-1}. For the second part of the last term, we find that
\beno\begin{aligned}
&\qquad\int_{0}^{T^*} \int_{\Sigma_t} \langle w_-\rangle^2\left(\log \langle w_-\rangle\right)^4 |\na z_+|\cdot|\na z_-|\cdot|\na z_+|\, dxdt\\ 
&\lesssim\|\langle w_+\rangle\left(\log \langle w_+\rangle\right)^2\na z_-\|_{L^\infty_{T^*}(L^\infty_x)}\cdot\Bigl\|\f{\langle w_-\rangle\left(\log \langle w_-\rangle\right)^2}{\langle w_+\rangle^{\f12}\log \langle w_+\rangle}\na z_+\Bigr\|_{L^2_{T^*}(L^2_x)}^2,
\end{aligned}\eeno 
which along with the weighted Sobolev inequality and \eqref{eq:p1-3} implies 
\beno
\int_{0}^{T^*} \int_{\Sigma_t} \langle w_-\rangle^2\left(\log \langle w_-\rangle\right)^4 |\na z_+|^2\cdot|\na z_-|\, dxdt
\lesssim\Big(\sum_{k=0}^2 E_-^k\Big)^{\frac{1}{2}}\cdot\bigl(\frac{E_+^0}{L\log L}+ F_+^0\bigr). 
\eeno 
The third term on the RHS of \eqref{eq:c1-1} is bounded by $\mu D_+^1$. Combining these estimates, we finally derive from \eqref{eq:c1-1} that
\beno\begin{aligned}
E_+^0+ D_+^0+ F_+^0
&\lesssim E_+(0)+E_+^0(0)+ \Big(E_-+ \sum_{l=0}^3 E_-^l \Big)^{\frac{1}{2}} \left(E_++ E_+^1+ F_++ F_+^1\right)\\ 
&\qquad+\Big(E_-+ \sum_{l=0}^3 E_-^l \Big)^{\frac{1}{2}} \left( E_+^{0}+ F_+^{0}\right)+ \mu D_+^{0}+\mu D_+^1.
\end{aligned}\eeno

By the ansatz \eqref{eq:a3}, we have $\big(E_-+ \sum_{l=0}^3 E_-^l \big)^{\frac{1}{2}}\lesssim\e$. Taking $\e$ and $\mu$ sufficiently small, we obtain
\beno 
E_+^0+ D_+^0+ F_+^0
\lesssim E_+(0)+E_+^0(0)+ \Big(E_-+ \sum_{l=0}^3 E_-^l \Big)^{\frac{1}{2}} \left(E_++ E_+^1+ F_++ F_+^1\right)+\mu D_+^1. 
\eeno
This establishes \eqref{eq:estimates on first order terms} for $\na z_+$. The result for $\na z_-$ follows from a symmetric argument.
This completes the proof of the proposition.
\end{proof}

\subsection{Energy estimates for higher order terms}
To derive higher order energy estimates, we first commute derivatives with the vorticity equations \eqref{eq:j}. Specifically, for any multi-index $\beta$ with $1 \leq|\beta| \leq N_{*}+4$, applying $\partial^{\beta}$ to \eqref{eq:j} gives rise to
\begin{equation}
\left\{\begin{array}{l}
\partial_{t} j_{+}^{(\beta)}+Z_{-} \cdot \nabla j_{+}^{(\beta)}-\mu \Delta j_{+}^{(\beta)}=\rho_{+}^{(\beta)},  \\
\partial_{t} j_{-}^{(\beta)}+Z_{+} \cdot \nabla j_{-}^{(\beta)}-\mu \Delta j_{-}^{(\beta)}=\rho_{-}^{(\beta)},
\end{array}\right. \label{eq:h1}
\end{equation}
where source terms $\rho_{\pm}^{(\beta)}$ are given by
\begin{equation}
\begin{aligned}
\rho_{+}^{(\beta)} & =-\partial^{\beta}\left(\nabla z_{-} \wedge \nabla z_{+}\right)-\left[\partial^{\beta},\, z_{-} \cdot \nabla\right] j_{+}, \\
\rho_{-}^{(\beta)} & =-\partial^{\beta}\left(\nabla z_{+} \wedge \nabla z_{-}\right)-\left[\partial^{\beta},\, z_{+} \cdot \nabla\right] j_{-} .
\end{aligned} \label{eq:h2}
\end{equation}
The following subsections are devoted to applying \eqref{eq:l7} (from Proposition \ref{prop:l1}) and \eqref{eq:l36} (from Proposition \ref{prop:l2}) to the system \eqref{eq:h1} for the cases $1 \leq|\beta| \leq N_{*}$ and $N_{*}+1 \leq|\beta|\leq N_{*}+3$, respectively. By establishing weighted energy estimates for $\nabla z_{\pm}^{(\beta)}$ at each order with suitable weights, we will complete the proof of the total energy estimate.

\subsubsection{Energy estimates for $\nabla z_{\pm}^{(\beta)}$ with $1 \leq|\beta| \leq N_{*}$}
The main result of this subsection is stated as follows:
\begin{proposition}\label{prop:h1}
Let $L \geq e^{1/\mu}\geq R\gg1$ and let $\e$ be sufficiently small. Then under the assumptions \eqref{eq:a1}--\eqref{eq:a3}, we have
\begin{equation}\label{eq:h3}
\begin{aligned}
 \sum_{k=1}^{N_{*}}\left(E_{\pm}^{k}+ F_{\pm}^{k}+ D_{\pm}^{k}\right) 
&\lesssim \sum_{k=1}^{N_{*}} E_{\pm}^{k}(0)+\sum_{l=0}^{N_{*}}\left(E_{\mp}^{l}\right)^{\frac{1}{2}}\cdot \Big(\frac{E_{\pm}^{0}}{L}+ F_{\pm}^{0}\Big)\\ 
&\qquad+ \frac{1}{R^{2}}\bigl(E_{\pm}^{0}+ D_{\pm}^{0}\bigr)+\frac{\varepsilon\mu}{L}E_{\pm}^{N_{*}+1}+\mu D_{\pm}^{N_{*}+1}.
\end{aligned} 
\end{equation}
Moreover, there holds
\begin{equation}\label{eq:h15}
\begin{aligned}
& E_{\pm}+ \sum_{k=0}^{N_{*}} E_{\pm}^{k}+ D_{\pm}+ \sum_{k=0}^{N_{*}} D_{\pm}^{k}+ F_{\pm}+ \sum_{k=0}^{N_{*}} F_{\pm}^{k}\\
\lesssim & E_{\pm}(0)+ \sum_{k=0}^{N_{*}} E_{\pm}^{k}(0)+\frac{\varepsilon\mu}{L}E_{\pm}^{N_{*}+1}+\mu D_{\pm}^{N_{*}+1}.
\end{aligned} 
\end{equation}
\end{proposition}
\begin{proof}
We divide the proof into three steps.

\textbf{Step 1. Energy estimate for \eqref{eq:h1}.} For any multi-index $\beta$ with $1 \leq |\beta| \leq N_{*}$, applying \eqref{eq:l7} to \eqref{eq:h1} with the weight functions $\lambda_{\pm}=\langle w_{\mp}\rangle^2\left(\log \langle w_{\mp}\rangle\right)^4$ yields
\begin{equation}\label{eq:h4}
\begin{aligned}
&\sup\limits_{0\leq\tau\leq t}\|\langle w_{\mp}\rangle\left(\log \langle w_{\mp}\rangle\right)^2j_{\pm}^{(\beta)}\|_{L^2(\Sigma_\tau)}^2+\mu\|\langle w_{\mp}\rangle\left(\log \langle w_{\mp}\rangle\right)^2\na j_{\pm}^{(\beta)}\|_{L^2_t(L^2_x)}^2\\ 
&\qquad+\sup\limits_{|u_\pm|\leq \frac{L}{4}} \int_{C^{\pm}_{t,u_\pm}} \langle w_{\mp}\rangle^2\left(\log \langle w_{\mp}\rangle\right)^4 |j_{\pm}^{(\beta)}|^2 d\sigma_\pm \\
\lesssim&\|\langle w_{\mp}\rangle\left(\log \langle w_{\mp}\rangle\right)^2j_{\pm}^{(\beta)}\|_{L^2(\Sigma_0)}^2+ \underbrace{\int_{0}^{t} \int_{\Sigma_{\tau}}\langle w_{\mp}\rangle^2\left(\log \langle w_{\mp}\rangle\right)^4|j_{\pm}^{(\beta)}|\cdot|\rho_{\pm}^{(\beta)}|\, dx d\tau}_{\text{nonlinear interaction term } I_{\pm}(t)} \\
& \qquad +\mu\|\left(\log \langle w_{\mp}\rangle\right)^2j_{\pm}^{(\beta)}\|_{L^2_t(L^2_x)}^2+\mu^2\|\langle w_{\mp}\rangle\left(\log \langle w_{\mp}\rangle\right)^2\na^2 j_{\pm}^{(\beta)}\|_{L^2_t(L^2_x)}^2,
\end{aligned} 
\end{equation}
where we used the fact that $\frac{\left|\nabla \lambda_{\pm}\right|^2}{\lambda_{\pm}}\lesssim\left(\log \langle w_{\mp}\rangle\right)^4$ for $\lambda_\pm=\langle w_\mp\rangle^2\left(\log \langle w_\mp\rangle\right)^4$.

For the third term on the RHS of \eqref{eq:h4}, we have
\beno 
\mu\|\left(\log \langle w_{\mp}\rangle\right)^2j_{\pm}^{(\beta)}\|_{L^2_t(L^2_x)}^2\leq\f{1}{R^2}\cdot\mu\int_0^t\int_{\Sigma_\tau} \langle w_{\mp}\rangle^2\left(\log \langle w_{\mp}\rangle\right)^4|\na z_{\pm}^{(|\beta|)}|^2\, dxd\tau=\f{1}{R^2}\cdot D_\pm^{|\beta|-1}(t).
\eeno
 While the last term of \eqref{eq:h4} is bounded by $\mu D_\pm^{|\beta|+1}(t)$. Then \eqref{eq:h4} is reduced to
\begin{equation}\label{eq:h5}
\begin{aligned}
& \sup\limits_{0\leq\tau\leq t}\|\langle w_{\mp}\rangle\left(\log \langle w_{\mp}\rangle\right)^2j_{\pm}^{(\beta)}\|_{L^2(\Sigma_\tau)}^2+\mu\|\langle w_{\mp}\rangle\left(\log \langle w_{\mp}\rangle\right)^2\na j_{\pm}^{(\beta)}\|_{L^2_t(L^2_x)}^2+F_\pm^{(\beta)}(t) \\
& \lesssim \|\langle w_{\mp}\rangle\left(\log \langle w_{\mp}\rangle\right)^2j_{\pm}^{(\beta)}\|_{L^2(\Sigma_0)}^2+I_\pm(t) +\f{1}{R^2}\cdot D_\pm^{|\beta|-1}(t)+\mu D_\pm^{|\beta|+1}(t).
\end{aligned} 
\end{equation}

\textbf{Step 2. Estimates for the nonlinear interaction term $I_{\pm}(t)$.}  By symmetry, it suffices to control $I_{+}$. Due to the expression of $\rho_{+}^{(\beta)}$ in \eqref{eq:h2}, we have 
\beno
|\rho_{+}^{(\beta)}|\lesssim \sum_{\gamma \leq \beta} |\nabla z_{-}^{(\gamma)}|\cdot |\nabla z_{+}^{(\beta-\gamma)}|+ \sum_{0\neq \gamma \leq \beta}|z_{-}^{(\gamma)}|\cdot|\nabla j_{+}^{(\beta-\gamma)}|
\lesssim \sum_{k\leq|\beta|}|\nabla z_{-}^{(k)}|\cdot|\nabla z_{+}^{(|\beta|-k)}|,
\eeno
which implies
\begin{equation}\label{eq:h6}
I_{+}(t) \lesssim \sum_{k\leq|\beta|} \int_{0}^{t} \int_{\Sigma_{\tau}} \langle w_{-}\rangle^2\left(\log \langle w_{-}\rangle\right)^4 |j_{+}^{(\beta)}|\cdot |\nabla z_{-}^{(k)}| \cdot|\nabla z_{+}^{(|\beta|-k)}|\, dx d\tau . 
\end{equation}

We split the analysis of $I_+$ into two cases according to the order of the derivatives:

\textbf{Case 1:} $1\leq|\beta|\leq N_{*}-2$. Since $k+2\leq N_{*}$, we can use the weighted Sobolev inequality \eqref{eq:e12} to control the $L^\infty_x$ norm  for $\nabla z_{-}^{(k)}$.  Hence, we have
\beno\begin{aligned}
I_+(t)&\lesssim\sum_{k\leq|\beta|} \|\langle w_{+}\rangle \left(\log \langle w_{+}\rangle\right)^2 \nabla z_{-}^{(k)}\|_{L_{(t,x)}^{\infty}}\cdot \Bigl\|\frac{\langle w_{-}\rangle\left(\log \langle w_{-}\rangle\right)^2}{\langle w_{+}\rangle^{\f12}\log \langle w_{+}\rangle}\cdot j_{+}^{(\beta)}\Bigr\|_{L^2_t(L^2_x)}\\ 
&\qquad\cdot\Bigl\|\frac{\langle w_{-}\rangle\left(\log \langle w_{-}\rangle\right)^2}{\langle w_{+}\rangle^{\f12}\log \langle w_{+}\rangle}\cdot\nabla z_{+}^{(|\beta|-k)}\Bigr\|_{L^2_t(L^2_x)},
\end{aligned}\eeno
which along with \eqref{eq:e12}, \eqref{eq:p1-3}, \eqref{eq:p1-3aa} and \eqref{eq:p1-3a} (for $\na z_+^{(|\beta|)}$) implies
\beno
I_+(t)\lesssim\sum_{k\leq|\beta|}\Bigl(\sum_{l=k}^{k+2}E_{-}^{l}\Bigr)^{\frac{1}{2}
}\cdot\Bigl(\f{ E_+^{|\beta|}}{L\log L}+ F_+^{|\beta|}\Bigr)^{\f12}\cdot\Bigl(\sum_{l=0}^{|\beta|}\bigl(\f{ E_+^l}{L}+ F_+^l\bigr)+\frac{\e}{L}D_\pm^{|\beta|}\Bigr)^{\f12}.
\eeno
Thus, we obtain
\begin{equation}\label{eq:h7}
I_{+}\lesssim \sum_{l=0}^{N_{*}}\left(E_{-}^{l}\right)^{\frac{1}{2}}\cdot\biggl(\sum_{k=0}^{|\beta|}\Big(\frac{E_{+}^{k}}{L}+ F_{+}^k\Bigr)+ \frac{\varepsilon}{L}D_{+}^{|\beta|}\biggr). 
\end{equation}

\textbf{Case 2:} $|\beta|=N_{*}-1$ or $N_{*}$. In this case, we first  rewrite $I_{+}$ as
\beno
I_{+}(t) \lesssim\Big(\underbrace{\sum_{k\leq N_{*}-2}}_{I_1(t)}+\underbrace{\sum_{N_{*}-1 \leq k \leq|\beta|}}_{I_2(t)}\Big) \int_{0}^{t} \int_{\Sigma_{\tau}}\langle w_{-}\rangle^2\left(\log \langle w_{-}\rangle\right)^4|j_{+}^{(\beta)}|\cdot|\nabla z_{-}^{(k)}|\cdot |\nabla z_{+}^{(|\beta|-k)}| dx d\tau. 
\eeno

For $I_1$, similar derivation as \eqref{eq:h7} in Case 1 gives rise to
\begin{equation}\label{eq:h8}
I_1(t)\lesssim\sum_{l=0}^{N_{*}}\left(E_{-}^{l}\right)^{\frac{1}{2}}\cdot\biggl(\sum_{k=0}^{|\beta|}\Bigl(\frac{E_{+}^{k}}{L}+ F_{+}^k\Bigr)+ \frac{\varepsilon}{L}D_{+}^{|\beta|}\biggr). 
\end{equation}

For $I_2$, since $N_{*}-1 \leq k \leq|\beta|\leq N_*$ implies $0\leq |\beta|-k\leq 1$, we can control the $L_{x}^{\infty}$ norm of $\nabla z_{+}^{(|\beta|-k)}$ by using the weighted Sobolev inequality. This yields
\beno
\begin{aligned}
I_{2}(t) 
& \leq \sum_{k=N_{*}-1}^{|\beta|}\bigl\|\langle w_{+}\rangle\left(\log \langle w_{+}\rangle\right)^2 \nabla z_{-}^{(k)}\bigr\|_{L_t^{\infty}(L_{x}^{2})}\cdot\Bigl\|\frac{\langle w_{-}\rangle\left(\log \langle w_{-}\rangle\right)^2}{\langle w_{+}\rangle^{\frac{1}{2}} \log \langle w_{+}\rangle} j_{+}^{(\beta)}\Bigr\|_{L_t^{2}(L_{x}^{2})} \\
& \quad\quad\quad\quad\cdot \Bigl\|\frac{\langle w_{-}\rangle\left(\log \langle w_{-}\rangle\right)^2}{\langle w_{+}\rangle^{\frac{1}{2}} \log \langle w_{+}\rangle} \nabla z_{+}^{(|\beta|-k)}\Bigr\|_{L_t^{2}(L_{x}^{\infty})}\\
& \lesssim \sup_{N_{*}-1\leq k\leq|\beta|}\left(E_{-}^{k}\right)^{\frac{1}{2}}\cdot\Bigl(\frac{E_{+}^{|\beta|}}{L\log L}+F_{+}^{|\beta|}(t)\Bigr)^{\frac{1}{2}}\cdot \sum_{k=0}^{|\beta|+1-N_{*}} \Bigl\|\frac{\langle w_{-}\rangle\left(\log \langle w_{-}\rangle\right)^2}{\langle w_{+}\rangle^{\frac{1}{2}} \log \langle w_{+}\rangle} \nabla z_{+}^{(k)}\Bigr\|_{L_t^{2}(L_{x}^{\infty})},
\end{aligned} 
\eeno
where we used \eqref{eq:p1-3} for $j_+^{(\beta)}$ in the last inequality.
Concerning the last term above, since  $\nabla^{i}\Big(\frac{\langle w_{-}\rangle\left(\log \langle w_{-}\rangle\right)^2}{\langle w_{+}\rangle^{\frac{1}{2}} \log \langle w_{+}\rangle}\Big) \lesssim \frac{\langle w_{-}\rangle\left(\log \langle w_{-}\rangle\right)^2}{\langle w_{+}\rangle^{\frac{1}{2}} \log \langle \overline{w_{+}}\rangle}$ holds for a piecewise weight function with $i=1,2$, by an argument similar to the proof of Lemma \ref{lem:e3}, the standard Sobolev inequality implies
\beno
\begin{aligned}
&\Bigl\|\frac{\langle w_{-}\rangle\left(\log \langle w_{-}\rangle\right)^2}{\langle w_{+}\rangle^{\frac{1}{2}} \log \langle w_{+}\rangle} \nabla z_{+}^{(k)}\Bigr\|_{L_{\tau}^{2}(L_{x}^{\infty})}^2
\lesssim \sum_{l=k}^{k+2}\Bigl\|\frac{\langle w_{-}\rangle\left(\log \langle w_{-}\rangle\right)^2}{\langle w_{+}\rangle^{\frac{1}{2}} \log \langle w_{+}\rangle} \nabla z_{+}^{(l)}\Bigr\|_{L_{\tau}^{2}(L_{x}^{2})}^2\\
&\qquad \overset{\eqref{eq:p1-3aa}}{\lesssim}
\sum_{l=k}^{k+2}\Bigl(\sum_{m=0}^l\bigl(\f{ E_+^m}{L\log L}+ F_+^m\bigr)+\frac{\varepsilon}{L}E_+^{l+1}\Bigr)
\overset{k=0,\,1}{\lesssim}\sum_{l=0}^4\f{ E_+^l}{L}+ \sum_{m=0}^3 F_+^m.
\end{aligned} 
\eeno
By this inequality, using $N_*\geq 7$,  we obtain
\beq\label{eq:h9}
I_{2}(t)
\lesssim\sup_{k\leq N_{*}}\left(E_{-}^{k}\right)^{\frac{1}{2}} \cdot\sum_{k=0}^{|\beta|}\Bigl(\frac{E_{+}^{k}}{L}+ F_{+}^k\Bigr).
\eeq

From \eqref{eq:h8} and \eqref{eq:h9}, we obtain the estimates for $I_+$ when $|\beta| = N_*-1$ and $|\beta| = N_*$. Together with \eqref{eq:h7}, this implies that for all $1 \leq |\beta| \leq N_{*}$,
\begin{equation}\label{eq:h10}
I_+(t)\lesssim\sum_{l=0}^{N_{*}}\left(E_{-}^{l}\right)^{\frac{1}{2}}\cdot\biggl(\sum_{k=0}^{|\beta|}\Bigl(\frac{E_{+}^{k}}{L}+ F_{+}^k\Bigr)+ \frac{\varepsilon}{L}D_{+}^{|\beta|}\biggr). 
\end{equation}
Similar estimate holds for $I_-(t)$.

\textbf{Step 3. Energy estimates for $\nabla z_{\pm}^{(\beta)}$ with $1\leq|\beta|\leq N_{*}$.}
Applying the div-curl lemma \ref{lem:div-cur} to $z_\pm^{(|\beta|)}$  with $\lambda=\lambda_\pm=\langle w_\mp\rangle^2\left(\log \langle w_\mp\rangle\right)^4$ and $F(L)=\f{1}{L\log L}$, we have
\beno
\begin{aligned}
&\|\langle w_{\mp}\rangle\left(\log \langle w_{\mp}\rangle\right)^2\nabla z_{\pm}^{(|\beta|)}\|_{L^2_x}^2 
\lesssim\|\langle w_{\mp}\rangle\left(\log \langle w_{\mp}\rangle\right)^2j_{\pm}^{(|\beta|)}\|_{L^2_x}^2 + \|\left(\log \langle w_{\mp}\rangle\right)^2z_{\pm}^{(|\beta|)}\|_{L^2_x}^2\\
&\qquad+\f{\e}{L\log L}\bigl(\|\langle w_{\mp}\rangle\left(\log \langle w_{\mp}\rangle\right)^2 z_{\pm}^{(|\beta|)}\|_{L^{2}_x}^{2}+\|\langle w_{\mp}\rangle\left(\log \langle w_{\mp}\rangle\right)^2\nabla^2 z_{\pm}^{(|\beta|)}\|_{L^2_x}^{2}\bigr).
\end{aligned}
\eeno
Since $\langle w_\mp\rangle\geq R$ and $L\geq e^{\f{1}{\mu}}$ (or $\f{1}{\log L}\leq\mu$), by taking $\e\leq\f{1}{R^2}$, we obtain 
\beq\label{eq:h11}
E_\pm^{|\beta|}\lesssim\sup_{0\leq t\leq T^*}\|\langle w_{\mp}\rangle\left(\log \langle w_{\mp}\rangle\right)^2j_{\pm}^{(|\beta|)}\|_{L^2_x}^2+\f{1}{R^2} E_\pm^{|\beta|-1}+\f{\e\mu}{L}E_\pm^{|\beta|+1}.
\eeq

 Similarly, for $z_\pm^{(|\beta|+1)}$, we obtain
\beno
\begin{aligned}
&\mu\|\langle w_{\mp}\rangle\left(\log \langle w_{\mp}\rangle\right)^2\nabla z_{\pm}^{(|\beta|+1)}\|_{L^2_t(L^2_x)}^2\lesssim\mu\|\langle w_{\mp}\rangle\left(\log \langle w_{\mp}\rangle\right)^2\na j_{\pm}^{(|\beta|)}\|_{L^2_t(L^2_x)}^2 \\
&\qquad+\f{\mu}{R^2}\|\langle w_{\mp}\rangle\left(\log \langle w_{\mp}\rangle\right)^2\na z_{\pm}^{(|\beta|)}\|_{L^{2}_t(L^2_x)}^{2}+\f{\e}{L}\cdot\mu^2\|\langle w_{\mp}\rangle\left(\log \langle w_{\mp}\rangle\right)^2\nabla^2 z_{\pm}^{(|\beta|+1)}\|_{L^2_t(L^2_x)}^{2},
\end{aligned} 
\eeno
which implies
\beq\label{eq:h12}
D_\pm^{|\beta|}\lesssim\mu\|\langle w_{\mp}\rangle\left(\log \langle w_{\mp}\rangle\right)^2\na j_{\pm}^{(|\beta|)}\|_{L^2_{T^*}(L^2_x)}^2
+\f{1}{R^2} D_\pm^{|\beta|-1}+\f{\e\mu}{L} D_\pm^{|\beta|+1}.
\eeq

Noticing that the first terms on the RHS of \eqref{eq:h11} and \eqref{eq:h12} correspond to the first two terms on the LHS of \eqref{eq:h5} evaluated at  $t=T^*$, we combine them with the estimate \eqref{eq:h10} for $I_\pm$. Then we deduce from \eqref{eq:h5} that 
\begin{equation}\label{eq:h13}
\begin{aligned}
 E_{\pm}^{|\beta|}+D_{\pm}^{|\beta|}+F_{\pm}^{|\beta|}
&\lesssim E_{\pm}^{|\beta|}(0)+ \sum_{l=0}^{N_{*}}\left(E_{\mp}^{l}\right)^{\frac{1}{2}}\cdot \bigg(\frac{1}{L}\Big(E_{\pm}^{0}+\sum_{k=1}^{|\beta|}E_{\pm}^{k}\Big)+ F_{\pm}^{0}+ \sum_{k=1}^{|\beta|}F_{\pm}^{k}\bigg)\\
&\qquad + \frac{1}{R^2}\bigl(E_{\pm}^{|\beta|-1}+ D_{\pm}^{|\beta|-1}\bigr)+\frac{\varepsilon\mu}{L}E_{\pm}^{|\beta|+1}+\mu D_{\pm}^{|\beta|+1}.
\end{aligned} 
\end{equation}
Next we sum up \eqref{eq:h13} over all multi-indices $1 \leq |\beta| \leq N_*$ to obtain 
\beno\begin{aligned}
&\sum_{k=1}^{N_*}\bigl(E_{\pm}^{k}+D_{\pm}^{k}+F_{\pm}^{k}\bigr)
\lesssim\sum_{k=1}^{N_*}E_{\pm}^{k}(0)+\sum_{l=0}^{N_{*}}\left(E_{\mp}^{l}\right)^{\frac{1}{2}}\cdot\Bigl(\f{E_\pm^0}{L}+F_\pm^0\Bigr)+\frac{1}{R^2}\bigl(E_{\pm}^0+D_{\pm}^{0}\bigr)
\\
&\qquad+\frac{\varepsilon\mu}{L}E_{\pm}^{N_*+1}+\mu D_{\pm}^{N_*+1}+\sum_{l=0}^{N_{*}}\left(E_{\mp}^{l}\right)^{\frac{1}{2}}\cdot\sum_{k=1}^{N_*}\bigl(E_{\pm}^{k}+F_{\pm}^{k}\bigr)+\frac{1}{R^2}\sum_{k=1}^{N_*-1}\bigl(E_{\pm}^k+ D_{\pm}^k\bigr).
\end{aligned}\eeno
Finally, since the ansatz \eqref{eq:a3} implies $\sum_{l=0}^{N_{*}} E_{\mp}^{l} \leq 2 C_{1} \varepsilon^2$,  we may choose $\varepsilon$ sufficiently small and $R$ sufficiently large to conclude that
\begin{equation}\label{eq:h14}
\begin{aligned}
 \sum_{k=1}^{N_{*}}\left(E_{\pm}^{k}+ D_{\pm}^{k}+ F_{\pm}^{k}\right) 
&\lesssim\sum_{k=1}^{N_*}E_{\pm}^{k}(0)+\sum_{l=0}^{N_{*}}\left(E_{\mp}^{l}\right)^{\frac{1}{2}}\cdot\Bigl(\f{E_\pm^0}{L}+F_\pm^0\Bigr)\\
&\qquad+\frac{1}{R^2}\bigl(E_{\pm}^0+D_{\pm}^{0}\bigr)+ \frac{\varepsilon\mu}{L}E_{\pm}^{N_*+1}+\mu D_{\pm}^{N_*+1}.
\end{aligned} 
\end{equation}
This is \eqref{eq:h3}.

{\bf Step 4. Proof of \eqref{eq:h15}.} Combining \eqref{eq:estimates on lowest order terms}, \eqref{eq:estimates on first order terms} and \eqref{eq:h14}, by using the ansatz \eqref{eq:a3} and taking $\e$ sufficiently small and $R$ sufficiently large, we obtain \eqref{eq:h15}.
This completes the proof of Proposition \ref{prop:h1}.
\end{proof}

\subsubsection{Energy estimates for $\nabla z_{\pm}^{(\beta)}$ with $ N_{*}+1\leq |\beta|\leq N_{*}+3$}
The goal of this subsection is to provide a parabolic type estimate to control the additional higher order terms on the right-hand side of \eqref{eq:h15} caused by the presence of non-zero viscosity and the boundary terms when applying the linear energy estimate \eqref{eq:l7} and the div-curl lemma (Lemma \ref{lem:div-cur}). We shall prove the following proposition.
\begin{proposition}\label{prop:h2}
Under the same assumptions as Proposition \ref{prop:h1}, we have
\begin{equation}\label{eq:h16}
\begin{aligned}
&\mu\bigl(E_{\pm}^{N_{*}+1}+  D_{\pm}^{N_{*}+1}\bigr)+\f{\mu}{\log L}\bigl(E_{\pm}^{N_*+2}+D_{\pm}^{N_*+2}\bigr)+\f{\mu}{(\log L)^2}\bigl(E_{\pm}^{N_{*}+3}+  D_{\pm}^{N_{*}+3}\bigr)\\ 
&\lesssim\mu E_{\pm}^{N_{*}+1}(0)+\f{\mu}{\log L}E_{\pm}^{N_*+2}(0)+\f{\mu}{(\log L)^2}E_{\pm}^{N_{*}+3}(0)
+\mu\bigl(E_{\pm}^{N_*}+  D_{\pm}^{N_*}\bigl)\\ 
&\qquad+\e\cdot\f{\mu}{L}\bigl(E_{\pm}^{N_{*}+4}+ D_{\pm}^{N_{*}+4}\bigr)
+ \e\cdot\sum_{l=0}^{N_{*}}\bigl(E_{\pm}^{l}+D_{\pm}^l\bigr) .
\end{aligned} 
\end{equation}
\end{proposition}
\begin{proof}
We prove the estimates for $\na z_\pm^{(\beta)}$ with $|\beta| $ from $N_*+1$ to $N_*+3$ separately.

{\bf Step 1. Estimates for $\na z_\pm^{(\beta)}$ with $|\beta|=N_*+1$.} Similarly as the proof of Proposition \ref{prop:h1}, we apply \eqref{eq:l36} to \eqref{eq:h1} for $|\beta|= N_{*}+1$ with the weight functions $\lambda_{\pm}=\mu\langle w_{\mp}\rangle^2\left(\log \langle w_{\mp}\rangle\right)^4$ to obtain
\begin{equation}\label{eq:h17}
\begin{aligned}
&\mu\Bigl(\sup\limits_{0\leq \tau \leq t}\|\langle w_{\mp}\rangle\left(\log \langle w_{\mp}\rangle\right)^2j_{\pm}^{(\beta)}\|_{L^2(\Sigma_\tau)}^2+\mu\|\langle w_{\mp}\rangle\left(\log \langle w_{\mp}\rangle\right)^2\na j_{\pm}^{(\beta)}\|_{L^2_t(L^2_x)}^2\Bigr)\\
& \lesssim \mu E_{\pm}^{N_{*}+1}(0)+\mu D_\pm^{N_*}+\underbrace{\f{\e\mu^2}{L\log L}\int_{0}^{t} \int_{\Sigma_{\tau}}\langle w_{\mp}\rangle^2\left(\log \langle w_{\mp}\rangle\right)^4|\na^2j_{\pm}^{(\beta)}|^2 dx d\tau}_{J_\pm}\\
&\qquad  +\underbrace{\mu\int_{0}^{t} \int_{\Sigma_{\tau}}\langle w_{\mp}\rangle^2\left(\log \langle w_{\mp}\rangle\right)^4|j_{\pm}^{(\beta)}|\cdot|\rho_{\pm}^{(\beta)}| dx d\tau}_{\text{nonlinear interaction term } I_{\pm}}.
\end{aligned} 
\end{equation}

{\it Step 1.1. Estimates for $J_\pm$ and $I_\pm.$}
Since Lemma \ref{lem:e2} implies $\langle w_\mp\rangle\lesssim L$, we have
\beq\label{eq:h18}
J_\pm\lesssim\f{\e\mu^2}{\log L}\int_{0}^{t} \int_{\Sigma_{\tau}}\langle w_{\mp}\rangle\left(\log \langle w_{\mp}\rangle\right)^4|\na^2j_{\pm}^{(\beta)}|^2 dx d\tau\lesssim\f{\e\mu}{\log L} D_\pm^{N_*+2}.
\eeq
It remains to bound the nonlinear interaction term $I_{\pm}$. By symmetry, it suffices to deal with $I_{+}$. 

Since \eqref{eq:h2} yields
\begin{equation*}
|\rho_{+}^{(\beta)}| \lesssim \sum_{k\leq N_{*}+1}|\nabla z_{-}^{(k)}|\cdot|\nabla z_{+}^{(N_{*}+1-k)}|,\quad\text{for }\,  |\beta|=N_{*}+1,
\end{equation*}
 we have
\beno
I_{+} \lesssim\Big(\underbrace{\sum_{k\leq N_{*}-2}}_{I_1}+\underbrace{\sum_{k=N_{*}-1}^{N_{*}+1}}_{I_2}\Big) \mu\int_{0}^{t} \int_{\Sigma_{\tau}}\langle w_{-}\rangle^2\left(\log \langle w_{-}\rangle\right)^4|j_{+}^{(\beta)}|\cdot|\nabla z_{-}^{(k)}|\cdot|\nabla z_{+}^{(N_{*}+1-k)}| dx d\tau. 
\eeno
For $I_1$, since $k\leq N_{*}-2$, we obtain
\beno
\begin{aligned}
I_1&\leq\sum_{k\leq N_{*}-2}\|\nabla z_{-}^{(k)}\|_{L_{(t,x)}^{\infty}}\cdot\sqrt\mu\|\langle w_{-}\rangle\left(\log \langle w_{-}\rangle\right)^2j_{+}^{(\beta)}\|_{L^2_t(L^2_x)}\\ 
&\qquad\qquad\cdot\sqrt\mu\|\langle w_{-}\rangle\left(\log \langle w_{-}\rangle\right)^2\nabla z_{+}^{(N_{*}+1-k)}\|_{L^2_t(L^2_x)}\\ 
&\overset{|\beta|=N_*+1}{\lesssim}\sum_{k\leq N_{*}-2}\bigl(\sum_{l=k}^{k+2}E_-^l\bigr)^{\f12}\cdot\bigl(D_+^{N_*}\bigr)^{\f12}\cdot \bigl(D_+^{N_*-k}\bigr)^{\f12}
\lesssim\sum_{l=0}^{N_{*}}\bigl(E_-^l\bigr)^{\f12}\cdot
\sum_{k=1}^{N_{*}} D_+^k.
\end{aligned}\eeno
While for $I_2$, since $N_{*}+1-k \leq 2 \leq N_{*}-2$, we get
\beno
\begin{aligned}
I_{2}
& \leq \sum_{k=N_{*}-1}^{N_{*}+1}\sqrt{\mu}\|\langle w_{-}\rangle\left(\log \langle w_{-}\rangle\right)^2 j_{+}^{(\beta)}\|_{L_t^2(L_x^2)}\cdot\sqrt{\mu}\|\nabla z_{-}^{(k)}\|_{L_t^2(L_x^2)}\\
& \quad\quad\quad\quad\ \cdot\|\langle w_{-}\rangle\left(\log \langle w_{-}\rangle\right)^2 \nabla z_{+}^{(N_{*}+1-k)}\|_{L^\infty_{(t,x)}} \\
&\lesssim\bigl(D_+^{N_*}\bigr)^{\f12}\cdot\sum_{k=N_{*}-1}^{N_{*}+1}\bigl(D_-^{k-1}\bigr)^{\f12}\cdot\sum_{l=N_{*}+1-k}^{N_{*}+3-k}\bigl(E_+^l\bigr)^{\f12}\\ 
& \lesssim\sum_{k=1}^{N_{*}}\big( D_{-}^{k}\big)^{\frac{1}{2}}
\cdot  \Big( \sum_{k=0}^{N_{*}} E_{+}^{k}+ \sum_{k=1}^{N_{*}} D_{+}^{k}\Big) .
\end{aligned} 
\eeno
Thus, we conclude that
\begin{equation} \label{eq:h19}
\begin{aligned}
I_{+} \lesssim \Bigl(\sum_{l=0}^{N_{*}}\bigl(E_-^l\bigr)^{\f12}+ \sum_{k=1}^{N_{*}}\bigl(D_{-}^{k}\bigr)^{\frac{1}{2}}\Bigr)\cdot\Big( \sum_{k=0}^{N_{*}} E_{+}^{k}+ \sum_{k=1}^{N_{*}} D_{+}^{k}\Big) .
\end{aligned}
\end{equation}

{\it Step 1.2. Estimate for $\na z_\pm^{(\beta)}$ with $|\beta|=N_*+1$.}
Applying the div-curl lemma \ref{lem:div-cur}  to $z_\pm^{(N_*+1)}$ with $\lambda=\lambda_\pm=\langle w_{\mp}^2\rangle\left(\log \langle w_{\mp}\rangle\right)^4$ and $F(L)=\f{1}{L\log L}$, we have
\beno
\begin{aligned}
&\mu\|\langle w_{\mp}\rangle\left(\log \langle w_{\mp}\rangle\right)^2\nabla z_{\pm}^{(N_*+1)}\|_{L^2_x}^2 
\lesssim\mu\|\langle w_{\mp}\rangle\left(\log \langle w_{\mp}\rangle\right)^2j_{\pm}^{(N_*+1)}\|_{L^2_x}^2 \\ 
&\qquad\qquad+\mu\|\left(\log \langle w_{\mp}\rangle\right)^2z_{\pm}^{(N_*+1)}\|_{L^2_x}^2
+\f{\mu\e}{L\log L}\|\langle w_{\mp}\rangle\left(\log \langle w_{\mp}\rangle\right)^2 z_{\pm}^{(N_*+1)}\|_{L^{2}_x}^{2}\\ 
&\qquad\qquad+\underbrace{\f{\mu\e}{L\log L}\int_{\Sigma_t}\langle w_{\mp}\rangle^2\left(\log \langle w_{\mp}\rangle\right)^4|\nabla^2 z_{\pm}^{(N_*+1)}|^2dx}_{\lesssim \frac{\mu\varepsilon}{\log L}E_{\pm}^{N_{*}+2}\quad\text{by the fact that } \langle w_{\mp}\rangle\lesssim L},
\end{aligned}
\eeno
which  implies
\beq\label{eq:h20}
\mu E_\pm^{N_*+1}\lesssim\sup_{0\leq t\leq T^*}\mu\|\langle w_{\mp}\rangle\left(\log \langle w_{\mp}\rangle\right)^2j_{\pm}^{(N_*+1)}\|_{L^2_x}^2+\mu E_\pm^{N_*}+\f{\mu\e}{\log L}E_\pm^{N_*+2}.
\eeq

Similar derivation as \eqref{eq:h18} and \eqref{eq:h20} yields
\beq\label{eq:h21}
\mu D_\pm^{N_*+1}\lesssim\mu^2\|\langle w_{\mp}\rangle\left(\log \langle w_{\mp}\rangle\right)^2\nabla j_{\pm}^{(N_*+1)}\|_{L^2_{T^*}(L^2_x)}^2+\mu D_\pm^{N_*}+\f{\mu\e}{\log L}D_\pm^{N_*+2}.
\eeq 

Noticing that the first terms on the RHS of \eqref{eq:h20} and \eqref{eq:h21} are the summations for $|\beta|=N_*+1$ from the  LHS of \eqref{eq:h17} evaluated at  $t=T^*$, we combine them with the estimates \eqref{eq:h18} and \eqref{eq:h19}  to obtain
\begin{equation}\label{eq:h21a}
\begin{aligned}
&\mu\bigl(E_{\pm}^{N_{*}+1}+  D_{\pm}^{N_{*}+1}\bigr)
\lesssim\mu\bigl(E_{\pm}^{N_{*}+1}(0)+  E_{\pm}^{N_{*}}+  D_{\pm}^{N_{*}}\bigl)+ \frac{\mu\varepsilon}{\log L}\bigl(E_{\pm}^{N_{*}+2}+ D_{\pm}^{N_{*}+2}\bigr)\\
&\qquad\qquad\qquad + \Bigl( \sum_{l=0}^{N_{*}}\left(E_{\mp}^{l}\right)^{\frac{1}{2}}+ \Big(\sum_{l=1}^{N_{*}} D_{\mp}^l\Big)^{\frac{1}{2}}\Bigr)\cdot \Big( \sum_{l=0}^{N_{*}} E_{\pm}^{l}+ \sum_{l=1}^{N_{*}} D_{\pm}^{l}\Big).
\end{aligned} 
\end{equation}

{\bf Step 2. Estimate for $\na z_\pm^{(\beta)}$ with $|\beta|=N_*+2$.} Similar derivation as \eqref{eq:h21a} yields
\beno
\begin{aligned}
\f{\mu}{\log L}\bigl(E_{\pm}^{N_{*}+2}+  D_{\pm}^{N_{*}+2}\bigr)
&\lesssim\f{\mu}{\log L}\bigl(E_{\pm}^{N_{*}+2}(0)+  E_{\pm}^{N_{*}+1}+  D_{\pm}^{N_{*}+1}\bigl)+ \frac{\mu\varepsilon}{(\log L)^2}\bigl(E_{\pm}^{N_{*}+3}+ D_{\pm}^{N_{*}+3}\bigr)\\
&\quad + \Bigl( \sum_{l=0}^{N_{*}}\left(E_{\mp}^{l}\right)^{\frac{1}{2}}+ \Big(\f{1}{\log L}\sum_{l=1}^{N_{*}+1} D_{\mp}^l\Big)^{\frac{1}{2}}\Bigr)\cdot \Big( \sum_{l=0}^{N_{*}} E_{\pm}^{l}+\f{1}{\log L}\sum_{l=1}^{N_{*}+1} D_{\pm}^{l}\Big),
\end{aligned} 
\eeno
which along with the fact that $\f{1}{\log L}\leq\mu$ implies
\beq\label{eq:h21b}
\begin{aligned}
\f{\mu}{\log L}\bigl(E_{\pm}^{N_*+2}+D_{\pm}^{N_*+2}\bigr)
&\lesssim\f{\mu}{\log L}\bigl(E_{\pm}^{N_*+2}(0)+E_{\pm}^{N_*+1}+D_{\pm}^{N_*+1}\bigl)+ \frac{\mu\varepsilon}{(\log L)^2}\bigl(E_{\pm}^{N_*+3}+ D_{\pm}^{N_*+3}\bigr)\\ 
&\quad+\Bigl(\sum_{l=0}^{N_*}\left(E_{\mp}^l\right)^{\frac{1}{2}}+ \Bigl(\mu\sum_{l=1}^{N_{*}+1} D_{\mp}^l\Bigr)^{\frac{1}{2}}\Bigr)\cdot \Bigl(\sum_{l=0}^{N_*} E_{\pm}^l+\mu\sum_{l=1}^{N_*+1} D_{\pm}^{l}\Bigr).
\end{aligned}\eeq

{\bf Step 3. Estimate for $\na z_\pm^{(\beta)}$ with $|\beta|=N_*+3$.} Applying \eqref{eq:l36} to \eqref{eq:h1} for $|\beta|= N_{*}+3$ with the weight functions $\lambda_{\pm}=\f{\mu}{(\log L)^2}\left(\log \langle w_{\mp}\rangle\right)^4$ and $F(L)=\f{1}{L(\log L)^2}$  yields
\beq\label{eq:h21c}
\begin{aligned}
&\f{\mu}{(\log L)^2}\Bigl(\sup\limits_{0\leq \tau \leq t}\|\left(\log \langle w_{\mp}\rangle\right)^2j_{\pm}^{(\beta)}\|_{L^2(\Sigma_\tau)}^2+ \mu\|\left(\log \langle w_{\mp}\rangle\right)^2\na j_{\pm}^{(\beta)}\|_{L^2_t(L^2_x)}^2\Bigr)\\
& \lesssim\f{\mu}{(\log L)^2}\bigl(E_{\pm}^{N_{*}+3}(0)+D_\pm^{N_*+2}\bigr)+\underbrace{\f{\e\mu^2}{L(\log L)^4}\int_{0}^{t} \int_{\Sigma_{\tau}}\left(\log \langle w_{\mp}\rangle\right)^4|\na^2j_{\pm}^{(\beta)}|^2 dx d\tau}_{\wt{J}_\pm}\\
&\qquad  +\underbrace{\f{\mu}{(\log L)^2}\int_{0}^{t} \int_{\Sigma_{\tau}}\left(\log \langle w_{\mp}\rangle\right)^4|j_{\pm}^{(\beta)}|\cdot|\rho_{\pm}^{(\beta)}| dx d\tau}_{\text{nonlinear interaction term } \wt{I}_{\pm}}.
\end{aligned}
\eeq

For $\wt J_\pm$, using the fact that $\log \langle w_{\mp}\rangle\lesssim\log L$, we derive 
\beq\label{eq:h21d}
\wt J_\pm\lesssim \f{\e\mu^2}{L}\int_{0}^{t} \int_{\Sigma_{\tau}}|\na^2j_{\pm}^{(\beta)}|^2 dx d\tau\lesssim\f{\mu\e}{L}\cdot\mu\int_{0}^{T_*} \int_{\Sigma_{\tau}}|\na^2z_{\pm}^{(N_*+4)}|^2 dx d\tau=\f{\mu\e}{L} D_\pm^{N_*+4}.
\eeq

Similar argument as \eqref{eq:h19} for $I_\pm$ gives rise to 
\begin{equation}\label{eq:h21e}
\begin{aligned}
\wt I_\pm &\lesssim \bigg( \sum_{l=0}^{N_*}\left(E_\mp^{l}\right)^{\frac{1}{2}}+ \Big(\frac{1}{\left(\log L\right)^2}\sum_{k=1}^{N_{*}+2} D_\mp^k\Big)^{\frac{1}{2}}\bigg)\cdot\Big( \sum_{l=0}^{N_*} E_\pm^l+ \frac{1}{\left(\log L\right)^2}\sum_{k=1}^{N_{*}+2} D_\pm^k\Big) \\
&\overset{\f{1}{\log L}\leq\mu}{\lesssim} \bigg(\sum_{l=0}^{N_*}\left(E_\mp^{l}\right)^{\frac{1}{2}}+ \Big(\frac{\mu}{\log L}\sum_{k=1}^{N_{*}+2} D_\mp^k\Big)^{\frac{1}{2}}\bigg) \Big(\sum_{l=0}^{N_*} E_\pm^l+ \frac{\mu}{\log L}\sum_{k=1}^{N_{*}+2} D_\pm^k\Big).
\end{aligned} 
\end{equation}

To derive the energy estimate for $\na z_\pm^{(\beta)}$ with $|\beta|=N_*+3$, we apply the div-curl lemma \ref{lem:div-cur}-with $\lambda=\lambda_\pm=\left(\log \langle w_{\mp}\rangle\right)^4$ and $F(L)=\frac{1}{L(\log L)^2}$-to $z_\pm^{(N_*+3)}$, using an argument similar to \eqref{eq:h20}, \eqref{eq:h21}, and \eqref{eq:h21d} to obtain
\beq\label{eq:h21f}\begin{aligned}
\f{\mu}{(\log L)^2} E_\pm^{N_*+3}&\lesssim\sup_{0\leq t\leq T^*}\f{\mu}{(\log L)^2}\|\left(\log \langle w_{\mp}\rangle\right)^2j_{\pm}^{(N_*+3)}\|_{L^2_x}^2+\f{\mu}{(\log L)^2} E_\pm^{N_*+2}+\f{\mu\e}{L} E_\pm^{N_*+4},\\ 
\f{\mu}{(\log L)^2}D_\pm^{N_*+3}
&\lesssim\f{\mu^2}{(\log L)^2}\|\left(\log \langle w_{\mp}\rangle\right)^2\nabla j_{\pm}^{(N_*+3)}\|_{L^2_{T^*}(L^2_x)}^2+\f{\mu}{(\log L)^2} D_\pm^{N_*+2}+\f{\mu\e}{L} D_\pm^{N_*+4}.
\end{aligned}\eeq

Thanks to \eqref{eq:h21d}, \eqref{eq:h21e} and \eqref{eq:h21f}, we deduce from \eqref{eq:h21c} that
\begin{equation}\label{eq:h21g}
\begin{aligned}
&\f{\mu}{(\log L)^2}\bigl(E_{\pm}^{N_{*}+3}+  D_{\pm}^{N_{*}+3}\bigr)
\lesssim\f{\mu}{(\log L)^2}\bigl(E_{\pm}^{N_{*}+3}(0)+  E_{\pm}^{N_{*}+2}+ D_{\pm}^{N_*+2}\bigl)+\f{\mu\e}{L}\bigl(E_{\pm}^{N_{*}+4}+ D_{\pm}^{N_{*}+4}\bigr)\\ 
&\qquad\qquad\qquad+ \Bigl( \sum_{l=0}^{N_*}\left(E_{\mp}^{l}\right)^{\frac{1}{2}}+ \Big(\f{\mu}{\log L}\sum_{l=1}^{N_{*}+2} D_{\mp}^l\Big)^{\frac{1}{2}}\Bigr)\cdot \Big( \sum_{l=0}^{N_{*}} E_{\pm}^{l}+ \f{\mu}{\log L}\sum_{l=1}^{N_*+2} D_{\pm}^{l}\Big) .
\end{aligned} 
\end{equation}

{\bf Step 4. Total energy estimate for $\na z_\pm^{(\beta)}$ with $N_*+1\leq |\beta|\leq N_*+3$.} Adding up \eqref{eq:h21a}, \eqref{eq:h21b} and \eqref{eq:h21g}, using the ansatz \eqref{eq:a3} and taking $\e\ll1$  and $L\gg 1$, we obtain the desired estimate \eqref{eq:h16}. This completes the proof of the proposition.
\end{proof}
\begin{remark}
In view of \eqref{eq:h15} and \eqref{eq:h16}, to close the {\it a priori} energy estimates for MHD, it remains to prove the energy estimate for $\na z_\pm^{(\beta)}$ with $|\beta|=N_*+4$.
\end{remark}

\subsection{Top-order parabolic estimates}
In this section, we derive the top-order energy estimate as follows:
\begin{proposition}\label{prop:h4}
Under the same assumptions as Proposition \ref{prop:h1}, we have 
\begin{equation}\label{eq:h22}
\begin{aligned}
&\qquad\f{\mu}{L}\bigl(E_{\pm}^{N_{*}+4}+D_{\pm}^{N_{*}+4}\bigr)\\ 
&\lesssim \f{\mu}{L} E_{\pm}^{N_{*}+4}(0)+ \Big(\sum_{l=0}^{N_*}\left(E_\mp^l\right)^{\f12}+ \Big(\f{1}{L}\sum_{l=1}^{N_*+3} D_\mp^l\Big)^{\f12}\Big)\cdot\Big(\sum_{l=0}^{N_*} E_{\pm}^{l}+ \f{1}{L}\sum_{l=1}^{N_{*}+3} D_\pm^l\Big) .
\end{aligned} 
\end{equation}
\end{proposition}
\begin{proof}
Taking the $L^2_x$-inner product of \eqref{eq:h1} with $\f{\mu}{L}\cdot j_\pm^{(\beta)}$ and integrating the resulting equality over $[0,t]$ yields
\beq\label{eq:h22a} 
\f{\mu}{L}\bigl(\|j_\pm^{(\beta)}\|_{L^2(\Sigma_t)}^2+\mu\|\na j_\pm^{(\beta)}\|_{L^2_t(L^2_x)}^2\bigr)\lesssim\f{\mu}{L}\|j_\pm^{(\beta)}\|_{L^2(\Sigma_0)}^2
+\underbrace{\f{\mu}{L}\int_0^t\int_{\Sigma_\tau}|\r_\pm^{(\beta)}| |j_\pm^{(\beta)}|dxd\tau}_{I_\pm}.
\eeq 
Here we remark that the integration by parts for the diffusion term produces no boundary terms due to the $2L$-periodicity of $j_\pm^{(\beta)}$.

Now we estimate the last term in \eqref{eq:h22a} for $|\beta|=N_{*}+4$. Since \eqref{eq:h2} implies
\beno
|\rho_{+}^{(\beta)}|\lesssim\sum_{k\leq|\beta|}|\nabla z_{-}^{(k)}|\cdot|\nabla z_{+}^{(|\beta|-k)}|=\sum_{k\leq N_*+4}|\nabla z_{-}^{(k)}|\cdot|\nabla z_{+}^{(N_*+4-k)}|,
\eeno
we have
\beno
I_+\lesssim\bigg(\underbrace{\sum_{k\leq N_{*}-2}}_{I_1}+\underbrace{\sum_{k=N_{*}-1}^{N_{*}+4}}_{I_2}\bigg) \frac{\mu}{L}\int_{0}^{t} \int_{\Sigma_{\tau}}|j_+^{(\beta)}|\cdot|\nabla z_-^{(k)}|\cdot|\nabla z_+^{(N_{*}+4-k)}|\,  dx d\tau. 
\eeno
For $I_1$, since $k\leq N_{*}-2$, by Sobolev inequality , we have
\begin{equation*}
\begin{aligned}
I_1&\leq\f{1}{L}\sum_{k\leq N_{*}-2}\bigl\|\nabla z_{-}^{(k)}\bigr\|_{L_{(t,x)}^{\infty}}\cdot\sqrt\mu\|j_{+}^{(\beta)}\|_{L^2_t(L^2_x)}\cdot\sqrt\mu\|\na z_{+}^{(N_{*}+4-k)}\|_{L^2_t(L^2_x)}\\
&\overset{|\beta|=N_*+4}{\lesssim} \sum_{l=0}^{N_*}\left(E_-^l\right)^{\frac{1}{2}}\cdot \frac{1}{L}\sum_{k=1}^{N_*+3} D_+^k.
\end{aligned}
\end{equation*}
While for $I_2$, since $N_{*}+4-k\leq 5 \leq N_{*}-2$, by Sobolev inequality,  we get
\begin{equation*}
\begin{aligned}
 I_2
& \leq \sum_{k=N_{*}-1}^{N_{*}+4}\sqrt{\frac{\mu}{L}}\|j_{+}^{(\beta)}\|_{L_t^2(L_x^2)}\cdot \sqrt{\frac{\mu}{L}}\|\nabla z_{-}^{(k)}\|_{L_t^2(L_x^2)}\cdot\|\nabla z_{+}^{(N_{*}+4-k)}\|_{L_{(t,x)}^\infty}\\
&\overset{|\beta|=N_*+4}{\lesssim}\Big(\frac{1}{L}\sum_{k=1}^{N_{*}+3} D_-^{k}\Big)^{\frac{1}{2}} \Big( \sum_{l=0}^{N_*}E_+^l+ \frac{1}{L}\sum_{k=1}^{N_{*}+3} D_+^k\Big) .
\end{aligned}
\end{equation*}
Finally, we conclude that
\begin{equation}\label{eq:h23}
\begin{aligned}
I_+\lesssim \Big(\sum_{l=0}^{N_*}\left(E_-^l\right)^{\frac{1}{2}}+\Big(\frac{1}{L}\sum_{k=1}^{N_{*}+3} D_{-}^{k}\Big)^{\frac{1}{2}}\Big)\cdot \Big(\sum_{l=0}^{N_*} E_+^l+ \frac{1}{L}\sum_{k=1}^{N_*+3} D_+^k\Big).
\end{aligned} 
\end{equation}
Similar estimate also holds for $I_-$.

Since $\dive\,z_\pm=0$ gives rise to $\|\na z_\pm^{(\beta)}\|_{L^2_x}\sim\|j_\pm^{(\beta)}\|_{L^2_x}$, using \eqref{eq:h23} for $I_\pm$, we deduce the desired estimate \eqref{eq:h22} from \eqref{eq:h22a}.
 This completes the proof of Proposition \ref{prop:h4}.
\end{proof}

The estimates \eqref{eq:h15}, \eqref{eq:h16}, and \eqref{eq:h22}, together with the ansatz \eqref{eq:a3}, lead to a closed a priori estimate for the total energy. We formalize this result in the following proposition.
\begin{proposition}\label{prop:h4*}
Under the assumptions of Proposition \ref{prop:h1}, we have
\begin{equation}\label{eq:h23a}
\sup_{0\leq t\leq T^*}\mathcal{E}_\pm^{w}(t)+\mathcal{D}_\pm^{w}(T^*)+ F_{\pm}+ \sum_{k=0}^{N_{*}}F_{\pm}^k\lesssim\mathcal{E}_\pm^w(0),
\end{equation}
where $\mathcal{E}_{\pm}^{w}(t)$ and $\mathcal{D}_{\pm}^{w}(t)$ are defined in Subsection 1.4.1.
\end{proposition}

\subsection{Proof of Theorem \ref{thm:a1}}
In this section, we shall close the continuity argument (from Section 2.1) and hence complete the proof of Theorem \ref{thm:a1}. 

\begin{proof}[Proof of Theorem \ref{thm:a1}] We divide the proof into three steps.

\textbf{Step 1. Improvement of the ansatz \eqref{eq:a3} and \eqref{eq:a2}.}
Assume $L \geq e^{1/\mu}$ and let $R > 0$ be a constant chosen sufficiently large, independent of $\varepsilon$, $\mu$, and $L$. Then, by Proposition \ref{prop:h4*}, for $\varepsilon$ sufficiently small, there exists a constant $C_1 > 0$ such that
\begin{equation*}
\sup_{0\leq t\leq T^*}\mathcal{E}_\pm^{w}(t)+\mathcal{D}_\pm^{w}(T^*)+ F_{\pm}+ \sum_{k=0}^{N_{*}}F_{\pm}^k\leq C_{1}\mathcal{E}^{w}(0) \leq C_{1}\varepsilon^2,
\end{equation*}
where $\mathcal{E}^{w}(0)=\mathcal{E}_+^{w}(0)+\mathcal{E}_-^{w}(0)$.
This improves the ansatz \eqref{eq:a3}.

By Lemma \ref{lem:e3} (weighted Sobolev inequality) and \eqref{eq:a3}, for  sufficiently small $\varepsilon$, we have
\begin{equation*}
\begin{aligned}
\left\|z_{\pm}\right\|_{L^{\infty}_{(t,x)}} \leq C\frac{\left(E_{\pm}+E_{\pm}^0+ E_{\pm}^1\right)^{\frac{1}{2}}}{\left(\log R\right)^2} \leq \frac{C\sqrt{2C_{1}}\varepsilon}{\left(\log R\right)^2} \leq \frac{1}{4},\quad\forall\,t\in[0,T^*].
\end{aligned} 
\end{equation*}
This improves the ansatz \eqref{eq:a2}.

\medskip

\textbf{Step 2. Improvement of the ansatz \eqref{eq:a1} (or equivalently \eqref{eq:e4}).}  Since $\psi_{\pm}(t,y)$ are the flows generated by $Z_{\pm}$, we have (see \eqref{eq:e2})
\begin{equation}\label{eq:i1}
\psi_{\pm}(t,y)= y\pm tB_{0}+ \int_{0}^{t} z_{\pm}\big(\tau, \psi_{\pm}(\tau,y)\big) d\tau,\quad\forall\, y\in\R^3. 
\end{equation}

By symmetry, we only improve the ansatz \eqref{eq:e4} for $\psi_{+}$. In view of \eqref{eq:i1}, we have
\begin{equation}\label{eq:i2}
\frac{\partial \psi_{+}(t, y)}{\partial y}=I+\int_{0}^{t}\left(\nabla z_{+}\right)\big(\tau, \psi_{+}(\tau, y)\big) \frac{\partial \psi_{+}(\tau, y)}{\partial y} d\tau. 
\end{equation}

{\it Step 2.1. Improvement for the term $\frac{\partial \psi_{+}(t, y)}{\partial y}-I$. }
From \eqref{eq:i2}, we deduce that
\begin{equation}\label{eq:i3}
\begin{aligned}
\left|\frac{\partial \psi_{+}(t, y)}{\partial y}-I\right| &\leq \int_{0}^{t}\left|\left(\nabla z_{+}\right)\big(\tau, \psi_{+}(\tau, y)\big)\right|\cdot \left|\frac{\partial \psi_{+}(\tau, y)}{\partial y}-I\right| d\tau \\
& \quad\ +\int_{0}^{t}\left|\left(\nabla z_{+}\right)\big(\tau, \psi_{+}(\tau, y)\big)\right| d \tau=:G(t,y).
\end{aligned} 
\end{equation}
Since 
\begin{equation*}
\begin{aligned}
\frac{d}{dt} G(t,y) & =\left|\left(\nabla z_{+}\right)\big(t, \psi_{+}(t, y)\big)\right|\cdot\left|\frac{\partial \psi_{+}(t, y)}{\partial y}-I\right|+\left|\left(\nabla z_{+}\right)\big(t, \psi_{+}(t, y)\big)\right| \\
&\overset{\eqref{eq:i3}}{\leq}\left|\left(\nabla z_{+}\right)\big(t, \psi_{+}(t, y)\big)\right|\cdot G(t, y)+\left|\left(\nabla z_{+}\right)\big(t, \psi_{+}(t, y)\big)\right|,
\end{aligned}
\end{equation*}
 we apply Gronwall's inequality to conclude that
\begin{equation}
\begin{aligned}
G(t, y) & \leq \exp \left(\int_{0}^{t}\left|\left(\nabla z_{+}\right)\big(\tau, \psi_{+}(\tau, y)\big)\right| d\tau\right)\cdot \underbrace{\int_{0}^{t}\left|\left(\nabla z_{+}\right)\big(\tau, \psi_{+}(\tau, y)\big)\right| d\tau}_{A(t,y)} .
\end{aligned} \label{eq:i4}
\end{equation}

We now derive the bound of $A(t,y)$. First, for any $(t,y)\in [0, T^*]\times \mathbb{R}^3$, by the ansatz $\|z_+\|_{L^\infty_{(t,x)}}\leq\f12$, we deduce from \eqref{eq:i1}  that
\begin{equation}\label{eq:i5}
\begin{aligned}
&|\psi_{+}^i(t,y)-y_i|\leq\f32t\leq\f32\log L\overset{L\gg1}{<}\f{L}{20},\quad i=1,2,3.
\end{aligned} 
\end{equation}
Then we split the estimate of $A(t,y)$ into the following two cases:

\smallskip 

{\bf Case 1:} $y-2\vv kL\in [-\frac{3L}{4},\frac{3L}{4}]^3$ with $\vv k=(k_1,k_2,k_3)\in\mathbb{Z}^3$.

In this case, using \eqref{eq:i5}, we have for $i=1,2,3$
\begin{equation}\label{eq:i7}
\left|\psi_{+}^{i}(\tau,y)-2k_iL\right|\leq \frac{5L}{6},\quad\text{i.e.,}\quad \psi_{+}(\tau,y)-2\vv k L\in Q_L.
\end{equation}
By the $2L$-periodicity of $z_{+}$, Lemma \ref{lem:e3} (weighted Sobolev inequality), and the ansatz \eqref{eq:a3}, we obtain
\begin{equation}\label{eq:i8}
\begin{aligned}
\left|\nabla z_{+}\big(\tau, \psi_{+}(\tau, y)\big)\right|
& = \left|\nabla z_{+}\big(\tau, \psi_{+}(\tau, y)-2\vv k L\big)\right|\lesssim \frac{\varepsilon}{\langle w_{-}\rangle\left(\log \langle w_{-}\rangle\right)^2}\bigg|_{x=\psi_+(\tau, y)-2\vv k L}\\
& \lesssim \frac{\varepsilon}{(R^2+|u_{-}|^2)^{\frac{1}{2}}\big(\log (R^{2}+|u_{-}|^2)^{\frac{1}{2}}\big)^{2}}\bigg|_{x=\psi_{+}(\tau, y)-2\vv k L}.
\end{aligned} 
\end{equation}

We shall switch the variable $\tau$ to $u_{-}$ in the integral $A$. By virtue of \eqref{eq:i7} and Lemma \ref{lem:e2}, we have
\begin{equation*}
\begin{aligned}
&\frac{d}{d \tau} u_{-}\big(\tau, \psi_{+}(\tau, y)-2\vv k L\big)=\frac{d}{d \tau} x_{3}^{-}\big(\tau, \psi_{+}(\tau, y)-2\vv k L\big)\\
&\qquad = (\p_\tau x_{3}^{-})\big(\tau, \psi_{+}(\tau, y)-2\vv k L\big)  + \p_\tau \psi_{+}(\tau, y)\cdot(\nabla x_{3}^{-})\big(\tau, \psi_{+}(\tau, y)-2\vv k L\big)\\
&\qquad =\bigl(\p_\tau x_{3}^{-}+Z_{+}\cdot\na_xx_{3}^{-}\bigr)\big|_{x=\psi_{+}(\tau, y)-2\vv k L},
\end{aligned}
\end{equation*}
which along with $\p_tx_3^-+Z_-\cdot\na_xx_3^-=0$ implies
\beno\begin{aligned}
&\frac{d}{d \tau} u_{-}\big(\tau, \psi_{+}(\tau, y)-2\vv k L\big)=(Z_+-Z_-)\cdot\na x_3^-\big|_{x=\psi_{+}(\tau, y)-2\vv k L}\\ 
&\qquad=\bigl(2\p_3 x_{3}^{-}+(z_+-z_-)\cdot\na x_3^-\bigr)\big|_{x=\psi_{+}(\tau, y)-2\vv k L}\geq 2-6C_0\e,
\end{aligned}\eeno
where we used the ansatz \eqref{eq:a1} and \eqref{eq:a2} in the last inequality.
Taking $\e$ sufficiently small, we obtain 
\beno 
\frac{d}{d \tau} u_{-}\big(\tau, \psi_{+}(\tau, y)-2\vv k L\big)\geq 1.
\eeno 
Hence, using \eqref{eq:i8} and the above estimates, by changing the variable from $\tau$ to $u_-$, we derive
\begin{equation}\label{eq:i9}
\begin{aligned}
A(t,y) & \lesssim \e\int_{0}^{t} \frac{1}{(R^2+|u_{-}|^2)^{\frac{1}{2}}\big(\log (R^{2}+|u_{-}|^2)^{\frac{1}{2}}\big)^{2}}\bigg|_{x=\psi_{+}(\tau, y)-2\vv k L} d\tau \\
& \lesssim\e\int_{0}^{\infty} \frac{1}{\big(R^{2}+\left|u_{-}\right|^{2}\big)^{\frac{1}{2}}\big(\log \big(R^{2}+\left|u_{-}\right|^{2}\big)^{\frac{1}{2}}\big)^{2}} du_{-} \lesssim \varepsilon.
\end{aligned} 
\end{equation}

{\bf Case 2:} $y-2\vv k L\in [-L,L)^3 \setminus [-\frac{3L}{4},\frac{3L}{4}]^3$ with $\vv k\in\mathbb{Z}^3$. 

In this case, there exists $i\in\{1,2,3\}$ such that $\frac{3L}{4}< \left|y_{i}-2k_iL\right|\leq L$. It then follows from  \eqref{eq:i5} that  
\begin{equation*}
\frac{L}{2}<\left|\psi_{+}^{i}(\tau,y)-2k_iL\right|< \frac{3L}{2}, 
\end{equation*}
which implies 
\beno\begin{aligned}
&\f{L}{2}<|\psi_{+}^{i}(\tau,y)-2k_iL|<L
\quad\text{or}\quad -L\leq\psi_{+}^{i}(\tau,y)-2(k_i+1)L<-\f{L}{2}\\ 
&\text{or}\quad \f{L}{2}<\psi_{+}^{i}(\tau,y)-2(k_i-1)L\leq L,\quad\forall\,\tau\leq t.
\end{aligned}\eeno
Thus, by the $2L$-periodicity of $z_{+}$, weighted Sobolev inequality (Lemma \ref{lem:e3}), Lemma \ref{lem:e2} and  \eqref{eq:a3}, we obtain
\beno
\left|\nabla z_{+}\big(\tau, \psi_{+}(\tau, y)\big)\right| \lesssim\sup_{\f{L}{2}<|x_i|\leq L}\frac{\left(E_+^0+E_+^1+ E_+^2\right)^{\frac{1}{2}}}{\langle w_{-}\rangle\left(\log \langle w_{-}\rangle\right)^{2}} \lesssim \frac{\bigl(\mathcal{E}^w(0)\bigr)^{\f12}}{L\left(\log L\right)^2},
\eeno
which yields
\begin{equation}\label{eq:i10}
A(t,y) \lesssim\int_0^{T^*}\left|\nabla z_{+}\big(\tau, \psi_{+}(\tau, y)\big)\right| d\tau\overset{T^*=\log L}{\lesssim} \frac{\bigl(\mathcal{E}^w(0)\bigr)^{\f12}}{L\log L} \lesssim \varepsilon. 
\end{equation}

Thanks to \eqref{eq:i9} and \eqref{eq:i10}, we deduce from \eqref{eq:i3} and \eqref{eq:i4} that 
\begin{equation}\label{eq:i12}
\left|\frac{\partial \psi_{+}(t, y)}{\partial y}-I\right| \leq G(t,y)\leq e^{A(t,y)} A(t,y) \leq C_{0}^{\prime \prime}\bigl(\mathcal{E}^w(0)\bigr)^{\f12}\leq C_0'\e\leq\f{1}{20}, 
\end{equation}
where we took $C_0'\geq C_0''$. Then we improve the first part of \eqref{eq:e4}.

{\it Step 2.2. Improvement for the term $\na_y\Bigl(\frac{\partial \psi_{+}(t, y)}{\partial y}\Bigr)$. }
Applying $\partial_{k}$ (with $k=1,2,3$) to both sides of \eqref{eq:i2} leads to
\begin{equation*}
\begin{aligned}
\partial_{k}\bigg(\frac{\partial \psi_{+}(t, y)}{\partial y}\bigg)& = \int_{0}^{t}\left(\nabla z_{+}\right)\big(\tau, \psi_{+}(\tau, y)\big) \partial_{k}\Bigl(\frac{\partial \psi_{+}(\tau, y)}{\partial y}\Bigr) d\tau \\
& \quad\ +\int_{0}^{t} \partial_{k}\Big(\left(\nabla z_{+}\right)\big(\tau, \psi_{+}(\tau, y)\big)\Big)\Bigl(\frac{\partial \psi_{+}(\tau, y)}{\partial y}\Bigr) d\tau.
\end{aligned}
\end{equation*}
By virtue of Gronwall's inequality, we get
\begin{equation*}
\begin{aligned}
\Bigl|\partial_{k}\Bigl(\frac{\partial \psi_{+}(t, y)}{\partial y}\Bigr)\Bigr|& \leq \exp \Bigl(\int_{0}^{t}\left|\left(\nabla z_{+}\right)\big(\tau, \psi_{+}(\tau, y)\big)\right| d\tau\Bigr)\cdot\int_{0}^{t}\left|\left(\nabla^{2} z_{+}\right)\big(\tau, \psi_{+}(\tau, y)\big)\right|\cdot\Bigl|\frac{\partial \psi_{+}(\tau, y)}{\partial y}\Bigr|^{2} d\tau \\
& \stackrel{\eqref{eq:i12}}{\leq} 2\exp \Bigl(\underbrace{\int_{0}^{t}\left|\left(\nabla z_{+}\right)\big(\tau, \psi_{+}(\tau, y)\big)\right| d\tau}_{A}\Bigr)\cdot\underbrace{\int_{0}^{t}\left|\left(\nabla^2 z_{+}\right)\big(\tau, \psi_{+}(\tau, y)\big)\right| d\tau}_{B} .
\end{aligned}
\end{equation*}
Similar derivation as \eqref{eq:i10} for $A$ gives rise to $B \lesssim \varepsilon$, which together with \eqref{eq:i10} implies 
\begin{equation}\label{eq:i13}
\Bigl|\nabla_y \Bigl(\frac{\partial \psi_{+}(t,y)}{\partial y}\Bigr)\Bigr| \leq  C_1^{\prime\prime} \varepsilon\leq C_0'\e,
\end{equation}
where we took $C_{0}^{\prime} \geq\max\{C_0'',\,C_1''\}$. Thereby the second part of \eqref{eq:e4} is improved.

\medskip 

{\bf Step 3. Existence and uniquess.} The short-time existence for smooth data is standard. The existence and uniquess on time interval $[0,\log L]$ follow directly from the {\it a priori } energy estimate \eqref{eq:h23a} and the continuity argument. Thus, we complete the proof of Theorem \ref{thm:a1}.
\end{proof}

\section{Proof of Theorem \ref{thm:d1}}
Based on the weighted a priori estimates for $z_{\pm}$ obtained in Theorem \ref{thm:a1}, we prove the global nonlinear stability (Theorem \ref{thm:d1}) in this section. The proof relies heavily on the decay mechanism of the local solutions on the time interval $[0, \log L]$ and the global existence theory for parabolic systems with small initial data.

\subsection{The linear and nonlinear decomposition of solutions.}
We consider the solutions $(z_+, z_-)$ to the MHD system on the time interval $[0, \log L]$ obtained from Theorem \ref{thm:a1} in this and the following subsections. The interval $[0, \log L]$  is partitioned into $n_0$ subintervals as 
\beno
[0, \log L] = \bigcup_{n=1}^{n_0}[t_{n-1}, t_n], \quad\text{where}\quad t_n=nT,\,\ 0\leq n\leq n_0,\quad T =\f{\log L}{n_0}.
\eeno
The values of $n_0$ and the subinterval length $T$ will be determined in the course of the proof.

Now, we decompose $z_\pm$ on $\left[t_{n-1}, t_{n}\right]$ into the {\it linear} and {\it nonlinear} parts as follows:
\beno
z_{\pm} = z_{\pm,n,\text{(lin)}}+ z_{\pm,n,\text{(non)}},
\eeno
where the linear part $z_{\pm,n,\text{(lin)}}$  satisfies
\begin{equation}\label{eq:d1}
\left\{\begin{array}{l}
\partial_{t} z_{\pm,n,\text{(lin)}}+ Z_{\mp}\cdot\nabla z_{\pm,n,\text{(lin)}}- \mu\Delta z_{\pm,n,\text{(lin)}}= 0,\quad t\in(t_{n-1},t_n],\,\,x\in Q_L,\\ 
\left.z_{\pm,n,\text{(lin)}}\right|_{t=t_{n-1}}= z_{\pm}(t_{n-1},x),
\end{array}\right. 
\end{equation}
and the nonlinear part $z_{\pm,n,\text{(non)}}$ satisfies
\begin{equation}\label{eq:d2}
\left\{\begin{array}{l}
\partial_{t} z_{\pm,n,\text{(non)}}+ Z_{\mp}\cdot\nabla z_{\pm,n,\text{(non)}}- \mu\Delta z_{\pm,n,\text{(non)}}= -\nabla p, \quad t\in(t_{n-1},t_n],\,\,x\in Q_L,\\ 
\operatorname{div} z_{\pm,n,\text{(non)}}= -\operatorname{div} z_{\pm,n,\text{(lin)}},\\
\left.z_{\pm,n,\text{(non)}}\right|_{t=t_{n-1}}= 0.
\end{array}\right. 
\end{equation}

We define the {\it weighted energies} $E^{(\alpha)}_{\pm,\text{(lin)}}(t)$ and $E^{(\alpha)}_{\pm,\text{(non)}}(t)$ for $z_{\pm,\text{(lin)}}$ and $z_{\pm,\text{(non)}}$, respectively, using the same weight functions as for $z_\pm$. On the interval $(t_{n-1}, t_n]$, these functions are defined by: 
\beno 
z_{\pm,\text{(lin)}}:=z_{\pm,n,\text{(lin)}}\quad\text{and}\quad  z_{\pm,\text{(non)}}:=z_{\pm,n,\text{(non)}}.
\eeno
The weighted energies $E^{k}_{\pm,\text{(lin)}}(t)$ and $E^{k}_{\pm,\text{(non)}}(t)$ are defined analogously to $E^k_\pm(t)$ for $z_\pm$. We also define the corresponding {\it lowest-order energies} as $E_{\pm,\text{(lin)}}(t)$ and $E_{\pm,\text{(non)}}(t)$.

The {\it total weighted energy} at time $t$ for $z_{\pm,\text{(lin)}}$ is defined as follows:
\beno
\begin{aligned}
\mathcal{E}_{\pm,\text{(lin)}}^{w}(t) & := E_{\pm,\text{(lin)}}(t)+ \sum_{|\alpha|\leq N_{*}} E_{\pm,\text{(lin)}}^{(\alpha)}(t)+ \mu \sum_{|\alpha|=N_{*}+1} E_{\pm,\text{(lin)}}^{(\alpha)}(t)+ \frac{\mu}{\log L}\sum_{|\alpha|=N_{*}+2} E_{\pm,\text{(lin)}}^{(\alpha)}(t)\\
& \qquad  + \frac{\mu}{\left(\log L\right)^2}\sum_{|\alpha|=N_{*}+3} E_{\pm,\text{(lin)}}^{(\alpha)}(t)+\f{\mu}{L} \sum_{|\alpha|=N_{*}+4} E_{\pm,\text{(lin)}}^{(\alpha)}(t).
\end{aligned} 
\eeno
Similarly, we can define $\mathcal{E}_{\pm,\text{(non)}}^{w}(t)$ for $z_{\pm,\text{(non)}}$ at time $t$.

We shall also define the lowest- and higher-order {\it weighted diffusions} for $z_{\pm,n,\text{(lin)}}$ as follows:
\beno\begin{aligned}
&D_{\pm,n,\text{(lin)}}(t):= \mu\int_{t_{n-1}}^{t} \int_{\Sigma_{\tau}} \left(\log \langle w_{\mp}\rangle\right)^{4}\left|\nabla z_{\pm,n,\text{(lin)}}\right|^{2} dx d\tau,\quad\forall\,t\in(t_{n-1},\,t_n],\\ 
&D^{(\al)}_{\pm,n,\text{(lin)}}(t):=\mu\int_{t_{n-1}}^{t} \int_{\Sigma_{\tau}}\lambda_\pm\left|\nabla^2 z^{(\al)}_{\pm,n,\text{(lin)}}\right|^{2} dx d\tau, \ \text{ for }0\leq |\al|\leq N_*+4,
\end{aligned}\eeno
where the weights $\lambda_\pm$ are identical to those used in $D^{(\al)}_\pm(t)$ for $z_\pm$. Similarly, we can also define the weighted diffusions $D_{\pm,n,\text{(non)}}(t)$ and $D^{(\al)}_{\pm,n,\text{(non)}}(t)$. The diffusions $D^k_{\pm,n,\text{(lin)}}(t)$ and $D^k_{\pm,n,\text{(non)}}(t)$ are defined analogously to $D^k_\pm(t)$. 

The {\it total weighted diffusion} over time interval $(t_{n-1},t]\subset(t_{n-1},t_n]$ for $z_{\pm,\text{(lin)}}$ is defined as follows:
\beno\begin{aligned}
\mathcal{D}^w_{\pm,n,\text{(lin)}}(t):&=D_{\pm,n,\text{(lin)}}(t)+ \sum_{k=0}^{N_{*}} D_{\pm,n,\text{(lin)}}^k(t)+\mu D_{\pm,n,\text{(lin)}}^{N_{*}+1}(t)\\
&\quad + \frac{\mu}{\log L} D_{\pm,n,\text{(lin)}}^{N_{*}+2}(t)+ \frac{\mu}{\left(\log L\right)^2} D_{\pm,n,\text{(lin)}}^{N_{*}+3}(t)+ \f{\mu}{L}D_{\pm,n,\text{(lin)}}^{N_{*}+4}(t).
\end{aligned}\eeno
Similarly, we can define $\mathcal{D}_{\pm,n,\text{(non)}}^{w}(t)$ for $z_{\pm,\text{(non)}}$ over time interval $(t_{n-1},t]\subset(t_{n-1},t_n]$.

The lowest- and higher-order {\it weighted energy fluxes} for $z_{\pm,n,\text{(lin)}}$  are defined by:
\beno
\begin{aligned}
F_{\pm,n,\text{(lin)}}(t) & = \sup_{|u_{\pm}|\leq \frac{L}{4}}\int_{C_{[t_{n-1},\,t],\,u_{\pm}}^{\pm}} \left(\log \langle w_{\mp}\rangle\right)^{4}\left|z_{\pm,n,\text{(lin)}}\right|^{2} d\sigma_{\pm},\quad\forall\,t\in(t_{n-1},\,t_n],\\
F_{\pm,n,\text{(lin)}}^{(\alpha)}(t) & = \sup_{|u_{\pm}|\leq \frac{L}{4}}\int_{C_{[t_{n-1},\,t],\,u_{\pm}}^{\pm}} \langle w_{\mp}\rangle^{2}\left(\log \langle w_{\mp}\rangle\right)^{4}\left|\nabla z_{\pm,n,\text{(lin)}}^{(\alpha)}\right|^{2} d\sigma_{\pm}, \ \text{ for }0\leq |\alpha| \leq N_*,
\end{aligned} 
\eeno
while the weighted fluxes for $z_{\pm,n,\text{(non)}}$  are defined by:
\beno
\begin{aligned}
F_{\pm,n,\text{(non)}}(t) & = \sup_{|u_{\pm}|\leq \frac{L}{4}}\int_{C_{[t_{n-1},\,t],\,u_{\pm}}^{\pm}} \left(\log \langle w_{\mp}\rangle\right)^{4}\left|z_{\pm,n,\text{(non)}}\right|^{2} d\sigma_{\pm},\quad\forall\,t\in(t_{n-1},\,t_n],\\
F_{\pm,n,\text{(non)}}^{(0)}(t) & = \sup_{|u_{\pm}|\leq \frac{L}{4}}\int_{C_{[t_{n-1},\,t],\,u_{\pm}}^{\pm}} \langle w_{\mp}\rangle^{2}\left(\log \langle w_{\mp}\rangle\right)^{4}\left|\nabla z_{\pm,n,\text{(non)}}\right|^{2} d\sigma_{\pm},\\
F_{\pm,n,\text{(non)}}^{(\alpha)}(t) & = \sup_{|u_{\pm}|\leq \frac{L}{4}}\int_{C_{[t_{n-1},\,t],\,u_{\pm}}^{\pm}} \langle w_{\mp}\rangle^{2}\left(\log \langle w_{\mp}\rangle\right)^{4}\left|j_{\pm,n,\text{(non)}}^{(\alpha)}\right|^{2} d\sigma_{\pm}, \ \text{ for }1\leq |\alpha| \leq N_*,
\end{aligned} 
\eeno
where $j_{\pm,n,\text{(non)}}=\operatorname{curl} z_{\pm,n,\text{(non)}}$. 
The fluxes $F^k_{\pm,n,\text{(lin)}}(t)$, $F^k_{\pm,n,\text{(non)}}(t)$ are defined analogously to $F^k_\pm(t)$. 

\smallskip

In addition to the notations above, we define the {\it total physical energy} for $z_{\pm,\text{(lin)}}$ at time $t$ as follows:
\beno\begin{aligned}
\mathcal{E}_{\pm,\text{(lin)}}(t)&:=\sum_{|\al|\leq N_*+1}\|z^{(\al)}_{\pm,\text{(lin)}}\|_{L^2(\Sigma_t)}^2 
+\mu\sum_{|\al|=N_*+1}\|\na z^{(\al)}_{\pm,\text{(lin)}}\|_{L^2(\Sigma_t)}^2,
\end{aligned}\eeno
where $z_{\pm,\text{(lin)}}(t):=z_{\pm,n,\text{(lin)}}(t)$ for $t\in(t_{n-1},t_n]$. 
The corresponding {\it total diffusion} over $(t_{n-1},t]\subset(t_{n-1},t_n]$ is defined by
\beno\begin{aligned}
\mathcal{D}_{\pm,n,\text{(lin)}}(t)&:=\mu\sum_{|\al|\leq N_*+1}\int_{t_{n-1}}^t\|\na z^{(\al)}_{\pm,n,\text{(lin)}}\|_{L^2(\Sigma_\tau)}^2d\tau+\mu^2\sum_{|\al|=N_*+1}\int_{t_{n-1}}^t\|\na^2 z^{(\al)}_{\pm,n,\text{(lin)}}\|_{L^2(\Sigma_\tau)}^2d\tau.
\end{aligned}\eeno
Similarly, we can define $\mathcal{E}_{\pm,\text{(non)}}(t)$ for $z_{\pm,\text{(non)}}$. While the total unweighted diffusion over $(t_{n-1},t]\subset(t_{n-1},t_n]$ for $z_{\pm,\text{(non)}}$ is defined by
\beno\begin{aligned}
\mathcal{D}_{\pm,n,\text{(non)}}(t):&=\mu\sum_{0\leq|\al|\leq 1}\int_{t_{n-1}}^t\|\na z^{(\al)}_{\pm,n,\text{(non)}}\|_{L^2(\Sigma_\tau)}^2d\tau+ \mu\sum_{1\leq|\al|\leq N_*}\int_{t_{n-1}}^t\|\na j^{(\al)}_{\pm,n,\text{(non)}}\|_{L^2(\Sigma_\tau)}^2d\tau\\
&\quad + \mu^2\sum_{|\al|=N_*+1}\int_{t_{n-1}}^t\|\na j^{(\al)}_{\pm,n,\text{(non)}}\|_{L^2(\Sigma_\tau)}^2d\tau.
\end{aligned}\eeno
The {\it total energy } for MHD system at time $t$ is defined by
\beno 
\mathcal{E}(t):=\mathcal{E}_+(t)+\mathcal{E}_-(t),
\eeno
where $\mathcal{E}_\pm(t)$ denote the total physical energy for $z_\pm$, defined in the same manner as $\mathcal{E}_{\pm,\text{(lin)}}(t)$.

\begin{lemma}\label{lem:d1}
For all $1\leq n\leq n_0$ and any $t\in(t_{n-1},t_n]$, the following estimates hold:
\begin{equation}\label{eq:d3}
\begin{aligned}
&\mathcal{E}_{\pm,\text{(lin)}}^{w}(t)+\mathcal{D}_{\pm,n,\text{(lin)}}^{w}(t)+ F_{\pm,n,\text{(lin)}}(t)+ \sum_{k=0}^{N_{*}} F_{\pm,n,\text{(lin)}}^{k}(t)\lesssim \mathcal{E}^{w}(0),\\ 
&\mathcal{E}_{\pm,\text{(non)}}^{w}(t)+\mathcal{D}_{\pm,n,\text{(non)}}^{w}(t)+ F_{\pm,n,\text{(non)}}(t)+ \sum_{k=0}^{N_{*}} F_{\pm,n,\text{(non)}}^k(t)\lesssim \mathcal{E}^{w}(0).
\end{aligned} 
\end{equation}
Moreover, for any $t\in(t_{n-1},t_n]$, we have
\beq\label{eq:d4}\begin{aligned}
&\mathcal{E}_{\pm,\text{(lin)}}(t)+\mathcal{D}_{\pm,n,\text{(lin)}}(t)+\sum_{|\al|\leq N_*+1}\sup_{|u_{\pm}|\leq \frac{L}{4}}\int_{C_{[t_{n-1},\,t],\,u_{\pm}}^{\pm}}\left|z^{(\al)}_{\pm,n,\text{(lin)}}\right|^{2} d\sigma_{\pm}\lesssim\mathcal{E}_{\pm,\text{(lin)}}(t_{n-1}),\\ 
&\mathcal{E}_{\pm,\text{(non)}}(t)+\mathcal{D}_{\pm,n,\text{(non)}}(t)+\sum_{|\al|=0}^1\sup_{|u_{\pm}|\leq \frac{L}{4}}\int_{C_{[t_{n-1},\,t],\,u_{\pm}}^{\pm}}\left|z^{(\al)}_{\pm,n,\text{(non)}}\right|^{2}d\sigma_{\pm}\\ 
&\qquad+\sum_{1\leq|\al|\leq N_*}\sup_{|u_{\pm}|\leq \frac{L}{4}}\int_{C_{[t_{n-1},\,t],\,u_{\pm}}^{\pm}}\left|j^{(\al)}_{\pm,n,\text{(non)}}\right|^{2} d\sigma_{\pm}\lesssim\mathcal{E}_{\pm,\text{(lin)}}(t)
+\bigl(\mathcal{E}^w(0)\bigr)^{\f12}\cdot\mathcal{E}(t_{n-1}),
\end{aligned}\eeq
and 
\beq\label{eq:d4a}
\mathcal{E}(t)
\lesssim\sum_{+,-}\mathcal{E}_{\pm,\text{(lin)}}(t)
+\big(\mathcal{E}^w(0)\big)^{\f12}\cdot\mathcal{E}(t_{n-1}).
\eeq
\end{lemma}
\begin{proof}
For the linear system \eqref{eq:d1}, treating $Z_{\mp}=\mp B_0+z_\mp$ as given divergence-free periodic vector fields and taking $t_{n-1}$ as the initial time, a derivation similar to the {\it a priori} estimates for $z_{\pm}$ in \eqref{eq:thm2} leads to 
\beno 
\mathcal{E}_{\pm,\text{(lin)}}^{w}(t)+\mathcal{D}_{\pm,n,\text{(lin)}}^{w}(t)+ F_{\pm,n,\text{(lin)}}(t)+ \sum_{k=0}^{N_{*}} F_{\pm,n,\text{(lin)}}^{k}(t)\lesssim \mathcal{E}^{w}(t_{n-1})\lesssim \mathcal{E}^{w}(0),
\eeno
where we used \eqref{eq:thm2} in the last inequality.  This establishes the first inquality in \eqref{eq:d3}. 
Similarly, using the $2L$-periodicity of  $z_{\pm,n,\text{(lin)}}$, we obtain the first inequality in \eqref{eq:d4} for the unweighted energy estimate.

\smallskip

By taking curl of the nonlinear system \eqref{eq:d2}, we obtain the following system for $j_{\pm,n,\text{(non)}}$:
\begin{equation}\label{eq:d5}
\left\{\begin{array}{l}
\partial_{t} j_{\pm,n,\text{(non)}}+ Z_{\mp}\cdot\nabla j_{\pm,n,\text{(non)}}- \mu\Delta j_{\pm,n,\text{(non)}}= -\nabla z_{\mp} \wedge \nabla z_{\pm,n,\text{(non)}}, \\ 
\left.j_{\pm,n,\text{(non)}}\right|_{t=t_{n-1}}= 0.
\end{array}\right.
\end{equation}
In view of the a priori estimates \eqref{eq:thm2} for $z_{\pm}$ and the obtained estimate in \eqref{eq:d3} for the linear part, by using the div-curl lemma \ref{lem:div-cur} and proceeding in the same manner as derivation of \eqref{eq:thm2}, we derive the second inequality in \eqref{eq:d3} for $z_{\pm,n,\text{(non)}}$.

For the second inequality in \eqref{eq:d4}, since $\dive\,z_{\pm,n,\text{(non)}}=-\dive z_{\pm,n,\text{(lin)}}$, we have the following div-curl formula:
\beno 
\|\na z_{\pm,n,\text{(non)}}\|_{L^2_x}^2\sim \|j_{\pm,n,\text{(non)}}\|_{L^2_x}^2+\|\dive z_{\pm,n,\text{(lin)}}\|_{L^2_x}^2.
\eeno
By this equivalence and an argument analogous to the derivation of \eqref{eq:thm2}, together with the first inequality in \eqref{eq:d4}, the second inequality in \eqref{eq:d4} follows. 

Thanks to \eqref{eq:d4} and  
\beno
\mathcal{E}(t)=\sum_{+,-}\mathcal{E}_\pm(t)
\lesssim\sum_{+,-}\bigl(\mathcal{E}_{\pm,\text{(lin)}}(t)
+\mathcal{E}_{\pm,\text{(non)}}(t)\bigr),
\eeno
we obtain \eqref{eq:d4a}.
This concludes the proof of the lemma.
\end{proof}

\subsection{Estimate on the total unweighted linear energy $\mathcal{E}_{\pm,\text{(lin)}}(t)$}\label{sec:3.2}
By symmetry, it suffices to bound $\mathcal{E}_{+,\text{(lin)}}$. We introduce a cut-off function $\theta(r)\in C_{c}^{\infty}\left(\mathbb{R}\right)$ satisfying $0\leq\theta(r)\leq 1,$
\begin{equation*}
\theta(r)=
\begin{cases}
& 1, \quad \text{for }|r|\leq 1,\\
& 0, \quad \text{for }|r|\geq 2,
\end{cases}\quad\text{and define}\quad  \theta_{L}(r):=\theta\Big(\frac{r}{L/8}\Big)=
\begin{cases}
& 1, \quad \text{for }|r|\leq \frac{L}{8},\\
& 0, \quad \text{for }|r|\geq \frac{L}{4}.
\end{cases}
\end{equation*}
Using this, we split $z_{+,n,\text{(lin)}}(t,x)$ as follows:
\beno
z_{+,n,\text{(lin)}}=z_{+,n,\text{(lin)}}\theta_{L}(|x^-|)+ z_{+,n,\text{(lin)}}\left(1-\theta_{L}(|x^-|)\right)=:z_n+r_n, 
\eeno
where
\beq\label{eq:d6}
\text{supp}\, z_n(t,\cdot)\subset\Bigl\{x\in Q_L\,|\,|x^-(t,x)|\leq\f{L}{4}\Bigr\},\quad\text{supp}\, r_n(t,\cdot)\subset\Bigl\{x\in Q_L\,|\,|x^-(t,x)|>\f{L}{8}\Bigr\}.
\eeq
Then, for $t_{n-1}< t\leq t_{n}$, we have
\begin{equation}\label{eq:d7}
\begin{aligned}
\mathcal{E}_{+,\text{(lin)}}(t) & \leq 2\Bigl(\left\|z_{n} \right\|_{L^{2}\left(\Sigma_{t}\right)}^2+ \sum_{|\alpha|=1}^{N_*+1}\left\|\partial^{\alpha} z_{n} \right\|_{L^{2}\left(\Sigma_{t}\right)}^2+ \mu\sum_{|\alpha|=N_*+2}\left\|\partial^{\alpha} z_{n} \right\|_{L^{2}\left(\Sigma_{t}\right)}^2 \Bigr)\\
& \qquad + 2\Bigl(\left\|r_{n} \right\|_{L^{2}\left(\Sigma_{t}\right)}^2+ \sum_{|\alpha|=1}^{N_*+1}\left\|\partial^{\alpha} r_{n} \right\|_{L^{2}\left(\Sigma_{t}\right)}^2+ \mu\sum_{|\alpha|=N_*+2}\left\|\partial^{\alpha} r_{n} \right\|_{L^{2}\left(\Sigma_{t}\right)}^2 \Bigr) .
\end{aligned} 
\end{equation}

{\bf Estimate for $r_n$.} Since \eqref{eq:d6} and Lemma \ref{lem:e2} imply
\beno 
\langle w_-\rangle(t,x)\gtrsim \langle\overline{w_-}\rangle(t,x)=\left(R^2+|x^-|^2\right)^\frac{1}{2}\gtrsim L, \quad
\forall\,x\in\text{supp}\, r_n(t,\cdot),
\eeno
using the first inequality in \eqref{eq:d3} for $z_{+,n,\text{(lin)}}$ and the bound $\big|\partial^{\alpha}\big(\theta_{L}(|x^-|)\big)\big|\lesssim \frac{1}{L}$, we obtain
\begin{equation}\label{eq:d8}
\left\|r_{n} \right\|_{L^{2}\left(\Sigma_{t}\right)}^2+ \sum_{|\alpha|=1}^{N_*+1}\left\|\partial^{\alpha} r_{n} \right\|_{L^{2}\left(\Sigma_{t}\right)}^2+ \mu\sum_{|\alpha|=N_*+2}\left\|\partial^{\alpha} r_{n} \right\|_{L^{2}\left(\Sigma_{t}\right)}^2\lesssim \frac{\mathcal{E}^{w}(0)}{\left(\log L\right)^{4}} . 
\end{equation}

{\bf Estimate for $z_n$.} By \eqref{eq:d1} and the definition of $z_n$, there holds
\begin{equation}\label{eq:d9}
\left\{\begin{array}{l}
\partial_{t} z_{n}+ Z_{-}\cdot\nabla z_{n}- \mu\Delta z_{n}= \rho_{n}, \\ 
\left.z_{n}\right|_{t=t_{n-1}}= \left.z_{+}\theta_{L}(|x^-|)\right|_{t=t_{n-1}},
\end{array}\right. 
\end{equation}
where 
\beno 
 \r_n=-2\mu\nabla\big(\theta_{L}(|x^-|)\big) \cdot\nabla z_{+, n,\text{(lin)}}-\mu\Delta\big(\theta_{L}(|x^-|)\big) z_{+, n,\text{(lin)}}.
\eeno
Applying $\partial^{\alpha}$ with $|\alpha|\geq 1$ to the first equation in \eqref{eq:d9}, we obtain
\begin{equation} \label{eq:d10}
\partial_{t} z_{n}^{(\alpha)}+ Z_{-}\cdot\nabla z_{n}^{(\alpha)}- \mu\Delta z_{n}^{(\alpha)}=-\left[\partial^{\alpha},\, z_{-}\cdot \nabla\right] z_{n} + \partial^{\alpha}\rho_{n} . 
\end{equation}

The general energy estimates for \eqref{eq:d9} and \eqref{eq:d10} yield 
\begin{equation} \label{eq:d11}
\begin{aligned}
& \frac{1}{2}\frac{d}{d t}\Bigl(\sum_{|\alpha|=0}^{N_*+1}\|z_{n}^{(\alpha)}\|_{L^{2}\left(\Sigma_{t}\right)}^2+ \mu\sum_{|\alpha|=N_*+2}\|z_{n}^{(\alpha)}\|_{L^{2}\left(\Sigma_{t}\right)}^2 \Bigr)\\
&\qquad + \mu\Bigl(\sum_{|\alpha|=0}^{N_*+1}\|\nabla z_{n}^{(\alpha)}\|_{L^{2}\left(\Sigma_{t}\right)}^2+ \mu\sum_{|\alpha|=N_*+2}\|\nabla z_{n}^{(\alpha)}\|_{L^{2}\left(\Sigma_{t}\right)}^2 \Bigr)\leq f_{n}(t) ,
\end{aligned}
\end{equation}
where 
\begin{equation}\label{eq:d12}
f_{n}(t)= \left|\int_{\Sigma_{t}} \rho_{n}\cdot z_{n} \,dx\right|+ \Bigl(\sum_{|\alpha|=1}^{N_*+1}+ \mu\sum_{|\alpha|=N_*+2}\Bigr)\left|\int_{\Sigma_{t}} \big(-\left[\partial^{\alpha},\, z_{-}\cdot \nabla\right] z_{n}+ \partial^{\alpha}\rho_{n}\big)\cdot \partial^{\alpha} z_{n} \,dx\right|. 
\end{equation}

{\underline{\it i). Estimate for $f_n$.}
Since $\big|\partial^{\beta}\big(\theta_{L}(|x^-|)\big)\big|\lesssim \frac{1}{L}$ for $|\beta|\geq 1$, we have
\beq\label{eq:d13}\begin{aligned}
&|\p^{\alpha}\rho_{n}|\lesssim \frac{\mu}{L}\Big(|z_{+,n,\text{(lin)}}|+ \sum_{k\leq|\alpha|}|\nabla z_{+,n,\text{(lin)}}^{(k)}|\Big),\quad\text{for}\, |\al|\leq N_*+2,\\ 
&|\partial^{\alpha}z_{n}|\lesssim \frac{1}{L}|z_{+,n,\text{(lin)}}|+ \sum_{k\leq|\alpha|-1}|\nabla z_{+,n,\text{(lin)}}^{(k)}|,\quad\text{for}\, 1\leq|\al|\leq N_*+2,
\end{aligned}\eeq
which implies that
\beno\begin{aligned}
&\quad\Bigl(\sum_{|\alpha|=0}^{N_*+1}+ \mu\sum_{|\alpha|=N_*+2}\Bigr)\int_{t_{n-1}}^{t_n}\left|\int_{\Sigma_t}\p^\al\rho_n\cdot \p^\al z_n \,dx\right|dt\\ 
&
\lesssim\f{\mu}{L}\sup_{t_{n-1}\leq t\leq t_n}\Bigl(\sum_{k=0}^{N_*+1}\|z^{(k)}_{+,n,\text{(lin)}}\|_{L^2(\Sigma_t)}^2
+\mu\|z^{(N_*+2)}_{+,n,\text{(lin)}}\|_{L^2(\Sigma_t)}^2
\Bigr)\cdot(t_n-t_{n-1})\\ 
&\qquad
+\f{\mu}{L}\int_{t_{n-1}}^{t_n}\int_{\Sigma_t}|\na z^{(N_*+1)}_{+,n,\text{(lin)}}|^2\,dxdt+\f{\mu^2}{L}\int_{t_{n-1}}^{t_n}\int_{\Sigma_t}|\na z^{(N_*+2)}_{+,n,\text{(lin)}}|^2\,dxdt\\ 
&\lesssim\f{\mu}{L}\sup_{t_{n-1}\leq t\leq t_n}\mathcal{E}_{+,\text{(lin)}}(t)\cdot (t_n-t_{n-1})+\f{1}{L}\cdot\mathcal{D}_{+,n,\text{(lin)}}(t_n).
\end{aligned}\eeno
Then, using the first inequality in \eqref{eq:d4} and $|t_n-t_{n-1}|=T\leq\log L$, we get
\beq\label{eq:d14}
\Bigl(\sum_{|\alpha|=0}^{N_*+1}+ \mu\sum_{|\alpha|=N_*+2}\Bigr)\int_{t_{n-1}}^{t_n}\left|\int_{\Sigma_t}\p^\al\rho_n\cdot \p^\al z_n \,dx\right|dt\lesssim\f{\mu\log L+1}{L}\cdot\mathcal{E}_{\pm,\text{(lin)}}(t_{n-1}).
\eeq

For the remainding terms in $f_n$,
since $\text{supp}\, z_n(t,\cdot)\subset\Bigl\{x\in Q_L\,|\,|x^-(t,x)|\leq\f{L}{4}\Bigr\}$, 
we have
\beno\begin{aligned}
&\int_{t_{n-1}}^{t_n}\left|\int_{\Sigma_t}\left[\partial^{\alpha},\, z_{-}\cdot \nabla\right] z_n\cdot \p^\al z_n \,dx\right|dt\lesssim\iint_{W_{[t_{n-1},t_n],+}^{[-\f{L}{4},\f{L}{4}]}}\left|\left[\partial^{\alpha},\, z_{-}\cdot \nabla\right] z_n\right|\cdot|\p^\al z_n| dxdt,
\end{aligned}\eeno
and
\beno
\begin{aligned}
\left|\left[\partial^{\alpha},\, z_{-}\cdot \nabla\right] z_{n}\right| & \lesssim \sum_{1\leq k\leq|\alpha|}|\nabla z_{-}^{(k-1)}|\cdot |\nabla z_{n}^{(|\alpha|-k)}|\\
 & \lesssim \sum_{1\leq k\leq|\alpha|}|\nabla z_{-}^{(k-1)}|\cdot \Big(\frac{1}{L}|z_{+,n,\text{(lin)}}|+ \sum_{l\leq|\alpha|-k}|\nabla z_{+,n,\text{(lin)}}^{(l)}|\Big),
\end{aligned} 
\eeno
where $W_{[t_{n-1},t_n],+}^{[-\f{L}{4},\f{L}{4}]}:=\bigcup_{-\f{L}{4}\leq u_+\leq\f{L}{4}} C^+_{[t_{n-1},t_n],u_+}$.
Then using \eqref{eq:d13}, a derivation similar to those in \eqref{eq:h10} and \eqref{eq:p39} leads to
\beno\begin{aligned}
&\sum_{1\leq|\al|\leq N_*+1}\int_{t_{n-1}}^{t_n}\left|\int_{\Sigma_t}\left[\partial^{\alpha},\, z_{-}\cdot \nabla\right] z_n\cdot \p^\al z_n \,dx\right|dt\\ 
&\lesssim\sum_{l=0}^{N_*}\bigl(E_-^l\bigr)^{\f12}
\cdot\sum_{k=0}^{N_*+1}\sup_{|u_+|\leq\f{L}{4}}\int_{C_{[t_{n-1},t_n],u_+}^+}|z^{(k)}_{+,n,\text{(lin)}}|^2d\sigma_+.
\end{aligned}\eeno
Proceeding in the same manner as derivation of \eqref{eq:h19}, we also find that
\beno\begin{aligned}
&\quad\mu\sum_{|\al|=N_*+2}\int_{t_{n-1}}^{t_n}\left|\int_{\Sigma_t}\left[\partial^{\alpha},\, z_{-}\cdot \nabla\right] z_n\cdot \p^\al z_n \,dx\right|dt\\ 
&\lesssim \Bigl(\sum_{l=0}^{N_{*}}\bigl(E_-^l\bigr)^{\f12}+ \Bigl(\sum_{k=1}^{N_{*}} D_{-}^{k}\Bigr)^{\frac{1}{2}}\Bigr)\cdot\Bigl(\sup_{t_{n-1}\leq t\leq t_n}\sum_{k=0}^{N_*+1}\|z^{(k)}_{+,n,\text{(lin)}}\|_{L^2(\Sigma_t)}^2\\
&\quad\quad\ +\f{\mu(t_n-t_{n-1})}{L^2}
\sup_{t_{n-1}\leq t\leq t_n}\|z_{+,n,\text{(lin)}}\|_{L^2(\Sigma_t)}^2+\mu\sum_{l=0}^{N_*+1}\int_{t_{n-1}}^{t_n}\|\na z^{(l)}_{+,n,\text{(lin)}}\|_{L^2(\Sigma_t)}^2dt\Bigr)\\ 
&\lesssim \Bigl(\sum_{l=0}^{N_{*}}\bigl(E_-^l\bigr)^{\f12}+ \Bigl(\sum_{k=1}^{N_{*}} D_{-}^{k}\Bigr)^{\frac{1}{2}}\Bigr)
\cdot\Bigl(\bigl(1+\f{\mu\log L}{L^2}\bigr)\sup_{t_{n-1}\leq t\leq t_n}\mathcal{E}_{+,\text{(lin)}}(t)+\mathcal{D}_{+,n,\text{(lin)}}(t_n)\Bigr). 
\end{aligned}\eeno
Using \eqref{eq:thm2} and the first inequality in \eqref{eq:d4}, we obtain 
\beq\label{eq:d15}
\Bigl(\sum_{1\leq|\al|\leq N_*+1}+\mu\sum_{|\al|=N_*+2}\Bigr)\int_{t_{n-1}}^{t_n}\left|\int_{\Sigma_t}\left[\partial^{\alpha},\, z_{-}\cdot \nabla\right] z_n\cdot \p^\al z_n \,dx\right|dt\lesssim\bigl(\mathcal{E}^w(0)\bigr)^{\f12}\cdot\mathcal{E}_{+,\text{(lin)}}(t_{n-1}).
\eeq

Thus, by \eqref{eq:d14} and \eqref{eq:d15}, and since $L\gg1$, we derive
\beq\label{eq:d16}
\int_{t_{n-1}}^{t_{n}} f_{n}(t)\,  dt \lesssim\bigl(\mathcal{E}^w(0)\bigr)^{\f12}\cdot \mathcal{E}(t_{n-1}). 
\eeq

{\underline{\it ii). A refined energy of $z_n$ in the Lagrangian form.}} To simplify notation in this and subsequent subsections, we define the Lagrangian form $\widetilde{v}(t,y)$ of  $v(t,x)$ on $\mathbb{R}^{3}$ by
\beno
\widetilde{v}(t,y)=v\left(t, \psi_{-}(t,y)\right), 
\eeno
where $\psi_{-}(t,y)$ is the flow generated by $Z_{-}$, as given in \eqref{eq:e2}, and $y\in\mathbb{R}^{3}$ denotes the label at time $t=0$. Since $\dive\, Z_-=0$ implies $\operatorname{det}\left(\frac{\partial \psi_{-}(t,y)}{\partial y}\right)=1$, we have 
\beq\label{eq:d17} 
\|v\|_{L^{2}\left(\mathbb{R}^{3}\right)}=\|\widetilde{v}\|_{L^{2}\left(\mathbb{R}^{3}\right)}.
\eeq
Whenever $\nabla \widetilde{v}(t,y)$ is written, the derivative $\nabla$ is always taken with respect to the variable $y$. 

Since the support of $z_n(t,\cdot)$ is contained in $\left\{x\in Q_L\,|\, |x^-(t,x)|\leq \frac{L}{4}\right\}$, we may extend $z_n$ by zero outside $\left[-L, L\right]^{3}$, thereby regarding it as a compactly supported vector field on $\mathbb{R}^{3}$. 

For $t_{n-1}\leq t \leq t_{n}$, we define
\begin{equation}\label{eq:d18}
X_{n}(t)=\left\|\widetilde{z_n}\right\|_{L^{2}\left(\mathbb{R}^{3}\right)}^2 + \sum_{|\alpha|=1}^{N_*+1}\Big\|\widetilde{z_n^{(\alpha)}}\Big\|_{L^{2}\left(\mathbb{R}^{3}\right)}^2 + \mu\sum_{|\alpha|=N_*+2}\Big\|\widetilde{z_n^{(\alpha)}}\Big\|_{L^{2}\left(\mathbb{R}^{3}\right)}^2.
\end{equation}
By \eqref{eq:d17}, we have
\beq\label{eq:d19}
X_n(t)=\left\|z_{n} \right\|_{L^{2}\left(\Sigma_{t}\right)}^2+ \sum_{|\alpha|=1}^{N_*+1}\left\|\partial^{\alpha} z_{n} \right\|_{L^{2}\left(\Sigma_{t}\right)}^2+ \mu\sum_{|\alpha|=N_*+2}\left\|\partial^{\alpha} z_{n} \right\|_{L^{2}\left(\Sigma_{t}\right)}^2.
\eeq
By virtue of \eqref{eq:e4}, \eqref{eq:d11} and \eqref{eq:d19}, there exists a universal constant $c>0$ such that 
\begin{equation}\label{eq:d20}
\frac{d}{dt} X_{n}(t)+ c\mu\bigg(\left\|\nabla\widetilde{z_n}\right\|_{L^{2}}^2 + \sum_{|\alpha|=1}^{N_*+1}\Big\|\nabla\widetilde{z_n^{(\alpha)}}\Big\|_{L^{2}}^2 + \mu\sum_{|\alpha|=N_*+2}\Big\|\nabla\widetilde{z_n^{(\alpha)}}\Big\|_{L^{2}}^2\bigg) \leq 2f_{n}(t). 
\end{equation}
Here and in what follows, the notation $\|\cdot\|_{L^2}$ applied to $\wt f(t,y)$ always refers to the norm in $L^2(\R^3)$.

Let $\widehat{f}$ denote the Fourier transform of $f$ on $\mathbb{R}^{3}$. By Plancherel's theorem, we have
\beno
\|\nabla\widetilde{f}\|_{L^{2}}^2\sim\int_{\mathbb{R}^3} |\xi|^2\big|\widehat{\widetilde{f}}(\xi) \big|^2 d\xi\geq h(t)^2 \bigl\|\widetilde{f}\bigr\|_{L^2}^2 - h(t)^2\int_{|\xi|\leq h(t)}\big|\widehat{\widetilde{f}}(\xi) \big|^2 d\xi, 
\eeno
where $h(t)>0$ is a smooth function to be determined in the following Lemma \ref{lem:d3}. Using this inequality, we deduce from \eqref{eq:d20} that
\begin{equation}\label{eq:d21}
\begin{aligned}
& \frac{d}{dt} X_{n}(t)+ c\mu h(t)^{2} X_{n}(t) \\
\leq & \,2f_{n}(t)+ c\mu h(t)^{2}\int_{|\xi|\leq h(t)}\bigg(\bigl|\widehat{\widetilde{z_n}}(t,\xi)\bigr|^2+\Bigl(\sum_{|\alpha|=1}^{N_*+1}+ \mu\sum_{|\alpha|=N_*+2}\Bigr)\Big|\widehat{\widetilde{z_n^{(\alpha)}}}(t,\xi) \Big|^2\bigg) d\xi.
\end{aligned} 
\end{equation}

By combining \eqref{eq:d7}, \eqref{eq:d8}, \eqref{eq:d16}, and \eqref{eq:d21}, we obtain the main result of this subsection, stated in the following lemma.

\begin{lemma}\label{lem:d2} 
For $t_{n-1}< t \leq t_{n}$, we have
\beq\label{eq:d22}
\mathcal{E}_{+,\text{(lin)}}(t)\lesssim X_{n}(t)+ \frac{\mathcal{E}^{w}(0)}{\left(\log L\right)^{4}}, 
\eeq
where $X_n(t)$ defined in \eqref{eq:d18} satisfies \eqref{eq:d19} and \eqref{eq:d21}, and $f_n$ in \eqref{eq:d21}  satisfies \eqref{eq:d16}.
\end{lemma}

\subsection{Decay estimates for the principal total linear energy} Starting from \eqref{eq:d21}, we now derive the decay estimate of $X_n(t)$ presented in the following lemma.

\begin{lemma}\label{lem:d3} For $1\leq n\leq n_0$, we have
\begin{equation}\label{eq:d23}
\begin{aligned}
X_{n}(t_{n}) &\lesssim \frac{\log\big(\log(\mu T+e)+e\big)}{\log\big(\mu T+e\big)}\cdot \mathcal{E}^{w}(0)\\
& \qquad+\bigl(\mathcal{E}^w(0)\bigr)^{\f12}\cdot \mathcal{E}(t_{n-1}) +\int_{\mathbb{R}^{3}}\bigl|\widehat{\widetilde{z_n}}(t_{n-1}, \xi)\bigr|^2\cdot \theta\Bigl(\frac{|\xi|}{\delta(T)}\Bigr) d\xi, 
\end{aligned}
\end{equation}
where 
\begin{equation}\label{eq:d23a}
\delta(T)^2:=\frac{4}{c}\big(\log(\mu T+e)+e\big)^{-1}, 
\end{equation}
and $\theta$  denotes the cut-off function defined at the beginning of  Section \ref{sec:3.2}.
\end{lemma}
\begin{proof}
In view of \eqref{eq:d21}, we divide the proof into several steps.

{\bf Step 1. Estimates on $\int_{|\xi|\leq h(t)} \Big|\widehat{\widetilde{z_n^{(\alpha)}}}(t,\xi) \Big|^2 d\xi$ for $1\leq|\al|\leq N_*+2$.} Defining 
\beno
a_{1}(t,y)=\mathcal{F}^{-1}\Bigl(\theta\Bigl(\frac{|\xi|}{h(t)}\Big)\Bigr)=h(t)^{3} \mathcal{F}^{-1}(\theta)\big(h(t) y\big)\in\mathcal{S}(\R^3),\quad\text{for } \, t\in(t_{n-1},t_n],
\eeno
we have
\begin{equation}\label{eq:d24}
\left\|a_{1}\right\|_{L^{1}}\lesssim 1, \quad \left\|\nabla^2 a_{1}\right\|_{L^{1}}\lesssim h(t)^{2}, \quad \left\|a_{1}\right\|_{L^{2}}\lesssim h(t)^{\frac{3}{2}}. 
\end{equation}

By Plancherel's theorem and H\"{o}lder's inequality and the fact that $\theta(r)=1$ for all $|r|\leq 1$, we have
\begin{equation}\label{eq:d25}
\begin{aligned}
\int_{|\xi|\leq h(t)} \Big|\widehat{\widetilde{z_n^{(\alpha)}}}(t,\xi) \Bigr|^2 d\xi  & \leq \int_{\mathbb{R}^3} \Bigl|\widehat{\widetilde{z_n^{(\alpha)}}}(t,\xi) \Bigr|^2\theta\Bigl(\frac{|\xi|}{h(t)}\Bigr) d\xi\lesssim \left|\int_{\mathbb{R}^3} \widetilde{z_n^{(\alpha)}}(t,y)\cdot a_1 * \widetilde{z_n^{(\alpha)}}(t,y)\, dy\right|\\
& \lesssim \Bigl\|\left(R^2+|y|^2\right)^{\frac{1}{2}}\widetilde{z_n^{(\alpha)}}\Bigr\|_{L^2}\cdot\Bigl\|\frac{1}{|y|}\bigl(a_1 *\widetilde{z_n^{(\alpha)}}\bigr)\Bigr\|_{L^2} .
\end{aligned} 
\end{equation}
Due to Hardy's and Young's inequalities, we get
\beno
\begin{aligned}
\Bigl\|\frac{1}{|y|}\bigl(a_1 *\widetilde{z_n^{(\alpha)}}\bigr)\Bigr\|_{L^2} & \stackrel{\text{Hardy}}{\lesssim} \bigl\|\nabla \big(a_1 *\widetilde{z_n^{(\alpha)}}\big)\bigr\|_{L^2} \lesssim \bigl\| (\Delta a_1) * \big(|\nabla|^{-1}\widetilde{z_n^{(\alpha)}}\big)\bigr\|_{L^2}\\
& \stackrel{\text{Young}}{\lesssim} \left\|\nabla^2 a_{1}\right\|_{L^{1}}\cdot\bigl\| \left|\nabla\right|^{-1}\widetilde{z_n^{(\alpha)}}\bigr\|_{L^2}\stackrel{\eqref{eq:d24}}{\lesssim} h(t)^2 \Big\|\frac{1}{|\xi|}\widehat{\widetilde{z_n^{(\alpha)}}}\Big\|_{L^2}\\
& \stackrel{\text{Hardy}}{\lesssim} h(t)^2\cdot \Big\|\nabla_{\xi}\,\widehat{\widetilde{z_n^{(\alpha)}}}\Big\|_{L^2}\lesssim h(t)^2\cdot\bigl\||y|\,\widetilde{z_n^{(\alpha)}}\bigr\|_{L^2} .
\end{aligned} 
\eeno
Using this inequality and the fact that  $x^{-}\left(t,\psi_{-}(t,y)\right)=y$, we deduce from \eqref{eq:d25} that
\beno
\begin{aligned}
&\int_{|\xi|\leq h(t)}\Big|\widehat{\widetilde{z_n^{(\alpha)}}}(t,\xi) \Big|^2 d\xi\lesssim h(t)^2\cdot \bigl\|\langle y\rangle\widetilde{z_n^{(\alpha)}}\bigr\|_{L^2}^2\lesssim h(t)^2\cdot \bigl\|\langle x^-\rangle z_n^{(\alpha)} \bigr\|_{L^2(\Sigma_t)}^2 \\
&\quad\overset{\eqref{eq:d13}}{\lesssim} h(t)^2\cdot \Big(\|z_{+,n,\text{(lin)}}\|_{L^{2}(\Sigma_t)}^2 + \sum_{k\leq |\alpha|-1} \bigl\|\langle w_-\rangle\nabla z_{+,n,\text{(lin)}}^{(k)}\bigr\|_{L^{2}(\Sigma_t)}^2 \Big),
\end{aligned} 
\eeno 
which along with the first inequality in \eqref{eq:d3} implies
\begin{equation} \label{eq:d26}
\Big(\sum_{|\alpha|=1}^{N_*+1}+ \mu\sum_{|\alpha|=N_*+2}\Big)\int_{|\xi|\leq h(t)}\Big|\widehat{\widetilde{z_n^{(\alpha)}}}(t,\xi) \Big|^2 d\xi\lesssim h(t)^2\cdot \sup_{0\leq t\leq\log L}\mathcal{E}_{+,\text{(lin)}}^w(t)\lesssim h(t)^2\cdot \mathcal{E}^w(0).
\end{equation}

{\bf Step 2. Estimate on $\int_{|\xi|\leq h(t)}\big|\widehat{\widetilde{z_n}}(t,\xi)\big|^2 d\xi$.}
Since $\frac{d}{dt}\psi_{-}(t,y)=Z_-(t,\psi_{-}(t,y))$ for any $y\in\mathbb{R}^3$, it follows that
\begin{equation}\label{eq:d27}
\partial_{t}\tilde{f}(t,y)=\left(\partial_{t}+ Z_{-}\cdot\nabla\right) f|_{x=\psi_{-}(t,y)}, \quad \nabla_y\tilde{f}(t,y)=\left(\frac{\partial \psi_{-}(t,y)}{\partial y}\right)^T\nabla f|_{x=\psi_{-}(t,y)} . 
\end{equation}
For simplicity, one may regard $\na_x=\left(\frac{\partial \psi_{-}(t,y)}{\partial y}\right)^{-T}\na_y$. 

Let $A_{-}=\left(\frac{\partial \psi_-(t,y)}{\partial y}\right)^{-1} \left(\frac{\partial \psi_-(t,y)}{\partial y}\right)^{-T}$.
Since $\dive Z_-=0$ implies $\operatorname{det}\left(\frac{\partial \psi_{-}(t,y)}{\partial y}\right)=1$,  $\left(\frac{\partial \psi_-(t,y)}{\partial y}\right)^{-1}$ is the adjoint matrix of $\frac{\partial \psi_-(t,y)}{\partial y}$ and hence each of its column vectors is divergence free, i.e., $\operatorname{div}\left(\frac{\partial \psi_{-}(t,y)}{\partial y}\right)^{-1}=0$. Consequently,
\beno
\Delta f|_{x=\psi_-(t,y)}= \nabla_y\cdot \left( A_-\nabla_y \tilde{f}(t,y)\right), 
\eeno
from which and \eqref{eq:d9}, we deduce that
\begin{equation}\label{eq:d28}
\partial_{t} \widetilde{z_n}- \mu\Delta \widetilde{z_n}=  \underbrace{\mu\operatorname{div}\big((A_{-}- I)\nabla\widetilde{z_n}\big)}_{J_n^1}+ \underbrace{\widetilde{\rho_n}}_{J_n^2}=: J_n. 
\end{equation}

In frequency space, we have
\beno
\frac{d}{dt}\bigl|\widehat{\widetilde{z_n}}\bigr|^2 + 2\mu |\xi|^2 \bigl|\widehat{\widetilde{z_n}}\bigr|^2= 2\operatorname{Re}\bigl(\widehat{J_n}\cdot \overline{\widehat{\widetilde{z_n}}}\, \bigr), 
\eeno
which implies
\beq\label{eq:d28a}
\bigl|\widehat{\widetilde{z_n}}(t,\xi)\bigr|^2  = e^{-2\mu(t-t_{n-1})|\xi|^2} \bigl|\widehat{\widetilde{z_n}}(t_{n-1},\xi)\bigr|^2 + 2\operatorname{Re} \int_{t_{n-1}}^{t} e^{-2\mu(t-s)|\xi|^2}\big(\widehat{J_n}\cdot \overline{\widehat{\widetilde{z_n}}}\,\big)(s,\xi)ds.
\eeq
Using the fact that $\theta(r)=1$ for all $|r|\leq 1$, we have
\begin{equation}\label{eq:d29}
\begin{aligned}
\int_{|\xi|\leq h(t)} \bigl|\widehat{\widetilde{z_n}}(t,\xi)\bigr|^2 d\xi
&\leq \int_{\mathbb{R}^3} e^{-2\mu (t-t_{n-1})|\xi|^2} \bigl|\widehat{\widetilde{z_n}}(t_{n-1},\xi)\bigr|^2 \theta\Bigl(\frac{|\xi|}{h(t)}\Bigr) d\xi\\ 
&\quad+ 2\operatorname{Re} \int_{t_{n-1}}^{t} \int_{\mathbb{R}^3} e^{-2\mu(t-s)|\xi|^2} \theta\Bigl(\frac{|\xi|}{h(t)}\Bigr)\big(\widehat{J_n}\cdot \overline{\widehat{\widetilde{z_n}}}\,\big) (s,\xi) \,d\xi ds.
\end{aligned} 
\end{equation}

Following a similar approach as for $a_1$, we define 
\beno
a_{2}(s,y)=\mathcal{F}^{-1}\Bigl(e^{-2\mu(t-s)|\xi|^{2}}\Bigr)=\left(2\sqrt{\mu(t-s)}\right)^{-3} e^{-\frac{|y|^2}{8\mu(t-s)}} \in\mathcal{S}(\R^3),\quad\text{for }\, s<t.
\eeno
Using Plancherel's theorem, we have
\begin{equation}\label{eq:d30}
\Bigl|\int_{t_{n-1}}^t \int_{\mathbb{R}^3} e^{-2\mu(t-s)|\xi|^2} \theta\Bigl(\frac{|\xi|}{h(t)}\Bigr)\big(\widehat{J_n}\cdot \overline{\widehat{\widetilde{z_n}}}\,\big)(s,\xi)\, d\xi ds \Bigr|\lesssim\sum_{i=1}^2\underbrace{\Bigl|\int_{t_{n-1}}^t \int_{\mathbb{R}^3} J_n^i\cdot a_1 * a_2 * \widetilde{z_n}\, dy ds \Bigr|}_{F_{ni}(t)}.
\end{equation}

{\underline{\bf Estimate for $F_{n1}$.}}
By the expression of $J_n^1$ given in \eqref{eq:d28}, integration by parts gives rise to 
\beno\begin{aligned}
F_{n1}(t)&
\leq \mu\Bigl|\int_{t_{n-1}}^t \int_{\mathbb{R}^3} (A_- -I)\nabla \widetilde{z_n}\cdot \nabla\left(a_1 * a_2 * \widetilde{z_n}\right)\,dy ds\Bigr|\\ 
&\leq\mu\|A_- -I\|_{L_{t,y}^{\infty}}\cdot \int_{t_{n-1}}^t \Big(\int_{|y|\leq h(t)^{-\frac{1}{2}}}|\nabla\widetilde{z_n}| \cdot|\nabla(a_1 * a_2 * \widetilde{z_n})|\,dy\\ 
&\qquad\qquad\qquad\qquad\qquad\qquad
+ \int_{|y|\geq h(t)^{-\frac{1}{2}}} |\nabla\widetilde{z_n}| \cdot|\nabla(a_1 * a_2 * \widetilde{z_n})|\,dy\Bigr) ds\\ 
&\stackrel{\text{H\"{o}lder}}{\leq} \mu\|A_- -I\|_{L_{t,y}^{\infty}}\cdot\int_{t_{n-1}}^t \Big( h(t)^{-\frac{3}{4}}\left\|\nabla \widetilde{z_n}\right\|_{L^2}\cdot \left\|a_{1} * a_{2} *\left(\nabla \widetilde{z_{n}}\right)\right\|_{L^{\infty}} \\
& \quad\quad + \big(\log \left(R^2+h(t)^{-1}\right)^{\frac{1}{2}}\big)^{-1}\cdot\|\log (R^2+|y|^2)^{\frac{1}{2}}\nabla\widetilde{z_n}\|_{L^2}\cdot \|a_{1} * a_{2} *(\nabla \widetilde{z_{n}})\|_{L^{2}}\Big) ds.
\end{aligned}\eeno
Applying Young's inequality,  we get
\beno\begin{aligned}
&\left\|a_{1} * a_{2} *\left(\nabla \widetilde{z_{n}}\right)\right\|_{L^{\infty}} \lesssim\left\|a_{1}\right\|_{L^{2}}\left\|a_{2}\right\|_{L^1}\left\|\nabla \widetilde{z_{n}}\right\|_{L^2} \lesssim h(t)^{\frac{3}{2}}\left\|\nabla \widetilde{z_{n}}\right\|_{L^{2}}, \\ 
&\left\|a_{1} * a_{2} *\left(\nabla \widetilde{z_{n}}\right)\right\|_{L^{2}} \lesssim\|a_1\|_{L^{1}}\|a_2\|_{L^1}\left\|\nabla \widetilde{z_{n}}\right\|_{L^{2}} \lesssim \left\|\nabla \widetilde{z_{n}}\right\|_{L^{2}}.
\end{aligned}
\eeno
The estimate \eqref{eq:i12} leads to
\beq\label{eq:d30a} 
\|A_- -I\|_{L_{t,y}^{\infty}}\lesssim\bigl(\mathcal{E}^w(0)\bigr)^{\f12}.
\eeq
Hence, we derive that
\begin{equation}\label{eq:d31}
\begin{aligned}
F_{n1}(t)
& \lesssim \mu \Big(h(t)^{\frac{3}{4}}+ \big(\log \left(R^2+h(t)^{-1}\right)^{\frac{1}{2}}\big)^{-1}\Big)\cdot\|A_{-}-I \|_{L_{t,y}^{\infty}}
\cdot\int_{t_{n-1}}^t \big\|\log\langle y\rangle  \widetilde{\nabla z_n}\big\|_{L^{2}}^2 ds \\
& \lesssim\mu \Big(h(t)^{\frac{3}{4}}+ \big(\log \left(R^2+h(t)^{-1}\right)^{\frac{1}{2}}\big)^{-1}\Big)\cdot\bigl(\mathcal{E}^w(0)\bigr)^{\f12}\cdot \int_{t_{n-1}}^t \|\log \langle w_{-}\rangle \nabla z_{n}\|_{L^{2}}^2 ds .
\end{aligned} 
\end{equation}

From the second inequality in \eqref{eq:d13}, it follows that
\beno
\left\|\log \langle w_{-}\rangle \nabla z_{n}\right\|_{L^{2}}^2 \lesssim \frac{1}{L^2}\left\|\log \langle w_{-}\rangle z_{+,n,\text{(lin)}} \right\|_{L^{2}\left(\Sigma_{s}\right)}^2+ \left\|\log \langle w_{-}\rangle \nabla z_{+,n,\text{(lin)}} \right\|_{L^{2}\left(\Sigma_{s}\right)}^2 .
\eeno
Combining this inequality with \eqref{eq:d3} and the fact that $T^* \leq \log L$, we derive from \eqref{eq:d31} that
\begin{equation}\label{eq:d32}
F_{n1}(t) \lesssim\Big(h(t)^{\frac{3}{4}}+ \big(\log \left(R^2+h(t)^{-1}\right)^{\frac{1}{2}}\big)^{-1}\Big)\cdot\bigl(\mathcal{E}^w(0)\bigr)^{\f32}. 
\end{equation}

{\underline{\bf Estimate for $F_{n2}$.}} By virtue of  \eqref{eq:d13} and Young's inequality, we have
\beno
\begin{aligned}
F_{n2}(t) & \leq \int_{t_{n-1}}^{t}\left\|\rho_{n}\right\|_{L^{2}}\cdot\left\|a_{1} * a_{2} * \widetilde{z_{n}}\right\|_{L^{2}} ds \\
& \lesssim\frac{\mu}{L}\int_{t_{n-1}}^{t}\bigl(\|z_{+,n,\text{(lin)}}\|_{L^{2}(\Sigma_{s})}+\|\nabla z_{+,n,\text{(lin)}}\|_{L^{2}(\Sigma_{s})}\bigr)\cdot\|\widetilde{z_{n}}\|_{L^{2}} ds \\
& \lesssim \frac{\mu}{L}\int_{t_{n-1}}^{t}\left(\left\|z_{+,n,\text{(lin)}}\right\|_{L^{2}\left(\Sigma_{s}\right)}^{2}+\left\|\nabla z_{+,n,\text{(lin)}}\right\|_{L^{2}\left(\Sigma_{s}\right)}^{2}\right) ds,
\end{aligned} 
\eeno
which together with \eqref{eq:d3} implies
\beq\label{eq:d33}
F_{n2}(t)\lesssim \frac{\mu}{L}\cdot(t-t_{n-1})\cdot\mathcal{E}^{w}(0).
\eeq

Finally, combining \eqref{eq:d29}, \eqref{eq:d30}, \eqref{eq:d32} and \eqref{eq:d33}, for all $t_{n-1}< t \leq t_{n}$, we conclude that
\begin{equation}\label{eq:d34}
\begin{aligned}
\int_{|\xi|\leq h(t)}\bigl|\widehat{\widetilde{z_{n}}}(t, \xi)\bigr|^{2} d\xi&\leq C\Big(h(t)^{\frac{3}{4}}+ \big(\log \left(R^2+h(t)^{-1}\right)^{\frac{1}{2}}\big)^{-1}\Big)\cdot\bigl(\mathcal{E}^{w}(0)\bigr)^{\f32}\\
&\quad+ \frac{C\mu}{L}\cdot\left(t-t_{n-1}\right)\cdot\mathcal{E}^{w}(0) + \int_{\mathbb{R}^{3}}\bigl|\widehat{\widetilde{z_{n}}}(t_{n-1} ,\xi)\bigr|^{2}\cdot \theta\Bigl(\frac{|\xi|}{h(t)}\Bigr) d\xi .
\end{aligned} 
\end{equation}

{\bf Step 3. The decay estimate of $X_n(t)$.}
Thanks to \eqref{eq:d26} and \eqref{eq:d34}, for all $t_{n-1}< t \leq t_{n}$, we deduce from \eqref{eq:d21} that 
\begin{equation}\label{eq:d35}
\begin{aligned}
&\frac{d}{dt} X_{n}(t)+ c\mu h(t)^{2} X_{n}(t)\leq 2f_{n}(t)+ C\mu h(t)^{4} \mathcal{E}^{w}(0)\\ 
&\qquad+ C\mu h(t)^{2}\Big(h(t)^{\frac{3}{4}}+ \big(\log \left(R^2+h(t)^{-1}\right)^{\frac{1}{2}}\big)^{-1}\Big)\cdot\bigl(\mathcal{E}^{w}(0)\bigr)^{\f32} \\
& \qquad + \frac{C\mu^{2}\left(t-t_{n-1}\right) h(t)^{2}}{L}\mathcal{E}^{w}(0)+ C\mu h(t)^{2} \underbrace{\int_{\mathbb{R}^{3}}\left|\widehat{\widetilde{z_{n}}}(t_{n-1}, \xi)\right|^{2} \theta\left(\frac{|\xi|}{h(t)}\right) d\xi}_{H_{n}(t)},
\end{aligned} 
\end{equation}
which implies
\begin{equation}\label{eq:d36}
\frac{d}{dt}\Big(e^{c\mu\int_{t_{n-1}}^{t} h(\tau)^{2} d\tau} X_{n}(t)\Big) \leq e^{c\mu\int_{t_{n-1}}^{t} h(\tau)^{2} d\tau}\times\text{RHS of \eqref{eq:d35}}. 
\end{equation}

Taking
\beno
ch(t)^2=\big(\mu(t-t_{n-1})+e\big)^{-1} \Big(\log\big(\mu(t-t_{n-1})+e\big)\Big)^{-1}, \quad t_{n-1}< t \leq t_{n} , 
\eeno
we have
\beno
e^{c\mu\int_{t_{n-1}}^{t} h(\tau)^{2} d\tau}= \log\big(\mu(t-t_{n-1})+e\big), 
\eeno
from which and \eqref{eq:d36}, we deduce that
\beno
\begin{aligned}
&\frac{d}{dt}\Big(\log\big(\mu(t-t_{n-1})+e\big) X_{n}(t)\Big)\lesssim  \log\big(\mu(t-t_{n-1})+e\big) f_n(t)+ \mu\big(\mu(t-t_{n-1})+e\big)^{-2}\cdot\mathcal{E}^{w}(0) \\
& \qquad\qquad + \mu\big(\mu(t-t_{n-1})+e\big)^{-1} \Big(\log\big(\mu(t-t_{n-1})+e\big)\Big)^{-1}\cdot\bigl(\mathcal{E}^{w}(0)\bigr)^{\f32}
+ \frac{\mu}{L}\cdot\mathcal{E}^w(0) \\
&\qquad \qquad + \mu\big(\mu(t-t_{n-1})+e\big)^{-1}\cdot H_{n}(t) .
\end{aligned} 
\eeno
Integrating the above inequality on $\left[t_{n-1}, t_{n}\right]$ and using \eqref{eq:d16} for $f_n$, we obtain
\begin{equation} \label{eq:d37}
\begin{aligned}
&\log\big(\mu(t_{n}-t_{n-1})+e\big) X_{n}(t_{n})
\lesssim  X_{n}(t_{n-1})+ \log\big(\mu(t_{n}-t_{n-1})+e\big)\cdot\bigl(\mathcal{E}^{w}(0)\bigr)^{\f12}\cdot \mathcal{E}(t_{n-1}) \\
&\qquad \qquad+\Bigl(1+\frac{\mu\log L}{L}\Bigr)\mathcal{E}^{w}(0) +\log\Big(\log\big(\mu(t_{n}-t_{n-1})+e\big)\Big)\cdot\bigl(\mathcal{E}^{w}(0)\bigr)^{\f32}\\
&\qquad \qquad +\mu \int_{t_{n-1}}^{t_{n}} \frac{H_{n}(s)}{\mu\left(s-t_{n-1}\right)+e} ds .
\end{aligned}
\end{equation}

By definition, it is easy to check that 
\beno 
|\p^\al z_n(t_{n-1},x)|\lesssim\f{1}{L}|z_+(t_{n-1},x)|
+\sum_{k\leq|\al|-1}|\na z_+^{(k)}(t_{n-1,x})|,\quad 1\leq|\al|\leq N_*+2,
\eeno
which together with \eqref{eq:d19} and Lemma \ref{lem:d1} yields
\beno 
X_n(t_{n-1})\lesssim\mathcal{E}_+(t_{n-1})\lesssim\mathcal{E}_{+,\text{(lin)}}(t_{n-1})+\mathcal{E}_{+,\text{(non)}}(t_{n-1})\overset{\eqref{eq:d3}}{\lesssim}\mathcal{E}^w(0).
\eeno
Consequently, using the facts that $L\gg1$, $\mathcal{E}^{w}(0)\leq\e^2<1$ and $t_n-t_{n-1}=T$, we deduce from \eqref{eq:d37} that
\begin{equation} \label{eq:d38}
\begin{aligned}
&X_{n}(t_n)
\lesssim\f{\log\big(\log(\mu T+e)+e\big)}{\log(\mu T+e)}\cdot\mathcal{E}^{w}(0)+\bigl(\mathcal{E}^{w}(0)\bigr)^{\f12}\cdot \mathcal{E}(t_{n-1}) \\
&\qquad \qquad +\f{1}{\log(\mu T+e)}\cdot\mu \int_{t_{n-1}}^{t_{n}} \frac{H_{n}(s)}{\mu\left(s-t_{n-1}\right)+e} ds .
\end{aligned}
\end{equation}

Now we estimate the last term on the RHS of \eqref{eq:d38}.

(i). For $t_{n-1}\leq s\leq \frac{1}{\mu}\log(\mu T+e)+t_{n-1}$, since
 \beno
 H_n(s)\lesssim \|z_{+,n,\text{(lin)}}\|^2_{L^2(\Sigma_{t_{n-1}})}=\|z_{+}\|^2_{L^2(\Sigma_{t_{n-1}})} \lesssim \mathcal{E}^{w}(0),
 \eeno
we have
\begin{equation}\label{eq:d39}
\begin{aligned}
&\mu\int_{t_{n-1}}^{\frac{1}{\mu}\log (\mu T+e)+t_{n-1}} \frac{H_{n}(s)}{\mu\left(s-t_{n-1}\right)+e} ds \lesssim \log \big(\log(\mu T+e)+e\big)\cdot \mathcal{E}^{w}(0). 
\end{aligned}
\end{equation}

(ii). For $\frac{1}{\mu}\log(\mu T+e)+t_{n-1}\leq s\leq t_n$, we use $h(s)^{2}\leq \frac{1}{c}\big(\log(\mu T+e)+e\big)^{-1}$ and the definition of $H_{n}(s)$ to get
\begin{equation}\label{eq:d40}
\begin{aligned}
& \quad\ \mu \int_{\frac{1}{\mu}\log(\mu T+e)+t_{n-1}}^{t_{n}} \frac{H_{n}(s)}{\mu\left(s-t_{n-1}\right)+e} ds \\
& \lesssim \log(\mu T+e)\cdot\int_{|\xi|^{2}\leq \frac{4}{c}\bigl(\log (\mu T+e)+e\bigr)^{-1}}\bigl|\widehat{\widetilde{z_n}}(t_{n-1}, \xi)\bigr|^2 d\xi .
\end{aligned}
\end{equation}

Thanks to \eqref{eq:d39} and \eqref{eq:d40}, we derive from \eqref{eq:d38} that
\begin{equation}\label{eq:d41}
\begin{aligned}
X_{n}(t_{n}) &\lesssim \frac{\log\big(\log(\mu T+e)+e\big)}{\log(\mu T+e)} \mathcal{E}^{w}(0)+\bigl(\mathcal{E}^w(0)\bigr)^{\f12}\cdot \mathcal{E}(t_{n-1})\\
& \quad\ +\int_{\mathbb{R}^{3}}\bigl|\widehat{\widetilde{z_n}}(t_{n-1}, \xi)\bigr|^{2} \theta\Bigl(\frac{|\xi|}{\delta(T)}\Bigr) d\xi, \quad 1\leq n\leq n_0,
\end{aligned} 
\end{equation}
where $\delta(T)>0$ is given by
\beno
\delta(T)^{2}:=\frac{4}{c}\big(\log(\mu T+e)+e\big)^{-1}.
\eeno
This is the desired estimate \eqref{eq:d23}. The lemma is proved.
\end{proof}

\subsection{Estimate of the last term in (\ref{eq:d23}) (or (\ref{eq:d41}))}
 To establish the iterative decay estimate for  $X_n$, it remains to bound the last term in \eqref{eq:d23}, as detailed in the following lemma.

\begin{lemma}\label{lem:d4}
For $1\leq n\leq n_0$, there holds
\begin{equation}\label{eq:d42}
\int_{\mathbb{R}^{3}}\left|\widehat{\widetilde{z_n}}(t_{n-1}, \xi)\right|^{2} \theta\left(\frac{|\xi|}{\delta(T)}\right) d\xi\lesssim\f{2^n}{\log\big(\log(\mu T+e)+e\big)}\cdot\mathcal{E}^w(0).
\end{equation}
 The implicit constant in $\lesssim$ is independent of n and $\mu$.
\end{lemma}
\begin{proof}
For $1\leq n\leq n_0$ and $t\in[t_{n-1},t_n]$,  since 
\beno 
z_n=z_{+,n,\text{(lin)}}\theta_{L}(|x^-|),\quad z_+=z_{+,n,\text{(lin)}}+z_{+,n,\text{(non)}},
\eeno
it follows from \eqref{eq:d9} and \eqref{eq:d1} that
\begin{equation*}
\begin{aligned}
z_n|_{t=t_{n-1}}&=z_{+}\theta_{L}(|x^-|)|_{t=t_{n-1}}\overset{\text {(if } n\geq 2)}{=} z_{n-1}|_{t=t_{n-1}}+w_{n-1}|_{t=t_{n-1}},
\end{aligned}
\end{equation*}
where 
\beno 
w_n:=z_{+, n,\text{(non)}}\theta_{L}(|x^-|),\quad\text{for}\quad 1\leq n\leq n_0.
\eeno
Therefore, for $2\leq n\leq n_0$, we have
\begin{equation}\label{eq:d43}
\begin{aligned}
\int_{\mathbb{R}^{3}}\bigl|\widehat{\widetilde{z_n}}(t_{n-1}, \xi)\bigr|^2 \theta\Bigl(\frac{|\xi|}{\delta(T)}\Bigr) d\xi
&\leq  \,2\int_{\mathbb{R}^{3}}\bigl|\widehat{\widetilde{z_{n-1}}}(t_{n-1},\xi)\bigr|^2 \theta\Bigl(\frac{|\xi|}{\delta(T)}\Bigr) d\xi\\ 
&\qquad+ 2\int_{\mathbb{R}^{3}}\bigl|\widehat{\widetilde{w_{n-1}}}(t_{n-1}, \xi)\bigr|^2 \theta\Bigl(\frac{|\xi|}{\delta(T)}\Bigr) d\xi.
\end{aligned} 
\end{equation}

{\bf Step 1.  Estimate on $\int_{\mathbb{R}^{3}}\bigl|\widehat{\widetilde{z_n}}(t_{n}, \xi)\bigr|^2\theta\Bigl(\frac{|\xi|}{\delta(T)}\Bigr) d\xi $ for $1\leq n\leq n_0$.}
By \eqref{eq:d28a}, we have
\begin{equation}\label{eq:d44}
\begin{aligned}
 &\int_{\mathbb{R}^{3}}\bigl|\widehat{\widetilde{z_n}}(t_{n}, \xi)\bigr|^2\theta\Bigl(\frac{|\xi|}{\delta(T)}\Bigr) d\xi 
 = \int_{\mathbb{R}^{3}} e^{-2\mu\left(t_{n}-t_{n-1}\right)|\xi|^2}\bigl|\widehat{\widetilde{z_{n}}}(t_{n-1} ,\xi)\bigr|^2 \theta\Bigl(\frac{|\xi|}{\delta(T)}\Bigr) d\xi \\
&\qquad\qquad\qquad + 2\operatorname{Re} \int_{t_{n-1}}^{t_{n}} \int_{\mathbb{R}^{3}} e^{-2\mu\left(t_{n}-s\right)|\xi|^{2}} \theta\Bigl(\frac{|\xi|}{\delta(T)}\Bigr)\Big(\widehat{J_{n}}\cdot \overline{\widehat{\widetilde{z_n}}}\Big)(s, \xi) \,d\xi ds .
\end{aligned} 
\end{equation}
The RHS of \eqref{eq:d44}  is analogous to that of \eqref{eq:d29}, with  $h(t)$ and $t$ replaced by $\delta(T)$ and $t_n$, respectively. Therefore, using a derivation similar to that leading to \eqref{eq:d34} and  $t_n-t_{n-1}=T$, we obtain
\begin{equation}\label{eq:d45}
\begin{aligned}
 \int_{\mathbb{R}^{3}}\bigl|\widehat{\widetilde{z_n}}(t_{n}, \xi)\bigr|^2\theta\Bigl(\frac{|\xi|}{\delta(T)}\Bigr) d\xi 
 &\leq C\big(\log\delta(T)^{-1}\big)^{-1}\cdot\bigl(\mathcal{E}^{w}(0)\bigr)^{\f32} + \frac{C\mu T}{L}\cdot\mathcal{E}^{w}(0) \\
& \qquad + \int_{\mathbb{R}^{3}} \bigl|\widehat{\widetilde{z_{n}}}(t_{n-1}, \xi)\bigr|^2\cdot\theta\Bigl(\frac{|\xi|}{\delta(T)}\Bigr) d\xi,
\end{aligned} 
\end{equation}
where the inequality $\delta^{\f34}\lesssim \big(\log\delta^{-1}\big)^{-1}$ for sufficiently small $\delta$ has been applied to determine the coefficient of the first term on the RHS of \eqref{eq:d45}.

{\bf Step 2. Estimate on $\int_{\mathbb{R}^{3}}\bigl|\widehat{\widetilde{w_n}}(t_n, \xi)\bigr|^2 \theta\Bigl(\frac{|\xi|}{\delta(T)}\Bigr) d\xi$ for $1\leq n\leq n_0$.}
From the definition of $w_{n}$ and \eqref{eq:d2}, it follows that
\begin{equation}\label{eq:d46}
\left\{
\begin{aligned}
&\p_t w_n+ Z_-\cdot\nabla w_n- \mu\Delta w_n= -\nabla p\,\theta_{L}(|x^-|)+g_n,\quad t\in\left(t_{n-1}, t_{n}\right] \\ 
&w_n|_{t=t_{n-1}}=0,
\end{aligned}
\right. 
\end{equation}
where 
\beno 
g_n:=-2\mu\nabla\big(\theta_{L}(|x^-|)\big) \cdot\nabla z_{+, n,\text{(non)}}-\mu\Delta\big(\theta_{L}(|x^-|)\big) z_{+, n,\text{(non)}}.
\eeno
We rewrite \eqref{eq:d46} in the Lagrangian coordinates as follows:
\beq\label{eq:d47}
\partial_{t} \widetilde{w_n}-\mu\Delta \widetilde{w_{n}}=\underbrace{\mu\operatorname{div}\big((A_{-}- I)\nabla\widetilde{w_n}\big)}_{K_n^1}+\underbrace{\widetilde{g_n}}_{K_n^2}-\underbrace{\widetilde{\nabla p\, \theta_{L}(|x^-|)}}_{K_n^3}=: K_n, \quad t\in(t_{n-1}, t_{n}].
\eeq
 Similar to \eqref{eq:d44}, we obtain
\begin{equation}\label{eq:d48}
\int_{\mathbb{R}^{3}}\bigl|\widehat{\widetilde{w_{n}}}(t_{n}, \xi)\bigr|^2 \theta\Bigl(\frac{|\xi|}{\delta(T)}\Bigr) d\xi = 2\operatorname{Re} \int_{t_{n-1}}^{t_{n}} \int_{\mathbb{R}^{3}} e^{-2\mu\left(t_{n}-s\right)|\xi|^{2}} \theta\Bigl(\frac{|\xi|}{\delta(T)}\Bigr)\Big(\widehat{K_{n}} \cdot \overline{\widehat{\widetilde{w_{n}}}}\Big)(s, \xi)\, d\xi ds . 
\end{equation}

Now, we define $b_{1}, b_{2}\in\mathcal{S}(\mathbb{R}^{3})$ by
\begin{equation}
\begin{aligned}
b_{1}(y)&=\mathcal{F}^{-1}\Bigl(\theta\Bigl(\frac{|\xi|}{\delta(T)}\Bigr)\Bigr)=\delta(T)^{3} \mathcal{F}^{-1}(\theta)(\delta(T) y),\\ 
b_{2}(s,y)&=\mathcal{F}^{-1}\Bigl(e^{-2\mu\left(t_{n}-s\right)|\xi|^{2}}\Bigr)=\left(2 \sqrt{\mu\left(t_{n}-s\right)}\right)^{-3} e^{-\frac{|y|^{2}}{8 \mu\left(t_{n}-s\right)}},
\end{aligned} \label{eq:d48star}
\end{equation}
which satisfy
\beno
\|b_1\|_{L^1}+\|b_2(s,\cdot)\|_{L^1}\lesssim 1, \quad \|b_1\|_{L^2}\lesssim \delta(T)^{\frac{3}{2}}.
\eeno
Applying Plancherel's theorem, we then derive
\begin{equation}\label{eq:d49}
\begin{aligned}
& \left|\int_{t_{n-1}}^{t_{n}} \int_{\mathbb{R}^3} e^{-2\mu({t_{n}}-s)|\xi|^2} \theta\Bigl(\frac{|\xi|}{\delta(T)}\Bigr)\Big(\widehat{K_n}\cdot \overline{\widehat{\widetilde{w_n}}}\Big)(s,\xi)\, d\xi ds \right|
\leq\sum_{j=1}^{3}\underbrace{\left|\int_{t_{n-1}}^{t_{n}} \int_{\mathbb{R}^3} K_n^j\cdot b_1 * b_2 * \widetilde{w_n}\, dy ds \right|}_{P_{nj}} .
\end{aligned} 
\end{equation}

{\it Step 2.1. Estimates for $P_{n1}$ and $P_{n2}$.} Following derivations analogous to those in \eqref{eq:d32} and \eqref{eq:d33}, by virtue of \eqref{eq:d3}, we obtain
\beq\label{eq:d50}
P_{n1}\lesssim\big(\log\delta(T)^{-1}\big)^{-1}\cdot\bigl(\mathcal{E}^w(0)\bigr)^{\f32},\quad
P_{n2}\lesssim\f{\mu}{L}(t_n-t_{n-1})\cdot\mathcal{E}^w(0)
=\f{\mu T}{L}\cdot\mathcal{E}^w(0).
\eeq

{\it Step 2.2.  Estimate for $P_{n3}$.} Using an approach similar to the derivation of \eqref{eq:d31} for $F_{n1}$, applying H\"{o}lder's and Young's inequalities, we have
\beno
\begin{aligned}
P_{n3} & \leq \int_{t_{n-1}}^{t_{n}} \Big(\int_{|y|\leq \delta(T)^{-\frac{1}{2}}}+ \int_{|y|\geq \delta(T)^{-\frac{1}{2}}}\Big) \bigl|\widetilde{\nabla p\,\theta_{L}(|x^-|)}\bigr|\cdot|b_1 * b_2 *\widetilde{w_n}|\,dy ds\\
& \lesssim \Big(\delta(T)^{\frac{3}{4}}+ \big(\log\delta(T)^{-1}\big)^{-1}\Big) \int_{t_{n-1}}^{t_{n}} \bigl\|\log \left(R^2+|y|^2\right)^{\frac{1}{2}}\widetilde{\nabla p\,\theta_{L}(|x^-|)}\bigr\|_{L^2}\cdot\|\widetilde{w_n}\|_{L^{2}} ds \\
& \lesssim \big(\log\delta(T)^{-1}\big)^{-1} \int_{t_{n-1}}^{t_{n}} \|\log \langle w_{-}\rangle \nabla p\|_{L^2(\Sigma_{s})}\cdot\|z_{+,n,\text{(non)}}\|_{L^{2}(\Sigma_{s})} ds,
\end{aligned}
\eeno
which together with \eqref{eq:d3} implies
\beq\label{eq:d51}
P_{n3}\lesssim\big(\log\delta(T)^{-1}\big)^{-1}\cdot \big(\mathcal{E}^w(0)\big)^{\frac{1}{2}}\cdot\int_{t_{n-1}}^{t_{n}} \left\|\log \langle w_{-}\rangle \nabla p \right\|_{L^{2}\left(\Sigma_{s}\right)}ds.
\eeq
Hence, it suffices to control $\int_{0}^{T^*} \left\|\log \langle w_{-}\rangle \nabla p \right\|_{L^{2}\left(\Sigma_{\tau}\right)} d\tau$.

By the expression of $\na p$ in \eqref{eq:pressure}, we decompose $\nabla p$ as
\beno
\na p=\na p_0+\na p_1+\na p_{\geq 2}+\vv B_1+\vv B_2,
\eeno
where $\na p_1,\, \na p_{\geq 2}$, $\vv B_1$ and $\vv B_2$ are as defined in the proof of Proposition \ref{prop:p1}, and 
\beno
\na p_0(t,x):=\frac{1}{4\pi}\sum_{i,j=1}^3 \int_{Q_L} \nabla_x \Big(\frac{1}{|x-y|}\Big)\left(\partial_{i} z_{+}^{j} \partial_{j} z_{-}^{i}\right)(\tau, y) dy.
\eeno

{\underline{\it i). Estimate for $\int_{0}^{T^*} \left\|\log \langle w_{-}\rangle \nabla p_0 \right\|_{L^{2}\left(\Sigma_{\tau}\right)} d\tau$.}} 
Let $\theta(r)\in C_c^\infty(\R)$ be the cut-off function satisfying $\theta(r)=1$ for $|r|\leq 1$ and $\theta(r)=0$ for $|r|\geq 2$. We split $\na p_0$ into two parts as follows:
\begin{equation*}
\begin{aligned}
\na p_0(\tau, x)= & \underbrace{\frac{1}{4\pi}\sum_{i,j=1}^3 \int_{Q_L} \nabla_x\Big(\frac{1}{|x-y|}\Big)\cdot \theta(|x-y|)\cdot \left(\partial_{i} z_{+}^{j} \partial_{j} z_{-}^{i}\right)(\tau, y) dy}_{A_1(\tau, x)} \\
& +\underbrace{\frac{1}{4\pi} \sum_{i,j=1}^3\int_{Q_L} \nabla_x\Big(\frac{1}{|x-y|}\Big)\cdot \big(1-\theta(|x-y|)\big)\cdot \left(\partial_{i} z_{+}^{j} \partial_{j} z_{-}^{i}\right)(\tau, y) dy}_{A_2(\tau, x)} .
\end{aligned}
\end{equation*}

For $A_1$, since \eqref{eq:p4} (Lemma \ref{lem:p1}) implies $\log \langle w_{-}\rangle(\tau,x)\lesssim\log \langle w_{-}\rangle(\tau,y) $ for $|x-y|\leq 2$, and the seperation property \eqref{eq:e13} of $z_{\pm}$ gives rise to
\beq\label{eq:d52}
\f{1}{\langle w_-\rangle^{\f12}\log \langle w_-\rangle\cdot\langle w_+\rangle^{\f12}\log \langle w_+\rangle}\lesssim \f{1}{\langle\tau\rangle^{\f12}\log \langle\tau\rangle},
\eeq
we have
\beno\begin{aligned}
\log \langle w_{-}\rangle|A_1(\tau, x)|
&\lesssim \int_{|x-y|\leq 2} \frac{\log \langle w_{-}\rangle(\tau, y)|\nabla z_{+}(\tau, y)|\cdot |\nabla z_{-}(\tau, y)|}{|x-y|^{2}}\cdot 1_{Q_L}(y)dy\\ 
&\lesssim\frac{1}{\langle\tau\rangle^{\f12}\log \langle\tau\rangle}\cdot\int_{|x-y|\leq 2} \frac{1}{|x-y|^{2}} \cdot \Bigl(\frac{\langle w_{-}\rangle^{\frac{1}{2}}(\log \langle w_{-}\rangle)^2\left|\nabla z_{+}\right|}{\langle w_{+}\rangle^{\frac{1}{2}} \log \langle w_{+}\rangle}\\ 
&\qquad\qquad\qquad\qquad\cdot \langle w_{+}\rangle\left(\log \langle w_{+}\rangle\right)^2\left|\nabla z_{-}\right|\Bigr)(\tau, y)\cdot 1_{Q_L}(y) dy.
\end{aligned}\eeno 
Applying Young's inequality, we obtain
\begin{equation}\label{eq:d53}
\begin{aligned}
\|\log \langle w_{-}\rangle A_1\|_{L^{2}\left(\Sigma_{\tau}\right)}&\lesssim \f{1}{\langle\tau\rangle^{\f12}\log \langle\tau\rangle}\cdot\Bigl\|\frac{1}{|x|^{2}}\Bigr\|_{L^{1}(|x|\leq2)} \cdot \Bigl\|\frac{\langle w_-\rangle^{\frac{1}{2}}(\log \langle w_-\rangle)^2 \nabla z_+}{\langle w_+\rangle^{\f12} \log \langle w_+\rangle}\Bigr\|_{L^2(\Sigma_{\tau})}\\ 
&\qquad\cdot \|\langle w_+\rangle(\log \langle w_+\rangle)^2 \nabla z_-\|_{L^{\infty}(\Sigma_{\tau})}\\
& \lesssim \f{1}{\langle\tau\rangle^{\f12}\log \langle\tau\rangle}\cdot  \left\|\frac{\langle w_{-}\rangle \nabla z_{+}}{\langle w_{+}\rangle^{\frac{1}{2}} \log \langle w_{+}\rangle}\right\|_{L^{2}\left(\Sigma_{\tau}\right)}\cdot \Big(\sum_{k=0}^2 E_{-}^k \Big)^{\frac{1}{2}},
\end{aligned} 
\end{equation}
where we used the fact that $(\log \langle w_{-}\rangle)^2\lesssim \langle w_{-}\rangle^{\frac{1}{2}}$ and the weighted Sobolev inequality \eqref{eq:e12} in the last inequality.

Similarly, for $A_2$, since \eqref{eq:p5} (Lemma \ref{lem:p1}) implies
\beno 
\log\langle w_{-}\rangle(\tau,x)\lesssim\log(13|x-y|)\cdot \log\langle w_{-}\rangle(\tau,y),\quad\text{for}\quad|x-y|\geq 1,
\eeno
 using \eqref{eq:d52} and Young's inequality, we obtain
\begin{equation}\label{eq:d54}
\begin{aligned}
\|\log \langle w_-\rangle A_2\|_{L^{2}(\Sigma_{\tau})} 
&\lesssim \f{1}{\langle\tau\rangle^{\f12}\log \langle\tau\rangle}\cdot \Bigl\|\frac{\log (13|x|)}{|x|^{2}} \Bigr\|_{L^{2}(|x|\geq 1)} \cdot \Bigl\|\frac{\langle w_-\rangle^{\frac{1}{2}}(\log \langle w_{-}\rangle)^2 \nabla z_+}{\langle w_+\rangle^{\frac{1}{2}} \log \langle w_+\rangle}\Bigr\|_{L^2(\Sigma_{\tau})}\\ 
&\qquad\cdot\|\langle w_+\rangle(\log \langle w_+\rangle)^2 \nabla z_-\|_{L^{2}(\Sigma_{\tau})}\\
& \lesssim\f{1}{\langle\tau\rangle^{\f12}\log \langle\tau\rangle}\cdot \Bigl\|\frac{\langle w_-\rangle \nabla z_+}{\langle w_+\rangle^{\frac{1}{2}} \log \langle w_+\rangle}\Bigr\|_{L^{2}(\Sigma_{\tau})}\cdot\left(E_{-}^0\right)^{\frac{1}{2}} .
\end{aligned} 
\end{equation}

Therefore, thanks to \eqref{eq:d53} and \eqref{eq:d54}, we get
\begin{equation}\label{eq:d55}
\begin{aligned}
\int_{0}^{T^*}\|\log \langle w_{-}\rangle
\na p_0\|_{L^{2}(\Sigma_{\tau})} d\tau
&\lesssim \Big(\sum_{k=0}^2 E_{-}^k \Big)^{\frac{1}{2}}\cdot\Bigl\|\f{1}{\langle\tau\rangle^{\f12}\log \langle\tau\rangle}\Bigr\|_{L^2(0,T^*)}\cdot\Bigl\|\frac{\langle w_{-}\rangle \nabla z_{+}}{\langle w_{+}\rangle^{\frac{1}{2}} \log \langle w_{+}\rangle}\Bigr\|_{L_{T^*}^{2}(L_x^2)} \\
&\overset{\eqref{eq:p1-3}}{\lesssim}\Big(\sum_{k=0}^2 E_{-}^k \Big)^{\frac{1}{2}}\cdot\left(\frac{1}{L\log L}E_{+}^{0}+ F_{+}^{0}\right)^{\frac{1}{2}} \lesssim \mathcal{E}^w(0).
\end{aligned} 
\end{equation}

{\underline{\it ii). Estimate for  $\int_{0}^{T^*} \|\log \langle w_{-}\rangle(\na p-\na p_0) \|_{L^{2}(\Sigma_{\tau})} d\tau$.}} Since $\na p-\na p_0=\na p_1+\na p_{\geq 2}+\vv B_1+\vv B_2$, using the estimates \eqref{eq:p16b}, \eqref{eq:p18}, \eqref{eq:p21}, and \eqref{eq:p22} for $\na p_1$, $\na p_{\geq 2}$, $\vv B_1$,  and $\vv B_2$, respectively,   we derive
\beno\begin{aligned}
&\quad\|\log \langle w_{-}\rangle(\na p-\na p_0) \|_{L^\infty_t(L^2_x)}\lesssim\|(\log \langle w_{-}\rangle)^2(\na p_1+\na p_{\geq 2}+\vv B_1+\vv B_2) \|_{L^\infty_t(L^2_x)}\\ 
&\lesssim\f{(\log L)^2}{L^{\f12}}\cdot \Bigl(\sum_{l=0}^2E_-^l\Bigr)^{\f12}\cdot \bigl(E_+^0\bigr)^{\f12}
+\frac{\left(\log L\right)^2}{L^{\f52}} E_{+}^{\frac{1}{2}} E_{-}^{\frac{1}{2}}+\f{1}{L^{\f32}}\cdot \left(E_{-}+E_{-}^0+ E_{-}^1\right)^{\frac{1}{2}}\cdot \left(E_{+}+ E_{+}^0\right)^{\frac{1}{2}}\\ 
&\lesssim\Bigl(\f{(\log L)^2}{L^{\f12}}+\frac{\left(\log L\right)^2}{L^{\f52}}+\f{1}{L^{\f32}}\Bigr)\cdot\mathcal{E}^w(0)
\overset{L\gg1}{\lesssim}\f{(\log L)^2}{L^{\f12}}\cdot\mathcal{E}^w(0),\quad\forall\,0\leq t\leq T^*=\log L,
\end{aligned}\eeno
which implies
\begin{equation}\label{eq:d56}
\int_{0}^{T^*} \left\|\log \langle w_{-}\rangle(\na p-\na p_0)\right\|_{L^{2}\left(\Sigma_{\tau}\right)} d\tau\lesssim \frac{(\log L)^3}{L^{\frac{1}{2}}}\cdot\mathcal{E}^w(0)\lesssim \mathcal{E}^w(0). 
\end{equation}

Combining \eqref{eq:d55} and \eqref{eq:d56}, we conclude that
\begin{equation}\label{eq:d57}
\int_{0}^{T^*} \left\|\log \langle w_{-}\rangle \nabla p\right\|_{L^{2}\left(\Sigma_{\tau}\right)} d\tau\lesssim \mathcal{E}^w(0),
\end{equation}
from which and  \eqref{eq:d51}, we deduce that
\begin{equation}\label{eq:d58}
P_{n3} \lesssim \big(\log\delta(T)^{-1}\big)^{-1}\big(\mathcal{E}^{w}(0)\big)^{\frac{3}{2}} . 
\end{equation}

{\it Step 2.3. Estimate for $\int_{\mathbb{R}^{3}}\bigl|\widehat{\widetilde{w_n}}(t_n, \xi)\bigr|^2 \theta\Bigl(\frac{|\xi|}{\delta(T)}\Bigr) d\xi$.}
Thanks to \eqref{eq:d50}, \eqref{eq:d58} and the fact that $t_n-t_{n-1}=T\leq\log L$, we  deduce from \eqref{eq:d48} and \eqref{eq:d49} that
\begin{equation}\label{eq:d59}
\begin{aligned}
&\int_{\mathbb{R}^3} \bigl|\widehat{\widetilde{w_{n}}}(t_n, \xi)\bigr|^2 \theta\Bigl(\frac{|\xi|}{\delta(T)}\Bigr)d\xi \lesssim\big(\log\delta(T)^{-1}\big)^{-1} \big(\mathcal{E}^w(0)\big)^\frac{3}{2} + \frac{\mu\log L}{L}\mathcal{E}^w(0). 
\end{aligned} 
\end{equation}

{\bf Step 3. Estimate on $\int_{\mathbb{R}^{3}}\left|\widehat{\widetilde{z_n}}(t_{n-1}, \xi)\right|^{2} \theta\left(\frac{|\xi|}{\delta(T)}\right) d\xi$.}
By virtue of \eqref{eq:d45}, \eqref{eq:d59}, and the fact that $T\leq\log L$,  we derive from \eqref{eq:d43} that
\begin{equation}\label{eq:d60}
\begin{aligned}
&\int_{\mathbb{R}^{3}}\left|\widehat{\widetilde{z_n}}(t_{n-1}, \xi)\right|^{2} \theta\left(\frac{|\xi|}{\delta(T)}\right) d\xi\leq 2\int_{\mathbb{R}^{3}}\left|\widehat{\widetilde{z_{n-1}}}(t_{n-2}, \xi)\right|^{2} \theta\left(\frac{|\xi|}{\delta(T)}\right) d\xi\\ 
&\qquad\qquad\qquad+ C\big(\log\delta(T)^{-1}\big)^{-1} \big(\mathcal{E}^w(0)\big)^\frac{3}{2}+ \frac{C\mu\log L}{L}\mathcal{E}^w(0),\quad\text{for}\,\, 2\leq n\leq n_0.
\end{aligned} 
\end{equation}
Iterating this estimate yields
\begin{equation}\label{eq:d61}
\begin{aligned}
&\int_{\mathbb{R}^{3}}\left|\widehat{\widetilde{z_n}}(t_{n-1}, \xi)\right|^{2} \theta\left(\frac{|\xi|}{\delta(T)}\right) d\xi \leq 2^{n-1}\int_{\mathbb{R}^{3}}\left|\widehat{\widetilde{z_{1}}}(0, \xi)\right|^{2} \theta\left(\frac{|\xi|}{\delta(T)}\right) d\xi\\ 
&\qquad\qquad\qquad+ C 2^{n-1}\big(\log\delta(T)^{-1}\big)^{-1} \big(\mathcal{E}^w(0)\big)^\frac{3}{2}+ \frac{C\mu\log L}{L}2^{n-1}\mathcal{E}^w(0).
\end{aligned} 
\end{equation}

To bound the first term on the RHS of \eqref{eq:d61}, we take $b_{1}\in\mathcal{S}(\mathbb{R}^{3})$ as defined in \eqref{eq:d48star}, through a similar derivation as to \eqref{eq:d51}, by using H\"{o}lder's and Young's inequalities, we obtain
\beno
\begin{aligned}
&\int_{\mathbb{R}^3}\left|\widehat{\widetilde{z_1}}(0, \xi)\right|^2\theta\left(\frac{|\xi|}{\delta(T)}\right) d\xi \lesssim \left|\int_{\mathbb{R}^3} \widetilde{z_1}(0, y)\cdot b_1 * \widetilde{z_1}(0,y)\, dy \right| \\
&\qquad \lesssim \Big(\int_{|y|\leq \delta(T)^{-\frac{1}{2}}}+ \int_{|y|\geq \delta(T)^{-\frac{1}{2}}}\Big) |\widetilde{z_1}(0, y)|\cdot|b_1 * \widetilde{z_1}(0, y)|\,dy\\
&\qquad \lesssim \Big(\delta(T)^{\frac{3}{4}}+ \big(\log\delta(T)^{-1}\big)^{-1}\Big)\bigl\|\log \left(R^2+|y|^2\right)^{\frac{1}{2}}\widetilde{z_1}(0,y)\bigr\|_{L^2}^2 \\
&\qquad \lesssim \big(\log\delta(T)^{-1}\big)^{-1}\|\log\langle x\rangle z_1(0, x)\|_{L^2(\mathbb{R}^3)}^2,
\end{aligned} 
\eeno
which  along with the fact that $\|\log\langle x\rangle z_1(0, x)\|_{L^2(\mathbb{R}^3)}\leq\|\log \langle w_{-}\rangle z_{+}\|_{L^{2}(\Sigma_{0})}$ implies
\beq\label{eq:d61star}
\int_{\mathbb{R}^3}\left|\widehat{\widetilde{z_1}}(0, \xi)\right|^2\theta\left(\frac{|\xi|}{\delta(T)}\right) d\xi \lesssim \left|\int_{\mathbb{R}^3} \widetilde{z_1}(0, y)\cdot b_1 * \widetilde{z_1}(0,y)\, dy \right|\lesssim \big(\log\delta(T)^{-1}\big)^{-1}\mathcal{E}^w(0). 
\eeq

Using \eqref{eq:d61star} and the facts that $T\leq\log L$, $L\gg1$ and $\mathcal{E}^w(0)\leq1$, we deduce from \eqref{eq:d61} that for $1\leq n\leq n_0$,
\begin{equation}\label{eq:d62}
\begin{aligned}
&\int_{\mathbb{R}^{3}}\left|\widehat{\widetilde{z_n}}(t_{n-1}, \xi)\right|^{2} \theta\Bigl(\frac{|\xi|}{\delta(T)}\Bigr) d\xi\lesssim 2^{n}\Bigl(\big(\log\delta(T)^{-1}\big)^{-1}+\frac{\mu\log L}{L}\Bigr)\mathcal{E}^w(0) \\
&\qquad \lesssim 2^{n}\Bigl(\f{1}{\log\big(\log(\mu T+e)+e\big)}+\frac{\mu\log L}{L}\Bigr)\mathcal{E}^w(0) \\
&\qquad \lesssim \f{2^n}{\log\big(\log(\mu T+e)+e\big)}\cdot\mathcal{E}^w(0).
\end{aligned} 
\end{equation}
This establishes \eqref{eq:d42} and the proof of the lemma is completed.
\end{proof}

\subsection{Decay mechanism and global extension.}
In this subsection, we show that by time $T^* = \log L$, the total unweighted energy of the local solution decays to a value $\e_\mu$ that is sufficiently small compared to the viscosity $\mu$. The solution then enters the classical small-data parabolic regime,  enabling the local solution on $[0, \log L]$ to be extended globally. This completes the proof of Theorem \ref{thm:d1}.

\begin{proof}[Proof of Theorem \ref{thm:d1}] We divide the proof into two steps.

{\bf Step 1. Decay mechanism of the energy.}  Thanks to Lemmas \ref{lem:d2}, \ref{lem:d3}, and \ref{lem:d4}, we obtain for $1\leq n\leq n_0$,
\beno
\begin{aligned}
\mathcal{E}_{+,\text{(lin)}}(t_n)  &\lesssim \frac{\log\big(\log(\mu T+e)+e\big)}{\log\big(\mu T+e\big)}\cdot \mathcal{E}^{w}(0)+\bigl(\mathcal{E}^w(0)\bigr)^{\f12}\cdot \mathcal{E}(t_{n-1}) \\ 
&\qquad+\f{2^{n}}{\log\big(\log(\mu T+e)+e\big)}\cdot\mathcal{E}^w(0) + \frac{\mathcal{E}^{w}(0)}{\left(\log L\right)^{4}}, 
\end{aligned}
\eeno
which along with the facts that $T<\log L$ and $L\gg 1$ implies
\beno\begin{aligned}
\mathcal{E}_{+,\text{(lin)}}(t_n)
&\lesssim\Bigl(\frac{\log\big(\log(\mu T+e)+e\big)}{\log\big(\mu T+e\big)}+\f{2^{n}}{\log\big(\log(\mu T+e)+e\big)}\Bigr)\cdot\mathcal{E}^w(0) +\bigl(\mathcal{E}^w(0)\bigr)^{\f12}\cdot \mathcal{E}(t_{n-1}).
\end{aligned}\eeno
The same estimate also holds for $\mathcal{E}_{-,\text{(lin)}}(t_n)$. Combining these estimates and \eqref{eq:d4a} in Lemma \ref{lem:d1}, there exists a constant $C_0>1$ such that for all $1\leq n\leq n_0$,
\beq\label{eq:d64}\begin{aligned}
\mathcal{E}(t_n)
&\leq C_0\Bigl(\frac{\log\big(\log(\mu T+e)+e\big)}{\log\big(\mu T+e\big)}+\f{2^{n}}{\log\big(\log(\mu T+e)+e\big)}\Bigr)\cdot\mathcal{E}^w(0)+C_0\bigl(\mathcal{E}^w(0)\bigr)^{\f12}\cdot \mathcal{E}(t_{n-1}).
\end{aligned}\eeq 

{\bf Step 2. Decay to the small-data parabolic regime.} First, there exists a constant $\e_1\in(0,\e_0]$ such that
\beno 
C_0\bigl(\mathcal{E}^w(0)\bigr)^{\f12}\leq C_0\e_1<\f12,
\eeno
provided that $\mathcal{E}^w(0)\leq\e_1^2$. Let $\e_\mu=O(\mu)$ be the threshold for entering the small-data parabolic regime. That is, if the initial energy $\mathcal{E}(0)$ is less than $\e_\mu^2$, the MHD system admits a unqiue global solution.

In view of the iteration scheme \eqref{eq:d64}, we take $n_{0}=\Bigl\lfloor \frac{2\log \varepsilon_{\mu}}{\log\bigl(2C_0(\mathcal{E}^w(0))^{\f12}\bigr)}\Bigr\rfloor-1$, such that
\beq\label{eq:d65}
\left(2C_0\bigl(\mathcal{E}^w(0)\bigr)^{\f12}\right)^{n_{0}+2} < \varepsilon_{\mu}^2 \leq \left(2C_0\bigl(\mathcal{E}^w(0)\bigr)^{\f12}\right)^{n_{0}+1},
\eeq
 where $\left\lfloor m \right\rfloor$ denotes the maximum integer that does not exceed $m$. Then there exists a sufficiently large constant $L_\mu\geq e^{\f{1}{\mu}}$ such that for any $L\geq L_\mu$ and with $T=\f{\log L}{n_0}$ chosen large enough, we have 
 \beno 
\frac{\log\big(\log(\mu T+e)+e\big)}{\log(\mu T+e)}<\f{\varepsilon_{\mu}^2}{4C_0} 
\quad\text{and}\quad\f{2^{n_0}}{\log\big(\log(\mu T+e)+e\big)}<\f{\varepsilon_{\mu}^2}{4C_0}.
\eeno
Hence, we deduce from \eqref{eq:d64} that
\beno
\mathcal{E}(t_n)\leq \f{\e_\mu^2}{2}\cdot\mathcal{E}^w(0)+C_0\bigl(\mathcal{E}^w(0)\bigr)^{\f12}\cdot \mathcal{E}(t_{n-1}).
\eeno
Consequently, by iteration and the assumption that $C_0\bigl(\mathcal{E}^w(0)\bigr)^{\f12}<\f12$, we obtain
\beno\begin{aligned}
\mathcal{E}(t_n)&\leq \f{\e_\mu^2}{2}\cdot\sum_{k=0}^{n-1}\bigl(C_0(\mathcal{E}^w(0))^{\f12}\bigr)^k\cdot\mathcal{E}^w(0)+\bigl(C_0(\mathcal{E}^w(0))^{\f12}\bigr)^n\mathcal{E}(t_0)\\ 
&
\leq\e_\mu^2\cdot\mathcal{E}^w(0)+\bigl(C_0(\mathcal{E}^w(0))^{\f12}\bigr)^n\mathcal{E}^w(0),
\end{aligned}\eeno
which together with \eqref{eq:d65} implies
\beq\label{eq:d66}
\mathcal{E}(t_n)\leq\bigl(2C_0(\mathcal{E}^w(0))^{\f12}\bigr)^{n+2},\quad\text{for}\quad 1\leq n\leq n_0.
\eeq
Then the following estimate holds:
\beq\label{eq:d67}
\mathcal{E}(T^*)=\mathcal{E}(t_{n_0})\leq\bigl(2C_0(\mathcal{E}^w(0))^{\f12}\bigr)^{n_0+2}\leq\e_\mu^2.
\eeq
In particular, the $H^2$-norm of the system at time $T^*$ is bounded by $\e_\mu$. Hence the MHD system enter the classical small-data parabolic regime, and the local solution $(z_+,z_-)$ obtained in Therorem \ref{thm:a1} can be extended  globally. This completes the proof of Theorem \ref{thm:d1}.
\end{proof}

\section{Appendix}

\subsection{Divergence theorem for piecewise smooth vector fields}
In this subsection, we state a modification of  divergence theorem for piecewise smooth vector fields as follows:
\begin{lemma}[Divergence Theorem for Piecewise Smooth Vector Fields]\label{lem:divergence}
Let $\Omega\subset\R^n$ be a domain partitioned into $\Omega = \bigcup_{k=1}^N \Omega_k$ with pairwise disjoint interiors and each $\Omega_k$ be a Lipschitz domain. Denote by $\Gamma_{ik} = \partial\Omega_i \cap \partial\Omega_k$ the interfaces between subdomains. For a piecewise smooth vector field $\mathbf{F}$ satisfying $\mathbf{F}|_{\Omega_k} \in C^1(\overline{\Omega_k})$ for each $1 \leq k \leq N$, the following divergence theorem holds
\beq\label{eq:div thm}
\int_\Omega \na\cdot\vv Fdx=\int_{\p\Omega}\vv F\cdot\vv ndS+\sum_{i<k}\int_{\Gamma_{ik}}[\vv F]_{ik}\cdot\vv n_{ik} dS,
\eeq
where $\vv n$ is the outward unit normal  on $\p\Omega$, $\vv n_{ik}$ is the unit normal vector pointing from $\Omega_i$ to $\Omega_k$ and $[\vv F]_{ik}=\vv F|_{\Omega_i}-\vv F|_{\Omega_k}$ denotes the jump of $\vv F$ across $\Gamma_{ik}$.
\end{lemma}
\begin{proof}
Since $\mathbf{F}|_{\Omega_k} \in C^1(\overline{\Omega_k})$ for each $k$, we may apply the classical divergence theorem to obtain
\beno 
\int_{\Omega_k}\dive\vv Fdx=\int_{\p\Omega_k}\vv F\cdot\vv n_k dS,\quad1\leq k\leq N,
\eeno
where $\vv n_k$ denotes the outward unit normal to $\partial\Omega_k$.
Summing over all subdomains and accounting for the interface jumps yields \eqref{eq:div thm}. The lemma is proved.
\end{proof}

\subsection{Proof of Lemma \ref{prop:b1}} In this subsection, we give detailed proof of Lemma \ref{prop:b1}.
\begin{proof}[Proof of Lemma \ref{prop:b1}]
We only derive the estimates for $\lambda_+$, as the estimates for $\lambda_{-}$ can be obtained in the same manner. We first verify \eqref{eq:b1} for 
$\lambda_{+}=\left(\log \langle w_{-}\rangle\right)^4$.

{\bf Step 1. Estimate of  $\bigl\|\frac{\lambda_+|_{x_1=L}-\lambda_+|_{x_1=-L}}{\lambda_+|_{x_1=L}}\bigr\|_{L^\infty_{(t, x_{2}, x_{3})}}$.}
By virtue of \eqref{eq:g16}, the periodicity relation $x_{3}^-(t, x_{1}, x_{2}, x_{3})=x_{3}^-(t, x_{1}+2L, x_{2}, x_{3})$ holds for all $(t, x)\in \left[0, T^{*}\right] \times \mathbb{R}^{3}$. Consequently, it suffices to prove
\begin{equation}\label{eq:b2}
\Bigl\|\frac{\left(\log \langle \overline{w_{-}}\rangle\right)^4|_{x_1=L}- \left(\log \langle \overline{w_{-}}\rangle\right)^4|_{x_1=-L}}{\left(\log \langle \overline{w_{-}}\rangle\right)^4|_{x_1=L}}\Bigr\|_{L^{\infty}([0,T^*]\times[-2L,L]^2)}
\lesssim \frac{\varepsilon}{L\left(\log L\right)^2} .
\end{equation}

For any fixed $(t,x_2,x_3) \in [0,T^*] \times [-2L,L]^2$, we define 
\beno
c:=\big(R^{2}+\left|x_{2}^{-}\right|^{2}+\left|x_{3}^{-}\right|^{2}\big)^{\frac{1}{2}}(t, L, x_{2}, x_{3}),\quad g(r):=\Big(\log \left(c^{2}+r^{2}\right)^{\frac{1}{2}}\Big)^{4}.
\eeno 
According to \eqref{eq:g16}, we have
\beno
x_{2}^{-}(t, L, x_{2}, x_{3})=x_{2}^{-}(t,-L, x_{2}, x_{3}), \quad x_{3}^{-}(t, L, x_{2}, x_{3})=x_{3}^{-}(t,-L, x_{2}, x_{3}), 
\eeno
which along with $\langle \overline{w_{-}}\rangle=\big(R^{2}+\left|x_{1}^{-}\right|^{2}+\left|x_{2}^{-}\right|^{2}+\left|x_{3}^{-}\right|^{2}\big)^{\frac{1}{2}}$ implies that 
\beno
\langle\overline{w_{-}}\rangle(t,\pm L,x_2,x_3)=(c^2+\left|x_{1}^{-}(t,\pm L,x_2,x_3)\right|^{2})^{\f12 }.
\eeno 
Then we get
\begin{equation}
\begin{aligned}
& \quad\quad\quad\bigl|\left(\log \langle \overline{w_{-}}\rangle\right)^4(t,L,x_2,x_3)- \left(\log \langle \overline{w_{-}}\rangle\right)^4(t,-L,x_2,x_3)\bigr| \\
& \quad\quad = \left|g\big(x_{1}^{-}(t,L,x_2,x_3)\big)- g\big(x_{1}^{-}(t,-L,x_2,x_3)\big)\right| \\
& \ \ \overset{\scriptscriptstyle g(L)=g(-L)}{\leq} \left|g\big(x_{1}^{-}(t,L,x_2,x_3)\big)- g(L)\right|+ \left|g(-L)- g\big(x_{1}^{-}(t,-L,x_2,x_3)\big)\right| \\
& \overset{\scriptscriptstyle \text{mean value theorem}}{=} \left|g^{\prime}(\xi_1)\right| \cdot \left|x_{1}^{-}(t,L,x_2,x_3)-L\right|+ \left|g^{\prime}(\xi_2)\right| \cdot \left|x_{1}^{-}(t,-L,x_2,x_3)-(-L)\right|,
\end{aligned} \label{eq:b4}
\end{equation}
where $\xi_1$ is a point between $x_{1}^{-}(t,L,x_2,x_3)$ and $L$, $\xi_2$ is a point between $x_{1}^{-}(t,-L,x_2,x_3)$ and $-L$. 

Thanks to \eqref{eq:e9a} in the proof of Lemma \ref{lem:e2}, we have
\beq\label{eq:b6} 
\left|x_{1}^{-}(t,L,x_2,x_3)-L\right|\leq\f32\int_{0}^{t} \left|z_{-}(\tau,L,x_2,x_3+t-\tau)\right|\,d\tau\leq\f{L}{20},
\eeq
which shows that $x_{1}^{-}(t,L,x_2,x_3)\in[\f{19}{20}L,\f{21}{20}L]$, i.e., $x_{1}^{-}(t,L,x_2,x_3)\sim L$. Then using weighted Sobolev inequality \eqref{eq:e11}, the $2L$-periodicity of $z_-$,  Lemma \ref{lem:e2} (3) and ansatz \eqref{eq:a3} and $T^*=\log L$, we deduce from \eqref{eq:b6} that
\begin{equation}\label{eq:b7}
\left|x_{1}^{-}(t,L,x_2,x_3)-L\right| 
\lesssim \frac{\left(E_{-}+ E_{-}^0+ E_{-}^1\right)^{\frac{1}{2}}}{\left(\log L\right)^{2}}\cdot T^* \lesssim \frac{\varepsilon}{\log L},
\end{equation}
Similarly, there also holds $\left|x_{1}^{-}(t,-L,x_2,x_3)-(-L)\right|\lesssim \frac{\varepsilon}{\log L}$.
Based on these estimates, we derive that $|\xi_{1}|\gtrsim L$ and $|\xi_{2}|\gtrsim L$, thereby
\beno
\left|g^{\prime}(\xi_1)\right| \lesssim \frac{\left(\log L\right)^{3}}{L},\quad \left|g^{\prime}(\xi_2)\right| \lesssim \frac{\left(\log L\right)^{3}}{L},
\eeno
from which and \eqref{eq:b7}, we deduce from \eqref{eq:b4} that
\begin{equation}\label{eq:b9}
\bigl|\left(\log \langle \overline{w_{-}}\rangle\right)^4(t,L,x_2,x_3)- \left(\log \langle \overline{w_{-}}\rangle\right)^4(t,-L,x_2,x_3)\bigr| \lesssim \frac{\left(\log L\right)^{2}}{L} \varepsilon . 
\end{equation}

On the other hand, since $x_{1}^{-}(t,L,x_2,x_3)\sim L$, we have $\left(\log \langle \overline{w_{-}}\rangle\right)^4(t,L,x_2,x_3)\gtrsim \left(\log L\right)^{4}$, which along with \eqref{eq:b9} gives rise to \eqref{eq:b2}. Thus, we get
\begin{equation}\label{eq:b10}
\Bigl\|\frac{\lambda_{\pm}|_{x_1=L}-\lambda_{\pm}|_{x_1=-L}}{\lambda_{\pm}|_{x_1=L}}\Bigr\|_{L^\infty_{(t, x_{2}, x_{3})}}\lesssim \frac{\varepsilon}{L\left(\log L\right)^2}. 
\end{equation}
The same estimates also holds for term $\bigl\|\frac{\lambda_+|_{x_2=L}-\lambda_+|_{x_2=-L}}{\lambda_+|_{x_2=L}}\bigr\|_{L^\infty_{(t, x_{1}, x_{3})}}$.

\smallskip 

{\bf Step 2. Estimate of $\delta_{+}(\lambda_{+}^{1}-\lambda_{+}^{2})=\bigl\|\frac{\lambda_{+}^{1}-\lambda_{+}^{2}}{\lambda_{+}^{2}}\bigr\|_{L^{\infty}(\overline{C}_{L}^{-})}$.}  For any fixed $(t, x) \in \overline{C}_{L}^{-}$, we have 
\beno 
x_{3}^{-}(t,x)=L\quad \text{and}\quad  u_-(t,x)=x_{3}^{-}(t,x_1,x_2,x_3-2L).
\eeno
 Lemma \ref{lem:e2} (2) then yields that $L\geq |x_{3}|>\frac{5L}{6}\geq\f{L}{3}$, which along with Lemma \ref{lem:e2} (1) implies that
\beno 
\left|u_{-}(t, x)\right|>\frac{L}{4}\quad \text{and}\quad \lambda_{+}^{2}(t,x)\gtrsim \left(\log L\right)^{4}.
\eeno

By virtue of \eqref{eq:g16}, we have
\begin{equation*}
x_{1}^{-}(t,x)=x_{1}^{-}(t,x_1,x_2,x_3-2L), \quad x_{2}^{-}(t,x)=x_{2}^{-}(t,x_1,x_2,x_3-2L) .  
\end{equation*}
Defining $d=\big(R^{2}+\left|x_{1}^{-}\right|^{2}+\left|x_{2}^{-}\right|^{2}\big)^{\frac{1}{2}}(t,x)$ and $h(r)=\Big(\log \left(d^{2}+r^{2}\right)^{\frac{1}{2}}\Big)^{4}$, using the definitions of $\lambda_{+}^{i}(t,x)$ (for $i=1,2$), we get
\begin{equation}
\begin{aligned}
\left|\lambda_{+}^{1}(t,x)- \lambda_{+}^{2}(t,x)\right|
&  = \left|h\big(x_{3}^{-}(t,x)\big)- h\big(x_{3}^{-}(t,x_1,x_2,x_3-2L)\big)\right| \\
&  = \left|h(L)- h\big(x_{3}^{-}(t,x_1,x_2,x_3-2L)\big)\right| \\
& = \left|h(-L)- h\big(x_{3}^{-}(t,x_1,x_2,x_3-2L)\big)\right| \\
& = \left|h^{\prime}(\eta)\right|\cdot \left|x_{3}^{-}(t,x_1,x_2,x_3-2L)-(-L)\right|,
\end{aligned} \label{eq:b11}
\end{equation}
where $\eta$ is a point between $x_{3}^{-}(t,x_1,x_2,x_3-2L)$ and $-L$. 

According to \eqref{eq:e8}, by using the ansatz \eqref{eq:a1}, \eqref{eq:a3} and Lemma \ref{lem:e2}, Lemma \ref{lem:e3}, we have
\beq\label{eq:b11a}
\begin{aligned}
\left|x_{3}^{-}(t,x)-(x_{3}+t)\right| 
& \leq\f32\int_{0}^{t} \left|z_{-}(\tau,x_1,x_2,x_3+t-\tau)\right|\,d\tau\\
& \lesssim \frac{\left(E_{-}+ E_{-}^0+ E_{-}^1\right)^{\frac{1}{2}}}{\left(\log L\right)^{2}}\cdot T^* \lesssim \frac{\varepsilon}{\log L} ,
\end{aligned}
\eeq
where we used the facts that $L\geq|x_3|>\f{5}{6}L$ and then $|x_3+t-\tau|\geq |x_3|-T^*> \frac{L}{3}$. Consequently, using the fact $x_{3}^{-}(t,x)=L$, we obtain
\beq\label{eq:b12}
\left|-L-(x_{3}+t-2L)\right|= \left|x_{3}^{-}(t,x)-2L-(x_{3}+t-2L)\right|
=\left|x_{3}^{-}(t,x)-(x_{3}+t)\right|\lesssim \frac{\varepsilon}{\log L},
\eeq
whicn implies that $x_{3}+t-2L\in[-\f98L,-\f78L]$.

Since $x_{3}+t-2L\in[-\f98L,-\f78L]$, we apply \eqref{eq:e8} with  $x_{3}$ replaced by $x_{3}-2L$. Following the same argument as in \eqref{eq:b11a}, we obtain
\begin{equation}\label{eq:b13}
\ \left|x_{3}^{-}(t,x_1,x_2,x_3-2L)-(x_{3}+t-2L)\right|
\lesssim \frac{\varepsilon}{\log L} .
\end{equation}

Combining \eqref{eq:b12} and \eqref{eq:b13}, we obtain
\begin{equation}\label{eq:b14}
\left|x_{3}^{-}(t,x_1,x_2,x_3-2L)-(-L)\right|\lesssim \frac{\varepsilon}{\log L},
\end{equation}
which yields that $x_{3}^{-}(t,x_1,x_2,x_3-2L)\in[-\f98L,-\f78L]$. 
Since $\eta$ is between $x_{3}^{-}(t,x_1,x_2,x_3-2L)$ and $-L$, then  $|\eta| \sim L$ and thus $\left|h^{\prime}(\eta)\right| \sim \frac{\left(\log L\right)^{3}}{L}$. Thus, we deduce from \eqref{eq:b11} that
\beno
\left|\lambda_{+}^{1}(t,x)- \lambda_{+}^{2}(t,x)\right|\lesssim \frac{\left(\log L\right)^{2}}{L} \varepsilon,
\eeno
which along with $\lambda_{+}^{2}(t,x)\gtrsim \left(\log L\right)^{4}$ implies that
\beno
\Bigl|\frac{\lambda_{+}^{1}(t,x)-\lambda_{+}^{2}(t,x)}{\lambda_{+}^{2}(t,x)}\Bigr|\lesssim \frac{\varepsilon}{L\left(\log L\right)^{2}}\quad \text{for all } (t,x)\in \overline{C}_{L}^{-}. 
\eeno
Thus, we get
\beq\label{eq:b16}
\delta_{+}(\lambda_{+}^{1}-\lambda_{+}^{2})\lesssim \frac{\varepsilon}{L\left(\log L\right)^{2}}.
\eeq

\smallskip

Combining \eqref{eq:b10} and \eqref{eq:b16}, we obtain the first part of \eqref{eq:b1} for $\lambda_+=\left(\log \langle w_{-}\rangle\right)^4$.

\medskip 

The second part of \eqref{eq:b1} for $\lambda_{+}=\langle w_{-}\rangle^2\left(\log \langle w_{-}\rangle\right)^4$ and $\langle w_{-}\rangle\left(\log \langle w_{-}\rangle\right)^4$ follows similarly to the case for $\lambda_+=\left(\log \langle w_{-}\rangle\right)^4$. This completes the proof of the lemma.
\end{proof}

\subsection{Proof of Lemma \ref{prop:div-curl-apply}} In this subsection, we sketch the proof of Lemma \ref{prop:div-curl-apply}.

\begin{proof}[Proof of Lemma \ref{prop:div-curl-apply}]
We only verify \eqref{eq:div-curl *} for $\lambda(t,x)=\f{\langle w_-\rangle^2(\log \langle w_-\rangle)^4}{\langle w_+\rangle(\log \langle w_+\rangle)^2}$, since \eqref{eq:div-curl *} for $\lambda(t,x)=\f{\langle w_+\rangle^2(\log \langle w_+\rangle)^4}{\langle w_-\rangle(\log \langle w_-\rangle)^2}$ can be derived in a similar way. 

{\bf Step 1. Estimate of $\bigl\|\frac{\lambda|_{x_1=L}-\lambda|_{x_1=-L}}{\lambda|_{x_1=L}}\bigr\|_{L^\infty_{(t, x_{2}, x_{3})}}.$} 
Since $x_{3}^{\pm}(t, x_{1}, x_{2}, x_{3})=x_{3}^{\pm}(t, x_{1}+2L, x_{2}, x_{3})$ holds for all $(t, x_{1}, x_{2}, x_{3})\in \left[0, T^{*}\right] \times \mathbb{R}^{3}$, it suffices to prove that
\begin{equation}\label{eq:A1}
\Bigl\|\frac{\lambda_{i}|_{x_1=L}-\lambda_{i}|_{x_1=-L}}{\lambda_{i}|_{x_1=L}}\Bigr\|_{L^\infty([0,T^*]\times[-2L,2L]^2)}\lesssim \frac{\varepsilon}{L\log L}, \quad \text{for }\ i=1,2, 3.
\end{equation}

For any fixed $(t,x_2,x_3) \in [0,T^*] \times [-2L,2L]^2$, according to \eqref{eq:g16}, we have
\begin{equation}
x_{2}^{\pm}(t, L, x_{2}, x_{3})=x_{2}^{\pm}(t,-L, x_{2}, x_{3}), \quad x_{3}^{\pm}(t, L, x_{2}, x_{3})=x_{3}^{\pm}(t,-L, x_{2}, x_{3}) .  \label{eq:div-curl L,-L}
\end{equation}
Denoting by $b=\big(R^{2}+\left|x_{2}^{-}\right|^{2}+\left|x_{3}^{-}\right|^{2}\big)^{\frac{1}{2}}(t, L, x_{2}, x_{3})$ and $c=\big(R^{2}+\left|x_{2}^{+}\right|^{2}+\left|x_{3}^{+}\right|^{2}\big)^{\frac{1}{2}}(t, L, x_{2}, x_{3})$,  we define two functions 
\beno 
g(r)=\left(b^{2}+r^{2}\right)\Big(\log \left(b^{2}+r^{2}\right)^{\frac{1}{2}}\Big)^{4}\quad\text{and}\quad h(r)=\left(c^{2}+r^{2}\right)^{\frac{1}{2}}\Big(\log \left(c^{2}+r^{2}\right)^{\frac{1}{2}}\Big)^{2},
\eeno 
which along with the fact $\langle \overline{w_{\pm}}\rangle=\big(R^{2}+\left|x^{\pm}\right|^{2}\big)^{\frac{1}{2}}$ and \eqref{eq:div-curl L,-L} imply that
\begin{equation}\label{eq:A2}
\begin{aligned}
&\left|\lambda_{1}(t, L, x_{2}, x_{3})-\lambda_{1}(t,-L, x_{2}, x_{3})\right|  = \Bigl|\frac{g\big(x_{1}^{-}(t,L,x_{2},x_{3})\big)}{h\big(x_{1}^{+}(t,L,x_{2},x_{3})\big)}- \frac{g\big(x_{1}^{-}(t,-L,x_{2},x_{3})\big)}{h\big(x_{1}^{+}(t,-L,x_{2},x_{3})\big)}\Bigr| \\
&\quad \leq \Bigl|\frac{g\big(x_{1}^{-}(t,L,x_{2},x_{3})\big)}{h\big(x_{1}^{+}(t,L,x_{2},x_{3})\big)}- \frac{g(L)}{h(L)}\Bigr| + \Bigl|\frac{g(-L)}{h(-L)}- \frac{g\big(x_{1}^{-}(t,-L,x_{2},x_{3})\big)}{h\big(x_{1}^{+}(t,-L,x_{2},x_{3})\big)}\Bigr| 
\end{aligned}
\end{equation}

Thanks to \eqref{eq:e9a} in the proof of Lemma \ref{lem:e2}, we obtain by the similar argumet as \eqref{eq:b7} that
\beno
\left|x_{1}^{\pm}(t, L, x_{2}, x_{3})-L\right| \lesssim \frac{\varepsilon}{\log L}, \quad \left|x_{1}^{\pm}(t, -L, x_{2}, x_{3})-(-L)\right| \lesssim \frac{\varepsilon}{\log L}, 
\eeno
which gives rise to
\beno
\left|x_{1}^{+}(t,\pm L,x_{2},x_{3})\right|\sim L,\quad \left|x_{1}^{-}(t,\pm L,x_{2},x_{3})\right|\sim L,\quad
\left|h\big(x_{1}^{+}(t,\pm L,x_{2},x_{3})\big)\right|\gtrsim L\left(\log L\right)^{2},
\eeno 
\beno 
\text{and}\quad 
\lambda_{1}(t, L, x_{2}, x_{3})=\frac{g\big(x_{1}^{-}(t,L,x_{2},x_{3})\big)}{h\big(x_{1}^{+}(t,L,x_{2},x_{3})\big)}\gtrsim \frac{g(L)}{h(L)}\gtrsim L\left(\log L\right)^{2}.
\eeno

Following similar derivation as \eqref{eq:b9}, we deduce from \eqref{eq:A2} that
\begin{equation*}
\left|\lambda_{1}(t, L, x_{2}, x_{3})-\lambda_{1}(t,-L, x_{2}, x_{3})\right| \lesssim \Big(\frac{1}{L\left(\log L\right)^{2}}\cdot L\left(\log L\right)^{4}+ \left(\log L\right)^{2}\Big) \cdot \frac{\varepsilon}{\log L} \lesssim \varepsilon \log L . 
\end{equation*}
Thus, we get for all $(t, x_{2}, x_{3})\in [0, T^{*}]\times [-2L,2L]^2$,
\begin{equation}\label{eq:A3}
\Bigl|\frac{\lambda_{1}(t,L,x_2,x_3)-\lambda_{1}(t,-L,x_2,x_3)}{\lambda_{1}(t,L,x_2,x_3)}\Bigr|\lesssim \frac{\varepsilon}{L\log L}.
\end{equation}
This is exactly \eqref{eq:A1} for $i=1$. Similar arguments as \eqref{eq:A3} show that \eqref{eq:A1} also holds for $i=2,3$. 

Therefore, we obtain
\beq\label{eq:A4}
\Bigl\|\frac{\lambda|_{x_1=L}-\lambda|_{x_1=-L}}{\lambda|_{x_1=L}}\Bigr\|_{L^\infty_{(t, x_{2}, x_{3})}}\lesssim \frac{\varepsilon}{L\log L}.
\eeq 
The same estimate also holds for $\bigl\|\frac{\lambda|_{x_2=L}-\lambda|_{x_2=-L}}{\lambda|_{x_2=L}}\bigr\|_{L^\infty_{(t, x_{2}, x_{3})}}$. 

\smallskip 

{\bf Step 2. Estimate of $\bigl\|\frac{\lambda_{1}-\lambda_{2}}{\lambda_{2}}\bigr\|_{L^{\infty}(\overline{C}_{L}^{-})}$.} For any fixed $(t, x) \in \overline{C}_{L}^{-}$, we have
\beno\begin{aligned}
\Bigl|\frac{\lambda_{1}(t,x)-\lambda_{2}(t,x)}{\lambda_{2}(t,x)}\Bigr|&= \Bigl|\frac{\big(\langle \overline{w_{-}}\rangle^2\left(\log \langle \overline{w_{-}}\rangle\right)^4\big)(t,x)-\big(\langle \overline{w_{-}}\rangle^2\left(\log \langle \overline{w_{-}}\rangle\right)^4\big)(t,x_1,x_2,x_3-2L)}{\big(\langle \overline{w_{-}}\rangle^2\left(\log \langle \overline{w_{-}}\rangle\right)^4\big)(t,x_1,x_2,x_3-2L)}\Bigr|\\
&=\Bigl|\frac{\lambda_+^{1}(t,x)-\lambda_+^{2}(t,x)}{\lambda_+^{2}(t,x)}\Bigr|,
\end{aligned}
\eeno
where $\lambda_+^{1}(t,x)$ and $\lambda_+^{2}(t,x)$ are notations used in Lemma \ref{prop:b1}. 

Thus, by virtue of Lemma \ref{prop:b1} for $\lambda(t,x)=\langle w_-\rangle^2
(\log\langle w_-\rangle)^4$, we get
\beq\label{eq:A5}
\Bigl\|\frac{\lambda_{1}-\lambda_{2}}{\lambda_{2}}\Bigr\|_{L^{\infty}(\overline{C}_{L}^{-})}=\Bigl\|\frac{\lambda_+^{1}(t,x)-\lambda_+^{2}(t,x)}{\lambda_+^{2}(t,x)}\Bigr\|_{L^{\infty}(\overline{C}_{L}^{-})}\lesssim\frac{\varepsilon}{L\log L}.
\eeq 

\smallskip 

{\bf Step 3. Estimate of $\bigl\|\frac{\lambda_{1}-\lambda_{3}}{\lambda_{3}}\bigr\|_{L^{\infty}(\overline{C}_{-L}^{+})}$.}
For any fixed $(t, x) \in \overline{C}_{-L}^{+}$, we have
\beno
\Bigl|\frac{\lambda_{1}(t,x)-\lambda_{3}(t,x)}{\lambda_{3}(t,x)}\Bigr|= \Bigl|\frac{\big(\langle \overline{w_{+}}\rangle \left(\log \langle \overline{w_{+}}\rangle\right)^2\big)(t,x)-\big(\langle \overline{w_{+}}\rangle \left(\log \langle \overline{w_{+}}\rangle\right)^2\big)(t,x_1,x_2,x_3+2L)}{\big(\langle \overline{w_{+}}\rangle \left(\log \langle \overline{w_{+}}\rangle\right)^2\big)(t,x)}\Bigr|.
\eeno
Then similar derivation as \eqref{eq:b16} yields that
\beq\label{eq:A6}
\Bigl\|\frac{\lambda_{1}-\lambda_{3}}{\lambda_{3}}\Bigr\|_{L^{\infty}(\overline{C}_{-L}^{+})}\lesssim \frac{\varepsilon}{L\log L}.
\eeq

Combining \eqref{eq:A4},  \eqref{eq:A5} and  \eqref{eq:A6}, we finally obtain
 \eqref{eq:div-curl *} for $\lambda(t,x)=\f{\langle w_-\rangle^2(\log \langle w_-\rangle)^4}{\langle w_+\rangle(\log \langle w_+\rangle)^2}$. 
This completes the proof of the lemma.
\end{proof}

\subsection{A lemma on weight functions.} This subsection presents a lemma concerning the weight functions, which allows the transfer of dependence from the $x$-variables to the $y$-variables.
\begin{lemma}\label{lem:p1}
Let $R\geq 100$. We have
\begin{itemize}
\item[(1).] for any $x,y\in Q_L,\, |x-y|\leq 2$, there hold
\begin{equation}\label{eq:p4}
\langle w_{\pm}\rangle(\tau, x)\leq 12\langle w_{\pm}\rangle(\tau, y),\quad \log \langle w_{\pm}\rangle(\tau, x)\leq 2\log \langle w_{\pm}\rangle(\tau, y). 
\end{equation}  
\item[(2).] for any $x,y\in Q_L,\,|x-y|\geq 1$, there hold
\beq\label{eq:p5}\begin{aligned}
&\langle w_{\pm}\rangle(\tau, x) \leq 12\langle w_{\pm}\rangle(\tau, y)+6|x-y|\leq 13|x-y|\cdot\langle w_{\pm}\rangle(\tau, y),\\ 
&
\log \langle w_{\pm}\rangle(\tau, x) \leq 2\log (13|x-y|)\cdot \log \langle w_{\pm}\rangle(\tau, y). 
\end{aligned}\eeq
\item[(3).] for any $x,y\in Q_L,\,|x-y|\geq 1$, there hold
\beq\label{eq:p5-1}
\langle w_{\pm}\rangle(\log \langle w_{\pm}\rangle)^2(\tau, x)\leq48\langle w_{\pm}\rangle\bigl(\log\langle w_{\pm}\rangle\bigr)^2(\tau,y)+12|x-y|\bigl(\log (12|x-y|)\bigr)^2.
\eeq
\end{itemize}
\end{lemma}
\begin{proof}
{\bf (1). Proof of \eqref{eq:p4}.} For any $x,y\in Q_L$ with $|x-y|\leq 2$, by virtue of Lemma \ref{lem:e2}, we have
\begin{itemize}
\item if $|y_3|\geq \frac{L}{3}$, then $|u_{\pm}(\tau,y)|\geq \frac{L}{4}$, which implies
\begin{equation*}
\langle w_{\pm}\rangle(\tau, x) \leq 3L\leq 12|u_{\pm}(\tau,y)|\leq 12\langle w_{\pm}\rangle(\tau, y);
\end{equation*}
\item if $|y_3|< \frac{L}{3}$, then $|x_3|\leq|y_3|+|x_3-y_3|\overset{L\gg1}{\leq} \frac{L}{2}$, from which and Lemma \ref{lem:e2} shows that 
\beno 
u_{\pm}(\tau,x)=x_{3}^{\pm}(\tau,x),\quad u_{\pm}(\tau,y)=x_{3}^{\pm}(\tau,y).
\eeno 
Thanks to \eqref{eq:e5} and the differential mean value theorem, we have
\beno
\begin{aligned}
\langle w_{\pm}\rangle(\tau, x) = \langle \overline{w_{\pm}}\rangle(\tau, x)&\leq \langle \overline{w_{\pm}}\rangle(\tau, y)+ |x-y|\cdot \left\|\nabla \langle \overline{w_{\pm}}\rangle\right\|_{L^{\infty}}\\ 
&\leq \langle \overline{w_{\pm}}\rangle(\tau, y)+ 4
 \leq 2\langle \overline{w_{\pm}}\rangle(\tau, y)= 2\langle w_{\pm}\rangle(\tau, y).
\end{aligned}
\eeno
\end{itemize}

Thus, based on the above analysis, we obtain the first inequality in \eqref{eq:p4}. Using this inequality, we obtain
\begin{equation*}
\log \langle w_{\pm}\rangle(\tau, x)\leq \log 12 + \log \langle w_{\pm}\rangle(\tau, y) \overset{R\geq100}{\leq} 2\log \langle w_{\pm}\rangle(\tau, y).
\end{equation*} 
This is the second inequality in \eqref{eq:p4}.

\medskip 

{\bf (2). Proof of \eqref{eq:p5}.}  For any $x,y\in Q_L$ with $|x-y|\geq 1$, by Lemma \ref{lem:e2},  we have
\begin{itemize}
\item if $|y_3|\geq \frac{L}{3}$, then $|u_{\pm}(\tau,y)|\geq \frac{L}{4}$, which gives rise to
\begin{equation*}
\langle w_{\pm}\rangle(\tau, x) \leq 3L\leq 12|u_{\pm}(\tau,y)|\leq 12\langle w_{\pm}\rangle(\tau, y);
\end{equation*}
\item if $|x_3|\geq\frac{5L}{6}$ and $|y_3|< \frac{L}{3}$, then $|x-y|\geq |x_3-y_3|> \frac{L}{2}$, and
\begin{equation*}
\langle w_{\pm}\rangle(\tau, x) \leq 3L\leq 6|x-y|;
\end{equation*}
\item if $|x_3|<\frac{5L}{6}$ and $|y_3|< \frac{L}{3}$, then  $u_{\pm}(\tau,x)=x_{3}^{\pm}(\tau,x)$, $u_{\pm}(\tau,y)=x_{3}^{\pm}(\tau,y)$, thereby $\langle w_{\pm}\rangle(\tau, x)=\langle \overline{w_{\pm}}\rangle(\tau, x)$, $\langle w_{\pm}\rangle(\tau, y)=\langle \overline{w_{\pm}}\rangle(\tau, y)$. In view of \eqref{eq:e5} and the mean value theorem, we have
\begin{equation*}
\langle w_{\pm}\rangle(\tau, x) \leq \langle w_{\pm}\rangle(\tau, y)+2|x-y|.
\end{equation*}
\end{itemize}

Therefore, based on the above analysis and $|x-y|\geq 1$, we conclude that
\begin{equation*}
\langle w_{\pm}\rangle(\tau, x) \leq 12\langle w_{\pm}\rangle(\tau, y)+6|x-y|\overset{R\geq 100}{\leq} 13|x-y|\langle w_{\pm}\rangle(\tau, y).
\end{equation*}
This is the first inequality in \eqref{eq:p5}. Using this inequality, we get
\begin{equation*}
\log \langle w_{\pm}\rangle(\tau, x)\leq \log (13|x-y|)+ \log \langle w_{\pm}\rangle(\tau, y) \leq 2\log (13|x-y|) \log \langle w_{\pm}\rangle(\tau, y),
\end{equation*} 
which is the second inequality in \eqref{eq:p5}. 

\medskip

{\bf (3). Proof of \eqref{eq:p5-1}.} Since \eqref{eq:p5} shows
$
\langle w_{\pm}\rangle(\tau, x) \leq 12\langle w_{\pm}\rangle(\tau, y)+6|x-y|$, it follows that
\beno 
\text{either }\  \langle w_{\pm}\rangle(\tau, x) \leq 24\langle w_{\pm}\rangle(\tau, y),\quad  \text{or }\  \langle w_{\pm}\rangle(\tau, x) \leq12|x-y|.
\eeno 
Therefore, we obtain  $\langle w_{\pm}\rangle(\log \langle w_{\pm}\rangle)^2(\tau, x)$ is bounded by 
\beno 
\text{either }\  24\langle w_{\pm}\rangle\bigl(\log(24\langle w_{\pm}\rangle)\bigr)^2(\tau, y),\quad  \text{or }\ 12|x-y|\bigl(\log(12|x-y|)\bigr)^2,
\eeno 
which along with $R\geq 100$ gives rise to \eqref{eq:p5-1}. It completes the proof of the lemma.
\end{proof}

\vspace{0.3cm}

 \noindent{\bf Data of availability statement.} Our manuscirpt has no associated data.

\vspace{0.3cm}
\noindent {\bf Acknowledgments.}
 The work of the first author was partially supported by NSF of China under grant 12171019. 

 \vspace{0.3cm}

 \noindent{\bf Conflict of interest statement.} All authors declare that they have no competing financial interests or personal relationships that could have appeared to influence the work reported in this paper.

\end{document}